\documentclass[11pt,a4paper,reqno]{amsart}
\fontsize{6.5pt}{6.5pt}\selectfont 
\makeatletter
\def\@evenhead{{\fontsize{6.5pt}{6.5pt}\selectfont \hfil \leftmark\hfil\thepage}}
\def\@oddhead{{\fontsize{6.5pt}{6.5pt}\selectfont\hfil\rightmark \hfil\thepage}}
\makeatother
\usepackage{amsfonts}
\usepackage{booktabs}
\usepackage{amsmath,amssymb,amsthm,amsxtra}
\usepackage{float}
\usepackage{amssymb}
\usepackage{mathabx}
\usepackage{bbm}
\usepackage[colorlinks, linkcolor=blue,anchorcolor=Periwinkle,
citecolor=Red,urlcolor=Emrald]{hyperref}
\usepackage[usenames,dvipsnames]{xcolor}
\usepackage{enumitem}
\usepackage{geometry,array} 
\usepackage{graphicx}
\usepackage{subfigure}
\usepackage{bookmark}
\usepackage{tikz}\usetikzlibrary{matrix,intersections, calc, arrows.meta}
\usepackage{url}
\usepackage{dsfont}
\usepackage[colorinlistoftodos]{todonotes}
\usepackage{tablists} \restorelistitem
\usepackage{bm}

\usetikzlibrary{decorations.markings}
\tikzset{->-/.style={decoration={  markings,  mark=at position #1 with
    {\arrow{>}}},postaction={decorate}}}
\tikzset{-<-/.style={decoration={  markings,  mark=at position #1 with
    {\arrow{<}}},postaction={decorate}}}
\usepackage{extarrows}
\usepackage[all]{xy}
\usepackage{setspace}
\usepackage{thmtools}
\usepackage{thm-restate}
\usepackage{hyperref}
\usepackage{cleveref}
\usepackage{multirow}
\usepackage{ifthen}
\usepackage{caption}
\usepackage{tikz-3dplot}
\usepackage{comment}
\usepackage{mathrsfs}
\usepackage{subfiles}
\usepackage{placeins}
\usepackage{aligned-overset}

\usepackage{pgfplots}
\pgfplotsset{compat=1.18}

\newcommand{\tildelim}{\mathop{\widetilde{\lim}}}

\renewcommand{\dim}{\operatorname{dim}}

\newcommand{\mfu}{\mathbf{u}}

\newcommand{\mfw}{\mathbf{w}}

\newcommand{\mcC}{\mathcal{C}}

\newcommand{\mcT}{\mathcal{T}}

\newcommand{\mbR}{\mathbb{R}}

\newcommand{\mbZ}{\mathbb{Z}}

\theoremstyle{plain}
\newtheorem{theorem}{Theorem}[section]

\newtheorem{lemma}[theorem]{Lemma}
\newtheorem{corollary}[theorem]{Corollary}
\newtheorem{proposition}[theorem]{Proposition}
\newtheorem{conjecture}[theorem]{Conjecture}
\theoremstyle{definition}
\newtheorem{definition}[theorem]{Definition}

\newtheorem{example}[theorem]{Example}

\newtheorem{remark}[theorem]{Remark}

\numberwithin{equation}{section}
\newtheorem{definition-proposition}[theorem]{Definition-Proposition}

\newcommand{\zeroregion}[1]{\mathcal{H}_D\left(#1\right)}
\newcommand{\positiveregion}[1]{\mathcal{H}^{+}_D\left(#1\right)}
\newcommand{\negativeregion}[1]{\mathcal{H}^{-}_D\left(#1\right)}
\newcommand{\positiveclosure}[1]{\overline{\mathcal{H}}_D^{+}\left(#1\right)}
\newcommand{\negativeclosure}[1]{\overline{\mathcal{H}}_D^{-}\left(#1\right)}

\newcommand{\Ezeroregion}[1]{\mathcal{H}\left(#1\right)}
\newcommand{\Epositiveregion}[1]{\mathcal{H}^{+}\left(#1\right)}
\newcommand{\Enegativeregion}[1]{\mathcal{H}^{-}\left(#1\right)}

\begin{document}

\title {Geometric structures of $G$-fans associated with rank $3$ cluster-cyclic exchange matrices}

\author{Ryota Akagi}
\address{Graduate School of Mathematics\\ nagoya University\\Chikusa-ku\\ Nagoya\\464-8602\\ Japan.}
\email{ryota.akagi.e6@math.nagoya-u.ac.jp}

\author{Zhichao Chen}
\address{School of Mathematical Sciences\\ University of Science and Technology of China \\ Hefei, Anhui 230026, P. R. China.}
\email{czc98@mail.ustc.edu.cn}

\maketitle

\begin{abstract} In this paper, we investigate the geometric structures of $G$-fans associated with rank $3$ real cluster-cyclic exchange matrices. In this class, a simple recursion for tropical signs was found, which enables us to study the detailed properties of $c$-, $g$-vectors. We introduce two kinds of upper bounds of the $G$-fans. The first one is the global upper bound, which comes from a hyperbolic surface containing all $g$-vectors after an initial mutation. The second one is the local upper bound, which reflects the internal separateness structure. As applications, we prove that there is no periodicity among $g$-vectors, and we completely determine the sign of $g$-vectors. We also prove the monotonicity of $g$-vectors under the minimum assumption. Moreover, we show that the three global upper bounds can be simplified to a single uniform upper bound.
\\\\
Keywords: $G$-fan, global upper bound, local upper bound, periodicity, monotonicity.
\\
2020 Mathematics Subject Classification: 13F60, 05E10, 05E45.
\end{abstract}

\tableofcontents
\begin{figure}[htbp]
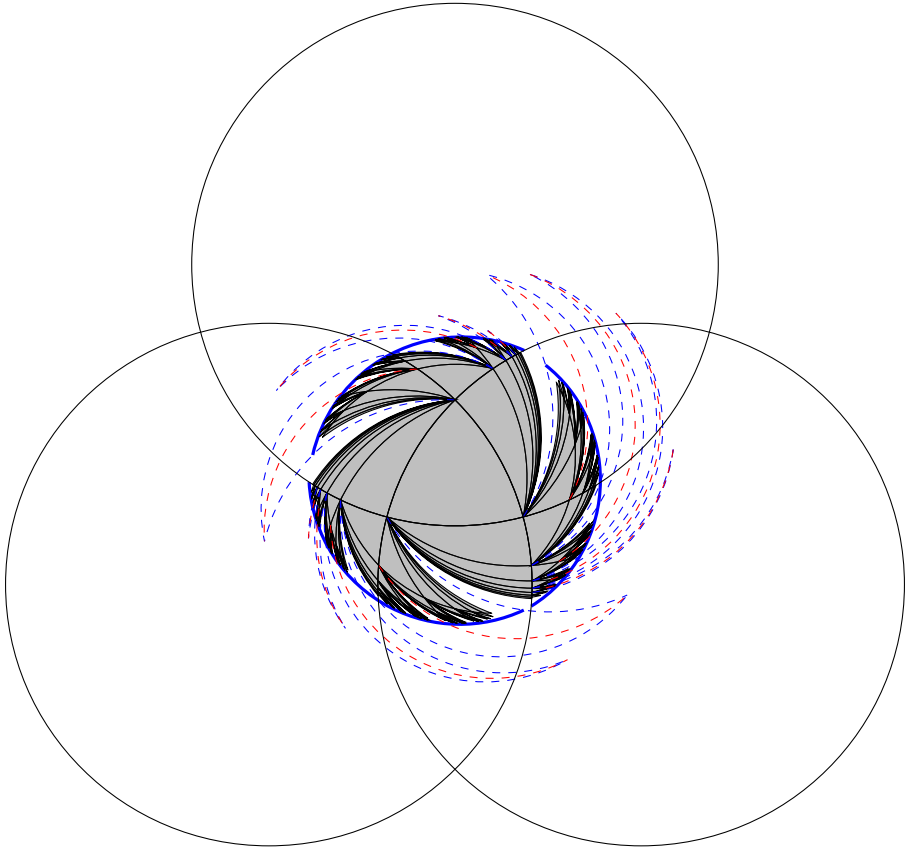

\centering
\subfile{tikzfile/introduction/intro_G_fan_hyperbolic_case}
\caption{$G$-fan corresponding to a cluster-cyclic exchange matrix.}
\label{fig: main graph}
\end{figure}

\section{Introduction}\label{intro}
\subsection{Background}
Cluster algebras are a class of commutative rings equipped with cluster variables, which are rational functions, and mutations, which are their transformation rules. This concept was introduced by \cite{FZ02} and has been developed by many researchers. Nowadays, related structures have been discovered in a wide range of areas in mathematics.
\par
It is known that cluster algebras possess many interesting combinatorial structures. One of the most important objects is the {\em $g$-vectors}. Roughly speaking, these are vectors defined by a certain ``degree" of cluster variables. Although $g$-vectors are defined using the specific information from cluster variables, it is a surprising fact that they inherit information about the periodicity of cluster variables \cite{CKLP13, CL20, Nak23}. Thus, they provide a natural parametrization of cluster variables.
\par
It is also known that the $g$-vectors form an important combinatorial structure called the {\em $G$-fans} (or the {\em $g$-vector fans}), which are simplicial fans in a Euclidean space. This is a geometric realization of the cluster complex in \cite{FZ07}. 
The existence of this concept was conjectured by \cite{FZ07}, which was eventually proved by \cite{GHKK18}, and it has been studied by many researchers (e.g. \cite{Rea14, Nak23, Nak24, Yur25}). This concept is strongly related to the {\em pointed bases}, which are a class of bases in cluster algebras parametrized by $\mathbb{Z}^n$. According to \cite{Qin24}, such bases must contain all cluster monomials, and each pair of pointed bases can be transformed via certain linear combinations. Since all cluster monomials are parametrized by the points in the $G$-fan, the problem of constructing a pointed basis can be reduced to the problem of adding elements parametrized by points in the compliment of the $G$-fan. Recently, from this point of view, the {\em dominance region} was defined by using the $G$-fan, and studied in \cite{RS23, RRS25}.
\par
Through the {\em cluster scattering diagrams} and the {\em theta bases} in \cite{GHKK18}, the $G$-fans play essential roles in the construction of a pointed basis. The cluster scattering diagrams are wall-crossing structures that naturally contain the $G$-fans. The theta basis is one kind of the pointed bases. The construction of the theta basis strongly depends on the geometry of the cluster scattering diagrams. Hence, it is a central topic in cluster algebra theory to understand the $G$-fans.
\par
Although the $G$-fans have important information of cluster algebras, it seems that they appear to be highly complex and varied. Due to this complexity, there has been limited progress in the explicit understanding. In some specific cases, there are several nice results. For example, for the rank $2$ cluster algebras, all the $G$-fans are explicitly understood in \cite{Rea14, AC25a}, see \Cref{fig: rank 2 G fans}. Also, the relationships between the $G$-fans (or scattering diagrams) and the roots of Kac-Moody Lie algebras were studied in \cite{RS16, RS18, Rea20a, Rea20b, RS22, Yur23} for cases with acyclic and affine initial exchange matrices, and in \cite{Yur20} for the case arising from the surface.
Based on the observations from these results, it seems quite difficult to connect the known theory with the $G$-fans if the initial exchange matrix has an oriented cycle. Recently, in \cite{Nak24}, several prototypical examples of rank 3 $G$-fans were presented, along with a conjecture for the classification of $G$-fans.
\begin{figure}[htbp]
\centering
\begin{minipage}{0.3\textwidth}
\centering
\begin{tikzpicture}
\draw (0,0)--(1,0);
\draw (0,0)--(0,1);
\draw (0,0)--(-1,0);
\draw (0,0)--(0,-1);
\draw (0,0)--(1,-1);
\end{tikzpicture}
\end{minipage}
\begin{minipage}{0.3\textwidth}
\centering
\begin{tikzpicture}
\draw (0,0)--(1,0);
\draw (0,0)--(0,1);
\draw (0,0)--(-1,0);
\draw (0,0)--(0,-1);
\foreach \x in {1,2,3,...,20}
    {
    \draw (0,0)--(1, {-\x/(\x+1)});
    \draw (0,0)--({\x/(\x+1)},-1);
    }
\draw[dashed] (0,0)--(1,-1); 
\end{tikzpicture}
\end{minipage}
\begin{minipage}{0.3\textwidth}
\centering
\begin{tikzpicture}
\draw (0,0)--(1,0);
\draw (0,0)--(0,1);
\draw (0,0)--(-1,0);
\draw (0,0)--(0,-1);
\foreach \x in {2,3,...,10}
    {
    \draw (0,0)--(1, {-\x/(\x+1)+0.3});
    \draw (0,0)--({\x/(\x+1)-0.3},-1);
    }
\fill[gray, opacity=0.5] (0,0)--(10/11-0.3,-1)--(1,-1)--(1,-10/11+0.3)--cycle;
\end{tikzpicture}
\end{minipage}
\caption{Rank $2$ $G$-fans. The corresponding initial exchange matrices are $\left(\begin{smallmatrix}
0 & -1\\
1 & 0
\end{smallmatrix}\right)$, $\left(\begin{smallmatrix}
0 & -2\\
2 & 0
\end{smallmatrix}\right)$, and $\left(\begin{smallmatrix}
0 & -p\\
p & 0
\end{smallmatrix}\right)$ ($p > 2$), respectively.}
\label{fig: rank 2 G fans}
\end{figure}

\subsection{Purpose and related works}
In this paper, we investigate the structure of the $G$-fan associated with the {\em cluster-cyclic} exchange matrices of rank $3$. This is an exchange matrix whose mutation equivalent exchange matrices all have an oriented cycle. As stated above, this class is rather difficult to study via existing theories. On the other hand, this class has an important example called the {\em Markov quiver}, see \eqref{eq: Markov matrix}. Hence, it is worthwhile to  explore a new approach to this class.
\par
Although it is hard to establish the connections between this class and another theory, several unique and interesting phenomena have already been found. Firstly, it is known that the classification of these exchange matrices can be done via the {\em Markov constant} \cite{BBH11, Aka24}, see \Cref{prop: ordinary classification of cluster-cyclicity}. In \cite{Sev12}, another classification was provided by the distribution of eigenvalues of the corresponding pseudo Cartan companion, and we reveal its detailed property in this paper, see \Cref{lem: eigenvalues lemma for cluster-cyclic matrix}. Secondly, in \cite{LL24}, it was shown that all $g$-vectors are contained in a quadratic surface arising from the pseudo Cartan companion. We refine this theorem in \Cref{thm: global upper bound theorem}. Lastly, and most importantly, in \cite{AC25b}, the sign-coherence of $c$-vectors was shown even when {\em real} entries are allowed, and a recursion for their signs was obtained, see \Cref{thm: recursion for tropical signs}. 
\par
The example of the $G$-fan or the scattering diagram associated with the Markov quiver is presented in \cite{Cha12, FG16, Rea23}.
In \cite[Section $6$]{Nak24}, it was conjectured that the class of cluster-cyclic exchange matrices provides one class for the classification of the $G$-fans. Although we have not got the explicit definition how to classify the $G$-fans, our results provide an affirmative answer to this conjecture.

\subsection{Conventions}\label{subsec: conv}
A sequence $\mathbf{w}=[k_1,\dots,k_r]$ of $1,2,3$ is said to be {\em reduced} if $k_i \neq k_{i+1}$ for any $i=1,2,\dots,r-1$. By convention, the empty sequence $\emptyset = [\ ]$ is also reduced. We write the set of all reduced sequences by $\mathcal{T}$. Then, for each $j=1,2,3$ and $\mathbf{w} \in \mathcal{T}$, we can assign $c$-vectors $\mathbf{c}_{j}^{\mathbf{w}}$ and $g$-vectors $\mathbf{g}_j^{\mathbf{w}}$ following the standard definition, see \Cref{def: C G matrices}.
\par
For any $\mathbf{w}=[k_1,\dots,k_r] \in \mathcal{T}$ and $l=1,2,3$, define $\mathbf{w}[l] \in \mathcal{T}$ as follows: $\mathbf{w}[l]=[k_1,\dots,k_r,l]$ if $k_r \neq l$, and $\mathbf{w}[k]=[k_1,\dots,k_{r-1}]$ if $k_r = l$. Repeating this process, we define the product of two reduced sequences $\mathbf{w}[l_1,\dots,l_m]=\mathbf{w}[l_1][l_2]\dots[l_m]$.
\par
We define the partial order $\leq$ on $\mathcal{T}$ as follows:
\begin{equation}
[k_1,\dots,k_r] \leq [l_1,\dots,l_{r'}] \Longleftrightarrow\ \textup{$r \leq r'$ and $k_i=l_i$ for any $i=1,\dots,r$}.
\end{equation}
For any $\mathbf{w} \in \mathcal{T}$, define the subset $\mathcal{T}^{\geq \mathbf{w}}$ of $\mcT$ by 
\begin{equation}\label{eq: after subset}
\mathcal{T}^{\geq \mathbf{w}}=\{\mathbf{u} \in \mathcal{T} \mid \mathbf{u} \geq \mathbf{w}\}.
\end{equation}
\par
We specify the following points, which slightly differ from conventions used in previous works:
\begin{itemize}
\item {\em Generalization to real entries}: Since the sign-coherence has been established for this class allowing for real entries, we handle the $G$-fan in this broader setting. This generalization was introduced in \cite{AC25a} with several conjectures, though those conjectures have been solved for the class of exchange matrices studied in this paper.
\item {\em Modified $c$-, $g$-vectors}: Instead of usual $c$-, $g$-vectors, we employ the {\em modified $c$-vectors} $\tilde{\mathbf{c}}_j^{\mathbf{w}}$ and the {\em modified $g$-vectors} $\tilde{\mathbf{g}}_{j}^{\mathbf{w}}$ introduced by \cite{AC25a}, see \Cref{def: modified c g vectors}.
They are defined by multiplying the original vectors by certain positive scalars; this modification does not affect the structure of the $G$-fan. However, to recover the results for the original $c$-, $g$-vectors, certain adjustments are required.
\item {\em $K$, $S$, $T$-labeling}: To indicate the mutation directions and indices, we use the symbol $K$, $S$, $T$ instead of $1$, $2$, $3$, see \Cref{def: K S T labeling}. This labeling is compatible with the recursion for the signs of $c$-vectors obtained in \cite{AC25b}.
\end{itemize}
To explain the main theorem, we introduce some notations. Let $\mathcal{M}$ be the free monoid generated by $S$ and $T$. We introduce the right monoid action of $\mathcal{M}$ on $\mathcal{T}\setminus\{\emptyset\}$ by \eqref{eq: monoid action definition}. Then, every nonempty reduced sequence $\mathbf{w} \in \mathcal{T}\setminus\{\emptyset\}$ is uniquely expressed as $\mathbf{w}=[i]X$ for some $i=1,2,3$ and $X \in \mathcal{M}$. See \Cref{def: K S T labeling} for the precise definition. In this expression, we have $\mathcal{T}^{\geq [i]}=\{[i]X \mid X \in \mathcal{M}\}$. We decompose this set into the following two parts:
\begin{itemize}
\item {\em Trunk}: $\mathcal{T}^{<[i]S^{\infty}}=\{[i]S^n \mid n \in \mathbb{Z}_{\geq 0}\}$.
\item {\em Maximal branches}: $\mathcal{T}^{\geq [i]S^nT}=\{[i]S^nTX \mid X \in \mathcal{M}\}$ with $n \in \mathbb{Z}_{\geq 0}$.
\end{itemize}
See \Cref{fig: tropical signs} as an example. For any $\mathbf{w}$ in a maximal branch, the set $\mathcal{T}^{\geq \mathbf{w}}$ is called a {\em branch}.

\subsection{Main results}
Let $B$ be a cluster-cyclic initial exchange matrix of rank $3$, and let $\Delta(B)$ be the $G$-fan associated with $B$. For any reduced sequence $\mathbf{w}$, we define the sub $G$-fan $\Delta^{\geq \mathbf{w}}(B)$, which consists of the cones spanned by $\tilde{\mathbf{g}}_1^{\mathbf{u}}$, $\tilde{\mathbf{g}}_2^{\mathbf{u}}$, and $\tilde{\mathbf{g}}_3^{\mathbf{u}}$ with $\mathbf{u} \in \mathcal{T}^{\geq {\mathbf{w}}}$. When $\mathcal{T}^{\geq \mathbf{w}}$ is a branch, the sub $G$-fan $\Delta^{\geq \mathbf{w}}(B)$ is also called a branch.
\par
The first main theorem introduces the {\em global upper bound}, which is an upper bound depending only on the initial mutation direction $i=1,2,3$. See the thick blue lines in \Cref{fig: main graph}. By \cite{LL24}, it is known that all modified $g$-vectors with the initial mutation direction $i$ are on a quadratic surface $H_i$ (\Cref{lem: surface for modified g-vectors}). Note that this quadratic surface is defined by the mutated exchange matrix $B^{[i]}=\mu_i(B)$, not the initial one $B$. We prove that this quadratic surface is a hyperboloid of two sheets, or a degeneration thereof, which consists of two connected components separated by a plane orthogonal to a sign-coherent vector (\Cref{lem: eigenvalues lemma for cluster-cyclic matrix}). We decompose $H_i$ into the ``positive" part $H^+_i$ and the ``negative" part $H^{-}_i$. See \Cref{fig: two plane} and \Cref{fig: hyperboloid} as examples. Let $Q^+_i$ be the minimum convex cone including $H^+_i$ and $\mathbf{0}$. See \Cref{def: for global upper bounds} for the precise definition. Then, the first main theorem is that $Q_i^{+}$ is an upper bound for $\Delta^{\geq [i]}(B)$.
\begin{theorem}[\Cref{thm: global upper bound theorem}]\label{thm: intro global upper bound theorem}
The following inclusion holds:
\begin{equation}
|\Delta^{\geq [i]}(B)| \subset Q_i^+.
\end{equation}
In particular, we have $|\Delta(B)| \subset Q_1^+ \cup Q_2^+ \cup Q_3^+$.
\end{theorem}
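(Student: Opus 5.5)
The plan is to reduce the inclusion to a statement about the individual modified $g$-vectors, and then prove a sign statement for them along the trunk and the branches. Since $Q_i^+$ is a convex cone, and $|\Delta^{\geq [i]}(B)|$ is the union of the cones spanned by $\tilde{\mathbf{g}}_1^{\mathbf{u}},\tilde{\mathbf{g}}_2^{\mathbf{u}},\tilde{\mathbf{g}}_3^{\mathbf{u}}$ with $\mathbf{u}\in\mathcal{T}^{\geq[i]}$, it suffices to show that $\tilde{\mathbf{g}}_j^{\mathbf{u}}\in Q_i^+$ for all such $\mathbf{u}$ and all $j$. The cone attached to $\mathbf{u}=\emptyset$ is not in $\Delta^{\geq[i]}(B)$. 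However, the vectors $\tilde{\mathbf{g}}_j^{\emptyset}=\mathbf{e}_j$ with $j\neq i$ do appear in cones for $\mathbf{u}=[i]$, so they must also be checked; they lie on $H_i$ as well.

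By \Cref{lem: surface for modified g-vectors}, every $\tilde{\mathbf{g}}_j^{\mathbf{u}}$ with $\mathbf{u}\geq[i]$ lies on $H_i$. By \Cref{lem: eigenvalues lemma for cluster-cyclic matrix}, $H_i$ splits into $H_i^+$ and $H_i^-$, separated by a plane $\mathbf{v}^\perp$ with $\mathbf{v}$ sign-coherent, say $\mathbf{v}\in\mathbb{R}_{\geq0}^3$ after fixing signs. So $H_i^{\pm}=H_i\cap\{\pm\langle\mathbf{v},\mathbf{x}\rangle>0\}$ (or $\geq 0$ in the degenerate case). The problem thus reduces to the sign statement $\langle \mathbf{v},\tilde{\mathbf{g}}_j^{\mathbf{u}}\rangle\geq 0$, or more precisely to showing that each $g$-vector lies on the positive sheet.

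To prove the sign statement, I will use the recursion for tropical signs (\Cref{thm: recursion for tropical signs}) together with the $K,S,T$ labeling. At each step $\mathbf{u}\mapsto\mathbf{u}[k]$ only one $g$-vector changes:
\[
\tilde{\mathbf{g}}_k^{\mathbf{u}[k]}=-\tilde{\mathbf{g}}_k^{\mathbf{u}}+\sum_{l}\bigl[-\varepsilon\, b_{lk}^{\mathbf{u}}\bigr]_+\tilde{\mathbf{g}}_l^{\mathbf{u}},
\]
up to the positive rescaling, where $\varepsilon$ is the tropical sign of $\tilde{\mathbf{c}}_k^{\mathbf{u}}$. Hence the new vector lies on $H_i$ and is connected to the old data through a linear relation. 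I will argue by induction along $\mathcal{T}^{\geq[i]}$ that all three $g$-vectors of $\mathbf{u}$ lie on the same sheet $H_i^+$.

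The base case is $\mathbf{u}=[i]$: its $g$-vectors are $\mathbf{e}_j$ for $j\neq i$, which are nonnegative, together with $\tilde{\mathbf{g}}_i^{[i]}=-\mathbf{e}_i+\sum_l[b_{li}]_+\mathbf{e}_l$. I will check directly that all three lie on the $\langle\mathbf{v},\cdot\rangle\geq0$ side.

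For the inductive step I would first try a continuity and connectedness argument. Consider the segment between $\tilde{\mathbf{g}}_k^{\mathbf{u}}$ and $\tilde{\mathbf{g}}_k^{\mathbf{u}[k]}$, or better the two adjacent simplicial cones $C^{\mathbf{u}}$ and $C^{\mathbf{u}[k]}$ that share the facet spanned by the other two $g$-vectors. That facet lies in $Q_i^+$ by induction. The new ray lies on $H_i=H_i^+\sqcup H_i^-$, and it lies on the positive side of the facet's hyperplane $\tilde{\mathbf{c}}_k^{\perp}$ determined by the sign $-\varepsilon$.

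The main obstacle is ruling out that the new ray lands on $H_i^-$, i.e. that it is $-\mathbf{x}$ for some $\mathbf{x}\in H_i^+$, since $H_i$ is symmetric under $\mathbf{x}\mapsto-\mathbf{x}$. For this I would use the recursion to identify, on the trunk and on each maximal branch, which of the three vectors is mutated and with which sign. Using cluster-cyclicity (all entries $b_{lk}^{\mathbf{u}}$ nonzero with a cyclic sign pattern), the new vector is then a combination $-\mathbf{x}+a\mathbf{y}+b\mathbf{z}$ with $a$ or $b$ large. In the quadratic form $q$ defining $H_i$, which has signature $(1,2)$ or a degeneration of it, two vectors on the same sheet have a bilinear pairing of definite sign, and this pairing sign detects which sheet a vector lies on. So I would compute $q(\tilde{\mathbf{g}}_k^{\mathbf{u}[k]},\tilde{\mathbf{g}}_l^{\mathbf{u}})$ via the mutation formula and show that it has the same-sheet sign. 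This uses that $q$-pairings of the old vectors are controlled by the exchange matrix $B^{\mathbf{u}}$, through the symmetrizer/pseudo Cartan companion identity behind \Cref{lem: surface for modified g-vectors}.

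Once every $g$-vector of $\mathcal{T}^{\geq[i]}$ is shown to lie in $H_i^+$, the cones lie in $Q_i^+$ by convexity. The final claim follows from $|\Delta(B)|=C^{\emptyset}\cup\bigcup_i|\Delta^{\geq[i]}(B)|$. For $C^{\emptyset}=\mathbb{R}_{\geq0}^3$, it is contained in $Q_i^+$ because every $\mathbf{e}_j$ with $j\neq i$ is in $Q_i^+$ and $\mathbf{e}_i\in Q_{j}^+$ for $j\neq i$. To see that the whole orthant is covered, note that $\mathbb{R}_{\geq0}^3$ is the union of the cones $C^{[j]}\cap\mathbb{R}^3_{\geq0}$ pieces. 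Alternatively, each point of $\mathbb{R}_{\geq 0}^3$ can be shown directly to lie in some $Q_j^+$ because $\mathbf{e}_1,\mathbf{e}_2,\mathbf{e}_3$ all lie on the positive side of $\mathbf{v}$; this would be checked using the explicit form of $H_j$.
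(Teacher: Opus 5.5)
Your plan works, but it takes a different route from the paper, and the step you call the main obstacle is easier than you suggest. You need neither the tropical-sign recursion, nor the $K,S,T$ labeling, nor any ``$a$ or $b$ large'' estimate.

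\Cref{lem: surface for modified g-vectors} is a full matrix identity, not just a statement about diagonal entries. Write $q(\mathbf{x},\mathbf{y})=(D^{\frac{1}{2}}\mathbf{x})^{\top}\tilde{A}^{[i]}(D^{\frac{1}{2}}\mathbf{y})$. For $\mathbf{w}\geq[i]$ and $j\neq l$, the value $q(\tilde{\mathbf{g}}_j^{\mathbf{w}},\tilde{\mathbf{g}}_l^{\mathbf{w}})$ is an off-diagonal entry of $\tilde{A}^{\mathbf{w}K}$, a symmetrized pseudo Cartan companion in the pattern. By cluster-cyclicity this entry is at least $2$.

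The Cauchy--Schwarz computations in the proofs of \Cref{lem: convexity of Q} and \Cref{lem: separation lemma for Q} give, for $\mathbf{x},\mathbf{y}\in H_i$:
\begin{itemize}
\item $q(\mathbf{x},\mathbf{y})\geq 2$ if $\mathbf{x},\mathbf{y}$ lie on the same sheet;
\item $q(\mathbf{x},\mathbf{y})\leq -2$ if they lie on opposite sheets.
\end{itemize}
This holds in the degenerate cases too. So the three modified $g$-vectors of each seed lie on one sheet, and adjacent seeds share two of them. At $[i]$ you have $\tilde{\mathbf{e}}_{s_0},\tilde{\mathbf{e}}_{t_0}\in H_i^+$ because $\mathbf{v}$ is positive. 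Induction on $\mathcal{T}^{\geq[i]}$ then settles everything, including $\tilde{\mathbf{g}}_{k_0}^{[i]}$, with no direct sign check.

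The paper instead uses only the diagonal of that congruence (every $g$-vector lies on $H_i$), together with the cluster-cyclic shape of the recursion. Exactly one coefficient in the $g$-vector mutation is nonzero. Hence $\tilde{\mathbf{g}}_K^{\mathbf{u}M}+\tilde{\mathbf{g}}_M^{\mathbf{u}}$ is a positive multiple of a single $g$-vector of $\mathbf{u}$, so it lies in $Q_i\setminus\{\mathbf{0}\}$. For a minimal counterexample this contradicts \Cref{lem: separation lemma for Q}.

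So the paper pairs the new vector with the one it replaces, while you pair it with its neighbours in the new seed. Your version needs the off-diagonal part of the identity. The paper's version needs the explicit $S$- and $T$-mutation formulas.

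Your argument for the final claim is wrong as written.
\begin{itemize}
\item $\mathcal{C}(G^{[j]})\cap\mathbb{R}_{\geq 0}^3$ is only the shared facet spanned by the $\tilde{\mathbf{e}}_l$ with $l\neq j$. These pieces do not cover the orthant.
\item Knowing $\mathbf{e}_j\in Q_i^+$ only for $j\neq i$ does not give $\mathcal{C}(G^{\emptyset})\subset Q_i^+$.
\end{itemize}
The fix is one line. Every $\tilde{\mathbf{e}}_j$, including $j=i$, lies in $H_i^+$: the diagonal entries of $\tilde{A}^{[i]}$ equal $2$ and $\mathbf{v}$ is positive. So $\mathcal{C}(G^{\emptyset})\subset Q_i^+$ for every $i$ by convexity, which is exactly what the paper uses.
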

\par
The second main theorem introduces the {\em local upper bound} $\mathscr{V}^{\mathbf{w}}$, which serves as an upper bound for each branch $\Delta^{\geq \mathbf{w}}(B)$, see \Cref{def: local upper bound}. This set can be computed solely from local information at $\mathbf{w}$, such as the modified $g$-vectors $\tilde{\mathbf{g}}_1^{\mathbf{w}}$, $\tilde{\mathbf{g}}_{2}^{\mathbf{w}}$, $\tilde{\mathbf{g}}_3^{\mathbf{w}}$ and the exchange matrix $B^{\mathbf{w}}$. Let $\mathscr{V}_{\circ}^{\mathbf{w}}$ be the interior of $\mathscr{V}^{\mathbf{w}}$. Then, the following statements hold.
\begin{theorem}[\Cref{thm: local upper bound theorem}]
For any branch $\Delta^{\geq {\bf w}}(B)$, the following inclusion holds.
\begin{equation}
|\Delta^{\geq {\bf w}}(B)| \subset \mathscr{V}^{\bf w}.
\end{equation}
Moreover, about the next local upper bounds $\mathscr{V}^{\mathbf{w}S}$, $\mathscr{V}^{\mathbf{w}T}$, the following properties hold.
\\
\textup{($a$)} Their interiors $\mathscr{V}_{\circ}^{\mathbf{w}S}$ and $\mathscr{V}_{\circ}^{\mathbf{w}T}$ are separated by the plane $\zeroregion{\bar{\mathbf{c}}^{\mathbf{w}}}$.
\\
\textup{($b$)} The local upper bounds are monotonically decreasing $\mathscr{V}^{\mathbf{w}} \supset \mathscr{V}^{\mathbf{w}S}, \mathscr{V}^{\mathbf{w}T}$.
\end{theorem}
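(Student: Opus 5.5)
We do not yet have the precise definition of $\mathscr{V}^{\mathbf{w}}$ (\Cref{def: local upper bound}), so the plan assumes the natural shape: $\mathscr{V}^{\mathbf{w}}$ is a closed convex cone determined by $\tilde{\mathbf{g}}_1^{\mathbf{w}},\tilde{\mathbf{g}}_2^{\mathbf{w}},\tilde{\mathbf{g}}_3^{\mathbf{w}}$ and $B^{\mathbf{w}}$. Concretely, it should be cut out by the two half-spaces $\overline{\mathcal{H}}_D^{\pm}$ attached to the two $c$-vectors $\tilde{\mathbf{c}}_S^{\mathbf{w}},\tilde{\mathbf{c}}_T^{\mathbf{w}}$ of the non-returning mutation directions, with their signs prescribed by the tropical sign recursion (\Cref{thm: recursion for tropical signs}). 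It may also be intersected with a boundary piece coming from the limiting ray along $\mathbf{w}S^{\infty}$, of the type provided by the global bound $Q_i^+$. The vector $\bar{\mathbf{c}}^{\mathbf{w}}$ should be the $c$-vector of the direction in which the branch splits into $\mathbf{w}S$ and $\mathbf{w}T$.

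The strategy is to prove (b) and (a) first by a direct local computation. The inclusion $|\Delta^{\geq \mathbf{w}}(B)|\subset \mathscr{V}^{\mathbf{w}}$ then follows from (b) by induction along the branch.

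For (b), write the $g$-vectors and $c$-vectors at $\mathbf{w}S$ and $\mathbf{w}T$ via the mutation formulas (\Cref{def: C G matrices}). Because signs are sign-coherent and known by the recursion, these formulas are linear with explicit coefficients. At $\mathbf{w}S$ only one $g$-vector changes:
\[
\tilde{\mathbf{g}}_k^{\mathbf{w}S}=-\tilde{\mathbf{g}}_k^{\mathbf{w}}+\sum_j[\pm b_{jk}^{\mathbf{w}}]_+\tilde{\mathbf{g}}_j^{\mathbf{w}} .
\]
The first step is to check that the new generators of $\mathscr{V}^{\mathbf{w}S}$, or equivalently its defining inequalities, lie in $\mathscr{V}^{\mathbf{w}}$. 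Dually, each defining normal of $\mathscr{V}^{\mathbf{w}}$ must be a nonnegative combination of the normals of $\mathscr{V}^{\mathbf{w}S}$. The $c$-vector mutation $\tilde{\mathbf{c}}^{\mathbf{w}S}=\pm\tilde{\mathbf{c}}^{\mathbf{w}}+[\,\cdot\,]_+\tilde{\mathbf{c}}$ has nonnegative coefficients in the cluster-cyclic regime, where all $|b_{ij}|\geq 2$ after the Markov-constant normalisation (\Cref{prop: ordinary classification of cluster-cyclicity}). This should give the required nonnegative combination directly; the case of $T$ is symmetric.

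For (a), observe that $\bar{\mathbf{c}}^{\mathbf{w}}$ is, up to sign, a defining normal of both $\mathscr{V}^{\mathbf{w}S}$ and $\mathscr{V}^{\mathbf{w}T}$, with opposite signs. The reason is that the two mutations $S$ and $T$ move across the wall $\zeroregion{\bar{\mathbf{c}}^{\mathbf{w}}}$ in opposite directions, as dictated by the sign recursion. One interior then lies in $\positiveregion{\bar{\mathbf{c}}^{\mathbf{w}}}$ and the other in $\negativeregion{\bar{\mathbf{c}}^{\mathbf{w}}}$. Checking that the sign conventions really are opposite requires the explicit recursion and the $K,S,T$ bookkeeping of \Cref{def: K S T labeling}.

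For the main inclusion, each cone of $\Delta^{\geq\mathbf{w}}(B)$ is indexed by some $\mathbf{u}=\mathbf{w}X$. Take $X=X_1\cdots X_m$ and induct on $m$. The base case requires the cone $\mathrm{cone}(\tilde{\mathbf{g}}_j^{\mathbf{w}})\subset\mathscr{V}^{\mathbf{w}}$, which should hold by construction: the $g$-vectors pair with the $c$-vectors by duality, $\langle \tilde{\mathbf{c}}_i,\tilde{\mathbf{g}}_j\rangle\propto\delta_{ij}$ with positive scalars. The inductive step is (b). The main obstacle is the boundary piece. If $\mathscr{V}^{\mathbf{w}}$ is not purely polyhedral, for instance if it is bounded by $H^+$, the nonnegative-combination argument for (b) fails. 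One must then show that the quadratic form of $B^{\mathbf{w}S}$, restricted to the relevant half-space, dominates that of $B^{\mathbf{w}}$. I would attempt this using the eigenvalue description in \Cref{lem: eigenvalues lemma for cluster-cyclic matrix} together with the invariance of the surface in \Cref{lem: surface for modified g-vectors}.
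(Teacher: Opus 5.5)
Your overall architecture is the paper's: prove (b) by a one-step computation, observe $\mathcal{C}(G^{\mathbf{w}})\subset\mathscr{V}^{\mathbf{w}}$, and chain $\mathscr{V}^{\mathbf{w}}\supset\mathscr{V}^{\mathbf{w}M_1}\supset\cdots\supset\mathscr{V}^{\mathbf{w}X}\supset\mathcal{C}(G^{\mathbf{w}X})$. However, your guessed shape of $\mathscr{V}^{\mathbf{w}}$ is wrong, and the arguments you sketch for (a) and (b) do not survive the correction.

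\textbf{The actual shape of $\mathscr{V}^{\mathbf{w}}$.} In the paper, $\mathscr{V}^{\mathbf{w}}$ is polyhedral: a simplicial cone with part of its boundary removed, $\mathscr{V}^{\mathbf{w}}_{\circ}=\mathcal{C}^{\circ}(\tilde{\mathbf{g}}_S^{\mathbf{w}},\tilde{\mathbf{g}}_T^{\mathbf{w}},\bar{\mathbf{g}}^{\mathbf{w}})$, where $\bar{\mathbf{g}}^{\mathbf{w}}=\tilde{\mathbf{g}}_K^{\mathbf{w}}-(\alpha_{SK}^{\mathbf{w}})^{-1}\tilde{\mathbf{g}}_S^{\mathbf{w}}-(\alpha_{TK}^{\mathbf{w}})^{-1}\tilde{\mathbf{g}}_T^{\mathbf{w}}$. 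Its facet normals are $\tilde{\mathbf{c}}_K^{\mathbf{w}}$ and the two limiting directions $\alpha_{SK}^{\mathbf{w}}\tilde{\mathbf{c}}_S^{\mathbf{w}}+\tilde{\mathbf{c}}_K^{\mathbf{w}}\sim\tildelim_{n\to\infty}\tilde{\mathbf{c}}_S^{\mathbf{w}S^n}$ and $\alpha_{TK}^{\mathbf{w}}\tilde{\mathbf{c}}_T^{\mathbf{w}}+\tilde{\mathbf{c}}_K^{\mathbf{w}}\sim\tildelim_{n\to\infty}\tilde{\mathbf{c}}_S^{\mathbf{w}TS^n}$. No quadratic surface is involved, so the ``boundary piece'' you worry about does not arise.

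Conversely, $\tilde{\mathbf{c}}_S^{\mathbf{w}}$ (and likewise $\tilde{\mathbf{c}}_T^{\mathbf{w}}$) cannot be a bounding normal of any region containing $|\Delta^{\geq\mathbf{w}}(B)|$. We have $\mathcal{C}(G^{\mathbf{w}})\subset\positiveclosure{\tilde{\mathbf{c}}_S^{\mathbf{w}}}$, while $\tilde{\mathbf{c}}_K^{\mathbf{w}S}=-\tilde{\mathbf{c}}_S^{\mathbf{w}}$ forces $\mathcal{C}(G^{\mathbf{w}S})\subset\negativeclosure{\tilde{\mathbf{c}}_S^{\mathbf{w}}}$. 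Indeed $\tilde{\mathbf{g}}_K^{\mathbf{w}}\in\zeroregion{\tilde{\mathbf{c}}_S^{\mathbf{w}}}$ is an interior point of $\mathscr{V}^{\mathbf{w}}$.

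Likewise, $\bar{\mathbf{c}}^{\mathbf{w}}=\alpha_{SK}^{\mathbf{w}}\tilde{\mathbf{c}}_S^{\mathbf{w}}-\alpha_{TK}^{\mathbf{w}}\tilde{\mathbf{c}}_T^{\mathbf{w}}$ is not a $c$-vector, and generically it is not a facet normal of $\mathscr{V}^{\mathbf{w}S}$ or $\mathscr{V}^{\mathbf{w}T}$. The plane $\zeroregion{\bar{\mathbf{c}}^{\mathbf{w}}}$ passes through $\tilde{\mathbf{g}}_K^{\mathbf{w}}$ and $\bar{\mathbf{g}}^{\mathbf{w}}$, hence through the common generator $\tilde{\mathbf{g}}_S^{\mathbf{w}S}=\tilde{\mathbf{g}}_S^{\mathbf{w}T}=\tilde{\mathbf{g}}_K^{\mathbf{w}}$ of both cones. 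So (a) is not settled by tracking signs in the tropical recursion.

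\textbf{The missing key inequality.} What you need is the triangle inequality $\alpha_{SK}^{\mathbf{w}}\alpha_{TS}^{\mathbf{w}}\geq\alpha_{TK}^{\mathbf{w}}$ of \eqref{eq: 3rd form of cluster-cyclic}. It comes from $C(B^{\mathbf{w}})\leq 4$, not from $p_{ij}^{\mathbf{w}}\geq 2$ alone; it fails, for example, for $p_{SK}=p_{TS}=2<p_{TK}$. Combining it with the $S$-mutation formulas and $p_{SK}^{\mathbf{w}}-(\alpha_{SK}^{\mathbf{w}})^{-1}=\alpha_{SK}^{\mathbf{w}}$ gives $\bar{\mathbf{g}}^{\mathbf{w}S}\sim\bar{\mathbf{g}}^{\mathbf{w}}+\beta\,\tilde{\mathbf{g}}_T^{\mathbf{w}}$ with $\beta=(\alpha_{TK}^{\mathbf{w}})^{-1}-(\alpha_{TS}^{\mathbf{w}}\alpha_{SK}^{\mathbf{w}})^{-1}\geq 0$.

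Then (b) is immediate: the generators $\tilde{\mathbf{g}}_S^{\mathbf{w}S}=\tilde{\mathbf{g}}_K^{\mathbf{w}}\in\mathscr{V}^{\mathbf{w}}_{\circ}$, $\tilde{\mathbf{g}}_T^{\mathbf{w}S}=\tilde{\mathbf{g}}_T^{\mathbf{w}}$, and $\bar{\mathbf{g}}^{\mathbf{w}S}\in\mathcal{C}(\bar{\mathbf{g}}^{\mathbf{w}},\tilde{\mathbf{g}}_T^{\mathbf{w}})$ all lie in $\mathscr{V}^{\mathbf{w}}$. For (a), compute $\langle\tilde{\mathbf{g}}_S^{\mathbf{w}S},\bar{\mathbf{c}}^{\mathbf{w}}\rangle_D=0$, $\langle\tilde{\mathbf{g}}_T^{\mathbf{w}S},\bar{\mathbf{c}}^{\mathbf{w}}\rangle_D=-\alpha_{TK}^{\mathbf{w}}<0$, and $\langle\bar{\mathbf{g}}^{\mathbf{w}S},\bar{\mathbf{c}}^{\mathbf{w}}\rangle_D\sim-\beta\alpha_{TK}^{\mathbf{w}}\leq 0$. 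Hence the plane meets the closed cone only inside the boundary face $\mathcal{C}(\bar{\mathbf{g}}^{\mathbf{w}S},\tilde{\mathbf{g}}_S^{\mathbf{w}S})$, and the case of $T$ is symmetric.

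Your dual route to (b) can be made to work with the correct normals. But it runs into the same inequality, not into the automatic nonnegativity of the $[\,\cdot\,]_+$ coefficients. The facet normals of $\mathscr{V}^{\mathbf{w}S}$ are $-\tilde{\mathbf{c}}_S^{\mathbf{w}}$, $\tilde{\mathbf{c}}_K^{\mathbf{w}}+\alpha_{SK}^{\mathbf{w}}\tilde{\mathbf{c}}_S^{\mathbf{w}}$, and $\alpha_{TS}^{\mathbf{w}}\tilde{\mathbf{c}}_T^{\mathbf{w}}-\tilde{\mathbf{c}}_S^{\mathbf{w}}$. Expressing $\alpha_{TK}^{\mathbf{w}}\tilde{\mathbf{c}}_T^{\mathbf{w}}+\tilde{\mathbf{c}}_K^{\mathbf{w}}$ in these, the coefficient of $-\tilde{\mathbf{c}}_S^{\mathbf{w}}$ is $\alpha_{SK}^{\mathbf{w}}-\alpha_{TK}^{\mathbf{w}}/\alpha_{TS}^{\mathbf{w}}$. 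This is nonnegative exactly when the triangle inequality holds.
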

In \Cref{fig: main graph}, the blue dashed lines represent the local upper bounds for the roots of the maximal branches $\mathbf{w}=[i]S^nT$, while the red lines correspond to the planes $\zeroregion{\bar{\mathbf{c}}^{\mathbf{w}}}$ in this theorem. This theorem allows us to approximate the sub $G$-fan $\Delta^{\geq \mathbf{w}}(B)$, which consists of infinitely many cones, by only one simplicial cone $\mathscr{V}^{\mathbf{w}}$ (excluding certain boundaries). The claim ($b$) implies that the accuracy of this approximation improves with mutations. This situation is illustrated in \Cref{fig: stereo of Vw}.
\par
Based on these two theorems and the preparatory result in \Cref{prop: separateness of upper bounds}, we derive several properties of $g$-vectors. We say that an equality $\mathbf{g}_{l}^{\mathbf{w}}=\mathbf{g}_{m}^{\mathbf{u}}$ ($\mathbf{w},\mathbf{u} \in \mathcal{T}$, $l,m=1,2,3$) of $g$-vectors is {\em trivial} if $l=m$ and ${\bf u}={\bf w}[k_1,\dots,k_r]$ with $k_i \neq l$. Roughly speaking, {\em a nontrivial equality means periodicity among $g$-vectors with respect to the mutations}. The following theorem implies that there is no periodicity among $g$-vectors associated with a cluster-cyclic exchange matrix of rank $3$. 
\begin{theorem}[\Cref{thm: non periodicity}]
All equalities ${\bf g}^{\bf w}_{l}={\bf g}^{\bf u}_{m}$ of $g$-vectors are trivial.
\end{theorem}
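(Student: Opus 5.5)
The plan is to reduce the statement to a separation property of the $G$-fan and then obtain that separation from the two upper-bound theorems. First I treat the trivial equalities as the baseline. Mutation in direction $k\neq l$ leaves $\mathbf{g}_l$ unchanged. Hence two pairs $(\mathbf{w},l)$ and $(\mathbf{u},m)$ give the same $g$-vector trivially exactly when they are connected by mutations avoiding $l$.

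So it suffices to prove the following injectivity statement. If $\tilde{\mathbf{g}}_l^{\mathbf{w}}$ and $\tilde{\mathbf{g}}_m^{\mathbf{u}}$ are positively proportional (the modification only rescales by positive scalars, so equal $g$-vectors give proportional modified ones), then the pairs are trivially related. Equivalently, distinct rays of the fan, labelled by equivalence classes of pairs, are distinct. I also reduce to primitive rays: by sign-coherence and the simplicial fan structure in \cite{AC25b}, two equal $g$-vectors lying in cones of the fan must span a common ray. So the question is purely whether the combinatorial cluster complex indexed by $\mathcal{T}$ embeds injectively.

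Next I localize using the tree structure of $\mathcal{T}$. Take the shortest common prefix $\mathbf{v}$ of $\mathbf{w}$ and $\mathbf{u}$, and assume the equality is nontrivial. After replacing $\mathbf{w},\mathbf{u}$ by the earliest representatives in their trivial classes, the two sequences diverge at $\mathbf{v}$ into different children. There are two cases.

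In the first case $\mathbf{v}=\emptyset$, so the sequences begin with different initial directions $[i]\neq[j]$. Then I invoke Theorem~\ref{thm: intro global upper bound theorem}, $|\Delta^{\geq[i]}(B)|\subset Q_i^+$, together with the preparatory separation result, Proposition~\ref{prop: separateness of upper bounds}. The aim is to show that $Q_i^+$ and $Q_j^+$ meet only along the common faces spanned by initial $g$-vectors $\mathbf{e}_k$. Those faces correspond exactly to trivial equalities, namely vectors not yet mutated.

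In the second case $\mathbf{v}=[i]X$ is nonempty, and the two descendants lie in $\mathcal{T}^{\geq\mathbf{v}S}$ and $\mathcal{T}^{\geq\mathbf{v}T}$ respectively. When the two branches are genuinely different, I use the local upper bound theorem (Theorem~\ref{thm: local upper bound theorem}). It places the two branches in $\mathscr{V}^{\mathbf{v}S}$ and $\mathscr{V}^{\mathbf{v}T}$, whose interiors are separated by the plane $\zeroregion{\bar{\mathbf{c}}^{\mathbf{v}}}$. A common ray must therefore lie on that plane, inside both closed cones. I then identify the intersection $\mathscr{V}^{\mathbf{v}S}\cap\mathscr{V}^{\mathbf{v}T}$ as the face spanned by the two $g$-vectors of $\mathbf{v}$ shared by both mutations. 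The goal is to show that the only $g$-vectors in either branch lying on this face are those two shared ones, which propagate trivially. Trunk elements $[i]S^n$ are handled through the same plane, using monotonicity (b) to push any later vector off the boundary.

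The main obstacle is this boundary analysis: ruling out a $g$-vector deep inside a branch landing exactly on the separating plane, or on the boundary of $Q_i^+$. For this I would use that new $g$-vectors are strictly positive combinations, $\tilde{\mathbf{g}}_k^{\mathbf{w}[k]}=-\tilde{\mathbf{g}}_k^{\mathbf{w}}+(\text{positive combination of the others})$. I would track the sign of $\langle\bar{\mathbf{c}}^{\mathbf{v}},\cdot\rangle$ inductively along the recursion for tropical signs (Theorem~\ref{thm: recursion for tropical signs}). This should show that every newly created vector lies in the open interior $\mathscr{V}_{\circ}$ of the corresponding local upper bound.
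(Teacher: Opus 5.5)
Your proposal has two genuine gaps.

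\textbf{First gap: the case $\mathbf{v}=\emptyset$.} Here you aim to show that $Q_i^{+}$ and $Q_j^{+}$ meet only along faces spanned by initial $g$-vectors. That claim is false. All entries of $\tilde{A}^{[i]}$ are $\geq 2$, and the eigenvector $\mathbf{v}$ of \Cref{lem: eigenvalues lemma for cluster-cyclic matrix} is positive. Hence every $Q_i^{+}\setminus\{\mathbf{0}\}$ contains an open neighbourhood of $\mathbb{R}^3_{\geq 0}\setminus\{\mathbf{0}\}$. In particular $Q_i^{+}\cap Q_j^{+}$ contains points of non-initial $G$-cones, for example points of $\mathcal{C}(G^{[i]})$ close to $\tilde{\mathbf{e}}_{s_0}$. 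In the value-rigid (e.g.\ Markov) case one even has $Q_1^{+}=Q_2^{+}=Q_3^{+}$. The global upper bound is too coarse to distinguish initial directions and plays no role in non-periodicity.

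\textbf{Second gap: the case $\mathbf{v}\neq\emptyset$.} \Cref{thm: local upper bound theorem} applies only when $\mathbf{v}$ lies in a branch. If $\mathbf{v}=[i]S^n$ is a trunk element, then $\bar{\mathbf{c}}^{\mathbf{v}}$ and $\mathscr{V}^{\mathbf{v}S}$ are undefined. Indeed $\mathbf{v}S$ is again in the trunk, and $\mathcal{T}^{\geq\mathbf{v}S}$ contains all later maximal branches $[i]S^mT$ with $m>n$. Invoking monotonicity ($b$) does not supply the missing separation. Your ``diverge into different children'' framing also omits the case where one creation point is a prefix of the other, i.e.\ $\mathbf{w}\leq\mathbf{u}$.

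\textbf{What fills both gaps.} The needed tool is \Cref{prop: separateness of upper bounds}: the open sets $\mathscr{T}_1,\mathscr{T}_2,\mathscr{T}_3$ and $\mathscr{V}_{\circ}^{[i]S^nT}$ ($i=1,2,3$, $n\geq 0$) are pairwise disjoint. You cite it but never actually use it. Its content is substantial: the hyperplanes $\mathfrak{c}_i^{\pm}(n)$ of \Cref{lem: hyperplane lemma} separate maximal branches of the same direction, and the bounds by $\tilde{\mathbf{e}}_{t_0}$, $\tilde{\mathbf{e}}_{t_0}+\tilde{\mathbf{e}}_{k_0}$, $\tilde{\mathbf{e}}_{s_0}+\alpha_{s_0k_0}\tilde{\mathbf{e}}_{k_0}$ separate across directions. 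With it, the argument runs as follows.
\begin{enumerate}
\item Every non-initial modified $g$-vector is $\tilde{\mathbf{g}}_K^{\mathbf{w}}$ for its creation point $\mathbf{w}\neq\emptyset$.
\item Two trunk vectors, or a trunk vector and some $\tilde{\mathbf{e}}_m$, are compared via the explicit formulas of \Cref{lem: infinite S mutation in trunks}.
\item A trunk vector or $\tilde{\mathbf{e}}_m$ lies in some $\overline{\mathscr{T}}_j$. For $\mathbf{w}=[i]S^nTX$ one has $\tilde{\mathbf{g}}_K^{\mathbf{w}}\in\mathscr{V}_{\circ}^{\mathbf{w}}\subset\mathscr{V}_{\circ}^{[i]S^nT}$, an open set disjoint from every $\overline{\mathscr{T}}_j$ and from every other $\mathscr{V}_{\circ}^{[j]S^mT}$.
\item Only when both creation points lie in the same maximal branch do you pass to the longest common prefix $\mathbf{w}_0$. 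There you use that $\mathcal{C}(G^{\mathbf{w}_0})\cup\mathscr{V}_{\circ}^{\mathbf{w}_0S}\cup\mathscr{V}_{\circ}^{\mathbf{w}_0T}$ is a disjoint union, by \Cref{thm: local upper bound theorem}\,($a$). This also covers the case $\mathbf{w}\leq\mathbf{u}$.
\end{enumerate}

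What you call the main obstacle needs no tropical-sign induction. By \eqref{eq: initail inclusion lemma}, $\tilde{\mathbf{g}}_K^{\mathbf{w}}$ is a strictly positive combination of $\bar{\mathbf{g}}^{\mathbf{w}},\tilde{\mathbf{g}}_S^{\mathbf{w}},\tilde{\mathbf{g}}_T^{\mathbf{w}}$, so it lies in the open set $\mathscr{V}_{\circ}^{\mathbf{w}}$. The nesting $\mathscr{V}^{\mathbf{w}}\subset\mathscr{V}^{\mathbf{w}_0M}$ together with openness then gives $\mathscr{V}_{\circ}^{\mathbf{w}}\subset\mathscr{V}_{\circ}^{\mathbf{w}_0M}$. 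So no $g$-vector created inside a branch lies on a separating plane.

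One small correction: $G^{\mathbf{v}S}$ and $G^{\mathbf{v}T}$ share only one $g$-vector, namely $\tilde{\mathbf{g}}_K^{\mathbf{v}}$, not two.
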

Since the cluster variables and the $g$-vectors share the same periodicity, this theorem can be restated as the one for cluster variables, see \Cref{cor: no periodicity among cluster variables}.
\par
Next application is for the signs of $g$-vectors. In the ordinary cluster algebra theory, it is known that every $G$-cone is a subset of one orthant \cite{GHKK18}. This property is called the {\em sign-coherence of $G$-matrices}, which implies that every row vector of each $G$-matrix has the same sign, and for the cluster-cyclic exchange matrix of rank $3$, it still holds even when we consider the generalization to real entries \cite{AC25b}. The sign of the $j$th row of $G^{\mathbf{w}}$ is denoted by $\tau_j^{\mathbf{w}} \in \{\pm 1\}$. In fact, as expected from \Cref{fig: main graph}, the explicit structure of these signs can be completely expressed.
\begin{theorem}[\Cref{thm: sign of G matrix}]
The signs of $G$-matrices are given in \Cref{tab: list of G-signs}.
\end{theorem}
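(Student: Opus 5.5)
We will compute the sign vector $(\tau_1^{\mathbf{w}},\tau_2^{\mathbf{w}},\tau_3^{\mathbf{w}})$ of the rows of $G^{\mathbf{w}}$ one region at a time: first the empty sequence, then the trunk $[i]S^n$, then every maximal branch $\mathcal{T}^{\geq [i]S^nT}$. Sign-coherence of $G$-matrices \cite{AC25b} means each row of $G^{\mathbf{w}}$ has a single sign. Since the modified $g$-vectors are positive multiples of the ordinary ones, it is enough to find, for each $\mathbf{w}$, one point in the interior of the cone spanned by $\tilde{\mathbf{g}}_1^{\mathbf{w}},\tilde{\mathbf{g}}_2^{\mathbf{w}},\tilde{\mathbf{g}}_3^{\mathbf{w}}$ and read off the signs of its coordinates. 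Better still, we show that the whole cone lies in one open orthant, apart from coordinates forced to vanish.

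\textbf{Trunk and first mutation.} For $\mathbf{w}=\emptyset$ we have $G=I$, so all $\tau_j^{\emptyset}=+1$. Along the trunk $[i]S^n$ we use the tropical sign recursion of \Cref{thm: recursion for tropical signs}. On the trunk the signs of the $c$-vectors follow a fixed periodic pattern, so the $g$-vector mutation $\tilde{\mathbf{g}}_k\mapsto -\tilde{\mathbf{g}}_k+\sum[\pm b_{jk}]_+\tilde{\mathbf{g}}_j$ is an explicit linear recursion. Induction on $n$ should show that the $i$th row of $G^{[i]S^n}$ is negative and the other two rows are positive. Equivalently, in the $i$th coordinate the mutated vector $-\mathbf{e}_i+\dots$ keeps a negative sign, while the other coordinates grow.

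\textbf{Maximal branches.} Here we use \Cref{thm: local upper bound theorem}. For $\mathbf{w}=[i]S^nT$ we have $|\Delta^{\geq\mathbf{w}}(B)|\subset\mathscr{V}^{\mathbf{w}}$, and monotonicity ($b$) gives $\mathscr{V}^{\mathbf{u}}\subset\mathscr{V}^{\mathbf{w}}$ for every $\mathbf{u}\geq\mathbf{w}$. So it suffices to show that the simplicial cone $\mathscr{V}^{\mathbf{w}}$ lies in a single closed orthant. Its generators are computed from $\tilde{\mathbf{g}}^{\mathbf{w}}_j$ and $B^{\mathbf{w}}$, and the $\tilde{\mathbf{g}}^{\mathbf{w}}_j$ are known from the trunk step plus one $T$-mutation. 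We then verify coordinatewise that each generator of $\mathscr{V}^{\mathbf{w}}$ has the sign prescribed in \Cref{tab: list of G-signs}.

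A single closed orthant only fixes each $\tau_j$ up to degenerate zero rows. To exclude those we use two facts. Every $G$-matrix is invertible, so no row of $G^{\mathbf{u}}$ vanishes entirely. Sign-coherence then pins down a unique $\tau_j^{\mathbf{u}}$ whenever the cone sits in a closed orthant. \Cref{thm: intro global upper bound theorem}, $|\Delta^{\geq[i]}(B)|\subset Q_i^+$, serves as a consistency check and handles any boundary generators: $Q_i^+$ lies on one side of the plane orthogonal to the sign-coherent vector of \Cref{lem: eigenvalues lemma for cluster-cyclic matrix}.

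\textbf{Main obstacle.} The hard step is proving that the generators of $\mathscr{V}^{[i]S^nT}$ have coherent signs for all $n$ and all real cluster-cyclic $B$. The entries depend on $n$ through a second-order linear recursion, so this needs explicit inequalities. We would first try to reduce them, via the Markov-constant classification (\Cref{prop: ordinary classification of cluster-cyclicity}), to positivity statements of the form $|b_{jk}|\geq 2$ that cluster-cyclicity guarantees. If a generator lands on a coordinate plane, we would instead apply the argument to the children $\mathbf{w}S$ and $\mathbf{w}T$, using the separation in ($a$), in order to obtain strict signs.
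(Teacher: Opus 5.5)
\textbf{There is a genuine gap in the branch step.} Your trunk step is fine; it amounts to the paper's explicit formulas $\tilde{\mathbf g}^{[i]S^n}_K=-u_n\tilde{\mathbf e}_{k_0}+u_{n+1}\tilde{\mathbf e}_{s_0}$, and so on. The branch step, however, rests on the claim that $\mathscr{V}^{[i]S^nT}$ lies in a single closed orthant, and that claim is false. It fails in two independent ways.

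First, for $n=0$ the branch $\mathcal{T}^{\geq[i]T}$ carries two different sign patterns in \Cref{tab: list of G-signs}: $[i]TS^m$ has $\tau_{k_0}=-$, while $[i]TS^mTX$ has $\tau_{k_0}=+$. Concretely, $\tilde{\mathbf g}^{[i]T}_T=p_{k_0s_0}\tilde{\mathbf e}_{s_0}-\tilde{\mathbf e}_{k_0}$, whereas $\bar{\mathbf g}^{[i]T}$ has $\tilde{\mathbf e}_{k_0}$-coefficient $(\alpha^{[i]}_{TK})^{-1}>0$. Passing to children does not help, because every $\mathscr{V}^{[i]TS^m}$ again contains cones of both kinds.

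Second, even where a branch has uniform signs, the extra generator $\bar{\mathbf g}^{\mathbf w}$ lies in the wrong orthant. For the Markov matrix, $\bar{\mathbf g}^{\mathbf w}=\tilde{\mathbf e}_{k_0}-\tilde{\mathbf e}_{s_0}-\tilde{\mathbf e}_{t_0}$ for every $\mathbf w$ in a branch of $[i]$. This contradicts $\tau_{s_0}=+$ everywhere, and $\tau_{k_0}=-$ on $[i]S^{n+1}TX$. Since it is the same vector at every depth, refining to $\mathbf wS$ and $\mathbf wT$ never cures it. The local bounds genuinely overshoot the $G$-fan near $\bar{\mathbf g}^{\mathbf w}$; here $\bar{\mathbf g}^{\mathbf w}\notin Q_i^+$. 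So checking generators of $\mathscr V^{\mathbf w}$ coordinatewise cannot succeed.

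\textbf{What is missing.} You need to use the global bound substantively, not as a consistency check, and you need one further inequality. The paper intersects the local half-space descriptions with $\Epositiveregion{\mathbf v}$, where $\mathbf v$ is the positive eigenvector of $\tilde A^{[i]}$ (\Cref{thm: global upper bound theorem}). It also splits the $n=0$ branch.
\begin{itemize}
\item $[i]TS^n$ is treated explicitly via its Chebyshev formula.
\item $\bigcup_{n,X}\mathcal C(G^{[i]TS^nTX})$ is placed inside $\Epositiveregion{-\mathbf e_{t_0}-\beta_{k_0t_0}\mathbf e_{k_0}}\cap\Epositiveregion{\mathbf e_{k_0}}\cap\Epositiveregion{\mathbf v}$. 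The middle half-space comes from $\mathbf c^{[i]TS^mT}_K=\mathbf e_{k_0}$.
\item $[i]S^{n+1}TX$ is placed inside $\Epositiveregion{\mathfrak c^-_i(1)}\cap\Enegativeregion{\mathbf e_{t_0}}\cap\Epositiveregion{\mathbf v}$, using \Cref{lem: hyperplane lemma}.
\end{itemize}
What converts these regions into sign information is the eigenvector inequality $\beta_{lm}v_l-v_m\geq 0$. It is proved from $(\lambda-2)^2>p_{12}^2+p_{23}^2+p_{31}^2$ and $\alpha_{ik}p_{kj}-p_{ij}\geq 0$. For example, it gives $x_{s_0}v_{s_0}>x_{k_0}(\beta_{k_0t_0}v_{t_0}-v_{k_0})\geq 0$.

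Your planned reduction, via the Markov-constant classification, to inequalities like $|b_{jk}|\geq 2$ cannot substitute for this. The decisive inequalities involve the eigenvector $\mathbf v$ of the quadric, not only the entries of $B$.
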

Later, we obtain some specific phenomena under the {\em minimum assumption}, which means that the initial exchange matrix is the minimum element in its mutation equivalence class. It is known that there is the minimum element in the mutation equivalence class of each {\em integer} cluster-cyclic exchange matrix. See \Cref{lem: the condition for minimum assumption} for the precise statement. Hence, it is worth considering the structure under this assumption. Firstly, we establish the monotonicity of $g$-vectors.
\begin{theorem}[\Cref{thm: monotonicity of g-vectors}]
Under the minimum assumption, for any $j=1,2,3$ and $\mathbf{u}, \mathbf{w} \in \mathcal{T}$, if $\mathbf{w} \leq \mathbf{u}$, it holds that
\begin{equation}
\mathrm{abs}({\bf g}_{j}^{\bf w}) \leq \mathrm{abs}({\bf g}_{j}^{\bf u}),
\end{equation}
where $\mathrm{abs}(\mathbf{g})=(|g_1|,|g_2|,|g_3|)^{\top}$ for $\mathbf{g}=(g_1,g_2,g_3)^{\top} \in \mathbb{R}^3$.
\end{theorem}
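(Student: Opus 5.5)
By induction along $\leq$, it suffices to treat a single step: if $\mathbf{u}=\mathbf{w}[k]$ is obtained from $\mathbf{w}$ by appending one index $k$ (so $\mathbf{w}<\mathbf{u}$), then $\mathrm{abs}(\mathbf{g}_j^{\mathbf{w}})\leq \mathrm{abs}(\mathbf{g}_j^{\mathbf{u}})$ for all $j$. The base case $\mathbf{w}=\emptyset$ needs a separate check, since $\mathbf{g}_j^{\emptyset}=\mathbf{e}_j$. For $j\neq k$ there is nothing to prove, because $\mathbf{g}_j^{\mathbf{u}}=\mathbf{g}_j^{\mathbf{w}}$. For $j=k$ the mutation rule gives
\[
\mathbf{g}_k^{\mathbf{u}}=-\mathbf{g}_k^{\mathbf{w}}+\sum_{l\neq k}[-\varepsilon b_{lk}^{\mathbf{w}}]_+\mathbf{g}_l^{\mathbf{w}},
\]
where $\varepsilon$ is the tropical sign of $\mathbf{c}_k^{\mathbf{w}}$, given by \Cref{thm: recursion for tropical signs}. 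I would first translate this to modified $g$-vectors, or simply work with ordinary ones, since the monotonicity statement is about ordinary $g$-vectors and the rescaling is by positive scalars.

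The key tool is the sign-coherence of $G$-matrices together with the explicit row signs $\tau_i^{\mathbf{w}}$ from \Cref{thm: sign of G matrix}. Fix a coordinate $i$. All entries of row $i$ of $G^{\mathbf{w}}$ have sign $\tau_i^{\mathbf{w}}$, and all entries of row $i$ of $G^{\mathbf{u}}$ have sign $\tau_i^{\mathbf{u}}$. Write $\mathbf{g}_k^{\mathbf{u}}+\mathbf{g}_k^{\mathbf{w}}=\sum_{l\neq k}a_l\mathbf{g}_l^{\mathbf{w}}$ with $a_l\geq 0$. If $\tau_i^{\mathbf{u}}=-\tau_i^{\mathbf{w}}$, then the $i$-th coordinate of $\mathbf{g}_k^{\mathbf{u}}$ equals $-(g_k^{\mathbf{w}})_i$ plus terms of sign $\tau_i^{\mathbf{w}}$, and it must have sign $-\tau_i^{\mathbf{w}}$. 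This gives $|(g_k^{\mathbf{u}})_i|\leq|(g_k^{\mathbf{w}})_i|$, which is the wrong direction. So I need the row signs to persist, $\tau_i^{\mathbf{u}}=\tau_i^{\mathbf{w}}$, together with the opposite inequality.

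I would therefore proceed differently. Here $k\neq j$ already handles the unchanged vectors, and the real content is an estimate of the form
\[
\sum_{l\neq k}a_l\,|(g_l^{\mathbf{w}})_i|\ \geq\ 2\,|(g_k^{\mathbf{w}})_i|
\]
whenever $(g_k^{\mathbf{u}})_i$ and $(g_k^{\mathbf{w}})_i$ share a sign. In the opposite-sign case the required inequality $|(g^{\mathbf{u}}_k)_i|\geq|(g^{\mathbf{w}}_k)_i|$ amounts to $\sum_l a_l|(g_l^{\mathbf{w}})_i|\geq 2|(g_k^{\mathbf{w}})_i|$ as well, after accounting for signs. Under the minimum assumption, along any reduced path starting from the minimum element the absolute entries $|b_{lk}^{\mathbf{w}}|$ are at least $2$ and grow (\Cref{lem: the condition for minimum assumption}). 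I would prove the inequality by a simultaneous induction on $\mathbf{w}$ with the stronger hypothesis that, in each row $i$, the coordinate of the most recently mutated $g$-vector dominates the others in the appropriate sense. This is modeled on the rank $2$ case, where $g$-vectors follow Chebyshev-like recursions $x_{n+1}=bx_n-x_{n-1}$ with $b\geq2$, which are increasing. Concretely, along the trunk and along each $S/T$ step, the recursion for tropical signs tells us exactly which $a_l$ are nonzero, and the mutation reduces to $\mathbf{g}^{\mathrm{new}}=|b|\mathbf{g}^{\mathrm{last}}-\mathbf{g}^{\mathrm{old}}$ (possibly with two summands). Monotonicity then follows from $|b|\geq2$ and the inductive fact that $\mathrm{abs}(\mathbf{g}^{\mathrm{last}})\geq\mathrm{abs}(\mathbf{g}^{\mathrm{old}})$, applied coordinatewise using the sign table to ensure no cancellation across signs.

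The main obstacle is organizing this strengthened induction hypothesis consistently across the $K,S,T$ labeling, and handling the first few steps near the initial seed. Near the root the sign table has exceptional rows and $\mathbf{g}^{\mathrm{old}}$ may be a standard basis vector of a different sign. These cases I would check directly using the explicit form of $B^{[i]}$ and the minimum condition $|b_{lk}|\geq2$ together with the Markov-type inequality on the product of entries.
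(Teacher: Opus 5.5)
Your single-step reduction and the coordinatewise Chebyshev argument are exactly the paper's \Cref{lem: key lemma for the monotonicity}. Once $\mathrm{abs}(\tilde{\mathbf{g}}_K^{\mathbf{w}})\geq\mathrm{abs}(\tilde{\mathbf{g}}_S^{\mathbf{w}}),\mathrm{abs}(\tilde{\mathbf{g}}_T^{\mathbf{w}})$ holds at some $\mathbf{w}$ in a branch, it propagates to all of $\mathcal{T}^{\geq\mathbf{w}}$; row sign-coherence alone suffices for this, and the sign table is not needed.

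The gap is everything outside this regime, which you defer as ``the first few steps near the initial seed''. That set is not finite.
\begin{itemize}
\item On the whole trunk, $\tilde{\mathbf{g}}_T^{[i]S^n}=\tilde{\mathbf{e}}_{t_0}$, while $\tilde{\mathbf{g}}_K^{[i]S^n}$ has zero $t_0$-coordinate.
\item On the whole chain $[i]TS^n$, $\tilde{\mathbf{g}}_T^{[i]TS^n}=-\tilde{\mathbf{e}}_{k_0}+p_{k_0s_0}\tilde{\mathbf{e}}_{s_0}$, while $\tilde{\mathbf{g}}_K^{[i]TS^n}$ has zero $k_0$-coordinate.
\end{itemize}
So your dominance invariant fails for every $n$ along both families. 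Moreover, in the trunk the $T$-step $\tilde{\mathbf{g}}_K^{\mathbf{w}T}=-\tilde{\mathbf{g}}_T^{\mathbf{w}}+p^{\mathbf{w}}_{ST}\tilde{\mathbf{g}}_S^{\mathbf{w}}$ does not add the most recently mutated vector, so it does not fit your template. These two families have to be handled through closed Chebyshev formulas (\Cref{lem: infinite S mutation in trunks} and the analogous formulas for $[i]TS^n$). Dominance must then be re-established, uniformly in $n$, at the infinitely many branch roots $[i]S^nT$ ($n\geq 1$) and $[i]TS^nT$ ($n\geq 0$); at $[i]T$ it fails outright.

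That re-establishment is where the minimum assumption is actually used, and your proposal does not identify the inequality needed there.
\begin{itemize}
\item At $[i]S^nT$ one needs $p_{ST}^{[i]S^n}u_{n-1}(p_{k_0s_0})\geq u_n(p_{k_0s_0})$, which follows from $p^{[i]S^n}_{ST}\geq p_{k_0s_0}$.
\item At $[i]TS^nT$ the $s_0$-coordinate needs $(p^{[i]TS^n}_{KT}-1)u_{n+1}(p')\geq p_{k_0s_0}$ with $p'=p^{[i]}_{s_0t_0}$, which follows from $p^{[i]}_{s_0t_0}\geq p_{k_0s_0}$.
\end{itemize}
Both come from \Cref{lem: monotonicity of B} (after an increasing mutation $\mu_j$, the largest of the three $p$'s is the one not involving $j$), combined with \Cref{lem: minimum assumption for B}.

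The tools you name for the exceptional cases, $|b_{lk}|\geq 2$ and the Markov-type inequality, hold for every cluster-cyclic matrix. In particular they hold for the non-minimal matrix of \Cref{ex: conter example of the monotonicity}, where monotonicity breaks exactly at the root $[1,2,1]=[i]TT$. There $p^{[1]}_{23}=8<1795=p_{13}$, and the $s_0$-coordinate drops from $1795$ to $228\cdot 8-1795=29$. So those tools cannot close the argument.

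Two smaller points:
\begin{itemize}
\item If the sign of row $i$ flips in a step, the shared entries $(g^{\mathbf{w}}_l)_i$ with $l\neq k$ must vanish. That gives equality, not the wrong inequality, and your claim that the opposite-sign case reduces to the same estimate is incorrect.
\item The growth of $B^{\mathbf{w}}$ is \Cref{lem: minimum assumption for B}, not \Cref{lem: the condition for minimum assumption}.
\end{itemize}
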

Recall the global upper bounds $Q_i$ in \Cref{thm: intro global upper bound theorem}. As stated before \Cref{thm: intro global upper bound theorem}, this upper bound depends on the initial mutation direction $i=1,2,3$. On the other hand, we can also define $Q_{\mathrm{initial}}^{+}$ as the same way with respect to the initial exchange matrix. In general, this is not an upper bound of the $G$-fan. However, under the minimum assumption, we can obtain the following inclusion.
\begin{theorem}[\Cref{thm: simplified global upper bound}]
Under the minimum assumption, the following inclusion holds.
\begin{equation}
|\Delta(B)| \subset Q^{+}_{\mathrm{initial}}.
\end{equation}
\end{theorem}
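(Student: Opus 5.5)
The plan is to reduce the statement to \Cref{thm: intro global upper bound theorem} by proving $Q_i^+ \subset Q^+_{\mathrm{initial}}$ for each $i=1,2,3$, under the minimum assumption. Since $|\Delta(B)| = \{\mathbf{0}\}\cup\bigcup_i |\Delta^{\geq [i]}(B)|$ together with the initial cone, and the initial cone (the positive orthant spanned by the unit vectors) will be checked to lie in $Q^+_{\mathrm{initial}}$ directly, this inclusion suffices.

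\textbf{Step 1: the initial cone.} First I write $Q^+_{\mathrm{initial}}$ concretely. Let $A$ be the pseudo Cartan companion of $B$, rescaled by the skew-symmetrizer as in the modified setting. Then $H_{\mathrm{initial}} = \{\mathbf{x} \mid \mathbf{x}^\top A \mathbf{x} = \text{const}\}$, and by \Cref{lem: eigenvalues lemma for cluster-cyclic matrix} the form $A$ has signature $(1,2)$, or its degenerate analogue. So $Q^+_{\mathrm{initial}}$ is the closed positive cone $\{\mathbf{x} \mid \mathbf{x}^\top A\mathbf{x} \geq 0,\ \langle \mathbf{v},\mathbf{x}\rangle \geq 0\}$ for the sign-coherent separating vector $\mathbf{v}$. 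The unit vectors satisfy $\mathbf{e}_j^\top A \mathbf{e}_j = 2 > 0$ and lie on the positive side. Convexity of the positive light-cone then gives that the positive orthant is contained in it.

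\textbf{Step 2: comparing the two quadrics.} The surface $H_i$ is defined by the form $A^{[i]}$ attached to $\mu_i(B)$, in coordinates where the $g$-vectors are the original ones. Using the mutation rule for $g$-vectors (the piecewise-linear map $E_i$ for sign $+$), $A^{[i]}$ and $A$ should differ by a congruence, which is the usual mechanism behind \cite{LL24}. Concretely, I compute $\mathbf{x}^\top A^{[i]}\mathbf{x} - \mathbf{x}^\top A \mathbf{x}$. Only entries in row and column $i$ change under $\mu_i$, where $|b'_{ij}|$ relates to $|b_{ij}|$ and $|b_{ik}b_{kj}|$. The difference should be a quadratic form of the type $c\,x_i(\alpha x_j + \beta x_k)$, with a sign controlled by whether $\mu_i$ increases or decreases the Markov-type quantity $|b_{12}|+|b_{13}|+|b_{23}|$ (or the product). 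The minimum assumption, via \Cref{lem: the condition for minimum assumption}, says every single mutation of $B$ does not decrease these entries. I expect this to be exactly the inequality $|b^{[i]}_{jk}| \geq |b_{jk}|$, i.e. $|b_{ij}b_{ik}| \geq 2|b_{jk}|$ roughly.

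\textbf{Step 3: deducing the inclusion.} On the region $Q_i^+$ the relevant coordinates have a known sign: $g$-vectors after initial mutation $i$ have $i$-th coordinate of sign governed by \Cref{tab: list of G-signs}, and on $Q_i^+$ the linear forms have fixed signs. With those signs, the difference form from Step 2 is nonpositive, so $\mathbf{x}^\top A\mathbf{x} \geq \mathbf{x}^\top A^{[i]}\mathbf{x} \geq 0$. Also $\langle \mathbf{v},\mathbf{x}\rangle \geq 0$ follows since $Q_i^+$ lies in a half-space meeting the orthant. Hence $Q_i^+ \subset Q^+_{\mathrm{initial}}$.

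\textbf{Main obstacle.} I expect the main obstacle to be Step 3's sign control. The quadratic-form inequality holds only on part of space, so I need a description of $Q_i^+$, via its extreme rays or boundary conic, showing it lies entirely in the region where the cross term has the correct sign. My first attempt is to check the inequality on the boundary of $Q_i^+$ (the asymptotic cone of $H_i^+$), where $\mathbf{x}^\top A^{[i]}\mathbf{x}=0$, and then use connectedness and convexity to conclude. This reduces the problem to showing that the boundary conic of $Q_i^+$ avoids the interior of the exterior of $Q^+_{\mathrm{initial}}$.
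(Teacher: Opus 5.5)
\paragraph{Gap: the target inclusion is false.} Your plan depends on proving $Q_i^{+}\subset Q^{+}_{\mathrm{initial}}$, and that inclusion is false in general, so Step~3 cannot be carried out.

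First, Step~2 needs correcting. $\mu_i$ only flips signs in row and column $i$, so the two pseudo Cartan companions differ only in the $(s_0,t_0)$ entry. In the skew-symmetric case, which suffices,
\[
\mathbf{x}^{\top}\tilde{A}\mathbf{x}-\mathbf{x}^{\top}\tilde{A}^{[i]}\mathbf{x}=2\bigl(p_{s_0t_0}-p^{[i]}_{s_0t_0}\bigr)x_{s_0}x_{t_0}.
\]
This is a multiple of $x_{s_0}x_{t_0}$, not of $x_i$. The hypothesis $\mu_i(B)\geq B$ gives $p^{[i]}_{s_0t_0}\geq p_{s_0t_0}$. So the comparison has the right sign exactly on $\{x_{s_0}x_{t_0}\leq 0\}$, and $Q_i^{+}$ is not contained in that region.

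Here is a concrete counterexample.
\begin{itemize}
\item Let $B$ be skew-symmetric with $p_{12}=p_{23}=p_{31}=3$. Then $C(B)=0$, and every $p^{[j]}_{lm}$ with $j\notin\{l,m\}$ equals $6$, so $B$ is minimum.
\item Take $i=1$ and $\mathbf{x}=(-\tfrac32,1,1)^{\top}$. Then $\mathbf{x}^{\top}\tilde{A}^{[1]}\mathbf{x}=\tfrac52>0$, but $\mathbf{x}^{\top}\tilde{A}\mathbf{x}=-\tfrac72<0$.
\item The positive eigenvector of $\tilde{A}^{[1]}$ is $\mathbf{v}=(6,\,3+3\sqrt{3},\,3+3\sqrt{3})^{\top}$, with eigenvalue $5+3\sqrt{3}$. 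This gives $\langle\mathbf{x},\mathbf{v}\rangle=6\sqrt{3}-3>0$.
\end{itemize}
So $\mathbf{x}\in Q_1^{+}\setminus Q_{\mathrm{initial}}$. The boundary conic of $Q_1^{+}$ really does leave $Q_{\mathrm{initial}}$, so the obstacle you flagged is fatal, not merely technical.

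\paragraph{Missing idea: compare only where the $g$-vectors lie.} That information comes from the cluster structure, not from $Q_i^{+}$ itself. By \Cref{thm: sign of G matrix} (see \eqref{eq: sign s0} and \eqref{eq: sign t0}), every $\mathbf{g}_j^{\mathbf{w}}$ with $\mathbf{w}\geq[i]$, other than $\mathbf{e}_{t_0}$, satisfies $x_{s_0}\geq 0\geq x_{t_0}$. It is the coordinates $s_0,t_0$ that matter, not the $i$th one; the sign of the $k_0=i$ coordinate actually varies across \Cref{tab: list of G-signs}. On $Q_i^{+}\cap\positiveclosure{\mathbf{e}_{s_0}}\cap\negativeclosure{\mathbf{e}_{t_0}}$ the displayed difference is $\geq 0$, so this set lies in $Q_{\mathrm{initial}}$.

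Your orientation step (``$Q_i^{+}$ lies in a half-space meeting the orthant'') also fails, because the positive eigenvectors of $\tilde{A}$ and $\tilde{A}^{[i]}$ differ. The paper argues instead as follows. The set above is a convex cone containing $\mathbf{e}_{s_0}\in Q^{+}_{\mathrm{initial}}$. If it met $Q^{-}_{\mathrm{initial}}\setminus\{\mathbf{0}\}$ at some $\mathbf{y}$, some combination $a\mathbf{e}_{s_0}+b\mathbf{y}\neq\mathbf{0}$ would lie on the plane separating the two sheets. That point would then be outside $Q_{\mathrm{initial}}$, a contradiction (\Cref{lem: lemma for the simplification of global upper bounds}).

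To finish, $\mathbf{e}_{t_0}$ is checked directly. Convexity of $Q^{+}_{\mathrm{initial}}$ (\Cref{lem: convexity of Q}) then passes from $g$-vectors to cones. Together with your Step~1 for the initial cone, this gives the theorem. Note that the proof genuinely depends on the sign theorem for $G$-matrices, and hence on the local upper bounds and separateness results, which your plan bypasses.
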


\subsection{Structure of this paper}
This paper is organized as follows.
\par
In \Cref{pre}, we recall the basic notations for the $G$-fans. In \Cref{sec: cluster-cyclic framework}, we recall the recursion for the signs of $c$-vectors (\Cref{thm: recursion for tropical signs}), and simplify some notations by using the specific structure of the rank $3$ cluster-cyclic exchange matrices. In \Cref{sec: global}, we introduce the global upper bounds (\Cref{thm: global upper bound theorem}). In \Cref{sec: S-mutations}, we focus on the properties of $S$-mutations. In \Cref{sec: trunks}, we give the explicit expressions in trunks. In \Cref{sec: support of cluster-cyclic G-fan}, we introduce the local upper bounds (\Cref{thm: local upper bound theorem}). In \Cref{sec: separateness}, we prove the separateness among local upper bounds. In \Cref{sec: application}, we give some applications about exhibiting the non-periodicity (\Cref{thm: non periodicity}) and the signs of $G$-matrices (\Cref{thm: sign of G matrix}). In \Cref{sec: under the minimum assumption}, we show some properties under the minimum assumption, including the monotonicity of $g$-vectors (\Cref{thm: monotonicity of g-vectors}) and the simplification of the global upper bound (\Cref{thm: simplified global upper bound}).

\newpage
\section{Preliminaries}\label{pre}
In this paper, we always consider the matrices associated with rank $3$ cluster algebras unless stated otherwise, whose entries are usually integers but are allowed to be real numbers in more general cases. Following \cite{Nak23}, we also introduce the basic notations for the $G$-fans.
\subsection{$B$-, $C$-, $G$-matrices}
A matrix $B \in \mathrm{M}_{3}(\mathbb{R})$ is said to be {\em skew-symmetrizable} if there exists a positive diagonal matrix $D=\mathrm{diag}(d_1,d_2,d_3)$ ($d_1,d_2,d_3>0$) such that $DB$ is skew-symmetric. For a $3 \times 3$ matrix $B$, it is known that this condition is equivalent to the {\em sign-skew-symmetry} ($\mathrm{sign}(b_{ij})=-\mathrm{sign}(b_{ji})$ for any $i,j$) and the following equality \cite[Lem.~7.4]{FZ03}:
\begin{equation}\label{eq: rank 3 skew-symmetrizable condition}
|b_{12}b_{23}b_{31}|=|b_{21}b_{32}b_{13}|.
\end{equation}
In the cluster algebra theory, a skew-symmetrizable matrix is called an {\em exchange matrix}, and we also use this terminology depending on the context.
\par
For any real number $x \in \mathbb{R}$, define $[x]_{+}=\max(x,0)$. Recall that $\mathcal{T}$ denotes the set of all reduced sequences of $1$, $2$, $3$ (Subsection~\ref{subsec: conv}). Then, the mutation and the $B$-pattern are defined as follows.
\begin{definition}\label{mutation rules}
	Let $B=(b_{ij}) \in \mathrm{M}_3(\mathbb{R})$ be a skew-symmetrizable matrix. For any $k=1,2,3$, the {\em $k$-direction mutation} $\mu_{k}(B)=(b^{\prime}_{ij})$ of $B$ is defined as:
	\begin{align}\label{eq: mutation of b}
		b_{ij}^{\prime}=\left\{
		\begin{array}{ll}
			-b_{ij} &  \text{if}\ i=k \ \mbox{or}\ j=k, \\
			b_{ij}+\mathrm{sign}(b_{ik})[b_{ik}b_{kj}]_{+}&  \text{if}\ i\neq k \ 
			\mbox{and}\ j\neq k. 
		\end{array} \right. 
	\end{align}
    Note that $\mu_k$ is an involution. For any ${\bf w}=[k_1,\dots,k_r] \in \mathcal{T}$, we define $B^{\bf w}=\mu_{k_r}\cdots\mu_{k_1}(B)$. The collection of all such matrices ${\bf B}(B)=\{B^{\bf w}\}_{{\bf w} \in \mathcal{T}}$ is called the {\em $B$-pattern} or the {\em mutation equivalence class}. For this pattern, the given $B$ is called the \emph{initial exchange matrix} of $\mathbf{B}(B)$. If $B$ and $B'$ form the same mutation equivalence class $\mathbf{B}(B)=\mathbf{B}(B')$ as a set, these two matrices are said to be {\em mutation equivalent}.
\end{definition}

\begin{definition}\label{def: C G matrices}
Let $B \in \mathrm{M}_3(\mathbb{R})$ be an initial exchange matrix.
We define the {\em $C$-matrices} $C^{\bf w}=(c_{ij}^{\mathbf{w}}) \in \mathrm{M}_3(\mathbb{R})$ and the {\em $G$-matrices} $G^{\bf w}=(g_{ij}^{\mathbf{w}}) \in \mathrm{M}_3(\mathbb{R})$ by the initial conditions $C^{\emptyset}=G^{\emptyset}=I$ (the identity matrix) and the following recursions:
\begin{equation}\label{eq: recursion of C}
\begin{aligned} 
c_{ij}^{\mathbf{w}[k]}&=\begin{cases}
-c_{ik}^{\mathbf{w}} & j=k,\\
c_{ij}^{\mathbf{w}} + c_{ik}^{\mathbf{w}}[b_{kj}^{\mathbf{w}}]_{+}+[-c_{ik}^{\mathbf{w}}]_{+}b_{kj}^{\mathbf{w}} & j \neq k,
\end{cases}
\\
g_{ij}^{\mathbf{w}[k]}&=\begin{cases}
-g_{ik}^{\mathbf{w}}+\sum_{l=1}^{3}g_{il}^{\mathbf{w}}[b_{lk}^{\mathbf{w}}]_{+}-\sum_{l=1}^3b_{il}^{\emptyset}[c_{lk}^{\mathbf{w}}]_{+} & j=k,
\\
g_{ij}^{\mathbf{w}} & j \neq k.
\end{cases}
\end{aligned}
\end{equation}
The collections of all $C$-matrices and $G$-matrices are called the {\em $C$-pattern} and the {\em $G$-pattern}, and we denote them by ${\bf C}(B)=\{C^{\bf w}\}_{{\bf w} \in \mathcal{T}}$ and ${\bf G}(B)=\{G^{\bf w}\}_{{\bf w} \in \mathcal{T}}$, respectively.
Each column vector of $C^{\bf w}$ (resp. $G^{\bf w}$) is called a {\em $c$-vector} (resp. {\em $g$-vector}), and we denote them by $C^{\bf w}=({\bf c}^{\bf w}_1, {\bf c}^{\bf w}_2,{\bf c}^{\bf w}_{3})$ and $G^{\bf w}=({\bf g}^{\bf w}_1, {\bf g}^{\bf w}_2,{\bf g}^{\bf w}_3)$.
\end{definition}
\begin{remark}\label{rmk: C,G}
The $c$-, $g$-vectors were introduced in \cite{FZ07} to record the mutation dynamics of coefficients through the tropical semifield and describe the $\mbZ^n$-grading on cluster variables respectively. They play a central role in understanding the sign-coherence, separation formulas, and the parameterization of cluster variables.
\end{remark}

We introduce a partial order $\leq$ on $\mathbb{R}^3$ as follows: $(u_1,u_2,u_3)^{\top} \leq (v_1,v_2,v_3)^{\top}$ if and only if $u_i \leq v_i\ \textup{for any $i=1,2,3$}$. The following is a key notion for $C$-matrices and $G$-matrices.
\begin{definition}
Let $B \in \mathrm{M}_3(\mathbb{R})$ be an initial exchange matrix. Then, a $C$-matrix $C^{\bf w}$ is said to be {\em sign-coherent} if either ${\bf c}^{\bf w}_{i} \geq {\bf 0}$ or ${\bf c}^{\bf w}_i \leq {\bf 0}$ holds for any $i=1,2,3$. The $C$-pattern ${\bf C}(B)$ is called {\em sign-coherent} if every $C^{\bf w}$ is sign-coherent. When $C^{\bf w}$ is sign-coherent, the {\em tropical sign} $\varepsilon_{i}^{\bf w} \in \{0,\pm 1\}$ of ${\bf c}^{\bf w}_{i}$ is defined as follows: $\varepsilon_{i}^{\bf w}=0$ if ${\bf c}_i^{\bf w} = {\bf 0}$, otherwise $\varepsilon_{i}^{\bf w} \in \{\pm 1\}$ such that $\varepsilon_{i}^{\bf w}{\bf c}_{i}^{\bf w} \geq {\bf 0}$.
\end{definition}

\begin{remark}\label{rmk sign coherent}
In \cite{GHKK18}, it was shown that every ${C}$-pattern corresponding to an {\em integer} skew-symmetrizable matrix is sign-coherent. However, when real entries are allowed, it is known that there exists a sign-incoherent $C$-pattern \cite[Ex.~5.4]{AC25a}, and it is an open problem when the sign-coherence holds.
\end{remark}

Under this assumption, we can obtain the recursion for $c$-, $g$-vectors.
\begin{proposition}[\cite{FZ07}]
Let $B \in \mathrm{M}_3(\mathbb{R})$ be an initial exchange matrix.
If ${\bf C}(B)$ is sign-coherent, we obtain a recursion for $c$-, $g$-vectors as
\begin{align}
{\bf c}^{{\bf w}[k]}_{i}&=\begin{cases}
-{\bf c}^{\bf w}_{k} & i=k,\\
{\bf c}^{\bf w}_{i}+[\varepsilon_{k}^{\bf w}b^{\bf w}_{ki}]_{+}{\bf c}^{\bf w}_{k} & i \neq k,
\end{cases}\label{eq: mutation of c-vectors}\\
{\bf g}^{{\bf w}[k]}_i&=\begin{cases}
-{\bf g}^{\bf w}_{k} + \sum_{j=1}^{3}[-\varepsilon_k^{\bf w}b_{jk}^{\bf w}]_{+}{\bf g}^{\bf w}_{j} & i=k,\\
{\bf g}^{\bf w}_{i} & i \neq k.
\end{cases}\label{eq: mutation of g-vectors}
\end{align}
\end{proposition}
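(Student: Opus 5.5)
Under sign-coherence, I will specialize the general recursions of Definition~\ref{def: C G matrices} to the case where $\mathbf{c}^{\mathbf{w}}_k$ has a definite sign $\varepsilon=\varepsilon_k^{\mathbf{w}}$. The argument is a direct computation, column by column.

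\emph{The $c$-vectors.} The column $j=k$ is immediate: $\mathbf{c}^{\mathbf{w}[k]}_k=-\mathbf{c}^{\mathbf{w}}_k$. For $j\neq k$ the general formula reads
\begin{equation*}
c_{ij}^{\mathbf{w}[k]}=c_{ij}^{\mathbf{w}}+c_{ik}^{\mathbf{w}}[b_{kj}^{\mathbf{w}}]_{+}+[-c_{ik}^{\mathbf{w}}]_{+}b_{kj}^{\mathbf{w}}.
\end{equation*}
I split into two cases on the sign of $\mathbf{c}^{\mathbf{w}}_k$, which sign-coherence guarantees is uniform across all rows $i$.
\begin{itemize}
\item If $\varepsilon=1$ (or the column is zero), then $[-c_{ik}]_+=0$, leaving $c_{ik}[b_{kj}]_+=[\varepsilon b_{kj}]_+c_{ik}$.
\item If $\varepsilon=-1$, then $[-c_{ik}]_+=-c_{ik}$, and the correction is $c_{ik}([b_{kj}]_+-b_{kj})=c_{ik}[-b_{kj}]_+$.
\end{itemize}
In both cases this gives \eqref{eq: mutation of c-vectors}. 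The case $\varepsilon=0$ is trivially consistent.

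\emph{The $g$-vectors.} Columns $i\neq k$ are unchanged. For the $k$-th column the general formula gives
\begin{equation*}
\mathbf{g}^{\mathbf{w}[k]}_k=-\mathbf{g}^{\mathbf{w}}_k+\sum_l [b^{\mathbf{w}}_{lk}]_+\mathbf{g}^{\mathbf{w}}_l-B[\mathbf{c}^{\mathbf{w}}_k]_+,
\end{equation*}
with $[\cdot]_+$ taken entrywise. When $\varepsilon=1$ the last term is $-B\mathbf{c}^{\mathbf{w}}_k$, and when $\varepsilon=-1$ it vanishes. So the target formula, which has $[-\varepsilon b_{lk}]_+$, follows from one identity valid whenever $\varepsilon=1$:
\begin{equation*}
\sum_l b^{\mathbf{w}}_{lk}\mathbf{g}^{\mathbf{w}}_l = B\mathbf{c}^{\mathbf{w}}_k, \quad\text{that is,}\quad G^{\mathbf{w}}B^{\mathbf{w}}=BC^{\mathbf{w}}.
\end{equation*}
Indeed, when $\varepsilon=1$ we use $[b]_+-b=[-b]_+$.

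The main step is proving $G^{\mathbf{w}}B^{\mathbf{w}}=BC^{\mathbf{w}}$ for all $\mathbf{w}$, assuming sign-coherence. I will do this by induction on the length of $\mathbf{w}$, starting from $\mathbf{w}=\emptyset$, where both sides equal $B$.

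For the inductive step, the mutation of each of $G$, $B$, $C$ under $\mu_k$ can be written in matrix form with sign $\varepsilon$, using the already established $c$-recursion and the $g$-recursion. Put
\begin{equation*}
J_k=I-2E_{kk},\qquad E=J_k+[\,-\varepsilon B^{\mathbf{w}}]_+^{\bullet k},\qquad F=J_k+[\,\varepsilon B^{\mathbf{w}}]_+^{k\bullet},
\end{equation*}
where $[\,\cdot\,]^{\bullet k}$ keeps only the $k$-th column and $[\,\cdot\,]^{k\bullet}$ keeps only the $k$-th row. Then
\begin{equation*}
C^{\mathbf{w}[k]}=C^{\mathbf{w}}F,\qquad B^{\mathbf{w}[k]}=E\,B^{\mathbf{w}}F,
\end{equation*}
and the $g$-mutation takes the form $G^{\mathbf{w}[k]}=G^{\mathbf{w}}E$.

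These are the standard Berenstein--Fomin--Zelevinsky $E,F$ decompositions, which hold for real entries with either choice of sign. One also has $E^2=F^2=I$. Therefore
\begin{equation*}
G^{\mathbf{w}[k]}B^{\mathbf{w}[k]}=G^{\mathbf{w}}E\,E\,B^{\mathbf{w}}F=G^{\mathbf{w}}B^{\mathbf{w}}F=BC^{\mathbf{w}}F=BC^{\mathbf{w}[k]},
\end{equation*}
which closes the induction.

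There is a circularity to watch: the $g$-recursion used in the inductive step is the one being proved. This is resolved by running the induction simultaneously on the identity and the recursion at $\mathbf{w}$, since the recursion at $\mathbf{w}$ needs only the identity at $\mathbf{w}$.

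The expected obstacle is checking $B^{\mathbf{w}[k]}=EB^{\mathbf{w}}F$ with the sign $\varepsilon$. This is a routine entrywise check using skew-symmetrizability, namely $\mathrm{sign}(b_{ik})=-\mathrm{sign}(b_{ki})$.
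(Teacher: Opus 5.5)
Your proof is correct. The paper gives no argument here and simply cites [FZ07], where matrices are integer and $g$-vectors come from an actual cluster algebra. Your proof reduces the case $\varepsilon_k^{\mathbf w}=1$ to the same key identity, $G^{\mathbf w}B^{\mathbf w}=BC^{\mathbf w}$, but proves it by pure matrix algebra via the Berenstein--Fomin--Zelevinsky factorization. It therefore applies verbatim to real entries, which is the generality in which the paper actually uses the statement.

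There are two small points.

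First, say explicitly why $\varepsilon_k^{\mathbf w}\neq 0$. For $\varepsilon=0$ your matrices degenerate to $E=F=J_k$, the factorization $B^{\mathbf w[k]}=EB^{\mathbf w}F$ fails, and your $g$-derivation does not cover that case. The fix comes from your own $c$-step: $C^{\mathbf w[k]}=C^{\mathbf w}F$ holds for every value of $\varepsilon$, with $\det F=-1$. So $\det C^{\mathbf w}=\pm 1$ by induction from $C^{\emptyset}=I$, and no $c$-vector vanishes.

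Second, the simultaneous induction is not needed, and the identity does not require sign-coherence. Rewrite $\mathrm{sign}(b_{ik})[b_{ik}b_{kj}]_+=[b_{ik}]_+[b_{kj}]_+-[-b_{ik}]_+[-b_{kj}]_+$, and the general $c$-recursion similarly as $c_{ij}+[c_{ik}]_+[b_{kj}]_+-[-c_{ik}]_+[-b_{kj}]_+$. Then a direct entrywise induction from the general recursions of Definition~\ref{def: C G matrices} proves $G^{\mathbf w}B^{\mathbf w}=BC^{\mathbf w}$ for every $\mathbf w$ with no sign-coherence hypothesis. Likewise, the check $B^{\mathbf w[k]}=EB^{\mathbf w}F$ uses only $b_{kk}=0$, not skew-symmetrizability.
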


As a consequence of this recursion, we obtain the following important properties.
\begin{proposition}[\cite{FZ07}]\label{prop: fundamental properties under sign-coherency}
Suppose that ${\bf C}(B)$ is sign-coherent. For any ${\bf w} \in \mathcal{T}$, we have $|C^{\bf w}|=|G^{\bf w}|=(-1)^{|{\bf w}|}$. In particular, the sets of $c$-vectors $\{{\bf c}_{1}^{\bf w},{\bf c}_{2}^{\bf w},{\bf c}_{3}^{\bf w}\}$ and $g$-vectors $\{{\bf g}_{1}^{\bf w}, {\bf g}_{2}^{\bf w}, {\bf g}_{3}^{\bf w}\}$ are bases of $\mathbb{R}^3$.
\end{proposition}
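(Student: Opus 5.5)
Since this is the classical statement of \cite{FZ07}, the plan is to argue by induction on the length $|\mathbf{w}|$ of the reduced sequence. The recursions \eqref{eq: mutation of c-vectors} and \eqref{eq: mutation of g-vectors} express one mutation as right multiplication by an elementary matrix. The base case is $\mathbf{w}=\emptyset$, where $C^{\emptyset}=G^{\emptyset}=I$ has determinant $1=(-1)^0$.

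For the inductive step, fix $\mathbf{w}$ and $k$ with $\mathbf{w}[k]$ of length $|\mathbf{w}|+1$. By sign-coherence $\varepsilon_k^{\mathbf{w}}$ is well defined, and it is nonzero: if $\mathbf{c}_k^{\mathbf{w}}=\mathbf{0}$, the determinant of $C^{\mathbf{w}}$ would vanish, contradicting the induction hypothesis. Then \eqref{eq: mutation of c-vectors} reads $C^{\mathbf{w}[k]}=C^{\mathbf{w}}E$, where $E$ agrees with $I$ except for its $k$th row. That row has entry $-1$ at position $(k,k)$ and entries $[\varepsilon_k^{\mathbf{w}}b_{ki}^{\mathbf{w}}]_+$ at positions $(k,i)$ for $i\neq k$. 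Expanding along the columns $i\neq k$, which are standard basis vectors, gives $\det E=-1$.

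Similarly, \eqref{eq: mutation of g-vectors} reads $G^{\mathbf{w}[k]}=G^{\mathbf{w}}F$, where $F$ agrees with $I$ except in its $k$th column. That column has entry $-1+[-\varepsilon_k^{\mathbf{w}}b_{kk}^{\mathbf{w}}]_+$ at position $(k,k)$ and entries $[-\varepsilon_k^{\mathbf{w}}b_{jk}^{\mathbf{w}}]_+$ at positions $(j,k)$ for $j\neq k$. Since $B^{\mathbf{w}}$ is skew-symmetrizable, its diagonal vanishes, so $b_{kk}^{\mathbf{w}}=0$ and the diagonal entry is $-1$; hence $\det F=-1$. Multiplicativity of the determinant then gives $|C^{\mathbf{w}[k]}|=-|C^{\mathbf{w}}|=(-1)^{|\mathbf{w}|+1}$, and likewise for $G$.

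A reduced sequence is reached from $\emptyset$ purely by appending indices, so this induction covers every $\mathbf{w}\in\mathcal{T}$. The basis statement is immediate, since a matrix with nonzero determinant has linearly independent columns. The only point needing care is checking that sign-coherence makes $\varepsilon_k^{\mathbf{w}}$ a single sign applying to the whole vector, which is exactly what makes the recursion linear. There is no real obstacle beyond this.
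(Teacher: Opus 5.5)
Your proof is correct and is the argument the paper relies on: it cites \cite{FZ07} and only remarks that the statement follows from the recursions \eqref{eq: mutation of c-vectors} and \eqref{eq: mutation of g-vectors}, i.e.\ each mutation is right multiplication by a matrix of determinant $-1$. One minor slip: for $E$ it is the rows $j\neq k$, not the columns, that are standard basis vectors (the columns $i\neq k$ are $\mathbf{e}_i+[\varepsilon_k^{\mathbf{w}}b_{ki}^{\mathbf{w}}]_+\mathbf{e}_k$), but expanding along those rows still gives $\det E=-1$.
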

When we discuss real $C$-, $G$-matrices, the following conjecture is important but not shown in general. However, for our focusing matrices which are said to be cluster-cyclic of rank $3$, it has already been proved in \cite{AC25b}, see also \Cref{prop: conjecture is true}.
\begin{conjecture}[{\cite[Conj. 6.1 \& Conj. 6.3]{AC25a}}]\label{conj: two conjectures}
Let $B \in \mathrm{M}_{3}(\mathbb{R})$ be an initial exchange matrix. Suppose that its $C$-pattern ${\bf C}(B)$ is sign-coherent.
\\
\textup{($a$)} For any ${\bf w} \in \mathcal{T}$, all $C$-patterns ${\bf C}(B^{\bf w})$ and ${\bf C}((B^{\bf w})^{\top})$ are sign-coherent.
\\
\textup{($b$)} For any ${\bf w} \in \mathcal{T}$, let $B'=B^{\bf w}$. Consider its $C$-pattern ${\bf C}(B')=\{C_{B'}^{\bf u}\}_{{\bf u} \in \mathcal{T}}$. If its $c$-vector ${\bf c}^{\bf u}_{i;B'}$ is expressed as ${\bf c}^{\bf u}_{i;B'}=\alpha{\bf e}_{j}$ for some $\alpha \in \mathbb{R}$ and $j\in \{1,\dots,n\}$, then we have $\alpha=\pm\sqrt{d_id_j^{-1}}$.
\end{conjecture}

\subsection{$G$-fan structure}
In this section, we fix an initial exchange matrix $B \in \mathrm{M}_3(\mathbb{R})$ and its skew-symmetrizer $D=\mathrm{diag}(d_1,d_2,d_3)$. We suppose that ${\bf C}(B)$ is sign-coherent. In this case, there are some good geometric structures in $c$-, $g$-vectors.
\par
We introduce an inner product $\langle\, ,\, \rangle_{D}$ in $\mathbb{R}^3$ as
\begin{equation}\label{eq: inner product}
\langle {\bf a}, {\bf b} \rangle_{D}={\bf a}^{\top}D{\bf b}.
\end{equation}
Then, the following duality between $c$-, $g$-vectors holds.
\begin{proposition}[{\cite[Prop.~2.16]{Nak23}}]\label{prop: orthogonality of c, g vectors}
Suppose that ${\bf C}(B)$ is sign-coherent. Then, for any ${\bf w} \in \mathcal{T}$ and $i,j=1,2,3$, we have
\begin{equation}\label{eq: orthogonal relation for usual vectors}
\langle {\bf g}_{i}^{\bf w}, {\bf c}_{j}^{\bf w}\rangle_{D}=\begin{cases}
d_i & \textup{if $i=j$},\\
0 & \textup{if $i \neq j$}.
\end{cases}
\end{equation}
\end{proposition}
Motivated by this equality, we always fix one skew-symmetrizer $D$ and the corresponding inner product $\langle\,,\,\rangle_{D}$ in $\mathbb{R}^{3}$.
\par
Then, we introduce the following notions.
\begin{definition}[Convex cone]
A nonempty set $\mathcal{C} \subset \mathbb{R}^{3}$ is called a {\em convex cone} if $\lambda{\bf a}+\lambda'{\bf b} \in \mathcal{C}$ holds for any ${\bf a},{\bf b} \in \mathcal{C}$ and $\lambda,\lambda' \in \mathbb{R}_{> 0}$. For any convex cone $\mathcal{C} \subset \mathbb{R}^3$, we write its {\em relative interior} by $\mathcal{C}^{\circ}$, which is the interior of $\mathcal{C}$ in the subspace $\langle \mathcal{C} \rangle_{\mathrm{vec}} \subset \mathbb{R}^3$. For any convex cone $\mathcal{C} \subset \mathbb{R}^3$, its {\em dimension} $\dim(\mathcal{C})$ is defined by the dimension $\dim\,\langle \mathcal{C} \rangle_{\mathrm{vec}}$ of the vector subspace spanned by $\mathcal{C}$.
\par
For any finite set of vectors ${\bf a}_1,\dots,{\bf a}_r \in \mathbb{R}^3$, we define the following two convex cones:
\begin{equation}
\mathcal{C}({\bf a}_1,\dots,{\bf a}_{r})=\left\{\left.\sum_{i=1}^{r}\lambda_{i}{\bf a}_{i}\  \right|\ \lambda_{i} \geq 0\right\},
\quad
\mathcal{C}^{\circ}({\bf a}_1,\dots,{\bf a}_{r})=\left\{\left.\sum_{i=1}^{r}\lambda_{i}{\bf a}_{i}\  \right|\ \lambda_{i} > 0\right\}.
\end{equation}
Note that $\mathcal{C}^{\circ}({\bf a}_1,\dots,{\bf a}_r)$ is the relative interior of $\mathcal{C}({\bf a}_1,\dots,{\bf a}_r)$. In convention, we write $\mathcal{C}(\emptyset)=\{{\bf 0}\}$. We say that a convex cone of the form $\mathcal{C}({\bf a}_1, \dots, {\bf a}_r)$ is a {\em polyhedral cone}, or simply a {\em cone}.
If ${\bf a}_1,\dots,{\bf a}_r$ are linearly independent, then the cone $\mathcal{C}({\bf a}_1,\dots,{\bf a}_r)$ said to be {\em simplicial}. In this case, $r \leq 3$ holds.
\end{definition}
For any vector ${\bf v} \in \mathbb{R}^{3}$, set
\begin{equation}
\begin{aligned}
\zeroregion{\bf v}&=\{{\bf x} \in \mathbb{R}^3 \mid \langle {\bf x}, {\bf v}\rangle_{D} = 0\},\\
\positiveregion{\bf v}&=\{{\bf x} \in \mathbb{R}^3 \mid \langle {\bf x}, {\bf v}\rangle_{D} > 0\},\\
\negativeregion{\bf v}&=\{{\bf x} \in \mathbb{R}^3 \mid \langle {\bf x}, {\bf v}\rangle_{D} < 0\}.
\end{aligned}
\end{equation}
Note that $\zeroregion{\bf 0}=\mathbb{R}^3$. Also, we set $\positiveclosure{\bf v}=\positiveregion{\bf v}\cup\zeroregion{\bf v}$ and $\negativeclosure{\bf v}=\negativeregion{\bf v}\cup\zeroregion{\bf v}$. When $\mathbf{v} \neq \mathbf{0}$, they represent the corresponding closures.
\par
When $d_1=d_2=d_3=1$, that is, when the inner product coincides with the Euclidean inner product, we omit $D$ and write $\Ezeroregion{\mathbf{v}}$, $\Epositiveregion{\mathbf{v}}$, and so on.
\begin{definition}[Face]
Let $\mathcal{C} \subset \mathbb{R}^3$ be a convex cone. Let ${\bf v} \in \mathbb{R}^{3}$ and suppose that $\mathcal{C} \subset \positiveclosure{\mathbf{v}}$. Then, the following set $\mathrm{face}_{\bf v}(\mathcal{C}) \subset \mathcal{C}$ is called a {\em face} of $\mathcal{C}$ associated with a {\em normal vector} ${\bf v}$.
\begin{equation}
\mathrm{face}_{\bf v}(\mathcal{C})=\mathcal{C} \cap \zeroregion{\mathbf{v}}.
\end{equation}
\end{definition}
Note that $\mathcal{C}=\mathrm{face}_{\bf 0}(\mathcal{C})$ holds. Hence, each cone is a face of itself.
\par
Let $\mathcal{C}=\mathcal{C}({\bf a}_1,\dots,{\bf a}_{r})$ be a simplicial cone generated by linearly independent vectors ${\bf a}_1,\dots,{\bf a}_{r}$, where $r\leq 3$. For any $J \subset \{1,\dots,r\}$, we write
\begin{equation}
\mathcal{C}_{J}({\bf a}_1,\dots,{\bf a}_r)=\left.\left\{ \sum_{j \in J} \lambda_j{\bf a}_{j} \ \right|\ \lambda_j \geq 0\right\}.
\end{equation}
Then, every face of a simplicial cone $\mathcal{C}=\mathcal{C}({\bf a}_{1},\dots,{\bf a}_{r})$ may be expressed as the above form. In fact, for any ${\bf v} \in \mathbb{R}^{3}$ satisfying $\mathcal{C} \subset \positiveclosure{\mathbf{v}}$, by setting $J=\{j \in \{1,2,\dots,r\}\mid  {\bf a}_{j} \in \zeroregion{\mathbf{v}}\}$, we have $\mathrm{face}_{\bf v}(\mathcal{C})=\mathcal{C}_{J}({\bf a}_1,\dots,{\bf a}_{r})$.

\begin{definition}[Fan]
A nonempty set $\Delta$ of cones in $\mathbb{R}^3$ is called a {\em fan} if it satisfies the following two conditions:
\begin{itemize}
\item For any $\mathcal{C} \in \Delta$, every face of $\mathcal{C}$ is also contained in $\Delta$.
\item For any $\mathcal{C}_1,\mathcal{C}_2 \in \Delta$, then $\mathcal{C}_1 \cap \mathcal{C}_2$ is a face of both $\mathcal{C}_1$ and $\mathcal{C}_2$.
\end{itemize}
For any fan $\Delta$, its {\em support} $|\Delta|$ is defined by $\bigcup_{\mathcal{C} \in \Delta}\mathcal{C}$. Moreover, a fan $\Delta$ is said to be {\em simplicial} if every cone $\mcC \in \Delta$ is simplicial.
\end{definition}
In the cluster algebra theory, it is known that $g$-vectors form a fan structure.
\begin{definition}[$G$-cone, $G$-fan]\label{def: G-cone, G-fan}
Let $B \in \mathrm{M}_{3}(\mathbb{R})$ be an initial exchange matrix. Suppose that ${\bf C}(B)$ is sign-coherent. Then, we write
\begin{equation}
\mathcal{C}(G^{\bf w})=\mathcal{C}({\bf g}_1^{\bf w},{\bf g}_{2}^{\bf w},{\bf g}_{3}^{\bf w}),
\end{equation}
and call it a {\em $G$-cone}. Moreover, for any $J \subset \{1,2,3\}$, we write $\mathcal{C}_{J}(G^{\bf w})=\mathcal{C}_{J}({\bf g}_{1}^{\bf w},{\bf g}_{2}^{\bf w},{\bf g}_{3}^{\bf w})$.
The set of all $G$-cones and their faces
\begin{equation}
\Delta(B)=\{\mathcal{C}_{J}(G^{\bf w}) \mid J \subset \{1,2,3\}, {\bf w} \in \mathcal{T}\}
\end{equation}
is called the {\em $G$-fan} associated with $B$. For any ${{\bf w}_0} \in \mathcal{T}$, we define the {\em sub $G$-fan} $\Delta^{\geq {\bf w}_0}(B)$ by
\begin{equation}
\Delta^{\geq {\bf w}_0}(B)=\{\mathcal{C}_{J}(G^{\bf w}) \mid J \subset \{1,2,3\},\ {\bf w} \geq {\bf w}_0\}.
\end{equation}
\end{definition}
\begin{proposition}[{\cite{FZ07, Rea14, Nak23}, \cite[Thm.~8.3]{AC25a}}]\label{prop: fan structure}
Let $B \in \mathrm{M}_3(\mathbb{R})$ be an initial exchange matrix. Suppose that ${\bf C}(B)$ is sign-coherent and \Cref{conj: two conjectures} holds for this $B$. Then, the $G$-fan $\Delta(B)$ is really a fan.
\end{proposition}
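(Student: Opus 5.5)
The plan is to follow the standard strategy of \cite{Rea14, Nak23}: change the initial seed so that one of the two cones being compared becomes the positive orthant, and then transport the result back by a piecewise-linear map.

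The first condition in the definition of a fan is immediate. Every $G$-cone $\mathcal{C}(G^{\bf w})$ is simplicial by \Cref{prop: fundamental properties under sign-coherency}, so each of its faces is of the form $\mathcal{C}_J(G^{\bf w})$, which lies in $\Delta(B)$ by definition. All the work is therefore in the second condition: for any ${\bf w},{\bf u}\in\mathcal{T}$, the intersection $\mathcal{C}(G^{\bf w})\cap\mathcal{C}(G^{\bf u})$ must be a face of both cones. It suffices to treat full $G$-cones, since faces of faces are faces.

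\textbf{Step 1 (change of initial seed).} For $k=1,2,3$, let $\eta_k$ be the tropical mutation map on $\mathbb{R}^3$. It is piecewise linear, it is linear on each of the closed half-spaces $\overline{\mathcal{H}}^{\pm}_D({\bf e}_k)$, and it is a bijection. I would show that
\begin{equation*}
\eta_k({\bf g}^{\bf w}_{i;B})={\bf g}^{[k]{\bf w}}_{i;\mu_k(B)},
\end{equation*}
where the right-hand side is a $g$-vector of the pattern with initial matrix $\mu_k(B)$ and index relabelled along $[k]$.

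This step is where \Cref{conj: two conjectures} is needed. Part ($a$) supplies sign-coherence for ${\bf C}(\mu_k(B))$ and for the transposed patterns. Through the duality of \Cref{prop: orthogonality of c, g vectors}, that is equivalent to row sign-coherence of $G$-matrices. Row sign-coherence says each $G$-cone lies in a single closed half-space $\overline{\mathcal{H}}^{\pm}_D({\bf e}_k)$, so $\eta_k$ is linear on each $G$-cone. Part ($b$) controls the normalisation of $c$-vectors that are multiples of ${\bf e}_j$. This normalisation is what makes the recursion \eqref{eq: mutation of g-vectors} for $\mu_k(B)$ agree exactly with the $\eta_k$-image, with no stray positive scalars, in the real setting.

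Iterating, for any ${\bf w}$ one obtains a piecewise-linear bijection $\eta_{\bf w}$. It sends the $G$-fan of $B$ to the $G$-fan of $B^{\bf w}$, sends $\mathcal{C}(G^{\bf w})$ to the positive orthant, and is linear on every $G$-cone.

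\textbf{Step 2 (orthant case).} It remains to show, for an arbitrary initial matrix $B'$ satisfying the same hypotheses, that $\mathcal{C}(I)\cap\mathcal{C}(G^{\bf u}_{B'})$ is a face of both cones. By row sign-coherence, $\mathcal{C}(G^{\bf u}_{B'})$ lies in one closed orthant. Hence the intersection is spanned by those rays of $\mathcal{C}(G^{\bf u}_{B'})$ lying in the coordinate subspace cut out by the nonpositive rows. Equivalently, it is cut out by the $c$-vectors of the appropriate sign, using the duality between $c$- and $g$-vectors. I would argue as follows. Take a column ${\bf c}^{\bf u}_j$ with $\varepsilon^{\bf u}_j=1$. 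Then $\langle\,\cdot\,,{\bf c}^{\bf u}_j\rangle_D\ge 0$ on the orthant and vanishes on the other $g$-vectors. Intersecting over such $j$ shows the intersection equals $\mathcal{C}_J(G^{\bf u})$ for $J=\{j\mid \varepsilon^{\bf u}_j=-1\}$, by a dual count. One must also check that this is a face of the orthant: each ${\bf g}^{\bf u}_j$ with $j\in J$ is a nonnegative vector on a boundary, and dual positivity forces it to be a positive multiple of some ${\bf e}_i$.

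\textbf{Step 3 (transport back).} Apply $\eta_{\bf w}^{-1}$. It is a homeomorphism that is linear on each of the two cones, so it carries faces to faces and intersections to intersections. This gives the claim for $\Delta(B)$.

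I expect Step 1 to be the main obstacle, specifically verifying that $\eta_k$ intertwines the two $G$-patterns exactly in the real-entry setting, where Conjecture ($b$) must replace integrality. Step 2 also needs care in showing that nonnegative $g$-vectors are multiples of unit vectors. For Step 1 I would first try an induction on $|{\bf w}|$ using \eqref{eq: mutation of g-vectors} together with the corresponding recursion for the transposed $C$-pattern, as in \cite[Thm.~8.3]{AC25a}.
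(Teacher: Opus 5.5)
Your architecture (change of initial seed by the piecewise-linear maps $\eta$, reduction to the positive orthant, transport back) matches the standard argument behind the cited proofs. However, Step~2 fails as written.

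\textbf{The sign in the $c$-vector argument is reversed.} For $x\in\mathbb{R}^3_{\geq 0}$ and $\varepsilon^{\mathbf{u}}_j=1$, the inequality $\langle x,\mathbf{c}^{\mathbf{u}}_j\rangle_D\geq 0$ is automatic and imposes nothing. It is the columns with $\varepsilon^{\mathbf{u}}_j=-1$ that force the coefficient of $\mathbf{g}^{\mathbf{u}}_j$ to vanish. So the intersection is only \emph{contained} in $\mathcal{C}_{J^+}(G^{\mathbf{u}})$, where $J^+=\{j\mid \varepsilon^{\mathbf{u}}_j=1\}$.
\begin{itemize}
\item Your claimed equality with $J=\{j\mid\varepsilon^{\mathbf{u}}_j=-1\}$ already fails at $\mathbf{u}=[k]$. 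There the intersection is $\mathcal{C}(\mathbf{e}_j : j\neq k)$, while $\mathbf{g}^{[k]}_k$ has $k$th entry $-1$.
\item Even with $J^+$ there is no equality in general. At $\mathbf{u}=[i]S$ one has $\varepsilon^{\mathbf{u}}_S=1$, yet $\tilde{\mathbf{g}}^{[i]S}_S=-\tilde{\mathbf{e}}_{k_0}+p_{k_0s_0}\tilde{\mathbf{e}}_{s_0}\notin\mathbb{R}^3_{\geq 0}$.
\end{itemize}

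\textbf{The ``face of the orthant'' half is unsupported.} It rests on the assertion that ``dual positivity'' forces every nonnegative $g$-vector to be a positive multiple of a unit vector. You give no mechanism for this. It is really a consequence of the fan property, not an input you can use on the way to it.

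\textbf{How to repair Step~2.} Both halves follow directly from the two kinds of sign-coherence, without that assertion.
\begin{itemize}
\item Your opening sentence of Step~2 is correct. Row sign-coherence puts $\mathcal{C}(G^{\mathbf{u}})$ in a closed orthant, so the intersection is $\{x\in\mathcal{C}(G^{\mathbf{u}})\mid x_l=0 \text{ for every nonpositive row } l\}$, which is a face of $\mathcal{C}(G^{\mathbf{u}})$.
\item For the orthant side, the orthogonality relation \eqref{eq: orthogonal relation for usual vectors} gives $\mathcal{C}(G^{\mathbf{u}})=\bigcap_{j}\positiveclosure{\mathbf{c}^{\mathbf{u}}_j}$. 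On $\mathbb{R}^3_{\geq 0}$ the conditions with $\varepsilon^{\mathbf{u}}_j=1$ are automatic. Those with $\varepsilon^{\mathbf{u}}_j=-1$ force $x_l=0$ for all $l\in\mathrm{supp}(\mathbf{c}^{\mathbf{u}}_j)$. Hence the intersection equals $\mathcal{C}\bigl(\mathbf{e}_l \mid l\notin \bigcup_{\varepsilon^{\mathbf{u}}_j=-1}\mathrm{supp}(\mathbf{c}^{\mathbf{u}}_j)\bigr)$, a face of $\mathbb{R}^3_{\geq 0}$.
\item Alternatively, prove only the first half and obtain the other by running Step~3 with $\eta_{\mathbf{u}}$ in place of $\eta_{\mathbf{w}}$. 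This works because the hypotheses hold at every $B^{\mathbf{w}}$.
\end{itemize}
Either way, the statement about nonnegative $g$-vectors then comes out as a corollary.

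\textbf{A separate point in Step~1.} The orthogonality relation alone only converts row sign-coherence of $G^{\mathbf{u}}$ into column sign-coherence of $(C^{\mathbf{u}})^{-1}$. To derive row sign-coherence from \Cref{conj: two conjectures}~($a$), you need the tropical duality of \cite{NZ12}. It realizes $(G^{\mathbf{u}})^{\top}$ as a $C$-matrix of the pattern with initial matrix $(B^{\mathbf{u}})^{\top}$, which is why ($a$) includes transposed patterns. In the real setting this duality must itself be established, together with the initial-seed mutation formula; it is not a consequence of the orthogonality relation.
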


\subsection{Skew-symmetrizing method}
The matrix patterns are defined by the skew-symmetrizable matrix $B$. On the other hand, the matrix patterns corresponding to $B$ are closely related to the ones corresponding to the following skew-symmetric matrix $\tilde{B}$.
\begin{definition}\label{def: skew-symmetrized matrix}
For any skew-symmetrizable matrix $B \in \mathrm{M}_{3}(\mathbb{R})$ with a skew-symmetrizer $D=\mathrm{diag}(d_1,d_2,d_3)$, let $\tilde{B}=D^{\frac{1}{2}}BD^{-\frac{1}{2}}$, where $D^{\frac{1}{2}}=\mathrm{diag}(\sqrt{d_1},\sqrt{d_2},\sqrt{d_3})$ and $D^{-\frac{1}{2}}=(D^{\frac{1}{2}})^{-1}$. Then, for any ${\bf w}=[k_{1},k_{2},\dots,k_{r}] \in \mathcal{T}$, we define $\tilde{B}^{\bf w} = \mu_{k_r}\cdots \mu_{k_1}(\tilde{B})$.
\end{definition}
These matrices can easily be calculated by the following rule.
\begin{lemma}[{\cite[Lem.~8.3]{FZ03}}]\label{lem: skew-symmetrized matrices formula}
Let $B \in \mathrm{M}_{3}(\mathbb{R})$ be an initial exchange matrix with a skew-symmetrizer $D$. Set $B^{\bf w}=(b_{ij}^{\bf w}) \in \mathrm{M}_{3}(\mathbb{R})$ and $\tilde{B}^{\bf w}=(\tilde{b}_{ij}^{\bf w}) \in \mathrm{M}_{3}(\mathbb{R})$. Then, we have
\begin{equation}
\begin{aligned}
\tilde{B}^{\bf w}&=D^{\frac{1}{2}}B^{\bf w}D^{-\frac{1}{2}},\  
\tilde{b}_{ij}^{\bf w}&=\mathrm{sign}(b_{ij}^{\bf w})\sqrt{|b_{ij}^{\bf w}b_{ji}^{\bf w}|}.
\end{aligned}
\end{equation}
In particular, $\tilde{B}^{\bf w}$ is independent of the choice of $D$ and it is always skew-symmetric. 
\end{lemma}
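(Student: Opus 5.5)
The statement to prove is \Cref{lem: skew-symmetrized matrices formula}: for every $\mathbf{w}$ we have $\tilde{B}^{\mathbf{w}}=D^{\frac12}B^{\mathbf{w}}D^{-\frac12}$ and $\tilde b^{\mathbf{w}}_{ij}=\mathrm{sign}(b^{\mathbf{w}}_{ij})\sqrt{|b^{\mathbf{w}}_{ij}b^{\mathbf{w}}_{ji}|}$. The plan is to prove the first identity by induction on the length of $\mathbf{w}$ and then read off the entrywise formula.

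The base case $\mathbf{w}=\emptyset$ is the definition of $\tilde B$. For the inductive step it suffices to show that conjugation by a positive diagonal matrix commutes with a single mutation. Precisely, we will show that if $A$ is any real $3\times 3$ matrix and $P=\mathrm{diag}(p_1,p_2,p_3)$ with $p_i>0$, then
\[
\mu_k(PAP^{-1})=P\mu_k(A)P^{-1}.
\]
Put $A'=PAP^{-1}$, so $a'_{ij}=p_ia_{ij}p_j^{-1}$.

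If $i=k$ or $j=k$, both sides equal $-p_ia_{ij}p_j^{-1}$.

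Otherwise, write $a'_{ik}a'_{kj}=p_ip_k^{-1}p_kp_j^{-1}a_{ik}a_{kj}=p_ip_j^{-1}a_{ik}a_{kj}$. Since $p_ip_j^{-1}>0$, we get $[a'_{ik}a'_{kj}]_+=p_ip_j^{-1}[a_{ik}a_{kj}]_+$. Also $\mathrm{sign}(a'_{ik})=\mathrm{sign}(a_{ik})$. Substituting into \eqref{eq: mutation of b} gives exactly $p_i\bigl(a_{ij}+\mathrm{sign}(a_{ik})[a_{ik}a_{kj}]_+\bigr)p_j^{-1}$. Applying this with $P=D^{\frac12}$ and $A=B^{\mathbf{w}}$ yields $\tilde B^{\mathbf{w}[k]}=D^{\frac12}B^{\mathbf{w}[k]}D^{-\frac12}$, which completes the induction.

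For the entrywise formula we use that mutation preserves skew-symmetrizability with the same $D$ (the standard fact from \cite{FZ03}, checked by the same kind of computation). So $d_ib^{\mathbf{w}}_{ij}=-d_jb^{\mathbf{w}}_{ji}$. Then
\[
\tilde b^{\mathbf{w}}_{ij}=\sqrt{d_i/d_j}\,b^{\mathbf{w}}_{ij}.
\]
Its sign is $\mathrm{sign}(b^{\mathbf{w}}_{ij})$. For its absolute value, note $d_i/d_j=|b^{\mathbf{w}}_{ji}/b^{\mathbf{w}}_{ij}|$ when $b^{\mathbf{w}}_{ij}\ne0$, so $|\tilde b^{\mathbf{w}}_{ij}|=\sqrt{|b^{\mathbf{w}}_{ij}b^{\mathbf{w}}_{ji}|}$. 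When $b^{\mathbf{w}}_{ij}=0$, both sides are $0$.

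This formula involves only $B^{\mathbf{w}}$, so $\tilde B^{\mathbf{w}}$ is independent of the choice of $D$. Skew-symmetry follows because $\mathrm{sign}(b^{\mathbf{w}}_{ji})=-\mathrm{sign}(b^{\mathbf{w}}_{ij})$ while the square-root factor is symmetric in $i,j$.

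The only point requiring any care is the preservation of the skew-symmetrizer under mutation, together with the zero-entry case; everything else is routine.
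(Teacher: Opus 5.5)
Your proof is correct. The commutation $\mu_k(PAP^{-1})=P\mu_k(A)P^{-1}$ for positive diagonal $P$, followed by the entrywise computation from $d_ib^{\mathbf{w}}_{ij}=-d_jb^{\mathbf{w}}_{ji}$, is the standard direct verification behind \cite[Lem.~8.3]{FZ03}; the paper only cites that result and gives no proof of its own. You could also drop the appeal to preservation of the skew-symmetrizer, since it follows from your first identity: $\tilde{B}$ is skew-symmetric, mutation preserves skew-symmetry, and $DB^{\mathbf{w}}=D^{\frac{1}{2}}\tilde{B}^{\mathbf{w}}D^{\frac{1}{2}}$.
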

Based on this fact, we write $\mathrm{Sk}(B)=D^{\frac{1}{2}}BD^{-\frac{1}{2}}$. Since this matrix is independent of the choice of $D$, the operator $\mathrm{Sk}$ can be seen as a function on the set of all skew-symmetrizable matrices.
\par
Now, we may consider the two $C$-, $G$-patterns. One is ${\bf C}(B)=\{C^{\bf w}\}$ and ${\bf G}(B)=\{G^{\bf w}\}$, and the other is ${\bf C}(\mathrm{Sk}(B))=\{\hat{C}^{\bf w}\}$ and ${\bf G}(\mathrm{Sk}(B))=\{\hat{G}^{\bf w}\}$. Let $\hat{C}^{\mathbf{w}}=(\hat{\mathbf{c}}_{1}^{\mathbf{w}},\hat{\mathbf{c}}_{2}^{\mathbf{w}},\hat{\mathbf{c}}_3^{\mathbf{w}})$ and $\hat{G}^{\mathbf{w}}=(\hat{\mathbf{g}}_{1}^{\mathbf{w}},\hat{\mathbf{g}}_{2}^{\mathbf{w}},\hat{\mathbf{g}}_3^{\mathbf{w}})$. Then, the following relations hold.
\begin{proposition}[{\cite{AC25a}}]\label{prop: skew-symmetrizing method}
For any ${\bf w} \in \mathcal{T}$ and $i=1,2,3$, we have
\begin{equation}\label{eq: skew-symmetrizing method}
\hat{C}^{\bf w}=D^{\frac{1}{2}}C^{\bf w}D^{-\frac{1}{2}},
\quad
\hat{G}^{\bf w}=D^{\frac{1}{2}}G^{\bf w}D^{-\frac{1}{2}},
\quad
\hat{\mathbf{c}}_{i}^{\mathbf{w}}=\frac{1}{\sqrt{d_i}}D^{\frac{1}{2}}\mathbf{c}_{i}^{\mathbf{w}},
\quad
\hat{\mathbf{g}}_{i}^{\mathbf{w}}=\frac{1}{\sqrt{d_i}}D^{\frac{1}{2}}\mathbf{g}_{i}^{\mathbf{w}}.
\end{equation}
In particular, for the $G$-fans, there is a one-to-one correspondence $\Delta(B) \to \Delta(\mathrm{Sk}(B))$ given by
\begin{equation}
\mathcal{C}_{J}(G^{\mathbf{w}}) \mapsto \mathcal{C}_J(\hat{G}^{\mathbf{w}})=D^{\frac{1}{2}}\mathcal{C}_J(G^{\mathbf{w}})
\quad
(J \subset \{1,2,3\},\ \mathbf{w} \in \mathcal{T}).
\end{equation}
\end{proposition}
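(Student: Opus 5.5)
The plan is to prove the matrix identities $\hat{C}^{\bf w}=D^{\frac{1}{2}}C^{\bf w}D^{-\frac{1}{2}}$ and $\hat{G}^{\bf w}=D^{\frac{1}{2}}G^{\bf w}D^{-\frac{1}{2}}$ simultaneously by induction on the length of ${\bf w}$. I will work directly with the entrywise recursion \eqref{eq: recursion of C} of \Cref{def: C G matrices}, which needs no sign-coherence. The column formulas and the fan statement will then be formal consequences.

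The base case ${\bf w}=\emptyset$ is trivial, since $C^{\emptyset}=G^{\emptyset}=I$ and $D^{\frac{1}{2}}ID^{-\frac{1}{2}}=I$. For the inductive step, fix ${\bf w}$ and $k$. Assume $\hat{c}^{\bf w}_{ij}=\sqrt{d_i}\,c^{\bf w}_{ij}/\sqrt{d_j}$ and $\hat{g}^{\bf w}_{ij}=\sqrt{d_i}\,g^{\bf w}_{ij}/\sqrt{d_j}$. By \Cref{lem: skew-symmetrized matrices formula}, the exchange matrices used in the recursion for $\mathrm{Sk}(B)$ are $\tilde{b}^{\bf w}_{ij}=\sqrt{d_i}\,b^{\bf w}_{ij}/\sqrt{d_j}$, and likewise $\tilde{b}^{\emptyset}_{ij}=\sqrt{d_i}\,b_{ij}/\sqrt{d_j}$.

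Substitute these into the recursion for $\hat{c}^{{\bf w}[k]}_{ij}$ with $j\neq k$. The key observation is that $[\lambda x]_+=\lambda[x]_+$ for $\lambda>0$. This gives
\[
\hat{c}^{\bf w}_{ik}[\tilde{b}^{\bf w}_{kj}]_+=\tfrac{\sqrt{d_i}}{\sqrt{d_k}}c^{\bf w}_{ik}\cdot\tfrac{\sqrt{d_k}}{\sqrt{d_j}}[b^{\bf w}_{kj}]_+=\tfrac{\sqrt{d_i}}{\sqrt{d_j}}c^{\bf w}_{ik}[b^{\bf w}_{kj}]_+,
\]
and similarly for $[-\hat{c}^{\bf w}_{ik}]_+\tilde{b}^{\bf w}_{kj}$. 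So the whole right-hand side equals $\sqrt{d_i}/\sqrt{d_j}$ times the right-hand side for $c^{{\bf w}[k]}_{ij}$. The case $j=k$ is immediate.

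For $g$, the $j\neq k$ case is trivial. For $j=k$, each summand is treated the same way:
\[
\hat{g}^{\bf w}_{il}[\tilde{b}^{\bf w}_{lk}]_+=\tfrac{\sqrt{d_i}}{\sqrt{d_k}}g^{\bf w}_{il}[b^{\bf w}_{lk}]_+,\qquad \tilde{b}^{\emptyset}_{il}[\hat{c}^{\bf w}_{lk}]_+=\tfrac{\sqrt{d_i}}{\sqrt{d_k}}b_{il}[c^{\bf w}_{lk}]_+.
\]
The intermediate factors $\sqrt{d_l}$ cancel in both cases. Hence $\hat{g}^{{\bf w}[k]}_{ik}=\sqrt{d_i}\,g^{{\bf w}[k]}_{ik}/\sqrt{d_k}$, which completes the induction.

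The column formulas follow by applying both sides to ${\bf e}_i$: $\hat{\bf c}^{\bf w}_i=D^{\frac{1}{2}}C^{\bf w}D^{-\frac{1}{2}}{\bf e}_i=\frac{1}{\sqrt{d_i}}D^{\frac{1}{2}}{\bf c}^{\bf w}_i$, and similarly for $g$.

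For the fans, $\hat{\bf g}^{\bf w}_j$ is a positive multiple of $D^{\frac{1}{2}}{\bf g}^{\bf w}_j$. Therefore $\mathcal{C}_J(\hat{G}^{\bf w})=D^{\frac{1}{2}}\mathcal{C}_J(G^{\bf w})$ as sets. Since $D^{\frac{1}{2}}$ is an invertible linear map, the assignment $\mathcal{C}\mapsto D^{\frac{1}{2}}\mathcal{C}$ is well defined on cones, independent of the labels $({\bf w},J)$, and injective. It is surjective onto $\Delta(\mathrm{Sk}(B))$ by construction, which gives the desired bijection.

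There is no real obstacle here. The only points needing care are using positive homogeneity of $[\cdot]_+$ correctly, and using the initial $\tilde{b}^{\emptyset}$ (rather than $\tilde{b}^{\bf w}$) in the last term of the $g$-recursion, so that the scaling factors cancel.
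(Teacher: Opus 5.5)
Your proof is correct: the simultaneous induction on the length of $\mathbf{w}$ via the entrywise recursion \eqref{eq: recursion of C}, combined with $\tilde{B}^{\mathbf{w}}=D^{\frac{1}{2}}B^{\mathbf{w}}D^{-\frac{1}{2}}$ and the positive homogeneity of $[\cdot]_{+}$, is the standard argument for this statement, and the column formulas and the fan bijection then follow formally as you describe. The paper does not reprove the result (it is quoted from \cite{AC25a}); your route is the natural proof of the cited statement, and, as you note, it needs no sign-coherence.
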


\begin{remark}
Thanks to \Cref{prop: skew-symmetrizing method}, when we consider the $C$-, $G$-matrices associated with a {\em real} exchange matrix $B$, it suffices to focus on the ones associated with the skew-symmetric matrix $\mathrm{Sk}(B)$. However, since the standard theory of cluster algebras is formulated for skew-symmetrizable matrices, we provide the subsequent proofs and statements in the skew-symmetrizable setting to ensure broader applicability.
\end{remark}
\subsection{Modified $c$-, $g$-vectors}
When we discuss the $G$-fan structure, the following modified vectors are more helpful than the ordinary $c$-, $g$-vectors.
\begin{definition}\label{def: modified c g vectors}
Let $B \in \mathrm{M}_{3}(\mathbb{R})$ be an initial exchange matrix with a skew-symmetrizer $D=\mathrm{diag}(d_1,d_2,d_3)$. We define the {\em modified $C$-matrix} $\tilde{C}^{\bf w}$ and the {\em modified $G$-matrix} $\tilde{G}^{\bf w}$ by
\begin{equation}
\tilde{C}^{\bf w}=C^{\bf w}D^{-\frac{1}{2}},
\quad
\tilde{G}^{\bf w}=G^{\bf w}D^{-\frac{1}{2}}.
\end{equation}
Their column vectors are called
{\em modified $c$-vectors} $\tilde{\bf c}_{i}^{\bf w}$ and {\em modified $g$-vectors} $\tilde{\bf g}_{i}^{\bf w}$, which are given by
\begin{equation}\label{eq: definition of modified c-, g-vectors}
\tilde{\bf c}_{i}^{\bf w}=\frac{1}{\sqrt{d_i}}{\bf c}_{i}^{\bf w},
\quad
\tilde{\bf g}_{i}^{\bf w}=\frac{1}{\sqrt{d_i}}{\bf g}_{i}^{\bf w}.
\end{equation}
\end{definition}
As in Definition~\ref{def: G-cone, G-fan}, we may consider the cone $\mathcal{C}_{J}(\tilde{G}^{\bf w})$ ($J \subset \{1,2,3\}$) spanned by its modified $g$-vectors. However, by \eqref{eq: definition of modified c-, g-vectors}, it holds that
\begin{equation}
\mathcal{C}_{J}(G^{\bf w})=\mathcal{C}_{J}(\tilde{G}^{\bf w}).
\end{equation}
We define the {\em modified standard vectors} $\tilde{\bf e}_i=\sqrt{d_i^{-1}}{\bf e}_i$ for any $i=1,2,3$. Then, we may obtain the following recursion.

\begin{proposition}[\cite{AC25a}]
Let $B \in \mathrm{M}_3(\mathbb{R})$ be an initial exchange matrix.
Suppose that ${\bf C}(B)$ is sign-coherent. 
Let $\tilde{B}^{\bf w}=(\tilde{b}_{ij}^{\bf w}) \in \mathrm{M}_{3}(\mathbb{R})$ be the skew-symmetric matrix defined in \Cref{def: skew-symmetrized matrix}.
Then, the modified $c$-, $g$-vectors may be obtained by the following recursions.
\begin{itemize}
\item $\tilde{\bf c}_{i}^{\emptyset}=\tilde{\bf g}_{i}^{\emptyset}=\tilde{\bf e}_{i}$.
\item For any ${\bf w} \in \mathcal{T}$ and $k=1,2,3$, we have
\begin{equation}\label{eq: mutation of modified c- g-vectors}
\tilde{\bf c}^{{\bf w}[k]}_{i}=\begin{cases}
- \tilde{\bf c}^{\bf w}_{k} & i = k,\\
\tilde{\bf c}^{\bf w}_{i}+[\varepsilon_{k}^{\bf w}\tilde{b}_{ki}^{\bf w}]_{+}\tilde{\bf c}^{\bf w}_{k} & i \neq k,
\end{cases}
\quad
\tilde{\bf g}^{{\bf w}[k]}_{i}=\begin{cases}
-\tilde{\bf g}_{k}^{\bf w}+\sum_{j=1}^{3}[-\varepsilon_{k}^{\bf w}\tilde{b}_{jk}^{\bf w}]_{+}\tilde{\bf g}_{j}^{\bf w} & i=k,\\
\tilde{\bf g}_{i}^{\bf w} & i \neq k.
\end{cases}
\end{equation}
\end{itemize}
\end{proposition}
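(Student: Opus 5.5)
The plan is to derive the modified recursion directly from the ordinary recursions \eqref{eq: mutation of c-vectors} and \eqref{eq: mutation of g-vectors}. We rescale by $1/\sqrt{d_i}$ and then use \Cref{lem: skew-symmetrized matrices formula} to convert the entries $b_{ki}^{\mathbf{w}}$ into $\tilde{b}_{ki}^{\mathbf{w}}$. The initial condition is immediate: $C^{\emptyset}=G^{\emptyset}=I$ gives $\tilde{\mathbf{c}}_i^{\emptyset}=\tilde{\mathbf{g}}_i^{\emptyset}=\mathbf{e}_i/\sqrt{d_i}=\tilde{\mathbf{e}}_i$.

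The key identity is
\begin{equation*}
b_{ki}^{\mathbf{w}}\sqrt{d_k/d_i}=\tilde{b}_{ki}^{\mathbf{w}}.
\end{equation*}
It follows from $\tilde{B}^{\mathbf{w}}=D^{\frac12}B^{\mathbf{w}}D^{-\frac12}$, whose $(k,i)$ entry is $\sqrt{d_k}\,b_{ki}^{\mathbf{w}}/\sqrt{d_i}$. Since multiplication by a positive scalar commutes with $[\,\cdot\,]_+$, we also get $[\varepsilon b_{ki}^{\mathbf{w}}]_+\sqrt{d_k/d_i}=[\varepsilon\tilde{b}_{ki}^{\mathbf{w}}]_+$ for any sign $\varepsilon$.

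For the $c$-vectors with $i\neq k$, we divide \eqref{eq: mutation of c-vectors} by $\sqrt{d_i}$:
\begin{equation*}
\tilde{\mathbf{c}}_i^{\mathbf{w}[k]}=\tilde{\mathbf{c}}_i^{\mathbf{w}}+[\varepsilon_k^{\mathbf{w}}b_{ki}^{\mathbf{w}}]_+\sqrt{d_k/d_i}\,\tilde{\mathbf{c}}_k^{\mathbf{w}}=\tilde{\mathbf{c}}_i^{\mathbf{w}}+[\varepsilon_k^{\mathbf{w}}\tilde{b}_{ki}^{\mathbf{w}}]_+\tilde{\mathbf{c}}_k^{\mathbf{w}}.
\end{equation*}
The tropical sign of $\tilde{\mathbf{c}}_k^{\mathbf{w}}$ equals that of $\mathbf{c}_k^{\mathbf{w}}$, because the two differ by a positive factor. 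The case $i=k$ is trivial.

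For the $g$-vectors, the case $i\neq k$ is trivial. For $i=k$, we divide by $\sqrt{d_k}$ and write $\mathbf{g}_j^{\mathbf{w}}=\sqrt{d_j}\,\tilde{\mathbf{g}}_j^{\mathbf{w}}$. The coefficient of $\tilde{\mathbf{g}}_j^{\mathbf{w}}$ becomes $[-\varepsilon_k^{\mathbf{w}}b_{jk}^{\mathbf{w}}]_+\sqrt{d_j/d_k}$, which by the same identity (applied to the entry $(j,k)$) equals $[-\varepsilon_k^{\mathbf{w}}\tilde{b}_{jk}^{\mathbf{w}}]_+$.

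There is no real obstacle. The only point needing care is to apply the scaling identity with the correct orientation of indices in each case: $(k,i)$ for the $c$-vectors and $(j,k)$ for the $g$-vectors.
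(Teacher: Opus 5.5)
Your proof is correct. The paper gives no argument for this statement (it simply cites \cite{AC25a}), and your direct rescaling is the natural verification: you use $\tilde{b}^{\mathbf{w}}_{ki}=\sqrt{d_k/d_i}\,b^{\mathbf{w}}_{ki}$ and $[\lambda x]_+=\lambda[x]_+$ for $\lambda>0$, with the orientation $(k,i)$ for $c$-vectors and $(j,k)$ for $g$-vectors. An equivalent route is to apply $D^{-\frac{1}{2}}$ to the ordinary recursion for $\mathrm{Sk}(B)$ (where $D=I$), using $\hat{\mathbf{c}}_i^{\mathbf{w}}=D^{\frac{1}{2}}\tilde{\mathbf{c}}_i^{\mathbf{w}}$ and $\hat{\mathbf{g}}_i^{\mathbf{w}}=D^{\frac{1}{2}}\tilde{\mathbf{g}}_i^{\mathbf{w}}$ from \Cref{prop: skew-symmetrizing method}.
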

One advantage of using modified vectors is that certain formulas become simpler. For example, by \eqref{eq: orthogonal relation for usual vectors}, the standard duality holds as follows.
\begin{proposition}[\cite{AC25a}]
The following equality holds: 
\begin{equation}\label{eq: orthogonal relation for modified vectors}
\langle \tilde{\bf g}_{i}^{\bf w}, \tilde{\bf c}_{j}^{\bf w} \rangle_{D}=\begin{cases}
1 & \textup{if $i=j$},\\
0 & \textup{if $i \neq j$}.
\end{cases}
\end{equation}
\end{proposition}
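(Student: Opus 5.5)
This identity is a direct rescaling of the duality in \Cref{prop: orthogonality of c, g vectors}, so the plan is to substitute the definition of the modified vectors and use bilinearity. No mutation recursion or induction is needed, since \Cref{prop: orthogonality of c, g vectors} already holds for every $\mathbf{w}\in\mathcal{T}$ under the standing assumption that $\mathbf{C}(B)$ is sign-coherent.

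Fix $\mathbf{w}\in\mathcal{T}$ and $i,j\in\{1,2,3\}$. By \eqref{eq: definition of modified c-, g-vectors} we have $\tilde{\mathbf{g}}_i^{\mathbf{w}}=d_i^{-1/2}\mathbf{g}_i^{\mathbf{w}}$ and $\tilde{\mathbf{c}}_j^{\mathbf{w}}=d_j^{-1/2}\mathbf{c}_j^{\mathbf{w}}$. Since $\langle\,,\,\rangle_D$ is bilinear, this gives
\[
\langle \tilde{\mathbf{g}}_i^{\mathbf{w}},\tilde{\mathbf{c}}_j^{\mathbf{w}}\rangle_D=\frac{1}{\sqrt{d_id_j}}\langle \mathbf{g}_i^{\mathbf{w}},\mathbf{c}_j^{\mathbf{w}}\rangle_D .
\]

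We then apply \eqref{eq: orthogonal relation for usual vectors}. If $i\neq j$, the right-hand side is $0$. If $i=j$, it equals $d_i/\sqrt{d_i d_i}=1$, because $d_i>0$.

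Equivalently, in matrix form: \Cref{prop: orthogonality of c, g vectors} says $(G^{\mathbf{w}})^{\top}DC^{\mathbf{w}}=D$. Hence
\[
(\tilde G^{\mathbf{w}})^{\top}D\tilde C^{\mathbf{w}}=D^{-1/2}(G^{\mathbf{w}})^{\top}DC^{\mathbf{w}}D^{-1/2}=D^{-1/2}DD^{-1/2}=I .
\]

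There is no real obstacle. The only point to check is that the sign-coherence hypothesis required by \Cref{prop: orthogonality of c, g vectors} is in force, and it is assumed throughout this subsection.
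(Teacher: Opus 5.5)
Your proof is correct and follows the paper's route: the paper derives this identity directly from the duality $\langle \mathbf{g}_i^{\mathbf{w}},\mathbf{c}_j^{\mathbf{w}}\rangle_D=d_i\delta_{ij}$ in \Cref{prop: orthogonality of c, g vectors}. It does so by the same rescaling $\tilde{\mathbf{g}}_i^{\mathbf{w}}=d_i^{-1/2}\mathbf{g}_i^{\mathbf{w}}$, $\tilde{\mathbf{c}}_j^{\mathbf{w}}=d_j^{-1/2}\mathbf{c}_j^{\mathbf{w}}$ that you use.
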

\subsection{Stereographic projection}
To draw the $G$-fan in $\mathbb{R}^3$, we often use its stereographic projection. We explain how to draw such pictures, which can also be referred to \cite{Mul16, Rea23, Nak23, Nak24}.
\begin{enumerate}
	\item Firstly, we consider the projection of $G$-fan to the unit sphere $S^2$ in $\mbR^3$.
	\item Secondly, we use the stereographic projection to the tangent plane $P$ of $S^2$ at $(\frac{\sqrt{3}}{3},\frac{\sqrt{3}}{3},\frac{\sqrt{3}}{3})$ from the antipode $N=(-\frac{\sqrt{3}}{3},-\frac{\sqrt{3}}{3},-\frac{\sqrt{3}}{3})$.
\end{enumerate}
This map is a continuous bijection between a certain projectivization $(\mathbb{R}^3\setminus \mathcal{C}(N))/\mathbb{R}_{>0} \cong S^2\setminus\{N\}$ and the plane $P \cong \mathbb{R}^2$. In the context of the $G$-fan, the eliminated point $\mathcal{C}(N)$ does not affect the essential structure. This is because the negative orthant $\mathbb{R}_{\leq 0}^3$ is known to be either a single chamber or the compliment of the $G$-fan (e.g. \cite{NZ12}, \cite[Prop.~5.4]{AC25a}).
\par
In this image, each half line in $\mathbb{R}^3$ is sent to a point, and each plane is sent to a circle. One important fact for the $G$-fan is that each $3$-dimensional cone appears as a triangle.
\par
For example, the three planes $\Ezeroregion{\mathbf{e}_1}$, $\Ezeroregion{\mathbf{e}_2}$, and $\Ezeroregion{\mathbf{e}_3}$ are illustrated as the three circles in \Cref{fig: orthants}, and each connected component in this picture represents an orthant. The center triangle is the cone $\mathcal{C}(\mathbf{e}_1, \mathbf{e}_2, \mathbf{e}_3)$. This is a chamber of every $G$-fan because the initial matrix is the identity.
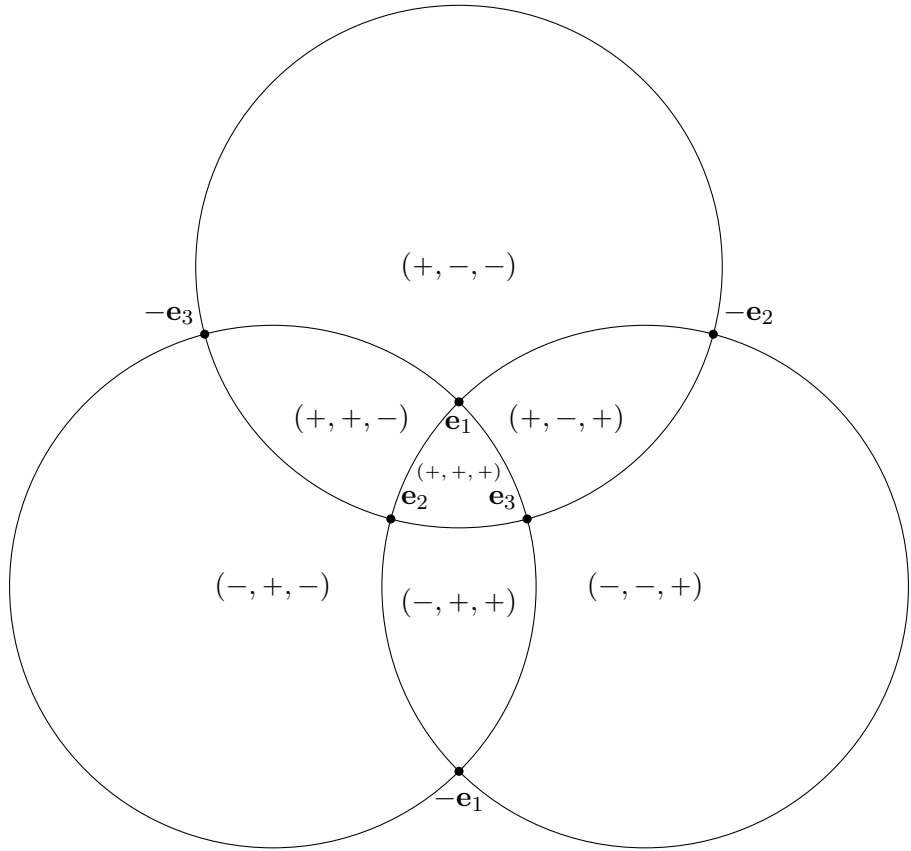
\begin{figure}
\centering
\begin{tikzpicture}
\draw (0,{2*sqrt(2)}) node {$(+,-,-)$} circle [radius= {2*sqrt(3)}];    
\draw ({-sqrt(6)},{-sqrt(2)}) node {$(-,+,-)$} circle [radius= {2*sqrt(3)}];    
\draw ({sqrt(6)},{-sqrt(2)}) node {$(-,-,+)$} circle [radius= {2*sqrt(3)}];    
\draw (0,1.0353) node [below = 1.5pt] {${\bf e}_1$};    
\draw (-0.8966, -0.5176) node [above right] {${\bf e}_2$};   
\draw (0.8966, -0.5176) node [above left] {${\bf e}_3$};   
\fill (0,1.0353) circle [radius=0.06];
\fill (-0.8966, -0.5176) circle [radius=0.06];
\fill (0.8966, -0.5176) circle [radius=0.06];
\fill (0.0,-3.8637) circle [radius=0.06];
\draw (0.0,-3.8637) node [below = 3pt] {$-{\bf e}_1$};
\fill (3.3461,1.9319) circle [radius=0.06];
\draw (3.3461,1.9319) node [above right] {$-{\bf e}_2$};
\fill (-3.3461,1.9319) circle [radius=0.06];
\draw (-3.3461,1.9319) node [above left] {$-{\bf e}_3$};
\draw (0,0.1) node {\tiny $(+,+,+)$};
\draw (1.4142,0.8165) node {$(+,-,+)$};
\draw (-1.4142,0.8165) node {$(+,+,-)$};
\draw (0,-1.633) node {$(-,+,+)$};
\end{tikzpicture}
\caption{The orthants in the stereographic projection}
\label{fig: orthants}
\end{figure}

\section{Rank $3$ cluster-cyclic framework}\label{sec: cluster-cyclic framework}
In this section, we introduce rank $3$ cluster-cyclic exchange matrices and its classification,  and present the corresponding mutation formulas, which are simplified due to the specific nature of the cluster-cyclic structure.
\subsection{Markov constant and cluster-cyclicity}
From now on, we always focus on rank $3$ cluster-cyclic exchange matrices throughout this paper, which is defined as follows.
\begin{definition}
Let $B \in \mathrm{M}_{3}(\mathbb{R})$ be a skew-symmetrizable matrix. This matrix is said to be {\em cyclic} if it holds that
\begin{equation}\label{eq: definition of cluster cyclic}
\mathrm{sign}(B)=\pm\left(\begin{matrix}
0 & + & -\\
- & 0 & +\\
+ & - & 0
\end{matrix}\right).
\end{equation}
(We assume that all off-diagonal entries are nonzero.)
For any initial exchange matrix $B \in \mathrm{M}_3(\mathbb{R})$, if every $B^{\bf w}$ (${\bf w} \in \mathcal{T}$) is cyclic, $B$ is said to be {\em cluster-cyclic}.
\end{definition}
Let $B$ be a cluster-cyclic matrix.
When we consider $B^{{\bf w}[k]}$, the signs on the $k$th row and the $k$th column are changed. Thus, it holds that
\begin{equation}\label{eq: sign reversed relation}
\mathrm{sign}(B^{{\bf w}[k]})=-\mathrm{sign}(B^{\bf w}).
\end{equation}
\begin{example}
One important example of the cluster-cyclic matrices is the skew-symmetric matrix corresponding to the {\em Markov quiver}, which is given by
\begin{equation}\label{eq: Markov matrix}
\pm \left(\begin{matrix}
0 & -2 & 2\\
2 & 0 & -2\\
-2 & 2 & 0
\end{matrix}\right).
\end{equation}
Let $B$ be one of the above matrices. In this case, the mutation is given by $\mu_i(B)=-B$ for each $i=1,2,3$.
\par
In view of \Cref{lem: skew-symmetrized matrices formula} and \Cref{prop: skew-symmetrizing method}, this property can be generalized to the following skew-symmetrizable matrix:
\begin{equation}\label{eq: value-rigid type}
B=\pm\left(\begin{matrix}
0 & -p' & r\\
p & 0 & -q'\\
-r' & q & 0
\end{matrix}\right),
\end{equation}
where $p,q,r,p',q',r' \in \mathbb{R}_{>0}$ satisfy $pp'=qq'=rr'=4$ and $pqr=p'q'r'$. In this case, $\mathrm{Sk}(B)$ shapes as \eqref{eq: Markov matrix} and the mutation of $B$ is given by $\mu_i(B)=-B$ for each $i=1,2,3$.
\end{example}
The following number plays an important role in the classification of cluster-cyclicity.
\begin{definition}
For any cyclic matrix $B=(b_{ij}) \in \mathrm{M}_3(\mathbb{R})$, define the {\em Markov constant} $C(B)$ by
\begin{equation}
C(B)=|b_{12}b_{21}|+|b_{23}b_{32}|+|b_{31}b_{13}|-|b_{12}b_{23}b_{31}|.
\end{equation}
\end{definition}
It was introduced by \cite{BBH11, Aka24}. At first glance, the expression of the Markov constant seems not symmetric due to the last term $b_{12}b_{23}b_{31}$. However, by \eqref{eq: rank 3 skew-symmetrizable condition}, the last term can be replaced with $|b_{21}b_{32}b_{13}|$, which ensures the symmetry of this definition.
In particular, by setting $p_{ij}=\sqrt{|b_{ij}b_{ji}|}$, we have $|b_{12}b_{23}b_{31}|=p_{12}p_{23}p_{31}$. Thus, we may obtain the following expression.
\begin{equation}
C(B)=p_{12}^2+p_{23}^2+p_{31}^2-p_{12}p_{23}p_{31}.
\end{equation}

\begin{proposition}[{\cite{BBH11,Aka24}}]\label{prop: ordinary classification of cluster-cyclicity}
Let $B=(b_{ij}) \in \mathrm{M}_{3}(\mathbb{R})$ be a skew-symmetrizable and cyclic matrix. Then, the following two conditions are equivalent.
\begin{itemize}
\item $B$ is cluster-cyclic.
\item It holds that $|b_{ij}b_{ji}| \geq 4$ for any $i \neq j$ and $C(B) \leq 4$.
\end{itemize}
\end{proposition}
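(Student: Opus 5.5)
Since $\tilde B^{\mathbf w}=\mathrm{Sk}(B^{\mathbf w})$ has the same sign pattern as $B^{\mathbf w}$, the quantities $p_{ij}^{\mathbf w}=\sqrt{|b^{\mathbf w}_{ij}b^{\mathbf w}_{ji}|}$ are its absolute entries, and both cyclicity and $C(B)$ are expressed through them. We may therefore work with the triple $(x,y,z)=(p_{12},p_{23},p_{31})\in\mathbb{R}_{>0}^3$ throughout. The first step is to record how mutation acts on such a triple while $B^{\mathbf w}$ is cyclic. For a cyclic matrix and direction $k$, exactly one of the two products $b_{ik}b_{kj}$, $b_{jk}b_{ki}$ (with $\{i,j,k\}=\{1,2,3\}$) is positive. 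The mutation rule then replaces the entry $p_{ij}$ by $|xyz/p_{ij}-p_{ij}|$, i.e.\ by $|p_{ik}p_{kj}-p_{ij}|$, and leaves the other two fixed. This is the Vieta involution of the Markov-type equation $x^2+y^2+z^2-xyz=C$, so $C$ is preserved as long as the mutated matrix remains cyclic. If $p_{ik}p_{kj}-p_{ij}<0$ the new matrix is acyclic, and if it equals $0$ an entry vanishes, which violates cyclicity.

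For the direction ``cluster-cyclic $\Rightarrow$ conditions'', first suppose some $p_{ij}<2$, say $z<2$. Mutating alternately in the two directions that fix $z$ acts on $(x,y)$ as the rank-$2$ dynamics with $|b c|=z^2<4$, i.e.\ the finite-type dynamics, which is periodic. Along that periodic orbit some entry must reach $p_{ik}p_{kj}-p_{ij}\le 0$, because the Chebyshev-type recursion $u_{n+1}=zu_n-u_{n-1}$ with $z=2\cos\theta$ cannot stay positive forever; at that point we obtain a non-cyclic matrix, a contradiction. Hence all $p_{ij}\ge2$. Next suppose $C>4$. We run the descent: at each step mutate the direction that decreases the largest coordinate. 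Such a step exists while the matrix stays cyclic, because if $x\ge y,z$ then $yz-x<x$ unless $x^2$ is small. Since $C$ is invariant, the descent stays in the region $\{p\ge2\}$, in which it must terminate. Terminating means that every mutation increases or keeps every coordinate, i.e.\ $x\le yz/2$ and similarly for the other two. Together with $x,y,z\ge2$ this forces $C\le4$, via the inequality $x^2+y^2+z^2-xyz\le 4$ on that region. Alternatively, the descent reaches a point where $yz-x\le0$, which is non-cyclic. Either way we get a contradiction with $C>4$.

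For the converse, assume all $p_{ij}\ge2$ and $C\le4$. We show by induction on $|\mathbf w|$ that $B^{\mathbf w}$ is cyclic and again satisfies both conditions. Given the triple with $x\ge2$, $y,z\ge2$ and $C\le4$, we need $x':=yz-x>0$ and in fact $x'\ge2$. Note that $x$ and $x'$ are the two roots of $t^2-yzt+(y^2+z^2-C)=0$, so $xx'=y^2+z^2-C\ge y^2+z^2-4$ and $x+x'=yz$. If $x'<2$, then $x>yz-2$, so $x^2+y^2+z^2-xyz> (yz-2)^2+y^2+z^2-(yz-2)yz = y^2+z^2-2yz+4\ge4$, a contradiction. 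Here we used that $t^2-yzt$ is decreasing below $yz/2$ and increasing above it, and one should check both possible positions of $x$ relative to $yz/2$. This gives $x'\ge2>0$, so the mutated matrix is cyclic with $C$ unchanged, and the induction closes.

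The main obstacle is the forward direction with $C>4$. One has to show rigorously that the descent cannot continue forever and that its terminal configuration violates $C>4$; the clean inequality to isolate is that $x,y,z\ge2$ and $2\max\le$ product of the other two together imply $C\le4$. Boundary cases where some $p_{ij}=2$ exactly will need separate care.
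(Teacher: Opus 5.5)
The paper does not prove this proposition; it cites \cite{BBH11,Aka24}, so your argument has to stand on its own. Your converse direction is correct and complete. Only one case arises there: $y,z\ge2$ gives $yz-2\ge yz/2$, so $x>yz-2$ already puts $x$ on the increasing branch of $t\mapsto t^2-yzt$ and yields $C>(y-z)^2+4\ge4$. The reduction to the triple, and the rule ``$p_{ij}\mapsto p_{ik}p_{kj}-p_{ij}$, cyclic iff this is positive'', are right. Your justification is misstated, though: by sign-skew-symmetry $b_{ik}b_{kj}$ and $b_{jk}b_{ki}$ always have the same sign, and for a cyclic matrix both are positive. In the step excluding $p_{ij}<2$, the rank-$2$ dynamics is not periodic for general real $z<2$, but what you actually use is true. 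Write $z=2\cos\theta$ with $0<\theta<\pi/2$; the sequence is $R\cos(n\theta-\varphi)$ with $R>0$, and steps of size $\theta<\pi$ cannot skip the open half-circle where the cosine is negative.

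The genuine gap is the case $C>4$ of the forward direction. The claim that the descent ``must terminate'' is not automatic over $\mathbb{R}$. The sum of the triple strictly decreases and is bounded below, but that does not force the descent to stop. Indeed, \Cref{lem: the condition for minimum assumption} and the example with $C(B)=4$ after it exhibit a cluster-cyclic class in which your descent (always lowering the largest entry) runs forever. So you must use $C>4$ quantitatively, and you also left the terminal inequality unproved. Both fixes are short.

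If $x$ is the largest entry and $2x>yz$, then $x$ and $x'=yz-x$ are the roots of $t^2-yzt+(y^2+z^2-C)$, and $x>yz/2$. Hence
\[
x-x'=\sqrt{(y^2-4)(z^2-4)+4(C-4)}\ \ge\ 2\sqrt{C-4}>0 .
\]
This uses $y,z\ge2$, which holds at every stage by your first step applied to each (again cluster-cyclic) $B^{\mathbf w}$. So every step lowers $x+y+z\ge6$ by at least $2\sqrt{C-4}$. The descent therefore stops after finitely many steps at a triple with, after relabeling, $x\ge y\ge z\ge2$ and $2x\le yz$. (A decreasing step need not exist, e.g.\ at $(3,3,3)$, but you only need the dichotomy ``descend or stop''.)

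At the terminal triple, $t\mapsto t^2-yzt$ is decreasing on $[y,yz/2]\ni x$, so
\[
C\le y^2(2-z)+z^2\le z^2(3-z)=4-(z-2)^2(z+1)\le 4,
\]
contradicting $C>4$. With these additions your proof is complete.
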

\begin{remark}
Another classification is known by \cite{Sev12} based on an {\em admissible quasi-Cartan companion}. In this paper, we take a specific one called the {\em pseudo Cartan companion}, see \eqref{eq: definition of pseudo Cartan companion}, and state this fact in \Cref{rem: Seven's result}.
\end{remark}
We now give some other inequalities derived from \Cref{prop: ordinary classification of cluster-cyclicity}. For a real number $p \geq 2$, set $\alpha(p)=\frac{1}{2}(p+\sqrt{p^2-4})$. By a direct calculation, we verify $\alpha(p)^2-p\alpha(p)+1=0$.
\begin{lemma}
Let $B=(b_{ij}) \in \mathrm{M}_3(\mathbb{R})$ be a cluster-cyclic matrix. Set $p_{ij}=\sqrt{|b_{ij}b_{ji}|}$ and $\alpha_{ij}=\alpha(p_{ij})$. Then, for any $\{i,j,k\}=\{1,2,3\}$, the following inequalities hold.
\begin{gather}
\frac{1}{2}\left(p_{ik}p_{kj}-\sqrt{(p_{ik}^2-4)(p_{kj}^2-4)}\right) \leq p_{ij} \leq \frac{1}{2}\left(p_{ik}p_{kj}+\sqrt{(p_{ik}^2-4)(p_{kj}^2-4)}\right),\label{eq: 1st form of cluster-cyclic}\\
\alpha_{ik}p_{kj}-p_{ij} \geq 0,\label{eq: 2nd form of cluster-cyclic}\\
\alpha_{ik}\alpha_{kj}-\alpha_{ij} \geq 0.\label{eq: 3rd form of cluster-cyclic}
\end{gather}
\end{lemma}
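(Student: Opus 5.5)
We derive all three inequalities from \Cref{prop: ordinary classification of cluster-cyclicity}, which gives $p_{ij}\geq 2$ for all $i\neq j$ and $C(B)\leq 4$. Set $a=p_{ik}$, $b=p_{kj}$, $x=p_{ij}$. Since the Markov constant is symmetric in the three pairs, the condition $C(B)\le 4$ reads
\begin{equation*}
x^2-abx+(a^2+b^2-4)\leq 0 .
\end{equation*}
We view the left-hand side as a quadratic polynomial in $x$.

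\textbf{Step 1: inequality \eqref{eq: 1st form of cluster-cyclic}.} The discriminant of the quadratic is $a^2b^2-4a^2-4b^2+16=(a^2-4)(b^2-4)$. It is nonnegative because $a,b\geq 2$. A monic quadratic is $\le 0$ exactly between its roots, and the roots here are $\tfrac12\bigl(ab\pm\sqrt{(a^2-4)(b^2-4)}\bigr)$. So $x$ lies between them, which is exactly \eqref{eq: 1st form of cluster-cyclic}.

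\textbf{Step 2: inequality \eqref{eq: 2nd form of cluster-cyclic}.} From Step 1, $x\leq \tfrac12\bigl(ab+\sqrt{(a^2-4)(b^2-4)}\bigr)$, so it suffices to show
\begin{equation*}
\tfrac12\bigl(ab+\sqrt{(a^2-4)(b^2-4)}\bigr)\leq \alpha(a)\,b=\tfrac12\bigl(a+\sqrt{a^2-4}\bigr)b .
\end{equation*}
After cancelling $\tfrac12 ab$, this becomes $\sqrt{a^2-4}\,\sqrt{b^2-4}\le \sqrt{a^2-4}\,b$, which holds since $\sqrt{b^2-4}\le b$.

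\textbf{Step 3: inequality \eqref{eq: 3rd form of cluster-cyclic}.} The cleanest route is a hyperbolic parametrization. Write $a=2\cosh s$, $b=2\cosh t$, $x=2\cosh u$ with $s,t,u\geq 0$; this is possible since all three values are $\ge 2$. Then $\alpha(2\cosh s)=e^{s}$, because $\tfrac12\bigl(2\cosh s+2\sinh s\bigr)=e^s$. The two roots in Step 1 become
\begin{equation*}
2\cosh s\cosh t\pm 2\sinh s\sinh t=2\cosh(s\pm t).
\end{equation*}
So \eqref{eq: 1st form of cluster-cyclic} reads $\cosh(s-t)\le\cosh u\le\cosh(s+t)$. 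Since $\cosh$ is increasing on $[0,\infty)$, this gives $|s-t|\le u\le s+t$. Hence $\alpha_{ij}=e^{u}\le e^{s+t}=\alpha_{ik}\alpha_{kj}$, which is \eqref{eq: 3rd form of cluster-cyclic}.

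The same parametrization also re-proves Step 2: $\alpha_{ik}p_{kj}-p_{ij}=e^{s}(e^{t}+e^{-t})-2\cosh u$, and this is at least $e^{s+t}-e^{u}\ge 0$, using $e^{s-t}\ge e^{-u}$, which follows from $u\ge t-s$.

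The only point needing care is the identification of roots with $\cosh(s\pm t)$ and the monotonicity of $\cosh$ on $[0,\infty)$. We expect no real obstacle beyond these routine computations.
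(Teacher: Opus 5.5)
Your proof is correct and follows the paper's route. Steps~1 and~2 are exactly the paper's arguments: you factor the Markov-constant quadratic in $p_{ij}$, then bound $p_{ij}$ by the upper root using $\sqrt{p_{kj}^2-4}\le p_{kj}$. For the third inequality, the paper combines $p_{ij}\le p_0$ (the upper root) with the monotonicity of $\alpha$ and a direct algebraic check that $\alpha(p_0)=\alpha_{ik}\alpha_{kj}$; your substitution $p=2\cosh(\cdot)$ is simply a cleaner way to see that same identity, since $\alpha(2\cosh(s+t))=e^{s}e^{t}$.
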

In \cite[Lem.~4.3]{FT19}, an equivalent expression of \eqref{eq: 3rd form of cluster-cyclic} is derived, and is called the {\em triangle inequality}. For the reader's convenience, we also give a proof.
\begin{proof}
For simplicity, set $p=p_{ij}$, $q=p_{ik}$, $r=p_{kj}$, and $\alpha_x=\alpha(x)$ for any $x \geq 2$. By \Cref{prop: ordinary classification of cluster-cyclicity}, we have $p^2+q^2+r^2-pqr \leq 4$. This inequality may be rearranged to
\begin{equation}
\left\{p-\frac{1}{2}\left(qr+\sqrt{(q^2-4)(r^2-4)}\right)\right\}\left\{p-\frac{1}{2}\left(qr-\sqrt{(q^2-4)(r^2-4)}\right)\right\} \leq 0.
\end{equation}
Thus, \eqref{eq: 1st form of cluster-cyclic} holds. Moreover, we have
\begin{equation}
\begin{aligned}
\alpha_qr-p &\geq \frac{1}{2}\left(q+\sqrt{q^2-4}\right)r-\frac{1}{2}\left(qr+\sqrt{(q^2-4)(r^2-4)}\right)
\\
&=\frac{1}{2}\sqrt{q^2-4}\left(r-\sqrt{r^2-4}\right)\geq 0.
\end{aligned}
\end{equation}
Thus, \eqref{eq: 2nd form of cluster-cyclic} holds.
Set $p_0=\frac{1}{2}(qr+\sqrt{(q^2-4)(r^2-4)})$. Then, $p \leq p_0$ implies $\alpha(p) \leq \alpha(p_0)$. Thus, we have
\begin{equation}\label{eq: lowerbound of lemma inclusion formula}
\alpha_{q}\alpha_{r}-\alpha_{p} \geq \frac{1}{4}\left(q+\sqrt{q^2-4}\right)\left(r+\sqrt{r^2-4}\right)-\frac{1}{2}\left(p_0+\sqrt{p_0^2-4}\right).
\end{equation}
Since
\begin{equation}
\begin{aligned}
p_0^2-4&=\frac{1}{4}\left(2q^2r^2-4q^2-4r^2+2qr\sqrt{(q^2-4)(r^2-4)}\right)\\
&=\frac{1}{4}\left(q\sqrt{r^2-4}+r\sqrt{q^2-4}\right)^2,
\end{aligned}
\end{equation}
we have
\begin{equation}\label{eq: p0 expression}
\begin{aligned}
p_0+\sqrt{p_0^2-4}&=\frac{1}{2}\left(qr+\sqrt{(q^2-4)(r^2-4)}+q\sqrt{r^2-4}+r\sqrt{q^2-4}\right)\\
&=\frac{1}{2}\left(q+\sqrt{q^2-4}\right)\left(r+\sqrt{r^2-4}\right).
\end{aligned}
\end{equation}
By substituting it into (\ref{eq: lowerbound of lemma inclusion formula}), we obtain \eqref{eq: 3rd form of cluster-cyclic}.
\end{proof}

\subsection{Mutation of tropical signs}
When we concentrate on the cluster-cyclic case, the recursion for the tropical signs $\varepsilon_j^{\mathbf{w}}$ can be obtained as follows.
\begin{theorem}[{\cite[Thm.~3.4]{AC25b}}]\label{thm: recursion for tropical signs}
Let $B \in \mathrm{M}_{3}(\mathbb{R})$ be a cluster-cyclic initial exchange matrix. Then, its $C$-pattern is sign-coherent. Moreover, for any ${\bf w} \in \mathcal{T}\backslash\{\emptyset\}$, let $k$ be the last index of ${\bf w}$. Then, the following statements hold.
\\
\textup{($a$)} There exists a unique $s \in \{1,2,3\} \setminus\{k\}$ such that
\begin{equation}\label{eq: condition of S}
\varepsilon_{s}^{\bf w} \neq \varepsilon_{k}^{\bf w}, \quad \varepsilon_{s}^{\bf w}\mathrm{sign}(b_{ks}^{\bf w})=-1.
\end{equation}
\textup{($b$)} Let $s \in \{1,2,3\}\setminus\{k\}$ be the above index, and let $t \in \{1,2,3\}\setminus\{k,s\}$ be the other index. Then, we have
\begin{equation}\label{eq: recursion of tropical signs}
(\varepsilon_{k}^{{\bf w}[s]},\varepsilon_{s}^{{\bf w}[s]},\varepsilon_{t}^{{\bf w}[s]})=(-\varepsilon_{k}^{{\bf w}},-\varepsilon_{s}^{{\bf w}},\varepsilon_{t}^{{\bf w}}),
\quad
(\varepsilon_{k}^{{\bf w}[t]},\varepsilon_{s}^{{\bf w}[t]},\varepsilon_{t}^{{\bf w}[t]})=(\varepsilon_{k}^{{\bf w}},\varepsilon_{s}^{{\bf w}},-\varepsilon_{t}^{{\bf w}}).
\end{equation}
In particular, all the tropical signs $\varepsilon_j^{\mathbf{w}}$ may be obtained by the above recursion and the following initial conditions.
\begin{equation}\label{eq: initial tropical signs}
\varepsilon_{j}^{\emptyset}=1,\quad \varepsilon_{j}^{[i]}=\begin{cases}
-1 & i=j,\\
1 & i\neq j.
\end{cases}
\end{equation}
\end{theorem}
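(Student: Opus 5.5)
The plan is an induction on the length of $\mathbf{w}$. Sign-coherence, the existence and uniqueness of $s$, and the recursion \eqref{eq: recursion of tropical signs} are proved together with an auxiliary quantitative invariant, chosen strong enough to survive one more mutation. By \Cref{prop: skew-symmetrizing method} and \eqref{eq: mutation of modified c- g-vectors}, we may work with the skew-symmetric matrices $\tilde B^{\mathbf{w}}$ and modified $c$-vectors; this changes no signs.

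The key observation is that the recursion \eqref{eq: mutation of modified c- g-vectors} acts on each coordinate separately. If we fix a row index $m$ and set $x_j=(\tilde{\mathbf{c}}^{\mathbf{w}}_j)_m$, then $x'_i=x_i+[\varepsilon_k\tilde b_{ki}]_+x_k$ and $x'_k=-x_k$, with coefficients determined only by the tropical signs and $\tilde B^{\mathbf{w}}$. So it suffices to control triples $(x_k,x_s,x_t)$ along the mutation tree.

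The base cases are $\mathbf{w}=\emptyset$ and $\mathbf{w}=[i]$, which are computed directly and give \eqref{eq: initial tropical signs}. For $\mathbf{w}=[i]$, the index $s$ is read off from the cyclic sign pattern \eqref{eq: definition of cluster cyclic}.

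For the inductive step, let $k$ be the last index, and let $s,t$ be as in ($a$), with $p_{ab}=|\tilde b^{\mathbf{w}}_{ab}|$. The invariant I would carry is
\[
(\mathrm{I}_{\mathbf{w}}):\quad \varepsilon_j x_j\ge 0 \ (j=k,s,t),\qquad |x_k|\le \alpha_{ks}^{-1}\,|x_s|,
\]
possibly together with a companion bound relating $|x_t|$ to $|x_s|$ and $|x_k|$, with constants built from the $\alpha$'s.

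Consider first mutation at $s$. Since $\varepsilon_s\,\mathrm{sign}(b^{\mathbf{w}}_{ks})=-1$, the coefficient of $x_s$ in $x'_k$ is $p_{ks}$. Because $x_k$ and $x_s$ have opposite signs, we get $x'_k=x_k+p_{ks}x_s$, and $|x_k|\le |x_s|/\alpha_{ks}\le p_{ks}|x_s|$ forces $\mathrm{sign}(x'_k)=\mathrm{sign}(x_s)=-\varepsilon_k$. We also have $x'_s=-x_s$. For $x'_t$, the sign condition and the cyclic sign pattern should force the coefficient to vanish (or to add a same-signed term), so $\varepsilon_t$ is preserved. This yields the first half of \eqref{eq: recursion of tropical signs}.

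Mutation at $t$ is handled similarly: $x'_t=-x_t$, and $x_k,x_s$ either stay unchanged or receive same-signed additions, so only $\varepsilon_t$ flips.

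It then remains to check two things in $\mathbf{w}[s]$ and $\mathbf{w}[t]$. First, the index singled out by ($a$) is the correct new one: $k$ in $\mathbf{w}[s]$, and presumably $s$ in $\mathbf{w}[t]$. This follows from the reversal $\mathrm{sign}(B^{\mathbf{w}[k]})=-\mathrm{sign}(B^{\mathbf{w}})$ in \eqref{eq: sign reversed relation}. Second, the invariant is restored with respect to the mutated values $p'_{ab}$. Uniqueness of $s$ in ($a$) is a finite sign check once the signs of $\varepsilon$ and of $B^{\mathbf{w}}$ are known.

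The main obstacle is this last step: choosing the invariant so that it is exactly inherited. Propagating $|x'_{\mathrm{new}\,k}|\le \alpha^{-1}|x'_{\mathrm{new}\,s}|$ involves the new entry $p'$, which is given by $p_{ij}'=p_{ik}p_{kj}-p_{ij}$ up to sign conventions. Here I would use the cluster-cyclic inequalities \eqref{eq: 2nd form of cluster-cyclic} and \eqref{eq: 3rd form of cluster-cyclic}, together with $\alpha^2-p\alpha+1=0$, which say precisely that $\alpha_{ik}p_{kj}\ge p_{ij}$ and $\alpha_{ik}\alpha_{kj}\ge\alpha_{ij}$. The condition $p_{ij}\ge 2$ from \Cref{prop: ordinary classification of cluster-cyclicity} guarantees that the $\alpha$'s are real and at least $1$.

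If the bound with $\alpha^{-1}$ turns out not to be preserved under $t$-mutations, my fallback is to replace it by a bound using the ratio $|x_k|:|x_s|:|x_t|$ against the positive eigenvector-type data $(1,\alpha,\alpha\alpha')$. The $t$-branch inequalities would then follow from the triangle inequality \eqref{eq: 3rd form of cluster-cyclic}.
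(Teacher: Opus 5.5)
The paper does not prove this statement; it imports it from \cite[Thm.~3.4]{AC25b}. Your plan therefore has to be judged on its own, and on those terms it works, more easily than you expect. The invariant you propose (row-wise sign-coherence together with $|x_k|\le\alpha_{ks}^{-1}|x_s|$) is preserved exactly. You need no companion bound on $x_t$, no fallback, and neither \eqref{eq: 2nd form of cluster-cyclic} nor \eqref{eq: 3rd form of cluster-cyclic}. The reason is that the entry $p$ appearing in the invariant is never the entry that changes under the mutation.

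Here is the $s$-mutation step in full. The coefficient of $x_s$ in $x'_t$ is $0$, because $\mathrm{sign}(b^{\mathbf{w}}_{st})=-\mathrm{sign}(b^{\mathbf{w}}_{sk})$ and $\varepsilon_s\mathrm{sign}(b^{\mathbf{w}}_{sk})=1$. Since $p_{ks}$ is unchanged, $|x'_k|=p_{ks}|x_s|-|x_k|\ge(p_{ks}-\alpha_{ks}^{-1})|x_s|=\alpha_{ks}|x'_s|$. This is precisely the invariant at $\mathbf{w}[s]$ for the new pair $(K,S)=(s,k)$. At the base $[i]$ you must check the invariant itself, not only the signs. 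By \Cref{lem: kst} it reduces, in row $k_0$, to $1\le\alpha_{k_0s_0}^{-1}p_{k_0s_0}=1+\alpha_{k_0s_0}^{-2}$, and it is trivial in the other rows.

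There is one concrete error, and it sits exactly where you anticipated trouble: after a $t$-mutation the new $S$-index is not always $s$.
\begin{itemize}
\item The signs are $\mathrm{sign}(b^{\mathbf{w}}_{tk})=\mathrm{sign}(b^{\mathbf{w}}_{ks})=\varepsilon_k$ and $\mathrm{sign}(b^{\mathbf{w}}_{ts})=\varepsilon_s$.
\item Hence the only nonzero coefficient in $\mu_t$ falls on the unique $j\in\{k,s\}$ with $\varepsilon_j=\varepsilon_t$. This gives $x'_j=x_j+p_{tj}x_t$, with both terms of the same sign.
\item The sign check for ($a$) at $\mathbf{w}[t]$, where the signs are $(\varepsilon_k,\varepsilon_s,-\varepsilon_t)$, selects this same $j$ as the new $S$-index.
\end{itemize}
So the new $S$-index is $s$ when $\varepsilon_t=\varepsilon_s$ (trunks) but $k$ when $\varepsilon_t=\varepsilon_k$ (branches), consistent with \Cref{lem: K S T mutation formula}.

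With your pair $(t,s)$ the bound genuinely fails. Take $\mathbf{w}=[i]TT$. The new $K$-vector is $-\tilde{\mathbf{c}}^{\mathbf{w}}_T=-(\tilde{\mathbf{e}}_{s_0}+p_{k_0s_0}\tilde{\mathbf{e}}_{k_0}+p^{[i]}_{ST}\tilde{\mathbf{e}}_{t_0})$, which has nonzero $s_0$-coordinate. The unchanged $\tilde{\mathbf{c}}^{\mathbf{w}}_S=-\tilde{\mathbf{e}}_{t_0}-p_{k_0t_0}\tilde{\mathbf{e}}_{k_0}$ has zero $s_0$-coordinate. With the correct pair $(t,j)$ the bound is immediate: $|x'_j|=|x_j|+p_{tj}|x_t|\ge\alpha_{tj}|x'_t|$. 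Here $p_{tj}$ is unchanged by $\mu_t$, and $p\ge\alpha(p)$.

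With this correction your induction proves sign-coherence, ($a$) including uniqueness, and ($b$) simultaneously. From cluster-cyclicity it uses only three facts: every $B^{\mathbf{w}}$ has the cyclic sign pattern; $p^{\mathbf{w}}_{ab}\ge 2$, by \Cref{prop: ordinary classification of cluster-cyclicity}; and $\alpha^2-p\alpha+1=0$.
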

Moreover, the following claim about the fan structure holds.
\begin{proposition}[{\cite[Prop.~4.11]{AC25b}}]\label{prop: conjecture is true}
\Cref{conj: two conjectures} holds for any cluster-cyclic exchange matrix of rank $3$. In particular, by \Cref{prop: fan structure}, its $G$-fan is really a fan.
\end{proposition}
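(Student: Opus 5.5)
The proposition has two parts. Part (a) of \Cref{conj: two conjectures} should reduce to \Cref{thm: recursion for tropical signs} by checking that the relevant matrices are again cluster-cyclic. Part (b) I plan to prove with a quadratic form that is invariant under the $c$-vector recursion. The final sentence then follows at once from \Cref{prop: fan structure}.

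\emph{Part ($a$).} Fix $\mathbf{w}$ and put $B'=B^{\mathbf{w}}$. Its $B$-pattern is $\{B^{\mathbf{w}\mathbf{u}}\}$, which consists of cyclic matrices, so $B'$ is cluster-cyclic. Hence ${\bf C}(B')$ is sign-coherent by \Cref{thm: recursion for tropical signs}.

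For $(B')^{\top}$, I will use the identity $\mu_k(M^{\top})=\mu_k(M)^{\top}$, which holds for skew-symmetrizable $M$. It follows directly from \eqref{eq: mutation of b}, since $\mathrm{sign}(b_{ik})[b_{ik}b_{kj}]_+=\mathrm{sign}(b_{kj})[b_{ik}b_{kj}]_+$ whenever the product is positive. This identity gives $((B')^{\top})^{\mathbf{u}}=((B')^{\mathbf{u}})^{\top}$. The transpose of a cyclic matrix is again cyclic, since its sign pattern is the negative of the pattern in \eqref{eq: definition of cluster cyclic}. So $(B')^{\top}$ is cluster-cyclic, and \Cref{thm: recursion for tropical signs} applies again. (Alternatively, I would use $|b_{ij}b_{ji}|$ and $C(B)$, which are transpose-invariant, together with \Cref{prop: ordinary classification of cluster-cyclicity}.)

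\emph{Part ($b$).} By \Cref{prop: skew-symmetrizing method}, the vector $\hat{\mathbf{c}}_i^{\mathbf{u}}=d_i^{-1/2}D^{1/2}\mathbf{c}_i^{\mathbf{u}}$ is a $c$-vector of the skew-symmetric matrix $\mathrm{Sk}(B')$. If $\mathbf{c}_i^{\mathbf{u}}=\alpha\mathbf{e}_j$, then $\hat{\mathbf{c}}_i^{\mathbf{u}}=\alpha\sqrt{d_j/d_i}\,\mathbf{e}_j$. So it suffices to prove the following skew-symmetric statement: a $c$-vector of the form $\beta\mathbf{e}_j$ has $\beta=\pm1$.

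Here $\mathrm{Sk}(B')$ is cluster-cyclic by part ($a$) together with \Cref{lem: skew-symmetrized matrices formula}. I will replace $B$ by this skew-symmetric matrix. Let $A$ be the symmetric matrix with $a_{ii}=2$ and $a_{ij}=-|b_{ij}|$, where the sign choice is to be fixed so that $A$ is a quasi-Cartan companion (it is the pseudo Cartan companion, an admissible companion in Seven's sense). The goal is to show $(\mathbf{c}_i^{\mathbf{u}})^{\top}A\,\mathbf{c}_i^{\mathbf{u}}=2$ for all $\mathbf{u}$ and $i$. Then $\beta\mathbf{e}_j$ gives $2\beta^2=2$, so $\beta=\pm1$.

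To prove the norm identity, I will show by induction on $|\mathbf{u}|$ the stronger claim that $A^{\mathbf{u}}:=(C^{\mathbf{u}})^{\top}AC^{\mathbf{u}}$ is a quasi-Cartan companion of $B^{\mathbf{u}}$. Concretely, this means the diagonal entries of $A^{\mathbf{u}}$ equal $2$ and $|a^{\mathbf{u}}_{ij}|=|b^{\mathbf{u}}_{ij}|$. I also need an explicit sign rule for the off-diagonal entries, written in terms of $\varepsilon^{\mathbf{u}}$ and $\mathrm{sign}(B^{\mathbf{u}})$.

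The inductive step substitutes \eqref{eq: mutation of c-vectors} into $A^{\mathbf{u}[k]}$. The diagonal entry $c_k^{\top}Ac_k$ is unchanged, and for $i\neq k$,
\[
c_i'^{\top}Ac_i' = 2 + 2[\varepsilon_k b_{ki}]_+\,a^{\mathbf{u}}_{ik} + 2[\varepsilon_k b_{ki}]_+^2 .
\]
This equals $2$ precisely when $a^{\mathbf{u}}_{ik}=-\varepsilon_k b^{\mathbf{u}}_{ki}$ whenever $[\varepsilon_k b_{ki}]_+\neq 0$. The off-diagonal entries should then transform like the $b$-entries, with the third entry matching \eqref{eq: mutation of b} through the cluster-cyclic relation.

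\emph{Main obstacle.} The hard step is finding the correct sign rule for $a^{\mathbf{u}}_{ij}$ and verifying that it propagates. I will first try the rule $a^{\mathbf{u}}_{ij}=-\varepsilon_i^{\mathbf{u}}\varepsilon_j^{\mathbf{u}}|b^{\mathbf{u}}_{ij}|$, possibly corrected by a factor depending on $\mathrm{sign}(b^{\mathbf{u}}_{ij})$. I would check it against the base case $\mathbf{u}=[i]$ using \eqref{eq: initial tropical signs}. I would then verify preservation under the $S$- and $T$-steps separately using \eqref{eq: recursion of tropical signs}, which changes only the signs of the mutated pair, together with \eqref{eq: condition of S}, which pins down the sign of $b_{ks}^{\mathbf{w}}$ relative to the tropical signs.
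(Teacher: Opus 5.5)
Your Part ($a$) is fine, and so is the reduction of ($b$) to the skew-symmetric claim that a $c$-vector $\beta\mathbf{e}_j$ must have $\beta=\pm1$. The gap is the central claim of ($b$). There is \emph{no} symmetric $A$ with $a_{ii}=2$ such that $\mathbf{c}^{\top}A\mathbf{c}=2$ for every $c$-vector, whatever signs, or even arbitrary real values, you put off the diagonal.

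Take the Markov matrix $B=\left(\begin{smallmatrix}0&2&-2\\-2&0&2\\2&-2&0\end{smallmatrix}\right)$. By \eqref{eq: mutation of c-vectors}, $(2,1,0)^{\top}$, $(0,2,1)^{\top}$ and $(1,0,2)^{\top}$ are $c$-vectors at $[1]$, $[2]$ and $[3]$, and $(2,1,2)^{\top}=\mathbf{c}_2^{[1,3]}$. Write $\frac{1}{2}\mathbf{x}^{\top}A\mathbf{x}=x_1^2+x_2^2+x_3^2+a_{12}x_1x_2+a_{13}x_1x_3+a_{23}x_2x_3$. The first three vectors force $a_{12}=a_{13}=a_{23}=-2$. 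With these values, $(2,1,2)^{\top}$ gives $9-4-8-4=-7\neq 1$.

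Your induction therefore breaks already at $\mathbf{u}=[1]$. There $(\mathbf{c}_2^{[1]})^{\top}A\,\mathbf{c}_3^{[1]}=2a_{13}+a_{23}=-6$, while $|b^{[1]}_{23}|=2$. So $(C^{[1]})^{\top}AC^{[1]}$ is not a quasi-Cartan companion of $B^{[1]}$ in any sign convention. The next mutation, at $3$, then produces $(2,1,2)^{\top}$ with $\mathbf{c}^{\top}A\mathbf{c}=-14$. The obstacle is therefore not choosing a sign rule: the equalities $|a^{\mathbf{u}}_{ij}|=|b^{\mathbf{u}}_{ij}|$ themselves fail to propagate. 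Your other candidate, the paper's pseudo Cartan companion with entries $+|b_{ij}|$, fails even earlier: it gives $\frac{1}{2}\mathbf{c}^{\top}A\mathbf{c}=9$ on $(2,1,0)^{\top}$.

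What is missing is an invariant that depends on the first mutation direction. This is how \Cref{lem: surface for g-vectors} works for $g$-vectors, where $A^{[i]}$ is used only on $\mathcal{T}^{\geq[i]}$. Inside $\mathcal{T}^{\geq[i]}$, with $D=I$, three $c$-vectors determine the form:
\begin{itemize}
\item $\tilde{\mathbf{c}}^{[i]}_S=\tilde{\mathbf{e}}_{s_0}+p_{k_0s_0}\tilde{\mathbf{e}}_{k_0}$ forces $a_{k_0s_0}=-p_{k_0s_0}$;
\item $\tilde{\mathbf{c}}^{[i]T^2}_S=-\tilde{\mathbf{e}}_{t_0}-p_{k_0t_0}\tilde{\mathbf{e}}_{k_0}$ forces $a_{k_0t_0}=-p_{k_0t_0}$;
\item $\tilde{\mathbf{c}}^{[i]T}_S=\tilde{\mathbf{e}}_{s_0}+p_{k_0s_0}\tilde{\mathbf{e}}_{k_0}+p^{[i]}_{ST}\tilde{\mathbf{e}}_{t_0}$ then forces $a_{s_0t_0}=p_{s_0t_0}$.
\end{itemize}
This is the pseudo Cartan companion of $\mathrm{Sk}(B)$ with the signs of its $k_0$th row and column reversed. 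The three subtrees give three different forms, which is exactly why no global form exists.

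Proving that this form is preserved on all of $\mathcal{T}^{\geq[i]}$ would be a genuinely new identity, a $c$-vector analogue of \Cref{lem: surface for g-vectors}; your proposal does not establish it. The other option is to argue ($b$) directly from the explicit recursions in \Cref{lem: S-mutations of modified c- g-vectors} and \Cref{lem: T-mutations of modified c- g-vectores}, with no quadratic form. As written, ($b$) is not proved, and therefore neither is the proposition.
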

Thanks to \Cref{prop: conjecture is true}, we may consider the $G$-fan structure without any assumption. Beforehand, we introduce the following notation for later use.
\begin{definition}\label{def: K S T labeling}
Let $K,S,T:\mathcal{T}\setminus\{\emptyset\} \to \{1,2,3\}$ be the maps defined by $K({\bf w})=k$, $S({\bf w})=s$, and $T({\bf w})=t$, where $k,s,t$ are the indices defined in \Cref{thm: recursion for tropical signs}. For any ${\bf w} \in \mathcal{T}\backslash\{\emptyset\}$ and $M,M'=K,S,T$, we write
\begin{equation}
\varepsilon_{M}^{\bf w}=\varepsilon_{M({\bf w})}^{\bf w},
\quad
b^{\bf w}_{MM'}=b^{\bf w}_{M({\bf w}),M'({\bf w})},
\quad
{\bf g}_{M}^{\bf w}={\bf g}_{M({\bf w})}^{\bf w},
\end{equation}
and so on.
\par
Let $\mathcal{M}$ be the free monoid generated by two letters $S$ and $T$. We define the right monoid action of $\mathcal{M}$ on $\mathcal{T}\backslash\{\emptyset\}$ by
\begin{equation}\label{eq: monoid action definition}
{\bf w}S={\bf w}[S({\bf w})],
\quad
{\bf w}T={\bf w}[T({\bf w})].
\end{equation}
Fix one initial mutation direction $i=1,2,3$, and let us focus on the subset $\mathcal{T}^{\geq [i]}$. We define the {\em trunk} $\mathcal{T}^{<[i]S^{\infty}}$ of $\mathcal{T}^{\geq [i]}$ by
\begin{equation}
\mathcal{T}^{<[i]S^{\infty}}=\{[i]S^n \mid n \in \mathbb{Z}_{\geq 0}\}.
\end{equation}
For each $X \in \mathcal{M}$, the subset $\mathcal{T}^{\geq [i]X}$ is defined by \eqref{eq: after subset}. This subset $\mathcal{T}^{\geq [i]X}$ is called a {\em branch} of $\mathcal{T}^{\geq [i]}$ if $X$ has at least one letter $T$. In particular, for each $n \in \mathbb{Z}_{\geq 0}$, $\mathcal{T}^{\geq [i]S^nT}$ is called the {\em $n$th maximal branch} of $\mathcal{T}^{\geq [i]}$.
\end{definition}
Note that each $\mathcal{T}^{\geq [i]}$ is decomposed into the trunk $\mathcal{T}^{<[i]S^{\infty}}$ and the maximal branches $\mathcal{T}^{\geq [i]S^nT}$ with $n \in \mathbb{Z}_{\geq 0}$, and each branch $\mathcal{T}^{\geq [i]X}$ is a subset of some maximal branch.
\par
These indices $K(\mathbf{w})$, $S(\mathbf{w})$, and $T(\mathbf{w})$ obey the following recursive rules.
\begin{lemma}[{\cite[Lem.~6.4]{AC25b}}]\label{lem: K S T mutation formula}
The following recurrence formulas of indices hold:
\begin{align}
(K({\bf w}S), S({\bf w}S), T({\bf w}S))&=(S({\bf w}), K({\bf w}), T({\bf w})), \label{eq: K S T mutation formula by S}
\\
(K({\bf w}T), S({\bf w}T), T({\bf w}T))&=
\begin{cases}
(T({\bf w}),S({\bf w}), K({\bf w})) & \textup{if ${\bf w}$ is in a trunk},\\
(T({\bf w}),K({\bf w}),S({\bf w})) & \textup{if ${\bf w}$ is in a branch}.
\end{cases}
\end{align}
\end{lemma}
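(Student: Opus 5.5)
The plan is to derive everything from \Cref{thm: recursion for tropical signs}, using two facts. First, $\mathrm{sign}(B^{\mathbf{w}[j]})=-\mathrm{sign}(B^{\mathbf{w}})$ by \eqref{eq: sign reversed relation}. Second, since $B^{\mathbf{w}}$ is cyclic, for any ordering $(k,s,t)$ of $\{1,2,3\}$ we have $\mathrm{sign}(b^{\mathbf{w}}_{ks})=\mathrm{sign}(b^{\mathbf{w}}_{st})=\mathrm{sign}(b^{\mathbf{w}}_{tk})=:\sigma$. This holds whether $(k,s,t)$ is an even or an odd permutation, by \eqref{eq: definition of cluster cyclic}.

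Fix $\mathbf{w}$ and write $k,s,t$ for $K(\mathbf{w}),S(\mathbf{w}),T(\mathbf{w})$ and $\varepsilon_j=\varepsilon_j^{\mathbf{w}}$. The defining condition \eqref{eq: condition of S} gives $\varepsilon_s=-\varepsilon_k$ and $\varepsilon_s\sigma=-1$, hence $\varepsilon_k\sigma=1$. Since $S(\cdot)$ is characterized by a uniqueness statement, in each case it suffices to exhibit one index satisfying \eqref{eq: condition of S} for the mutated seed.

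\emph{$S$-mutation.} The last index of $\mathbf{w}S=\mathbf{w}[s]$ is $s$, so $K(\mathbf{w}S)=s$. By \eqref{eq: recursion of tropical signs} the new signs are $(-\varepsilon_k,-\varepsilon_s,\varepsilon_t)$ at $(k,s,t)$. We test $k$ as the candidate for $S(\mathbf{w}S)$:
\begin{itemize}
\item $-\varepsilon_k\neq-\varepsilon_s$ holds.
\item $(-\varepsilon_k)\,\mathrm{sign}(b^{\mathbf{w}S}_{sk})=\varepsilon_k\,\mathrm{sign}(b^{\mathbf{w}}_{sk})=-\varepsilon_k\sigma=-1$.
\end{itemize}
So $S(\mathbf{w}S)=k$ and $T(\mathbf{w}S)=t$, which proves \eqref{eq: K S T mutation formula by S}.

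\emph{$T$-mutation.} Here $K(\mathbf{w}T)=t$, and the new signs are $(\varepsilon_k,\varepsilon_s,-\varepsilon_t)$. The same computation shows the following.
\begin{itemize}
\item For $k$: the second condition reads $-\varepsilon_k\,\mathrm{sign}(b^{\mathbf{w}}_{tk})=-\varepsilon_k\sigma=-1$, which always holds, and the first condition holds iff $\varepsilon_t=\varepsilon_k$.
\item For $s$: the second condition reads $-\varepsilon_s\,\mathrm{sign}(b^{\mathbf{w}}_{ts})=\varepsilon_s\sigma=-1$, which always holds, and the first condition holds iff $\varepsilon_t=\varepsilon_s$.
\end{itemize}
Hence $S(\mathbf{w}T)=s$ if $\varepsilon_T^{\mathbf{w}}=\varepsilon_S^{\mathbf{w}}$, and $S(\mathbf{w}T)=k$ if $\varepsilon_T^{\mathbf{w}}=\varepsilon_K^{\mathbf{w}}$. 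Exactly one of these occurs because $\varepsilon_S^{\mathbf{w}}=-\varepsilon_K^{\mathbf{w}}$.

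The remaining, and main, step is the invariant: $\varepsilon_T^{\mathbf{w}}=\varepsilon_S^{\mathbf{w}}$ on trunks and $\varepsilon_T^{\mathbf{w}}=\varepsilon_K^{\mathbf{w}}$ on branches. I will prove it by induction along the monoid action, using the index rules just established.
\begin{itemize}
\item \emph{Base.} At $\mathbf{w}=[i]$, \eqref{eq: initial tropical signs} gives $\varepsilon_K=-1$ and $\varepsilon_S=\varepsilon_T=1$.
\item \emph{$S$ on a trunk.} The new triple $(K,S,T)$ is $(s,k,t)$ with signs $\varepsilon'_K=-\varepsilon_s$, $\varepsilon'_S=-\varepsilon_k=\varepsilon_s$, and $\varepsilon'_T=\varepsilon_t=\varepsilon_s$. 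So $\varepsilon'_T=\varepsilon'_S$ is preserved.
\item \emph{$T$ from a trunk.} The new triple is $(t,s,k)$ with $\varepsilon'_K=-\varepsilon_t=-\varepsilon_s=\varepsilon_k$ and $\varepsilon'_T=\varepsilon_k$. So $\varepsilon'_T=\varepsilon'_K$, the branch invariant.
\item \emph{$S$ in a branch.} We get $\varepsilon'_K=-\varepsilon_s=\varepsilon_k=\varepsilon_t=\varepsilon'_T$.
\item \emph{$T$ in a branch.} The new triple is $(t,k,s)$ with $\varepsilon'_K=-\varepsilon_t=-\varepsilon_k=\varepsilon_s=\varepsilon'_T$.
\end{itemize}
Since every nonempty $\mathbf{w}$ is $[i]X$ and the branch/trunk dichotomy is exactly whether $X$ contains a $T$, the induction closes and yields the stated formulas.
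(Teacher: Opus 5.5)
Your proof is correct. The paper does not prove this lemma in its text; it is quoted from \cite[Lem.~6.4]{AC25b}. So your argument is a self-contained derivation rather than a reconstruction, and it uses only \Cref{thm: recursion for tropical signs} and \eqref{eq: sign reversed relation}.

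It is worth seeing how your proof relates to what the paper does prove.
\begin{itemize}
\item Your invariant is exactly fact ($A$) in the proof of \Cref{lem: for mutation formula}. That invariant is $\varepsilon_T^{\mathbf{w}}=\varepsilon_S^{\mathbf{w}}$ on trunks and $\varepsilon_T^{\mathbf{w}}=\varepsilon_K^{\mathbf{w}}$ on branches, together with $\varepsilon_S^{\mathbf{w}}=-\varepsilon_K^{\mathbf{w}}$.
\item Your cyclic identity $\mathrm{sign}(b^{\mathbf{w}}_{ks})=\mathrm{sign}(b^{\mathbf{w}}_{st})=\mathrm{sign}(b^{\mathbf{w}}_{tk})$ is a repackaging of fact ($B$), i.e.\ \Cref{lem: sign rule of B}.
\end{itemize}
The paper takes ($A$) from Lemma~\ref{lem: tropical signs and K,S,T labeling}, which is also imported from \cite{AC25b}. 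Your simultaneous induction proves ($A$) together with the index rules, so it removes the dependence on both citations. Carrying $\varepsilon_K^{\mathbf{w}}$ itself through the same induction would recover Lemma~\ref{lem: tropical signs and K,S,T labeling} completely.

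One detail should be stated explicitly: every tropical sign is $\pm1$. No $c$-vector vanishes, since $\det C^{\mathbf{w}}=\pm1$ by \Cref{prop: fundamental properties under sign-coherency}. You use this when you pass from $\varepsilon_s\neq\varepsilon_k$ to $\varepsilon_s=-\varepsilon_k$.
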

\begin{example}\label{ex: tropical signs}
We give an example how the tropical signs $(\varepsilon_1^{\mathbf{w}},\varepsilon_2^{\mathbf{w}},\varepsilon_3^{\mathbf{w}})$ are obtained and these indices $k=K(\mathbf{w})$, $s=S(\mathbf{w})$, $t=T(\mathbf{w})$ change. Consider
\begin{equation}\label{eq: initial conditions for example of tropical signs}
\mathrm{sign}(B)=\left(\begin{matrix}
0 & - & +\\
+ & 0 & -\\
- & + & 0
\end{matrix}\right),
\quad
i=1.
\end{equation}
By \eqref{eq: initial tropical signs} and
\begin{equation}
\mathrm{sign}(B^{[i]})=\left(\begin{matrix}
0 & + & -\\
- & 0 & +\\
+ & - & 0
\end{matrix}\right),
\end{equation}
we obtain $K([1])=1$, $S([1])=3$, and $T([1])=2$.
After the single mutation, by using \eqref{eq: recursion of tropical signs} and \Cref{lem: K S T mutation formula}, we can obtain the tropical signs $(\varepsilon_1^{\mathbf{w}},\varepsilon_2^{\mathbf{w}},\varepsilon_3^{\mathbf{w}})$ and $k=K(\mathbf{w})$, $s=S(\mathbf{w})$, $t=T(\mathbf{w})$ in \Cref{fig: tropical signs}.
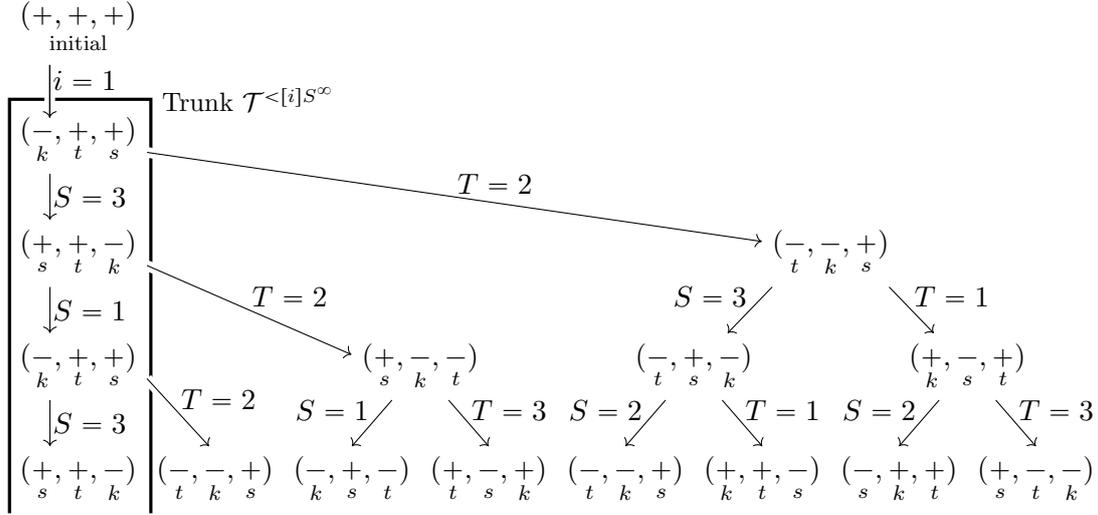
\begin{figure}[htbp]
\centering
\newcommand{\unitdist}{0.9}
\begin{tikzpicture}
\node (O) at (0,0) {$\underset{\tiny \textup{initial}}{(+,+,+)}$};
\node (i) at (0,-1.5) {$(\underset{k}{-},\underset{t}{+},\underset{s}{+})$};
\node (iS) at (0,-3) {$(\underset{s}{+},\underset{t}{+},\underset{k}{-})$};
\node (iSS) at (0,-4.5) {$(\underset{k}{-},\underset{t}{+},\underset{s}{+})$};
\node (iSSS) at (0,-6) {$(\underset{s}{+},\underset{t}{+},\underset{k}{-})$};

\node (iSST) at (2*\unitdist,-6) {$(\underset{t}{-},\underset{k}{-},\underset{s}{+})$};
\node (iSTS) at (4*\unitdist,-6) {$(\underset{k}{-},\underset{s}{+},\underset{t}{-})$};
\node (iSTT) at (6*\unitdist,-6) {$(\underset{t}{+},\underset{s}{-},\underset{k}{+})$};
\node (iTSS) at (8*\unitdist,-6) {$(\underset{t}{-},\underset{k}{-},\underset{s}{+})$};
\node (iTST) at (10*\unitdist,-6) {$(\underset{k}{+},\underset{t}{+},\underset{s}{-})$};
\node (iTTS) at (12*\unitdist,-6) {$(\underset{s}{-},\underset{k}{+},\underset{t}{+})$};
\node (iTTT) at (14*\unitdist,-6) {$(\underset{s}{+},\underset{t}{-},\underset{k}{-})$};

\node (iST) at (5*\unitdist,-4.5) {$(\underset{s}{+},\underset{k}{-},\underset{t}{-})$};
\node (iTS) at (9*\unitdist,-4.5) {$(\underset{t}{-},\underset{s}{+},\underset{k}{-})$};
\node (iTT) at (13*\unitdist,-4.5) {$(\underset{k}{+},\underset{s}{-},\underset{t}{+})$};
\node (iT) at (11*\unitdist,-3) {$(\underset{t}{-},\underset{k}{-},\underset{s}{+})$};

\draw[very thick] (iSSS.south west)--($(i.north west)+(0,0.1)$)--($(i.north east)+(0.05,0.1)$) node [right] {\small Trunk $\mathcal{T}^{<[i]S^{\infty}}$}--($(iSSS.south east)+(0.05,0)$);

\draw[preaction={draw=white, line width=4pt}] (-0.375,-0.5)--(-0.375,-1.2) node [pos=0.3, auto=left, inner sep=1pt] {$i=1$};
\draw[->] (-0.375,-0.5)->(-0.375,-1.2);
\draw[->] (i.-130)->(iS.130) node [pos=0.5, auto=left, inner sep=1pt] {$S=3$};
\draw[->] (iS.-130)->(iSS.130) node [pos=0.5, auto=left, inner sep=1pt] {$S=1$};
\draw[->] (iSS.-130)->(iSSS.130) node [pos=0.5, auto=left, inner sep=1pt] {$S=3$};

\draw[preaction={draw=white, line width=4pt} ,->] (i.-10)->(iT.170) node [pos=0.5, auto=left, inner sep=1pt] {$T=2$}; 
\draw[preaction={draw=white, line width=4pt} ,->] (iS.-10)->(iST.170) node [pos=0.5, auto=left, inner sep=1pt] {$T=2$}; 
\draw[preaction={draw=white, line width=4pt} ,->] (iSS.-10)->(iSST.100) node [pos=0.5, auto=left, inner sep=1pt] {$T=2$};

\draw[->] (iT.-150)->(iTS.45) node [pos=0.5, auto=right, inner sep=1pt] {$S=3$};
\draw[->] (iT.-30)->(iTT.135) node [pos=0.5, auto=left, inner sep=1pt] {$T=1$};
\draw[->] (iST.-130)->(iSTS.90) node [pos=0.5, auto=right, inner sep=1pt] {$S=1$};
\draw[->] (iST.-50)->(iSTT.90) node [pos=0.5, auto=left, inner sep=1pt] {$T=3$};
\draw[->] (iTS.-130)->(iTSS.90) node [pos=0.5, auto=right, inner sep=1pt] {$S=2$};
\draw[->] (iTS.-50)->(iTST.90) node [pos=0.5, auto=left, inner sep=1pt] {$T=1$};
\draw[->] (iTT.-130)->(iTTS.90) node [pos=0.5, auto=right, inner sep=1pt] {$S=2$};
\draw[->] (iTT.-50)->(iTTT.90) node [pos=0.5, auto=left, inner sep=1pt] {$T=3$};
\end{tikzpicture}
\caption{Tropical signs $(\varepsilon_1,\varepsilon_2,\varepsilon_3)$ in the case of \eqref{eq: initial conditions for example of tropical signs}. The below subscripts $k$, $s$, $t$ are the ones in \Cref{thm: recursion for tropical signs}.}\label{fig: tropical signs}
\end{figure}
\par
Let $k=K(\mathbf{w})$ and $s=S(\mathbf{w})$. By \eqref{eq: K S T mutation formula by S}, repeating $S$-mutation means repeating mutations in two directions $s$ and $k$. Namely, it holds that
\begin{equation}\label{eq: index S-mutations}
\mathbf{w}S^n=\mathbf{w}[\underset{n \text{terms}}{s,k,s,k,\dots}].
\end{equation}
In particular, for each $i=1,2,3$, by setting $k_0=K([i]) (=i)$ and $s_0=S([i])$, the trunk $\mathcal{T}^{<[i]S^{\infty}}$ consists of the finite sequence of the form $[k_0,s_0,k_0,\dots]$.
\end{example}

The mutation formulas for $c$-, $g$-vectors depend on $\varepsilon_{M}^{\bf w}b_{M'M}^{\bf w}$. We summarize these signs in the following lemma.
\begin{lemma}\label{lem: for mutation formula}
Let ${\bf w} \in \mathcal{T}\setminus\{\emptyset\}$. Then, the following statements hold.
\\
\textup{($a$)} Suppose that ${\bf w}$ is in a trunk. Then, $\varepsilon_{M}^{\bf w}\mathrm{sign}(b_{M'M}^{\bf w})$ are given in the following list.
\begin{equation}
\begin{array}{c||c|c|c|c|c|c}
(M,M') & (K,S) & (K,T) & (S,K) & (S,T) & (T,K) & (T,S) \\
\hline
\varepsilon_{M}^{\bf w}\mathrm{sign}(b_{M'M}^{\bf w})& -1 & 1 & -1 & 1 & 1 & -1 
\end{array}
\end{equation}
\textup{($b$)} Suppose that ${\bf w}$ is in a branch. Then, $\varepsilon_{M}^{\bf w}\mathrm{sign}(b_{M'M}^{\bf w})$ are given in the following list.
\begin{equation}
\begin{array}{c||c|c|c|c|c|c}
(M,M') & (K,S) & (K,T) & (S,K) & (S,T) & (T,K) & (T,S) \\
\hline
\varepsilon_{M}^{\bf w}\mathrm{sign}(b_{M'M}^{\bf w})& -1 & 1 & -1 & 1 & -1 & 1 
\end{array}
\end{equation}
\end{lemma}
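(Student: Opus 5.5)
Both tables can be derived from two inputs: the defining property of $S(\mathbf{w})$ in \Cref{thm: recursion for tropical signs}($a$), and the fact that $B^{\mathbf{w}}$ is cyclic. The argument is an induction on the length of $\mathbf{w}$ that runs alongside \Cref{lem: K S T mutation formula}. I will write $\sigma_{MM'}=\mathrm{sign}(b^{\mathbf{w}}_{MM'})$ and use two structural facts.

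The first fact is sign-skew-symmetry, $\sigma_{M'M}=-\sigma_{MM'}$. The second is cyclicity. Since $\mathrm{sign}(B^{\mathbf{w}})$ is $\pm$ the cyclic pattern, the signs around the $3$-cycle $(k,s,t)$ satisfy $\sigma_{KS}=\sigma_{ST}=\sigma_{TK}$. Indeed, for the pattern in \eqref{eq: definition of cluster cyclic} we have $b_{12},b_{23},b_{31}$ all of one sign. The ordered triple $(k,s,t)$ is a cyclic or an anticyclic permutation of $(1,2,3)$, and in either case the three entries $b_{ks},b_{st},b_{tk}$ share a sign.

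With these facts, each table reduces to three independent quantities, which I take to be $\varepsilon_K$, $\varepsilon_S$, $\varepsilon_T$ relative to the common value $\sigma:=\sigma_{KS}=\sigma_{ST}=\sigma_{TK}$.

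From \eqref{eq: condition of S} we know $\varepsilon_S\sigma_{KS}=-1$, so $\varepsilon_S=-\sigma$. The same condition gives $\varepsilon_K=-\varepsilon_S=\sigma$. The table entries then follow:
\begin{itemize}
\item $(K,S)$: $\varepsilon_K\sigma_{SK}=\sigma\cdot(-\sigma)=-1$.
\item $(K,T)$: $\varepsilon_K\sigma_{TK}=\sigma\cdot\sigma=1$.
\item $(S,K)$: $\varepsilon_S\sigma_{KS}=-1$.
\item $(S,T)$: $\varepsilon_S\sigma_{TS}=(-\sigma)(-\sigma)=1$.
\end{itemize}
These agree with both tables. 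The only remaining datum is $\varepsilon_T$. If $\varepsilon_T=\sigma$, the entries are $(T,K)=\varepsilon_T\sigma_{KT}=-1$ and $(T,S)=\varepsilon_T\sigma_{ST}=1$, which is the branch table. If $\varepsilon_T=-\sigma$, both signs flip, which is the trunk table. So the lemma is equivalent to the following claim:
\[
\varepsilon_T=\varepsilon_K \text{ in a branch}, \qquad \varepsilon_T=\varepsilon_S=-\varepsilon_K \text{ in a trunk}.
\]

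I prove this claim by induction using the recursion \eqref{eq: recursion of tropical signs} together with \Cref{lem: K S T mutation formula}.

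\emph{Base case.} For $\mathbf{w}=[i]$, \eqref{eq: initial tropical signs} gives $\varepsilon_K=-1$ and $\varepsilon_S=\varepsilon_T=1$, so the trunk claim holds.

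\emph{$S$-step.} Under $\mathbf{w}\mapsto\mathbf{w}S$, the old $k$ and $s$ swap roles and both signs flip, while $t$ stays as $T$ with its sign unchanged. The new values are $\varepsilon_K'=-\varepsilon_S$, $\varepsilon_S'=-\varepsilon_K$, $\varepsilon_T'=\varepsilon_T$.
\begin{itemize}
\item In a trunk, $\varepsilon_T=-\varepsilon_K$ gives $\varepsilon_T'=-\varepsilon_K=\varepsilon_S'$, so the trunk claim is preserved.
\item In a branch, $\varepsilon_T=\varepsilon_K=-\varepsilon_S$ gives $\varepsilon_T'=-\varepsilon_S=\varepsilon_K'$, so the branch claim is preserved.
\end{itemize}
Membership in a trunk or branch is also preserved, since $\mathbf{w}S$ stays in a trunk or branch exactly when $\mathbf{w}$ does.

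\emph{$T$-step.} Under $\mathbf{w}\mapsto\mathbf{w}T$, only $\varepsilon_t$ flips, and $\mathbf{w}T$ always lies in a branch.
\begin{itemize}
\item From a trunk, the new labels are $(K,S,T)=(t,s,k)$. Then $\varepsilon_K'=-\varepsilon_T=\varepsilon_K$ and $\varepsilon_T'=\varepsilon_K$, so $\varepsilon_T'=\varepsilon_K'$.
\item From a branch, the new labels are $(K,S,T)=(t,k,s)$. Then $\varepsilon_K'=-\varepsilon_T=-\varepsilon_K=\varepsilon_S=\varepsilon_T'$.
\end{itemize}
In both cases the branch claim holds for $\mathbf{w}T$.

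This closes the induction. The main point requiring care is the cyclicity identity $\sigma_{KS}=\sigma_{ST}=\sigma_{TK}$; the rest is bookkeeping.
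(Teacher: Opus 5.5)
Your proof is correct and has the same structure as the paper's. The $(S,K)$ entry comes from \eqref{eq: condition of S}, and every other entry follows from sign-skew-symmetry and the cyclic sign pattern together with the tropical-sign relations $\varepsilon_K=-\varepsilon_S=-\varepsilon_T$ in a trunk and $\varepsilon_K=-\varepsilon_S=\varepsilon_T$ in a branch; your identity $\sigma_{KS}=\sigma_{ST}=\sigma_{TK}$ is a repackaging of \Cref{lem: sign rule of B}. The only difference is that the paper imports those relations from Lemma~\ref{lem: tropical signs and K,S,T labeling} (cited from \cite{AC25b}), whereas you re-derive them by a valid induction along \eqref{eq: recursion of tropical signs} and \Cref{lem: K S T mutation formula}, which makes your argument self-contained given those two cited results.
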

To prove this lemma, we mention the following basic facts.
\begin{lemma}[{\cite[Lem.~6.5, Prop.~6.7]{AC25b}}]\label{lem: tropical signs and K,S,T labeling}
Fix an initial mutation direction $i=1,2,3$.
\\
\textup{($a$)} If ${\bf w} \in \mathcal{T}^{< [i]S^{\infty}}$ is in a trunk, then we have
\begin{equation}
\varepsilon_{K}^{\bf w}=-1,
\quad
\varepsilon_{S}^{\bf w}=\varepsilon_{T}^{\bf w}=1.
\end{equation}
\textup{($b$)} Suppose that ${\bf w} \in \mathcal{T}\backslash\{\emptyset\}$ is in a branch. We express ${\bf w}=[i]X$, where $X \in \mathcal{M}$ has at least one letter $T$. Then, we have
\begin{equation}
\varepsilon_{K}^{[i]X}=\varepsilon_{T}^{[i]X}=(-1)^{\#_{T}(X)},
\quad
\varepsilon_{S}^{[i]X}=-(-1)^{\#_{T}(X)},
\end{equation}
where $\#_{T}(X) \in \mathbb{Z}_{\geq 1}$ is the number of the letter $T$ appearing in $X$.
\end{lemma}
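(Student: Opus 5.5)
The plan is to prove both statements simultaneously by induction on the length of $X \in \mathcal{M}$ in the expression $\mathbf{w}=[i]X$. Two inputs do all the work. The first is the sign recursion \eqref{eq: recursion of tropical signs} of \Cref{thm: recursion for tropical signs}: under $\mathbf{w}\mapsto\mathbf{w}S$ the signs at $K(\mathbf{w})$ and $S(\mathbf{w})$ flip and the sign at $T(\mathbf{w})$ is kept, while under $\mathbf{w}\mapsto\mathbf{w}T$ only the sign at $T(\mathbf{w})$ flips. The second is the relabeling rule of \Cref{lem: K S T mutation formula}. Since all signs are indexed by the labels of the \emph{new} seed, each step amounts to composing these two pieces of information.

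\emph{Base case.} For $\mathbf{w}=[i]$ we have $K([i])=i$, and \eqref{eq: initial tropical signs} gives $\varepsilon_i^{[i]}=-1$ and $\varepsilon_j^{[i]}=1$ for $j\neq i$. Hence $\varepsilon_K^{[i]}=-1$ and $\varepsilon_S^{[i]}=\varepsilon_T^{[i]}=1$, which is (a) for $n=0$.

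\emph{Trunk step, proving (a).} Let $\mathbf{w}=[i]S^n$ satisfy (a). By \eqref{eq: K S T mutation formula by S}, $(K,S,T)(\mathbf{w}S)=(S,K,T)(\mathbf{w})$. Hence:
\begin{itemize}
\item $\varepsilon_K^{\mathbf{w}S}=-\varepsilon_S^{\mathbf{w}}=-1$;
\item $\varepsilon_S^{\mathbf{w}S}=-\varepsilon_K^{\mathbf{w}}=1$;
\item $\varepsilon_T^{\mathbf{w}S}=\varepsilon_T^{\mathbf{w}}=1$.
\end{itemize}
This is (a) for $[i]S^{n+1}$.

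\emph{Entering a branch.} For $\mathbf{w}$ in the trunk, \Cref{lem: K S T mutation formula} gives $(K,S,T)(\mathbf{w}T)=(T,S,K)(\mathbf{w})$. Hence:
\begin{itemize}
\item $\varepsilon_K^{\mathbf{w}T}=-\varepsilon_T^{\mathbf{w}}=-1$;
\item $\varepsilon_S^{\mathbf{w}T}=\varepsilon_S^{\mathbf{w}}=1$;
\item $\varepsilon_T^{\mathbf{w}T}=\varepsilon_K^{\mathbf{w}}=-1$.
\end{itemize}
Here $\#_T=1$, so this agrees with (b).

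\emph{Steps inside a branch.} Assume (b) holds for $\mathbf{w}=[i]X$ and set $\sigma=(-1)^{\#_T(X)}$. For the $S$-step, $\#_T$ is unchanged and $(K,S,T)(\mathbf{w}S)=(S,K,T)(\mathbf{w})$. Hence:
\begin{itemize}
\item $\varepsilon_K^{\mathbf{w}S}=-\varepsilon_S^{\mathbf{w}}=\sigma$;
\item $\varepsilon_S^{\mathbf{w}S}=-\varepsilon_K^{\mathbf{w}}=-\sigma$;
\item $\varepsilon_T^{\mathbf{w}S}=\varepsilon_T^{\mathbf{w}}=\sigma$.
\end{itemize}
For the $T$-step, $\#_T$ increases by one, so the target value is $-\sigma$, and the branch case of \Cref{lem: K S T mutation formula} gives $(K,S,T)(\mathbf{w}T)=(T,K,S)(\mathbf{w})$. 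Hence:
\begin{itemize}
\item $\varepsilon_K^{\mathbf{w}T}=-\varepsilon_T^{\mathbf{w}}=-\sigma$;
\item $\varepsilon_S^{\mathbf{w}T}=\varepsilon_K^{\mathbf{w}}=\sigma=-(-\sigma)$;
\item $\varepsilon_T^{\mathbf{w}T}=\varepsilon_S^{\mathbf{w}}=-\sigma$.
\end{itemize}
Both steps reproduce (b). Every element of $\mathcal{T}^{\geq[i]}$ is reached from $[i]$ by these moves, so the induction covers all cases.

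The only point requiring care is bookkeeping. In each case one must use the recursion \eqref{eq: recursion of tropical signs} with respect to the labels of $\mathbf{w}$, and only then translate to the labels of $\mathbf{w}S$ or $\mathbf{w}T$ through \Cref{lem: K S T mutation formula}, choosing the correct trunk or branch case. No step poses a genuine obstacle beyond this.
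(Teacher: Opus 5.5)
Your case computations are all correct. With the relabelings of \Cref{lem: K S T mutation formula} and the sign recursion \eqref{eq: recursion of tropical signs}, each of the four moves reproduces the claimed pattern: the trunk $S$-step, entering a branch, and the $S$- and $T$-steps inside a branch. Every element of $\mathcal{T}^{\geq[i]}$ is reached this way. The paper gives no argument here; it only cites \cite{AC25b}. So, relative to the results it quotes before this lemma, your induction is a complete proof.

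One point deserves care: the trunk/branch split in the $T$-case of \Cref{lem: K S T mutation formula} is not independent of the statement you are proving. Let $k,s,t$ be the labels of $\mathbf{w}$. At $\mathbf{w}T=\mathbf{w}[t]$ we have $K(\mathbf{w}T)=t$, and the sign at $t$ flips while the signs at $k$ and $s$ do not. All tropical signs are $\pm1$, since the $c$-vectors are nonzero, and $\varepsilon_k^{\mathbf{w}}\neq\varepsilon_s^{\mathbf{w}}$. Hence exactly one $x\in\{k,s\}$ satisfies $\varepsilon_x^{\mathbf{w}}=\varepsilon_t^{\mathbf{w}}$, equivalently $\varepsilon_x^{\mathbf{w}T}\neq\varepsilon_t^{\mathbf{w}T}$. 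Since $S(\mathbf{w}T)$ must satisfy the first condition of \eqref{eq: condition of S}, this forces $S(\mathbf{w}T)=x$.

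Thus $S(\mathbf{w}T)=S(\mathbf{w})$ exactly when $\varepsilon_T^{\mathbf{w}}=\varepsilon_S^{\mathbf{w}}$, and $S(\mathbf{w}T)=K(\mathbf{w})$ exactly when $\varepsilon_T^{\mathbf{w}}=\varepsilon_K^{\mathbf{w}}$. That is precisely the sign relation you are establishing. Quoting the trunk/branch form of that lemma as a black box is therefore safe only if it was proved without the sign pattern.

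To be self-contained, run a simultaneous induction in which the observation above, applied to your inductive hypothesis, supplies the $T$-relabeling. In the trunk $x=s$ because $\varepsilon_S=\varepsilon_T$, and in a branch $x=k$ because $\varepsilon_K=\varepsilon_T$. The $S$-relabeling $S(\mathbf{w}S)=K(\mathbf{w})$ needs no such input. Indeed $k$ satisfies both conditions of \eqref{eq: condition of S} at $\mathbf{w}S$: $\varepsilon_k^{\mathbf{w}S}=\varepsilon_s^{\mathbf{w}}=-\varepsilon_s^{\mathbf{w}S}$ and $\varepsilon_k^{\mathbf{w}S}\mathrm{sign}(b_{sk}^{\mathbf{w}S})=\varepsilon_s^{\mathbf{w}}\mathrm{sign}(b_{ks}^{\mathbf{w}})=-1$, so uniqueness applies. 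With this modification your computations go through unchanged and establish both lemmas at once.
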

\begin{lemma}\label{lem: sign rule of B}
For any distinct $M,M',M''\in \{K,S,T\}$, the following statements hold. 
\\
\textup{($a$)} For any ${\bf w} \in \mathcal{T}\setminus\{\emptyset\}$, we have $\mathrm{sign}(b_{MM'}^{\bf w})=-\mathrm{sign}(b_{M'M}^{\bf w})$.
\\
\textup{($b$)} For any ${\bf w} \in \mathcal{T}\setminus\{\emptyset\}$, we have $\mathrm{sign}(b_{MM'}^{\bf w})=-\mathrm{sign}(b_{MM''}^{\bf w})$ and $\mathrm{sign}(b_{M'M}^{\bf w})=-\mathrm{sign}(b_{M''M}^{\bf w})$.
\end{lemma}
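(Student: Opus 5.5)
The argument uses only the cyclic sign pattern of $B^{\mathbf{w}}$ together with the labeling $\{K(\mathbf{w}),S(\mathbf{w}),T(\mathbf{w})\}=\{1,2,3\}$. Since $\mathbf{w}\neq\emptyset$, the matrix $B^{\mathbf{w}}$ is cyclic, because $B$ is cluster-cyclic. So the sign matrix of $B^{\mathbf{w}}$ is $\pm$ the pattern in \eqref{eq: definition of cluster cyclic}, and all off-diagonal entries are nonzero. The maps $K,S,T$ take pairwise distinct values, so each of $b_{MM'}^{\mathbf{w}}$ with distinct $M,M'$ is a genuine off-diagonal entry $b_{ij}^{\mathbf{w}}$ with $i\neq j$. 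It therefore suffices to prove both statements for the index triple $(i,j,l)=(M(\mathbf{w}),M'(\mathbf{w}),M''(\mathbf{w}))$, which is an arbitrary arrangement of $\{1,2,3\}$.

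For (a), I would use sign-skew-symmetry: $\mathrm{sign}(b_{ij})=-\mathrm{sign}(b_{ji})$ holds for every skew-symmetrizable matrix, since $d_ib_{ij}=-d_jb_{ji}$ with $d_i,d_j>0$. Applying this to $B^{\mathbf{w}}$, which is skew-symmetrizable because mutation preserves skew-symmetrizability with the same $D$, gives (a) at once. Alternatively, one can read it off the cyclic pattern directly.

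For (b), I would inspect the cyclic pattern row by row. Each row of $\pm\left(\begin{smallmatrix}0&+&-\\-&0&+\\+&-&0\end{smallmatrix}\right)$ has exactly one $+$ and one $-$ off the diagonal. So for $j\neq l$, both different from $i$, we get $\mathrm{sign}(b_{ij}^{\mathbf{w}})=-\mathrm{sign}(b_{il}^{\mathbf{w}})$. The same holds column by column, which gives $\mathrm{sign}(b_{ji}^{\mathbf{w}})=-\mathrm{sign}(b_{li}^{\mathbf{w}})$. The column statement also follows from the row statement combined with (a).

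No step here is a real obstacle. The only point needing care is that each of $b_{MM'}^{\mathbf{w}}$, $b_{MM''}^{\mathbf{w}}$ is an off-diagonal entry of $B^{\mathbf{w}}$, and that $M',M''$ are the two indices other than $M$. Both follow because $K(\mathbf{w}),S(\mathbf{w}),T(\mathbf{w})$ are distinct by \Cref{thm: recursion for tropical signs}.
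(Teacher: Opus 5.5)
Your proposal is correct and follows the paper's own argument: (a) comes from sign-skew-symmetry of $B^{\mathbf{w}}$, and (b) comes from reading off the $M(\mathbf{w})$th row and column of the cyclic sign pattern. Your remark that $K(\mathbf{w}),S(\mathbf{w}),T(\mathbf{w})$ are pairwise distinct makes explicit a point the paper uses only implicitly.
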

\begin{proof}
The claim ($a$) follows from the fact that $B^{\bf w}$ is sign-skew-symmetric. To prove that $\mathrm{sign}(b_{MM'}^{\bf w})=-\mathrm{sign}(b_{MM''}^{\bf w})$, we consider the $M({\bf w})$th row of \eqref{eq: definition of cluster cyclic}. Since $M'({\bf w}), M''({\bf w}) \neq M({\bf w})$, exactly one of the corresponding entries is positive and the other is negative. Hence, this implies $\mathrm{sign}(b_{MM'}^{\bf w})=-\mathrm{sign}(b_{MM''}^{\bf w})$. By focusing on the $M({\bf w})$th column instead, we similarly obtain $\mathrm{sign}(b_{M'M}^{\bf w})=-\mathrm{sign}(b_{M''M}^{\bf w})$.
\end{proof}

\begin{proof}[Proof of \Cref{lem: for mutation formula}]
In the case of $(M,M')=(S,K)$ for any $\mfw$, this follows from the second condition of \eqref{eq: condition of S}. If we know this special case, the others are automatically determined by considering the following two facts.
\begin{itemize}
\item[($A$)] By Lemma~\ref{lem: tropical signs and K,S,T labeling}, if ${\bf w}$ is in a trunk, we have $\varepsilon_{K}^{\bf w} = -\varepsilon_{S}^{\bf w}=-\varepsilon_{T}^{\bf w}$. If ${\bf w}$ is in a branch, we have $\varepsilon_{K}^{\bf w} = -\varepsilon_{S}^{\bf w} = \varepsilon_{T}^{\bf w}$.
\item[($B$)] By \Cref{lem: sign rule of B}, for any distinct three indices $M,M',M'' \in \{K,S,T\}$, we obtain that $\mathrm{sign}(b_{MM'}^{\bf w})=-\mathrm{sign}(b_{M'M}^{\bf w})$, $\mathrm{sign}(b_{MM'}^{\bf w}) = -\mathrm{sign}(b_{MM''}^{\bf w})$, and $\mathrm{sign}(b_{M'M}^{\bf w}) = -\mathrm{sign}(b_{M''M}^{\bf w})$.
\end{itemize}
For example, $\varepsilon_{K}^{\bf w}\mathrm{sign}(b_{TK}^{\bf w})=1$ can be shown as follows:
\begin{equation}
\varepsilon_{K}^{\bf w}\mathrm{sign}(b_{TK}^{\bf w})\overset{(A)}{=}-\varepsilon_{S}^{\bf w}\mathrm{sign}(b_{TK}^{\bf w})\overset{(B)}{=}\varepsilon_{S}^{\bf w}\mathrm{sign}(b_{SK}^{\bf w})\overset{(B)}{=}-\varepsilon_{S}^{\bf w}\mathrm{sign}(b_{KS}^{\bf w})=1,
\end{equation}
where the last equality may be shown by the result of $(M,M')=(S,K)$.
\end{proof}

\subsection{Mutation formulas}\label{sec: mutation formulas}
To study $G$-fan structures, we introduce modified $c$--vectors $\tilde{\bf c}^{\bf w}_{i}$ and modified $g$-vectors $\tilde{\bf g}^{\bf w}_{i}$. For this purpose, the skew-symmetric matrices $\tilde{B}$ defined in \Cref{def: skew-symmetrized matrix} are more convenient and useful than the original ones. We write
\begin{equation}\label{eq: definition of p alpha}
p^{\bf w}_{ij}=\sqrt{|b_{ij}^{\bf w}b_{ji}^{\bf w}|}=|\tilde{b}_{ij}^{\bf w}|,
\quad
\alpha^{\mfw}_{ij}=\alpha(p^{\bf w}_{ij})=\frac{1}{2}\left(p^{\bf w}_{ij}+\sqrt{(p^{\bf w}_{ij})^2-4}\right),
\end{equation}
where $\tilde{b}_{ij}^{\bf w}$ is the $(i,j)$th entry of $\tilde{B}^{\bf w}$. By definition, it holds that
\begin{equation}\label{eq: minimum polynomial of alpha}
(\alpha_{ij}^{\bf w})^2 - p_{ij}^{\bf w} \alpha_{ij}^{\bf w}+1 = 0.
\end{equation} When we consider ${\bf w}=\emptyset$, for brevity, we omit it. Namely, we write $b_{ij}=b_{ij}^{\emptyset}$, $p_{ij}=p_{ij}^{\emptyset}=\sqrt{|b_{ij}b_{ji}|}$ and $\alpha_{ij}=\alpha^{\emptyset}_{ij}$.
\par
As in \Cref{def: K S T labeling}, for any $M,M'=K,S,T$ and ${\bf w} \in \mathcal{T}\backslash\{\emptyset\}$, we write $p^{\bf w}_{MM'}=p^{\bf w}_{M({\bf w})M'({\bf w})}$, $\alpha^{\mfw}_{MM'}=\alpha^{\mfw}_{M(\mfw)M'(\mfw)}$, $\tilde{\bf g}^{\bf w}_{M}=\tilde{\bf g}^{\bf w}_{M({\bf w})}$, and so on. Under these notations, the mutation rules \eqref{eq: mutation of b} and \eqref{eq: mutation of modified c- g-vectors} can be expressed as follows.
\begin{lemma}[$S$-mutation]\label{lem: S-mutations of modified c- g-vectors}
Let ${\bf w} \in \mathcal{T}\backslash\{\emptyset\}$. Then, the $S$-mutation data are given by
\begin{equation}\label{eq: S-mutations of p}
p_{MM'}^{{\bf w}S}=\begin{cases}
p_{SK}^{\bf w} & (M,M')=(K,S),(S,K),\\
p_{KS}^{\bf w}p_{ST}^{\bf w}-p_{KT}^{\bf w}& (M,M')=(S,T),(T,S),\\
p_{ST}^{\bf w} & (M,M')=(K,T),(T,K),
\end{cases}
\end{equation}
\begin{equation}\label{eq: S mutation of c- g-vectors}
\tilde{\bf c}^{{\bf w}S}_{M}=\begin{cases}
-\tilde{\bf c}^{\bf w}_{S} & M=K,\\
\tilde{\bf c}^{\bf w}_{K}+p_{SK}^{\bf w}\tilde{\bf c}_{S}^{\bf w} & M=S,\\
\tilde{\bf c}^{\bf w}_{T} & M=T,
\end{cases}
\quad
\tilde{\bf g}^{{\bf w}S}_{M}=\begin{cases}
-\tilde{\bf g}_{S}^{\bf w}+p_{SK}^{\bf w}\tilde{\bf g}_{K}^{\bf w} & M=K,\\
\tilde{\bf g}_{K}^{\bf w} & M=S,\\
\tilde{\bf g}_{T}^{\bf w} & M=T.
\end{cases}
\end{equation}
\end{lemma}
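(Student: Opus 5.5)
The plan is to specialize the general mutation rules \eqref{eq: mutation of b} and \eqref{eq: mutation of modified c- g-vectors} to the direction $s=S(\mathbf{w})$. The only inputs are the sign table of \Cref{lem: for mutation formula} and the relabeling rule \eqref{eq: K S T mutation formula by S}, which says $(K,S,T)$ at $\mathbf{w}S$ equals $(S,K,T)$ at $\mathbf{w}$. Write $k=K(\mathbf{w})$, $s=S(\mathbf{w})$, $t=T(\mathbf{w})$. Then at $\mathbf{w}S$ we have $K(\mathbf{w}S)=s$, $S(\mathbf{w}S)=k$, $T(\mathbf{w}S)=t$. Every quantity indexed by $M$ at $\mathbf{w}S$ is therefore a quantity indexed by the swapped letter at $\mathbf{w}$, and it suffices to compute $\mu_s$ applied to the data at $\mathbf{w}$. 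Both the trunk and branch cases of \Cref{lem: for mutation formula} give the same entries for the pairs $(K,S)$ and $(S,K)$, which are the only ones needed, so no case split is required.

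\textbf{Step 1: the values $p^{\mathbf{w}S}$.} By \Cref{lem: skew-symmetrized matrices formula} it is enough to work with the skew-symmetric matrix $\tilde B^{\mathbf{w}}$, which mutates by the same rule \eqref{eq: mutation of b}. Entries in row or column $s$ only change sign. This gives $p^{\mathbf{w}S}_{KS}=p^{\mathbf{w}S}_{sk}=p^{\mathbf{w}}_{SK}$ and $p^{\mathbf{w}S}_{KT}=p^{\mathbf{w}S}_{st}=p^{\mathbf{w}}_{ST}$. For the $(k,t)$ entry, which is $(S,T)$ at $\mathbf{w}S$, the rule gives
\[
\tilde b'_{kt}=\tilde b_{kt}+\mathrm{sign}(\tilde b_{ks})[\tilde b_{ks}\tilde b_{st}]_+ .
\]
By \Cref{lem: sign rule of B}, $\mathrm{sign}(\tilde b_{ks})=-\mathrm{sign}(\tilde b_{kt})$ and $\mathrm{sign}(\tilde b_{st})=-\mathrm{sign}(\tilde b_{sk})=\mathrm{sign}(\tilde b_{ks})$, so $\tilde b_{ks}\tilde b_{st}>0$. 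Hence $\tilde b'_{kt}=\mathrm{sign}(\tilde b_{ks})(p_{KS}p_{ST}-p_{KT})$, whose absolute value is $p^{\mathbf{w}}_{KS}p^{\mathbf{w}}_{ST}-p^{\mathbf{w}}_{KT}$. The one point needing care is that this difference is nonnegative (in fact positive). I would get this from cluster-cyclicity: $B^{\mathbf{w}S}$ is cyclic, so $\tilde b'_{kt}\neq0$, and its sign must be opposite to that of $\tilde b_{kt}$ by \eqref{eq: sign reversed relation}. This forces $p_{KS}p_{ST}-p_{KT}>0$. Alternatively, \eqref{eq: 2nd form of cluster-cyclic} with $\alpha\le p$ gives the same bound. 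This sign bookkeeping is the only step I expect to require attention.

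\textbf{Step 2: $c$-vectors.} Apply \eqref{eq: mutation of modified c- g-vectors} with mutation index $s$.
\begin{itemize}
\item The $s$-th vector is $-\tilde{\mathbf c}_s^{\mathbf{w}}$; this is the $K$-vector at $\mathbf{w}S$.
\item For $i=k$ we need $[\varepsilon_s\tilde b_{sk}]_+$. By \Cref{lem: for mutation formula}, the entry $(M,M')=(K,S)$ gives $\varepsilon_K\mathrm{sign}(b_{SK})=-1$, and the entry $(S,K)$ gives $\varepsilon_S\,\mathrm{sign}(b_{KS})=-1$. Hence $\varepsilon_S\,\mathrm{sign}(b_{SK})=+1$, so the coefficient is $p_{SK}$. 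The result is $\tilde{\mathbf c}_K+p_{SK}\tilde{\mathbf c}_S$, which is the $S$-vector at $\mathbf{w}S$.
\item For $i=t$ the coefficient is $[\varepsilon_S\tilde b_{ST}]_+$, and $\mathrm{sign}(b_{ST})=-\mathrm{sign}(b_{SK})$, so this coefficient is $0$. The $T$-vector is therefore unchanged.
\end{itemize}

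\textbf{Step 3: $g$-vectors.} The vectors $\tilde{\mathbf g}_k$ and $\tilde{\mathbf g}_t$ are unchanged and become the $S$- and $T$-vectors at $\mathbf{w}S$. The new $s$-th vector is $-\tilde{\mathbf g}_s+\sum_j[-\varepsilon_s\tilde b_{js}]_+\tilde{\mathbf g}_j$. By the table, $\varepsilon_S\,\mathrm{sign}(b_{KS})=-1$, so the $j=k$ coefficient is $p_{SK}$. Also $\varepsilon_S\,\mathrm{sign}(b_{TS})=+1$ in both the trunk and branch tables, so the $j=t$ coefficient vanishes. This gives $-\tilde{\mathbf g}_S+p_{SK}\tilde{\mathbf g}_K$ as the $K$-vector at $\mathbf{w}S$, which completes the proof.
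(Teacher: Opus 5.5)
Your proposal is correct and follows essentially the same route as the paper: relabel the indices via \eqref{eq: K S T mutation formula by S}, read off the mutation coefficients from \Cref{lem: for mutation formula} and \Cref{lem: sign rule of B}, and use cluster-cyclicity (the sign reversal \eqref{eq: sign reversed relation}) to get $p_{KS}^{\mathbf{w}}p_{ST}^{\mathbf{w}}-p_{KT}^{\mathbf{w}}>0$. The paper writes out only the $p_{ST}^{\mathbf{w}S}$ and $\tilde{\mathbf{c}}_{S}^{\mathbf{w}S}$ cases and says the others are similar; you have filled in the remaining cases correctly.
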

\begin{lemma}[$T$-mutation]\label{lem: T-mutations of modified c- g-vectores}
Let ${\bf w} \in \mathcal{T}\backslash\{\emptyset\}$. Then, the $T$-mutation data are given by the following rules.
\\
\textup{($a$)} If ${\bf w}$ is in a trunk, we have
\begin{equation}\label{eq: T mutation of B in trunks}
p_{MM'}^{{\bf w}T}=\begin{cases}
p_{TS}^{\bf w} & (M,M')=(K,S),(S,K),\\
p_{ST}^{\bf w}p_{TK}^{\bf w}-p_{SK}^{\bf w} & (M,M')=(S,T),(T,S),\\
p_{KT}^{\bf w} & (M,M')=(K,T),(T,K),\\
\end{cases}
\end{equation}
\begin{equation}\label{eq: T mutation of c- g-vectors in trunks}
\tilde{\bf c}^{{\bf w}T}_{M}=\begin{cases}
-\tilde{\bf c}^{\bf w}_{T} & M=K,\\
\tilde{\bf c}^{\bf w}_{S}+p_{ST}^{\bf w}\tilde{\bf c}_{T}^{\bf w} & M=S,\\
\tilde{\bf c}^{\bf w}_{K} & M=T,
\end{cases}
\quad
\tilde{\bf g}^{{\bf w}T}_{M}=\begin{cases}
-\tilde{\bf g}_{T}^{\bf w}+p_{ST}^{\bf w}\tilde{\bf g}_{S}^{\bf w} & M=K,\\
\tilde{\bf g}_{S}^{\bf w} & M=S,\\
\tilde{\bf g}_{K}^{\bf w} & M=T.
\end{cases}
\end{equation}
\textup{($b$)} If ${\bf w}$ is in a branch, we have
\begin{equation}\label{eq: T mutation of B in branches}
p_{MM'}^{{\bf w}T}=\begin{cases}
p_{TK}^{\bf w} & (M,M')=(K,S),(S,K),\\
p_{KT}^{\bf w}p_{TS}^{\bf w}-p_{KS}^{\bf w} & (M,M')=(S,T),(T,S),\\
p_{ST}^{\bf w} & (M,M')=(K,T),(T,K),\\
\end{cases}
\end{equation}
\begin{equation}\label{eq: T mutation of c- g-vectors in branches}
\tilde{\bf c}^{{\bf w}T}_{M}=\begin{cases}
-\tilde{\bf c}^{\bf w}_{T} & M=K,\\
\tilde{\bf c}^{\bf w}_{K}+p_{KT}^{\bf w}\tilde{\bf c}_{T}^{\bf w} & M=S,\\
\tilde{\bf c}^{\bf w}_{S} & M=T,
\end{cases}
\quad
\tilde{\bf g}^{{\bf w}T}_{M}=\begin{cases}
-\tilde{\bf g}_{T}^{\bf w}+p_{KT}^{\bf w}\tilde{\bf g}_{K}^{\bf w} & M=K,\\
\tilde{\bf g}_{K}^{\bf w} & M=S,\\
\tilde{\bf g}_{S}^{\bf w} & M=T.
\end{cases}
\end{equation}
\end{lemma}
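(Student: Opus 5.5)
The plan is to specialize the general mutation rules \eqref{eq: mutation of b} and \eqref{eq: mutation of modified c- g-vectors} to the direction $k=T(\mathbf{w})$. The signs that occur are read off from \Cref{lem: for mutation formula} and \Cref{lem: sign rule of B}. The last step relabels the new indices using \Cref{lem: K S T mutation formula}. Write $k=K(\mathbf{w})$, $s=S(\mathbf{w})$, $t=T(\mathbf{w})$. By \Cref{lem: K S T mutation formula}, $(K,S,T)(\mathbf{w}T)=(t,s,k)$ in a trunk and $(t,k,s)$ in a branch. Everything below is computed in terms of $k,s,t$ first and only relabeled at the end.

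\emph{Step 1: the $p$'s.} I work with the skew-symmetric matrices $\tilde{B}^{\mathbf{w}}$ of \Cref{lem: skew-symmetrized matrices formula}, which mutate by the same rule, so that $p^{\mathbf{w}}_{ij}=|\tilde b^{\mathbf{w}}_{ij}|$. Entries in row and column $t$ only change sign, so $p^{\mathbf{w}T}_{tk}=p^{\mathbf{w}}_{tk}$ and $p^{\mathbf{w}T}_{ts}=p^{\mathbf{w}}_{ts}$. For the remaining entry I use $\tilde b'_{ks}=\tilde b_{ks}+\mathrm{sign}(\tilde b_{kt})[\tilde b_{kt}\tilde b_{ts}]_+$. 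By \Cref{lem: sign rule of B}, $\mathrm{sign}(b_{kt})=-\mathrm{sign}(b_{ks})$ (row $k$) and $\mathrm{sign}(b_{ts})=-\mathrm{sign}(b_{ks})$ (column $s$). Hence $\tilde b_{kt}\tilde b_{ts}>0$ and
\[
\tilde b'_{ks}=\mathrm{sign}(b_{ks})\bigl(p_{ks}-p_{kt}p_{ts}\bigr).
\]
Cluster-cyclicity together with \eqref{eq: sign reversed relation} forces $\mathrm{sign}(\tilde b'_{ks})=-\mathrm{sign}(b_{ks})$. Therefore $p^{\mathbf{w}T}_{ks}=p_{kt}p_{ts}-p_{ks}$, with no absolute value needed. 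Relabeling gives \eqref{eq: T mutation of B in trunks}: in a trunk the pair $\{s,k\}$ becomes $\{S,T\}$, so this is $p_{ST}p_{TK}-p_{SK}$. It also gives \eqref{eq: T mutation of B in branches}: in a branch $\{k,s\}$ becomes $\{S,T\}$, so this is $p_{KT}p_{TS}-p_{KS}$. In both cases the unchanged values attach to the pairs $(K,S)$ and $(K,T)$ as stated.

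\emph{Step 2: $c$- and $g$-vectors.} In \eqref{eq: mutation of modified c- g-vectors} with $k\to t$, the new $t$-th vectors are $-\tilde{\mathbf{c}}_t$ and $-\tilde{\mathbf{g}}_t+\sum_j[-\varepsilon_t\tilde b_{jt}]_+\tilde{\mathbf{g}}_j$, and both get label $K$. The other $c$-vectors change by $[\varepsilon_t\tilde b_{ti}]_+\tilde{\mathbf{c}}_t$.

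Consider first the trunk case. \Cref{lem: for mutation formula}($a$) gives $\varepsilon_T\mathrm{sign}(b_{KT})=1$ and $\varepsilon_T\mathrm{sign}(b_{ST})=-1$. By skew-symmetry of signs, $\varepsilon_T\mathrm{sign}(b_{TK})=-1$ and $\varepsilon_T\mathrm{sign}(b_{TS})=1$. So only $\tilde{\mathbf{c}}_s$ gains $p_{st}\tilde{\mathbf{c}}_t$, and only $j=s$ contributes to the $g$-sum, giving $-\tilde{\mathbf{g}}_t+p_{st}\tilde{\mathbf{g}}_s$. Relabeling $s\mapsto S$ and $k\mapsto T$ yields \eqref{eq: T mutation of c- g-vectors in trunks}.

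In the branch case the table \Cref{lem: for mutation formula}($b$) has the opposite $T$-signs. Then only $\tilde{\mathbf{c}}_k$ gains $p_{kt}\tilde{\mathbf{c}}_t$, and only $j=k$ contributes to the $g$-sum. Relabeling $k\mapsto S$ and $s\mapsto T$ yields \eqref{eq: T mutation of c- g-vectors in branches}.

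The main point requiring care is the sign claim in Step 1, that $\tilde b'_{ks}$ has sign opposite to $b_{ks}$, so the absolute value resolves as $p_{kt}p_{ts}-p_{ks}$ rather than $p_{ks}-p_{kt}p_{ts}$. This is exactly where cluster-cyclicity enters; alternatively it follows from \eqref{eq: 2nd form of cluster-cyclic}-type inequalities, and I would check it against \eqref{eq: 1st form of cluster-cyclic} as well. The rest is bookkeeping of the $K,S,T$ relabeling.
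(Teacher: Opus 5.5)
Your proposal is correct and follows the paper's own route. Like the paper, you specialize \eqref{eq: mutation of b} and \eqref{eq: mutation of modified c- g-vectors} to the direction $T(\mathbf{w})$, read the signs off \Cref{lem: for mutation formula} and \Cref{lem: sign rule of B}, use \eqref{eq: sign reversed relation} to resolve the absolute value in the new $p$, and relabel via \Cref{lem: K S T mutation formula}. The paper writes out only two representative $S$-mutation cases and leaves the $T$-cases as "similar"; your bookkeeping for the trunk and branch cases is exactly that omitted verification, and it checks out.
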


\begin{proof}[Proof of \Cref{lem: S-mutations of modified c- g-vectors} and \Cref{lem: T-mutations of modified c- g-vectores}]
By using \Cref{lem: K S T mutation formula} and \Cref{lem: for mutation formula}, we may check each formula. Here, we show $p_{ST}^{{\bf w}S}=p_{KS}^{\bf w}p_{ST}^{\bf w}-p_{KT}^{\bf w}$ and $\tilde{\bf c}^{{\bf w}S}_{S}=\tilde{\bf c}^{\bf w}_{K}+p_{SK}^{\bf w}\tilde{\bf c}_{S}^{\bf w}$, and the others can be shown by a similar argument. 
\par
By \Cref{lem: K S T mutation formula}, we have $S({\bf w}S)=K({\bf w})$ and $T({\bf w}S)=T({\bf w})$.
Let $\tau=\mathrm{sign}(\tilde{b}_{KT}^{\bf w})$. Then, we have $p_{KT}^{\bf w}=\tau\tilde{b}_{KT}^{\bf w}$. By \eqref{eq: sign reversed relation}, it implies that $\mathrm{sign}(\tilde{b}^{{\bf w}S}_{ST})=\mathrm{sign}(\tilde{b}^{{\bf w}S}_{K({\bf w}),T({\bf w})})=-\mathrm{sign}(\tilde{b}_{KT}^{\bf w})=-\tau$ and $p_{ST}^{{\bf w}S}=-\tau\tilde{b}_{ST}^{{\bf w}S}$.
Moreover, by \Cref{lem: sign rule of B}, it holds that $\mathrm{sign}(\tilde{b}_{KS}^{\bf w})=-\tau$ and $\mathrm{sign}(\tilde{b}_{ST}^{\bf w})=-\tau$. In particular, $\tilde{b}_{KS}^{\bf w}\tilde{b}_{ST}^{\bf w} \geq 0$ holds. By \eqref{eq: mutation of b}, we have
\begin{equation}
\begin{aligned}
p_{ST}^{{\bf w}S}=&-\tau\tilde{b}_{ST}^{{\bf w}S}=-\tau \tilde{b}_{K({\bf w}),T({\bf w})}^{{\bf w}[S({\bf w})]}=-\tau(\tilde{b}_{KT}^{\bf w}+\mathrm{sign}(\tilde{b}_{KS}^{\bf w})|\tilde{b}_{KS}^{\bf w}\tilde{b}_{ST}^{\bf w}|)\\
=&-\tau(\tau p_{KT}^{\bf w} - \tau p_{KS}^{\bf w}p_{ST}^{\bf w})=p_{KS}^{\bf w}p_{ST}^{\bf w}-p_{KT}^{\bf w}.
\end{aligned}
\end{equation}
\par
By \Cref{lem: K S T mutation formula}, we have $S({\bf w}S)=K({\bf w})$. Thus, by \eqref{eq: mutation of modified c- g-vectors}, it implies that 
\begin{equation}
\tilde{\bf c}^{{\bf w}S}_{S}=\tilde{\bf c}_{K({\bf w})}^{{\bf w}[S({\bf w})]}=\tilde{\bf c}^{\bf w}_{K}+[\varepsilon_{S}^{\bf w}\tilde{b}_{SK}^{\bf w}]_{+}\tilde{\bf c}_{S}^{\bf w}.
\end{equation}
Since $\tilde{b}_{SK}^{\bf w}=-\tilde{b}_{KS}^{\bf w}$, we have $[\varepsilon_{S}^{\bf w}\tilde{b}_{SK}^{\bf w}]_{+}=[-\varepsilon_{S}^{\bf w}\tilde{b}_{KS}^{\bf w}]_{+}$. By \Cref{lem: for mutation formula} in the case of $(M,M')=(S,K)$, we obtain $\varepsilon_{S}^{\bf w}\tilde{b}_{KS}^{\bf w} < 0$. It implies $[-\varepsilon_{S}^{\bf w}\tilde{b}_{KS}^{\bf w}]_{+}=|\tilde{b}_{KS}^{\bf w}|=p_{KS}^{\bf w}$. Thus, we have $\tilde{\bf c}^{{\bf w}S}_{S}=\tilde{\bf c}^{\bf w}_{K}+p_{SK}^{\bf w}\tilde{\bf c}_{S}^{\bf w}$.
\end{proof}
\begin{example}
In the stereographic projection, according to \Cref{lem: S-mutations of modified c- g-vectors} and \Cref{lem: T-mutations of modified c- g-vectores}, the mutation rules of $g$-vectors can be understood as in \Cref{fig: mutations in trunks} and \Cref{fig: mutations in branches}. Note that the thick colored lines are on the same plane. By \eqref{eq: orthogonal relation for modified vectors}, each plane is orthogonal to a $c$-vector.
\begin{figure}[htbp]
\centering
\begin{minipage}[b]{0.45\textwidth}
\centering
\begin{tikzpicture}
\coordinate (K0) at (1.0116,-1.5575);
\coordinate (T0) at (-0.8966,-0.5176);
\coordinate (S0) at (0.8966,-0.5176);
\coordinate (KS) at (0.8526,-2.4612);
\coordinate (SS) at (K0);
\coordinate (TS) at (T0);
\coordinate (KT) at (1.8547,-0.0973);
\coordinate (ST) at (S0);
\coordinate (TT) at (K0);

\draw (1.0116,-1.5575) arc [radius=3.4641, start angle=-2.3711, end angle=15.0];
\draw (0.8966,-0.5176) arc [radius=3.4641, start angle=-75.0, end angle=-105.0];
\draw (-0.8966,-0.5176) arc [radius=2.4596, start angle=-144.8056, end angle=-92.3711];

\draw (0.8526,-2.4612) arc [radius=3.4641, start angle=-17.5926, end angle=-2.3711];
\draw (1.0116,-1.5575) arc [radius=2.4596, start angle=-92.3711, end angle=-144.8056];
\draw (-0.8966,-0.5176) arc [radius=2.582, start angle=-168.4349, end angle=-107.5926];

\draw (1.8547,-0.0973) arc [radius=3.4641, start angle=-57.6289, end angle=-75.0];
\draw (0.8966,-0.5176) arc [radius=3.4641, start angle=15.0, end angle=-2.3711];
\draw (1.0116,-1.5575) arc [radius=2.0069, start angle=-54.8389, end angle=-5.1611];

\draw (K0) node [above left] {\tiny $K$};
\draw (S0) node [below left] {\tiny $S$};
\draw (T0) node [xshift = 15, yshift = -7] {\tiny $T$};

\draw (0.3372,-1.0375) node {\tiny $\mathbf{w}$};

\draw (KS) node [xshift = -3, yshift=6] {\tiny $K$};
\draw (SS) node [below left] {\tiny $S$};
\draw (TS) node [xshift = 15, yshift = -23] {\tiny $T$};

\draw (0.3372,-1.8) node {\tiny $\mathbf{w}S$};

\draw (KT) node [xshift=-7, yshift=-10] {\tiny $K$};
\draw (ST) node [xshift=5, yshift=-2] {\tiny $S$};
\draw (TT) node [xshift=4, yshift=7] {\tiny $T$};

\draw (1.3,-0.9) node {\tiny $\mathbf{w}T$};

\draw[red, thick] (0.8966,-0.5176) arc [radius=3.4641, start angle=15.0, end angle=-17.5926];
\draw[blue, thick] (-0.8966,-0.5176) arc [radius=3.4641, start angle=-105.0, end angle=-57.6289];
\end{tikzpicture}
\caption{In trunks.}\label{fig: mutations in trunks}
\end{minipage}
\begin{minipage}[b]{0.45\textwidth}
\centering
\begin{tikzpicture}
\coordinate (K0) at (1.8547,-0.0973);
\coordinate (S0) at (0.8966,-0.5176);
\coordinate (T0) at (1.0116,-1.5575);

\coordinate (KS) at (2.7099,0.6705);
\coordinate (SS) at (K0);
\coordinate (TS) at (T0);

\coordinate (KT) at (1.517,1.2095);
\coordinate (ST) at (K0);
\coordinate (TT) at (S0);

\draw (1.8547,-0.0973) arc [radius=3.4641, start angle=-57.6289, end angle=-75.0];
\draw (0.8966,-0.5176) arc [radius=3.4641, start angle=15.0, end angle=-2.3711];
\draw (1.0116,-1.5575) arc [radius=2.0069, start angle=-54.8389, end angle=-5.1611];

\draw (2.7099,0.6705) arc [radius=3.4641, start angle=-38.5307, end angle=-57.6289];
\draw (1.8547,-0.0973) arc [radius=2.0069, start angle=-5.1611, end angle=-54.8389];
\draw (1.0116,-1.5575) arc [radius=2.1483, start angle=-78.0078, end angle=3.3782];

\draw (1.517,1.2095) arc [radius=2.0069, start angle=34.1393, end angle=-5.1611];
\draw (1.8547,-0.0973) arc [radius=3.4641, start angle=-57.6289, end angle=-75.0];
\draw (0.8966,-0.5176) arc [radius=2.498, start angle=-41.3099, end angle=1.7918];

\draw (K0) node [xshift=-7, yshift=-10] {\tiny $K$};
\draw (S0) node [xshift=5, yshift=-2] {\tiny $S$};
\draw (T0) node [xshift=4, yshift=7] {\tiny $T$};

\draw (1.3,-0.9) node {\tiny $\mathbf{w}$};

\draw (KS) node [xshift=-7, yshift=-13] {\tiny $K$};
\draw (SS) node [xshift=5, yshift=-1] {\tiny $S$};
\draw (TS) node [xshift=20, yshift=15] {\tiny $T$};

\draw (2.1,-0.5) node {\tiny $\mathbf{w}S$};

\draw (KT) node [xshift=2, yshift=-16] {\tiny $K$};
\draw (ST) node [xshift=-4, yshift=3] {\tiny $S$};
\draw (TT) node [xshift=10, yshift=8] {\tiny $T$};

\draw (1.6,0.3) node {\tiny $\mathbf{w}T$};

\draw[red, thick] (0.8966,-0.5176) arc [radius=3.4641, start angle=-75.0, end angle=-38.5307];
\draw[blue, thick] (1.0116,-1.5575) arc [radius=2.0069, start angle=-54.8389, end angle=34.1393];
\end{tikzpicture}
\caption{In branches.}\label{fig: mutations in branches}
\end{minipage}
\end{figure}
\end{example}

\subsection{Initial setup}
We fix the initial mutation $i\in \{1,2,3\}$. After a single mutation $[i]$, the mutation rule is described in  \Cref{lem: S-mutations of modified c- g-vectors} and \Cref{lem: T-mutations of modified c- g-vectores}. In contrast, the first mutation should be computed directly. We give some relevant formulas for the initial mutation as follows.
\par
Let $k_0=K([i])$, $s_0=S([i])$, and $t_0=T([i])$. By the same argument in \Cref{ex: tropical signs}, these indices are listed in \Cref{tab: List of initial indices}.
\begin{table}[htbp]
\captionsetup{skip=8pt}
\centering
\begin{tabular}{c||ccc|ccc}
$B$
&
\multicolumn{3}{c|}{$\left(\begin{smallmatrix}
0 & - & +\\
+ & 0 & -\\
- & + & 0
\end{smallmatrix}\right)$}
& 
\multicolumn{3}{c}{$\left(\begin{smallmatrix}
0 & + & -\\
- & 0 & +\\
+ & - & 0
\end{smallmatrix}\right)$}
\\
$i$ & $1$ & $2$ & $3$ & $1$ & $2$ & $3$
\\
\hline
$(k_0,s_0,t_0)$ 
&
$(1,3,2)$
&
$(2,1,3)$
&
$(3,2,1)$
&
$(1,2,3)$
&
$(2,3,1)$
&
$(3,2,1)$
\end{tabular}
\caption{The list of $k_0=K([i])$, $s_0=S([i])$, $t_0=T([i])$.}
\label{tab: List of initial indices}
\end{table}
\par
For a given initial exchange matrix $B$, if we consider the relation between two different initial mutation directions $i \neq j$, the following fact is useful:
\begin{equation}
S([i]) \neq S([j]), \quad T([i]) \neq T([j]).
\end{equation}
For example, set $k_0=K([i])$, $s_0=S([i])$, and $t_0=T([i])$. For the initial mutation $j=s_0$, since $T([j]) \neq t_0$, we have
\begin{equation}\label{eq: i to s0 indices}
K([j])=s_0,
\quad
S([j])=t_0,
\quad
T([j])=k_0.
\end{equation}
Similarly, for $j=t_0$, since $S([j]) \neq s_0$, we have
\begin{equation}\label{eq: i to t0 indices}
K([j])=t_0,
\quad
S([j])=k_0,
\quad
T([j])=s_0.
\end{equation}
\par
As explained in Subsection~\ref{sec: mutation formulas}, we omit the empty sequence $\emptyset=[\ ]$ in the superscript. By using the data at $\emptyset$, we give the expressions for $p_{MM'}^{[i]}$, $\tilde{\mathbf{g}}_{M}^{[i]}$, and $\tilde{\mathbf{c}}_{M}^{[i]}$.
\begin{lemma}\label{lem: kst}
The following equalities hold:
\begin{equation}\label{eq: initial mutations}
\left\{\begin{aligned}
p_{ST}^{[i]}&=p_{k_0s_0}p_{k_0t_0}-p_{s_0t_0},
\\
p_{KT}^{[i]}&=p_{k_0t_0},
\\
p_{KS}^{[i]}&=p_{k_0s_0},
\end{aligned}\right.
\quad
\left\{\begin{aligned}
\tilde{\bf g}_{K}^{[i]}&=-\tilde{\bf e}_{k_0}+p_{k_0s_0}\tilde{\bf e}_{s_0},
\\
\tilde{\bf g}_{S}^{[i]}&=\tilde{\bf e}_{s_0},
\\
\tilde{\bf g}_{T}^{[i]}&=\tilde{\bf e}_{t_0},
\end{aligned}\right.
\quad
\left\{\begin{aligned}
\tilde{\bf c}_{K}^{[i]}&=-\tilde{\bf e}_{k_0},
\\
\tilde{\bf c}_{S}^{[i]}&=\tilde{\bf e}_{s_0}+p_{k_0s_0}\tilde{\bf e}_{k_0},
\\
\tilde{\bf c}_{T}^{[i]}&=\tilde{\bf e}_{t_0}.
\end{aligned}\right.
\end{equation}
\end{lemma}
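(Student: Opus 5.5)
The plan is a direct computation from the initial data $\tilde{\mathbf{g}}_j^{\emptyset}=\tilde{\mathbf{c}}_j^{\emptyset}=\tilde{\mathbf{e}}_j$ and $\varepsilon_j^{\emptyset}=1$, using the single-step recursion \eqref{eq: mutation of modified c- g-vectors} for the mutation $\mu_{k_0}$ with $k_0=i$. The only real input is a sign determination. By \eqref{eq: definition of cluster cyclic} and \Cref{lem: sign rule of B} applied to the initial matrix, exactly one of $\tilde b_{k_0 s_0},\tilde b_{k_0 t_0}$ is positive. The first step is to identify which one, using the definition of $s_0=S([i])$ from \Cref{thm: recursion for tropical signs}($a$).

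At $[i]$ we have $\varepsilon^{[i]}_{k_0}=-1$ and $\varepsilon^{[i]}_{s_0}=1$. Condition \eqref{eq: condition of S} then gives $\mathrm{sign}(b^{[i]}_{k_0 s_0})=-1$. Mutation at $k_0$ reverses the signs in the $k_0$-th row, so $\mathrm{sign}(b_{k_0 s_0})=+1$. Consequently $\tilde b_{s_0 k_0}<0$, $\tilde b_{k_0 t_0}<0$ and $\tilde b_{t_0 k_0}>0$. One could cross-check this against \Cref{tab: List of initial indices}, but the derivation above is intrinsic.

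With the signs fixed, the vectors follow immediately. The $g$-vectors are
\[
\tilde{\mathbf g}^{[i]}_{k_0}=-\tilde{\mathbf e}_{k_0}+\sum_j[-\tilde b_{jk_0}]_+\tilde{\mathbf e}_j .
\]
Only $j=s_0$ contributes, with coefficient $|\tilde b_{s_0k_0}|=p_{k_0s_0}$. The other $g$-vectors are unchanged, so $\tilde{\mathbf g}^{[i]}_{s_0}=\tilde{\mathbf e}_{s_0}$ and $\tilde{\mathbf g}^{[i]}_{t_0}=\tilde{\mathbf e}_{t_0}$.

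The $c$-vectors are computed the same way:
\[
\tilde{\mathbf c}^{[i]}_{k_0}=-\tilde{\mathbf e}_{k_0},\qquad
\tilde{\mathbf c}^{[i]}_{j}=\tilde{\mathbf e}_j+[\tilde b_{k_0 j}]_+\tilde{\mathbf e}_{k_0}\quad(j\neq k_0).
\]
This gives $\tilde{\mathbf c}^{[i]}_{s_0}=\tilde{\mathbf e}_{s_0}+p_{k_0s_0}\tilde{\mathbf e}_{k_0}$ and $\tilde{\mathbf c}^{[i]}_{t_0}=\tilde{\mathbf e}_{t_0}$. Since $K([i])=k_0$, $S([i])=s_0$ and $T([i])=t_0$, this is exactly the claimed labeling.

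For the $p$'s, \Cref{lem: skew-symmetrized matrices formula} lets us work with $\tilde B$, where \eqref{eq: mutation of b} flips the signs of the entries in row and column $k_0$. This gives $p^{[i]}_{KS}=p_{k_0s_0}$ and $p^{[i]}_{KT}=p_{k_0t_0}$ at once. For $p^{[i]}_{ST}$ we compute
\[
\tilde b^{[i]}_{s_0t_0}=\tilde b_{s_0t_0}+\mathrm{sign}(\tilde b_{s_0k_0})[\tilde b_{s_0k_0}\tilde b_{k_0t_0}]_+ .
\]
The product $\tilde b_{s_0k_0}\tilde b_{k_0t_0}$ is positive, since both factors are negative. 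In a cyclic matrix $\tilde b_{s_0t_0}$ has the sign opposite to $\tilde b_{s_0k_0}$, so it is positive. Hence $\tilde b^{[i]}_{s_0t_0}=p_{s_0t_0}-p_{k_0s_0}p_{k_0t_0}$.

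Taking absolute values, and using that $B^{[i]}$ is cyclic so this entry is negative by \eqref{eq: sign reversed relation}, we get $p^{[i]}_{ST}=p_{k_0s_0}p_{k_0t_0}-p_{s_0t_0}$. The only delicate point in the whole argument is the sign bookkeeping in the first step; everything afterwards is mechanical.
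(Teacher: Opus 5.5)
Your proof is correct and is essentially the argument the paper intends. The paper only says that the lemma follows by the same sign bookkeeping as in the proofs of the $S$- and $T$-mutation lemmas; you make this explicit by extracting $\mathrm{sign}(b_{k_0s_0})=+1$ from the defining condition of $S([i])$ and the sign reversal in row $k_0$, after which everything is a single application of the recursions. Deriving this sign intrinsically rather than reading it off the table of initial indices is the right call: the last entry of that table appears to be a misprint, since $(3,2,1)$ should read $(3,1,2)$, which is also forced by $S([i])\neq S([j])$.
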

\begin{proof}
This follows from the same argument in the proofs of \Cref{lem: S-mutations of modified c- g-vectors} and \Cref{lem: T-mutations of modified c- g-vectores}.
\end{proof}

\section{Global upper bound}\label{sec: global}
From now on, we fix an cluster-cyclic initial exchange matrix $B \in \mathrm{M}_3(\mathbb{R})$ together with its skew-symmetrizer $D=\mathrm{diag}(d_1,d_2,d_3)$.
In this section, we exhibit the global upper bounds for the $G$-fan, which can be referred to the thick blue lines in \Cref{fig: main graph}.
\subsection{Quadratic surface for $g$-vectors}
In \cite{LL24}, a restriction on $g$-vectors was established.
To state it, we introduce the following notion.
\begin{definition}
For any cyclic matrix $B=(b_{ij}) \in \mathrm{M}_3(\mathbb{R})$, we define the {\em pseudo Cartan companion} of $B$ as
\begin{equation}\label{eq: definition of pseudo Cartan companion}
A=\left(\begin{matrix}
2 & |b_{12}| & |b_{13}|\\
|b_{21}| & 2 & |b_{23}|\\
|b_{31}| & |b_{32}| & 2
\end{matrix}\right).
\end{equation}
For any cluster-cyclic initial exchange matrix $B$ and any reduced sequence $\mathbf{w} \in \mathcal{T}$, we write the pseudo Cartan companion of $B^{\mathbf{w}}$ as $A^{\mathbf{w}}$.
\end{definition}
\begin{remark}\label{rem: Seven's result}
In \cite[Thm.~2.6]{Sev12}, the following classification of the cluster-cyclicity was established: a given cyclic matrix $B \in \mathrm{M}_3(\mathbb{R})$ is cluster-cyclic if and only if the corresponding pseudo Cartan companion $A$ has exactly one positive eigenvalue and two non-positive eigenvalues. Moreover, the following relationship between the pseudo Cartan companion and the Markov constant was found.
\begin{equation}\label{eq: Seven's equation}
\det A=2(4-C(B)).
\end{equation}
\end{remark}
Each pseudo Cartan companion $A$ is a symmetrizable matrix with a symmetrizer $D$. Now, we can exhibit the following restriction. Note that in \cite{LL24}, it is constructed under  the skew-symmetric case. Here, we can generalize this result to the skew-symmetrizable case.
\begin{lemma}[cf.~{\cite{LL24}}]\label{lem: surface for g-vectors}
Let $B \in \mathrm{M}_3(\mathbb{R})$ be a cluster-cyclic initial exchange matrix, and let $A^{\mathbf{w}}$ be the pseudo Cartan companion of $B^{\mathbf{w}}$.
Fix an initial mutation direction $i=1,2,3$. Then, for any ${\bf w} \in \mathcal{T}^{\geq [i]}$, we have
\begin{equation}\label{eq: congruence relation for pseudo Cartan companion}
({G}^{\bf w})^{\top}D{A}^{[i]}{G}^{\bf w}=D{A}^{{\bf w}K},
\end{equation}
where ${{\bf w}K}=\mathbf{w}[K(\mathbf{w})]$ is the sequence obtained by deleting the last index of ${\bf w}$. In particular, every $g$-vector ${\bf g}_{l}^{\bf w}=(x_1,x_2,x_3)^{\top}$ \textup{($l=1,2,3$)} satisfies
\begin{equation}\label{eq: quadratic equation for g-vectors}
d_1x_1^2+d_2x_2^2+d_3x_3^2+d_1|b_{12}^{[i]}|x_1x_2+d_2|b_{23}^{[i]}|x_2x_3+d_3|b_{31}^{[i]}|x_3x_1 = d_{l}.
\end{equation}
\end{lemma}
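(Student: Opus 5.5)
The plan is to prove \eqref{eq: congruence relation for pseudo Cartan companion} by induction on the length of ${\bf w} \in \mathcal{T}^{\geq [i]}$. The quadratic equation \eqref{eq: quadratic equation for g-vectors} will then follow by reading off diagonal entries. The basic tool is to write each $G$-mutation as right multiplication by an elementary matrix. By \eqref{eq: mutation of g-vectors}, for ${\bf u}\in\mathcal{T}$ and $k$ we have $G^{{\bf u}[k]}=G^{\bf u}E^{\bf u}_k$. Here $E^{\bf u}_k$ is the identity except for its $k$th column, which equals $-{\bf e}_k+\sum_j[-\varepsilon_k^{\bf u}b^{\bf u}_{jk}]_+{\bf e}_j$.

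In the cyclic situation exactly one coefficient $[-\varepsilon_k^{\bf u}b_{jk}^{\bf u}]_+$ is nonzero, and it equals $|b^{\bf u}_{jk}|$. By \Cref{lem: for mutation formula} (and \Cref{lem: kst} when ${\bf u}=\emptyset$), the index $j$ is as follows:
\begin{itemize}
\item $j=K$ for an $S$-mutation;
\item $j=S$ for a $T$-mutation in a trunk;
\item $j=K$ for a $T$-mutation in a branch;
\item $j=s_0$ for the first mutation $[i]$.
\end{itemize}
So $E^{\bf u}_k=I-2E_{kk}+|b^{\bf u}_{jk}|E_{jk}$ for a single index $j\neq k$.

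\emph{Base case.} For ${\bf w}=[i]$ we have ${\bf w}K=\emptyset$, so the claim is $(E^{\emptyset}_i)^{\top}DA^{[i]}E^{\emptyset}_i=DA$. \emph{Inductive step.} Let ${\bf w}\ge[i]$ and let $k\neq K({\bf w})$, so that ${\bf w}[k]$ is longer. Then ${\bf w}[k]K={\bf w}$, and the induction hypothesis reduces the claim to the local identity
\begin{equation*}
(E^{\bf w}_k)^{\top}\,DA^{{\bf w}K}\,E^{\bf w}_k=DA^{\bf w}.
\end{equation*}
Both the base case and this step concern only three neighbouring matrices $B^{{\bf w}K}=\mu_{K}(B^{\bf w})$, $B^{\bf w}$ and $B^{{\bf w}[k]}$, so I will handle them uniformly.

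To verify the local identity, I conjugate by $D^{1/2}$. The matrix $DA$ is symmetric because $d_j|b_{jk}|=d_k|b_{kj}|$, and $D^{-1/2}(DA)D^{-1/2}$ is the symmetric matrix with diagonal entries $2$ and off-diagonal entries $p_{jk}$. Under this conjugation $E_k$ becomes $I-2E_{kk}+p_{jk}E_{jk}$, with the same structure as before. The identity then becomes a $3\times 3$ symmetric-matrix computation involving only the numbers $p^{{\bf w}K}_{\bullet\bullet}$ and $p^{\bf w}_{\bullet\bullet}$.

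By \eqref{eq: mutation of b}, together with the cyclicity of both matrices, the passage from $B^{\bf w}$ to $B^{{\bf w}K}$ keeps $p_{KS},p_{KT}$ and replaces $p_{ST}$ by $p_{KS}p_{KT}-p_{ST}$. This is the same computation as in \Cref{lem: S-mutations of modified c- g-vectors}, applied to the mutation in direction $K$. Expanding $E^{\top}ME$ changes only the $k$th row and column. The entries to check are the following:
\begin{itemize}
\item $(k,k)$: $2-2p_{jk}^2\cdot\ldots$ reduces to $2$ using $A_{jj}=2$;
\item $(k,j)$: becomes $-p_{jk}+2p_{jk}=p_{jk}$;
\item $(k,l)$ for the third index $l$: becomes $-m_{kl}+p_{jk}m_{jl}$.
\end{itemize}
The $(k,l)$ entry must equal $p^{\bf w}_{kl}$, and this holds exactly by the mutation rule $p'=pp-p$ just described.

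I expect the main obstacle to be the bookkeeping: matching, in each of the four cases (base, $S$, $T$ in a trunk, $T$ in a branch), which index $j$ appears in $E$ and which pair's $p$ was changed by $\mu_K$. For instance, in the $S$-step $j=K$ and the changed entry is $p_{ST}$, giving $-p^{{\bf w}K}_{ST}+p_{KS}p_{KT}=p^{\bf w}_{ST}$. I would use \Cref{lem: K S T mutation formula} and \Cref{lem: kst} to fix these labels.

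Finally, for \eqref{eq: quadratic equation for g-vectors} I take the $(l,l)$ entry of \eqref{eq: congruence relation for pseudo Cartan companion}. With ${\bf g}^{\bf w}_l=(x_1,x_2,x_3)^{\top}$ this gives ${\bf g}^{\top}DA^{[i]}{\bf g}=2d_l$. Expanding with the symmetry $d_j|b^{[i]}_{jk}|=d_k|b^{[i]}_{kj}|$, each cross term appears twice, and dividing by $2$ yields the stated equation.
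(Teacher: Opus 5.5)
Your factorization $G^{\mathbf{u}[k]}=G^{\mathbf{u}}E^{\mathbf{u}}_k$ and your table of indices $j$ are correct. The gap is in the ``uniform'' local check, which rests on an unstated hypothesis. Expanding $E^{\top}ME$ gives the correct $(k,k)$ and $(k,j)$ entries only if $M$ already has $(j,k)$-entry equal to the coefficient of $E$. It gives the correct remaining entries only if $M$ agrees with the target outside row and column $k$.

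In your inductive step, $M=DA^{\mathbf{w}K}$ differs from $DA^{\mathbf{w}}$ only in the $(S,T)$ and $(T,S)$ entries. For $S$-steps and for $T$-steps in a branch you have $j=K$, so this is harmless. For a $T$-step on the trunk, however, $j=S$ and $k=T$, so $\{j,k\}$ is exactly the altered pair. After your $D^{1/2}$-conjugation, the $(T,T)$ entry of $E^{\top}ME$ is $2+2p^{\mathbf{w}}_{ST}\bigl(2p^{\mathbf{w}}_{ST}-p^{\mathbf{w}}_{KS}p^{\mathbf{w}}_{KT}\bigr)$, which is not $2$ unless $p^{\mathbf{w}}_{KS}p^{\mathbf{w}}_{KT}=2p^{\mathbf{w}}_{ST}$.

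The base case fails for the complementary reason. $E^{\emptyset}_i$ fixes $\mathbf{e}_{s_0}$ and $\mathbf{e}_{t_0}$, so the $(s_0,t_0)$ entry of $(E^{\emptyset}_i)^{\top}DA^{[i]}E^{\emptyset}_i$ is $d_{s_0}|b^{[i]}_{s_0t_0}|$, while that of $DA$ is $d_{s_0}|b_{s_0t_0}|$. These differ in general, since $p^{[i]}_{s_0t_0}=p_{k_0s_0}p_{k_0t_0}-p_{s_0t_0}$.

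More bookkeeping cannot fix this, because the displayed matrix identity is itself false on the trunk in general. Take $B$ skew-symmetric with $b_{12}=b_{23}=b_{31}=-3$ (so $D=I$ and $C(B)=0$) and $i=1$. Then $G^{[1]}$ has columns $(-1,0,3)^{\top},\mathbf{e}_2,\mathbf{e}_3$. The off-diagonal entries of $A^{[1]}$ are $p^{[1]}_{12}=p^{[1]}_{13}=3$ and $p^{[1]}_{23}=6$. Hence $(G^{[1]})^{\top}A^{[1]}G^{[1]}$ has $(1,2)$-entry $-3+18=15$ and $(2,3)$-entry $6$, whereas $A$ has $3$ in both places.

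What your computation actually proves is the rule $E^{\top}(DA^{\mathbf{y}})E=DA^{\mathbf{y}[j]}$, whenever the coefficient of $E$ equals $|b^{\mathbf{y}}_{jk}|$. In words, conjugation mutates the companion at $j$, not at $k$. Start from $(G^{\emptyset})^{\top}DA^{[i]}G^{\emptyset}=DA^{[i]}$ and feed in your table of $j$'s. This gives $(G^{\mathbf{w}})^{\top}DA^{[i]}G^{\mathbf{w}}=DA^{\mathbf{w}S}$ for $\mathbf{w}=[i]S^n$, and $=DA^{\mathbf{w}K}$ for $\mathbf{w}$ in a branch. In the example, $15,3,6$ are exactly the entries of $A^{[1]S}$.

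Both right-hand sides have diagonal $2D$. So the quadric equation \eqref{eq: quadratic equation for g-vectors}, which is the only part used later in the paper, does hold for every $\mathbf{w}\geq[i]$. Your induction, with the target corrected on the trunk, is a self-contained proof of it. The paper's own proof only transports the skew-symmetric statement of \cite{LL24} by $D^{1/2}$-conjugation, so it gives no independent check of the off-diagonal entries.
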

\begin{proof}
By doing the same argument in \cite{LL24}, we may obtain this result for any {\em real} skew-symmetric matrix with $d_1=d_2=d_3=1$. Let $\tilde{B}=D^{\frac{1}{2}}BD^{-\frac{1}{2}}$. We write the $g$-vector with the initial exchange matrix $\tilde{B}$ by $\hat{G}^{\bf w}$. Then, for any $\mathbf{w} \in \mathcal{T}^{\geq [i]}$, by the result for the skew-symmetric case, we have
\begin{equation}\label{eq: result for skew-symmetric case}
(\hat{G}^{\bf w})^{\top}\tilde{A}^{[i]}\hat{G}^{\bf w}=\tilde{A}^{{\bf w}K},
\end{equation}
where $\tilde{A}^{[i]}$ and $\tilde{A}^{\mathbf{w}K}$ are the pseudo Cartan companion of $\tilde{B}^{[i]}$ and $\tilde{B}^{\mathbf{w}K}$.
By Lemma~\ref{lem: skew-symmetrized matrices formula}, we have $\tilde{B}^{[i]}=D^{\frac{1}{2}}B^{[i]}D^{-\frac{1}{2}}$ and $\tilde{B}^{{\bf w}K}=D^{\frac{1}{2}}B^{\mathbf{w}K}D^{-\frac{1}{2}}$. Thus, we obtain $\tilde{A}^{[i]}=D^{\frac{1}{2}}A^{[i]}D^{-\frac{1}{2}}$ and $\tilde{A}^{{\bf w}K}=D^{\frac{1}{2}}A^{{\bf w}K}D^{-\frac{1}{2}}$. By \eqref{eq: skew-symmetrizing method}, we have $\hat{G}^{\mathbf{w}}=D^{\frac{1}{2}}G^{\mathbf{w}}D^{-\frac{1}{2}}$. By substituting these equalities into \eqref{eq: result for skew-symmetric case}, we obtain $(G^{\bf w})^{\top}DA^{[i]}G^{\bf w}=DA^{{\bf w}K}$. By focusing on the diagonal entries of this equality, we may obtain \eqref{eq: quadratic equation for g-vectors}.
\end{proof}
We state this result by using modified $g$-vectors. To this end, we define $\tilde{A}^{\mathbf{w}}=D^{\frac{1}{2}}A^{\mathbf{w}}D^{-\frac{1}{2}}$ as the pseudo Cartan companion of $\tilde{B}^{\bf w}=D^{\frac{1}{2}}B^{\mathbf{w}}D^{-\frac{1}{2}}$.
\begin{lemma}\label{lem: surface for modified g-vectors}
Fix an initial mutation direction $i=1,2,3$. For any ${\bf w} \in \mathcal{T}^{\geq [i]}$, every modified $G$-matrix $\tilde{G}^{\bf w}$ satisfies
\begin{equation}
(D^{\frac{1}{2}}\tilde{G}^{\bf w})^{\top}\tilde{A}^{[i]}(D^{\frac{1}{2}}\tilde{G}^{\bf w})=\tilde{A}^{{\bf w}K}.
\end{equation}
In particular, every modified $g$-vector $\tilde{g}^{\bf w}_{l}$ \textup{($l=1,2,3$)} satisfies
\begin{equation}\label{eq: quadratic form for modified g vectors}
(D^{\frac{1}{2}}\tilde{\bf g}_{l}^{\bf w})^{\top}\tilde{A}^{[i]}(D^{\frac{1}{2}}\tilde{\bf g}_{l}^{\bf w})=2.
\end{equation}
\end{lemma}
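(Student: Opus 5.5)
This is a reformulation of \Cref{lem: surface for g-vectors} in terms of modified $g$-vectors. The plan is to substitute $G^{\mathbf{w}}=\tilde{G}^{\mathbf{w}}D^{\frac{1}{2}}$ into \eqref{eq: congruence relation for pseudo Cartan companion} and then conjugate by $D^{-\frac{1}{2}}$ on both sides.

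\textbf{Matrix identity.} By \Cref{def: modified c g vectors} we have $\tilde{G}^{\mathbf{w}}=G^{\mathbf{w}}D^{-\frac{1}{2}}$, so $G^{\mathbf{w}}=\tilde{G}^{\mathbf{w}}D^{\frac{1}{2}}$. Substituting this into \eqref{eq: congruence relation for pseudo Cartan companion} gives
\begin{equation}
D^{\frac{1}{2}}(\tilde{G}^{\mathbf{w}})^{\top}DA^{[i]}\tilde{G}^{\mathbf{w}}D^{\frac{1}{2}}=DA^{\mathbf{w}K}.
\end{equation}
Multiply on the left and on the right by $D^{-\frac{1}{2}}$. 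On the right-hand side this yields $D^{\frac{1}{2}}A^{\mathbf{w}K}D^{-\frac{1}{2}}=\tilde{A}^{\mathbf{w}K}$, using the definition of $\tilde{A}^{\mathbf{w}}$. On the left-hand side we get $(\tilde{G}^{\mathbf{w}})^{\top}DA^{[i]}\tilde{G}^{\mathbf{w}}$. Rewrite the middle factor as
\begin{equation}
DA^{[i]}=D^{\frac{1}{2}}\bigl(D^{\frac{1}{2}}A^{[i]}D^{-\frac{1}{2}}\bigr)D^{\frac{1}{2}}=D^{\frac{1}{2}}\tilde{A}^{[i]}D^{\frac{1}{2}}.
\end{equation}
Since $D^{\frac{1}{2}}$ is diagonal, it is its own transpose, so the left-hand side becomes $(D^{\frac{1}{2}}\tilde{G}^{\mathbf{w}})^{\top}\tilde{A}^{[i]}(D^{\frac{1}{2}}\tilde{G}^{\mathbf{w}})$. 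This is exactly the claimed matrix identity.

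\textbf{Quadratic equation.} Take the $(l,l)$ diagonal entry of both sides. On the left this is $(D^{\frac{1}{2}}\tilde{\mathbf{g}}_l^{\mathbf{w}})^{\top}\tilde{A}^{[i]}(D^{\frac{1}{2}}\tilde{\mathbf{g}}_l^{\mathbf{w}})$. On the right it is the $(l,l)$ entry of $\tilde{A}^{\mathbf{w}K}$, which is the pseudo Cartan companion of the skew-symmetric matrix $\tilde{B}^{\mathbf{w}K}$; its diagonal entries are all $2$ by \eqref{eq: definition of pseudo Cartan companion}.

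\textbf{Main point to check.} The only step that is more than bookkeeping is confirming that $\tilde{A}^{\mathbf{w}}=D^{\frac{1}{2}}A^{\mathbf{w}}D^{-\frac{1}{2}}$ really is the pseudo Cartan companion of $\tilde{B}^{\mathbf{w}}$, including its diagonal. For the diagonal, conjugation by a diagonal matrix leaves diagonal entries unchanged, so they stay equal to $2$. For the off-diagonal entries, the $(j,k)$ entry is $\sqrt{d_j/d_k}\,|b^{\mathbf{w}}_{jk}|=|\tilde{b}^{\mathbf{w}}_{jk}|$ by \Cref{lem: skew-symmetrized matrices formula}. This check was already used in the proof of \Cref{lem: surface for g-vectors}.

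As a sanity check, the scalar version is also consistent with \eqref{eq: quadratic equation for g-vectors}. There the right-hand side $d_l$ comes from the diagonal of $DA^{\mathbf{w}K}$. Dividing by $d_l$, which is exactly the rescaling $\tilde{\mathbf{g}}_l=\mathbf{g}_l/\sqrt{d_l}$, turns $d_l$ into $2$ after accounting for the factor $2$ in the diagonal of $A$, matching the claim.
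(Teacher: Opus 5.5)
Your proposal is correct and follows the same route as the paper: substitute $G^{\mathbf{w}}=\tilde{G}^{\mathbf{w}}D^{\frac{1}{2}}$ into the congruence relation of \Cref{lem: surface for g-vectors}, then read off the diagonal entries, which equal $2$. Your extra bookkeeping ($DA^{[i]}=D^{\frac{1}{2}}\tilde{A}^{[i]}D^{\frac{1}{2}}$ and the conjugation by $D^{-\frac{1}{2}}$) simply fills in what the paper leaves implicit.
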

\begin{proof}
By substituting $G^{\bf w}= \tilde{G}^{\bf w}D^{\frac{1}{2}}$ into \eqref{eq: congruence relation for pseudo Cartan companion}, we obtain the claim.
\end{proof}
The equality \eqref{eq: quadratic form for modified g vectors} claims that, after the initial mutation $i=1,2,3$, every modified $g$-vectors are on the same quadratic surface. For later use, we start by studying more detailed properties of this surface.

\subsection{General fact of the quadratic surface}\label{sec: preliminary for the quadratic form}
In this subsection, we restrict our attention to the region indicated by the quadratic equation
\begin{equation}\label{eq: quadratic form}
x_1^2+x_2^2+x_3^2 +p_{12}x_1x_2+p_{23}x_2x_3+p_{31}x_{3}x_{1} =1,
\end{equation}
and we temporarily set aside the cluster algebraic structures, such as $B$-matrices and $g$-vectors. Namely, we fix $p_{12},p_{23},p_{31} \geq 2$ satisfying $C(p_{12},p_{23},p_{31})=p_{12}^2+p_{23}^2+p_{31}^2 - p_{12}p_{23}p_{31} \leq 4$. To simplify the statement, we set $p_{ji}=p_{ij}$ and
\begin{equation}
\tilde{A}=\left(\begin{matrix}
2 & p_{12} & p_{13}\\
p_{21} & 2 & p_{23}\\
p_{31} & p_{32} & 2
\end{matrix}\right).
\end{equation}
Note that the equality \eqref{eq: quadratic form} may be expressed as $\frac{1}{2}{\bf x}^{\top} \tilde{A}{\bf x}=1$. (This $\tilde{A}$ corresponds to the pseudo Cartan companion of $\tilde{B}$.)
Then, the region corresponding to the equality \eqref{eq: quadratic form} can be depicted as follows.
\begin{lemma}\label{lem: eigenvalues lemma for cluster-cyclic matrix}
\textup{($a$)} The matrix $\tilde{A}$ has three real eigenvalues $\lambda > \nu_1 \geq \nu_2$, where $\lambda$ is strictly positive and $\nu_1,\nu_2$ are non-positive allowing $\nu_1=\nu_2$. Moreover, $\lambda$ satisfies
\begin{equation}\label{eq: lambda inequality}
\lambda > 2+\sqrt{p_{12}^2+p_{23}^{2}+p_{31}^{2}}.
\end{equation}
\textup{($b$)} One eigenvector of $\tilde{A}$ with respect to $\lambda$ may be expressed as
\begin{equation}\label{eq: positive eigenvector of lambda}
{\bf v}=\left(\begin{matrix}
(\lambda-2)^2+(p_{12}+p_{13})(\lambda-2)+p_{12}p_{23}+p_{13}p_{32}-p_{23}^2
\\
(\lambda-2)^2+(p_{23}+p_{21})(\lambda-2)+p_{23}p_{31}+p_{21}p_{13}-p_{31}^2
\\
(\lambda-2)^2+(p_{31}+p_{32})(\lambda-2)+p_{31}p_{12}+p_{32}p_{21}-p_{12}^2
\end{matrix}\right),
\end{equation}
and every component of this vector is strictly positive.
\\
\textup{($c$)} The region of $\frac{1}{2}{\bf x}^{\top}\tilde{A}{\bf x}=1$ is any of hyperboloid of two sheets (when $\nu1,\nu2 < 0$), hyperbolic cylinder (when $\nu_1=0$, $\nu_2< 0$), or real parallel planes (when $\nu_1=\nu_2=0$) in the sense of \cite[\S~4.20]{Zwi95}. In particular, for each $\tilde{A}$, there are two connected components, and they are separated by the plane $\Ezeroregion{{\bf v}}$ orthogonal to $\mathbf{v}$. 
\end{lemma}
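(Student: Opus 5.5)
The plan is to treat $\tilde{A}$ as a real symmetric matrix whose off-diagonal entries satisfy $p_{ij}\geq 2$ and $C=p_{12}^2+p_{23}^2+p_{31}^2-p_{12}p_{23}p_{31}\leq 4$. Everything then follows from the characteristic polynomial in the shifted variable $\mu=\lambda-2$. Since $\tilde{A}-2I=N$ has zero diagonal, we get
\begin{equation*}
\det(\mu I-N)=\mu^3-(p_{12}^2+p_{23}^2+p_{31}^2)\mu-2p_{12}p_{23}p_{31}=:f(\mu).
\end{equation*}
Because $\tilde{A}$ is symmetric, all eigenvalues are real.

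For ($a$), I evaluate $f$ at the point $\mu=-2$, which corresponds to eigenvalue $0$ of $\tilde{A}$:
\begin{equation*}
f(-2)=-8+2(p_{12}^2+p_{23}^2+p_{31}^2)-2p_{12}p_{23}p_{31}=2(C-4)\leq 0,
\end{equation*}
consistent with $\det\tilde{A}=2(4-C)$ from \eqref{eq: Seven's equation}. Write $s=p_{12}^2+p_{23}^2+p_{31}^2$. Then $f$ has its local maximum at $\mu=-\sqrt{s/3}\leq -\sqrt{4}=-2$, since $s\geq 12$. So on $[-2,\infty)$ the function $f$ is decreasing and then increasing. Together with $f(-2)\leq 0$, this gives exactly one root $\mu_0>-2$, and the other two roots are $\leq -2$. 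Hence $\lambda>0\geq\nu_1\geq\nu_2$.

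For the bound \eqref{eq: lambda inequality}, I compute $f(\sqrt{s})=\sqrt{s}\,(s-s)-2p_{12}p_{23}p_{31}<0$, so the largest root satisfies $\mu_0>\sqrt{s}$. This is the only delicate point in ($a$). One has to check that $f(-2)=0$ (the case $C=4$) still leaves $\lambda>\nu_1$, which holds because the local maximum lies at $\mu\leq -2$ and $f(\sqrt s)<0$ forces the top root to be far to the right.

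For ($b$), the eigenvector comes from the adjugate. Any column, or row sum of columns, of $\mathrm{adj}(\mu I-N)$ with $\mu=\lambda-2$ lies in the kernel of $\mu I-N$. I would take the sum of the columns of the adjugate. Its $i$th entry is $(\mu^2-p_{jk}^2)+(\mu p_{ij}+p_{ik}p_{kj})+(\mu p_{ik}+p_{ij}p_{jk})$, which is exactly the stated formula \eqref{eq: positive eigenvector of lambda}. This is a routine check.

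For positivity, I use $\mu>\sqrt{s}>p_{jk}$, so $\mu^2-p_{jk}^2>0$, and all remaining terms are positive. I would also confirm that $\mathbf{v}\neq 0$, which is automatic since its entries are positive. The eigenspace of $\lambda$ is one-dimensional because $\lambda$ is a simple root.

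For ($c$), I diagonalize with an orthonormal basis $\mathbf{v}/|\mathbf{v}|,\mathbf{u}_1,\mathbf{u}_2$. The equation becomes $\tfrac12(\lambda y_0^2+\nu_1y_1^2+\nu_2y_2^2)=1$ with $\nu_i\leq 0$. The three cases in the statement correspond to the signs of $\nu_1,\nu_2$, following the standard classification cited from \cite{Zwi95}. On the surface, $\lambda y_0^2=2-\nu_1y_1^2-\nu_2y_2^2\geq 2$, so $y_0\neq 0$. Thus the surface misses the plane $\Ezeroregion{\mathbf{v}}=\{y_0=0\}$ and splits into the parts $y_0>0$ and $y_0<0$. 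Each part is a graph $y_0=\pm\sqrt{(2-\nu_1y_1^2-\nu_2y_2^2)/\lambda}$ over $\mathbb{R}^2$, hence connected. This gives exactly two components, separated by $\Ezeroregion{\mathbf{v}}$.
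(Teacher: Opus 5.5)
Your proposal is correct and follows the paper's proof essentially step for step: the same shifted cubic, the same sign checks $f(-2)\le 0$ (the paper's $f(0)\le 0$) and $f(\sqrt{s})<0$, the same use of $s\geq 12$ to locate the critical points, and the same bound $\mu^2>s>p_{jk}^2$ for the positivity of $\mathbf{v}$. The only differences are minor: you obtain $\mathbf{v}$ as the row sums of $\mathrm{adj}(\mu I-N)$, whereas the paper checks $(\lambda I-\tilde{A})\mathbf{v}=\mathbf{0}$ by expanding directly; and you give an explicit graph argument for the two components in ($c$), which the paper leaves to the standard classification of quadrics.
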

\begin{figure}[htbp]
\centering
\begin{minipage}{0.49\linewidth}
\centering
\begin{tikzpicture}
    \begin{axis}[
        hide axis,
        title={},
        xlabel={},
        ylabel={},
        zlabel={},
        domain=-2:2,
        y domain=-2:2,
        samples=20,
        view={35}{-5}, 
    ]
    
    \addplot3[
        surf,
        fill=gray,
        opacity=0.5,
        faceted color= gray
    ] {-(x+y)+2};

    \addplot3[
        surf,
        fill=gray,
        opacity=0.5,
        faceted color= gray
    ] {-(x+y)-2};

    \addplot3[->, thin, -stealth] coordinates {(0,0,0) (2,0,0)} node [right] {$x_1$};
    \addplot3[->, thin, -stealth] coordinates {(0,0,0) (0,2,0)} node [right] {$x_2$};
    \addplot3[->, thin, -stealth] coordinates {(0,0,0) (0,0,4)} node [right] {$x_3$};

    \addplot3[only marks, mark size=0.6pt, black] coordinates {(0,0,0)} node [left] {$O$};

    \draw (0,0,5) node [left] {$H^{+}$};
    \draw (0,0,-4) node [left] {$H^{-}$};
    \end{axis}
\end{tikzpicture}
\caption{Real parallel planes.}\label{fig: two plane}
\end{minipage}
\begin{minipage}{0.49\linewidth}
\centering
\begin{tikzpicture}
    \begin{axis}[
        hide axis,
        title={},
        xlabel={},
        ylabel={},
        zlabel={},
        domain=-2:2,
        y domain=-2:2,
        samples=20,
        view={35}{-5}, 
    ]
    
    \addplot3[
        surf,
        fill=gray,
        opacity=0.5,
        faceted color= gray
    ] {(-4*(x+y)+sqrt(12*x^2+16*x*y+12*y^2+32))/2};

    \addplot3[
        surf,
        fill=gray,
        opacity=0.5,
        faceted color= gray
    ] {(-4*(x+y)-sqrt(12*x^2+16*x*y+12*y^2+32))/2};

    \addplot3[->, thin, -stealth] coordinates {(0,0,0) (2,0,0)} node [right] {$x_1$};
    \addplot3[->, thin, -stealth] coordinates {(0,0,0) (0,2,0)} node [right] {$x_2$};
    \addplot3[->, thin, -stealth] coordinates {(0,0,0) (0,0,4)} node [right] {$x_3$};

    \addplot3[only marks, mark size=0.6pt, black] coordinates {(0,0,0)} node [left] {$O$};

    \draw (0,0,10) node [left] {$H^{+}$};
    \draw (0,0,-8) node [left] {$H^{-}$};
    \end{axis}
\end{tikzpicture}
\caption{Hyperboloid of two sheets.}\label{fig: hyperboloid}
\end{minipage}
\end{figure}
\begin{remark}
According to the classification of quadratic forms, the  quadratic form \eqref{eq: quadratic form} falls into one of the three types described in \Cref{lem: eigenvalues lemma for cluster-cyclic matrix}~($c$). However, we do not care about these differences in this paper. The only property we require is that the region arising from the quadratic form \eqref{eq: quadratic form} has two connected components separated by the plane $\Ezeroregion{{\bf v}}$.
\end{remark}
As stated in \Cref{rem: Seven's result}, the signs of $\lambda$, $\nu_1$, and $\nu_2$ can be obtained by \cite{Sev12}. However, for the later purpose, we need to know more detailed properties such as \eqref{eq: lambda inequality} and ($b$). Hence, we provide an alternative proof here.
\begin{proof}
Let $f(t)=\det(tI-\tilde{A})$. Then, by using $p_{ij}=p_{ji}$, we may expand it as
\begin{equation}
f(t)=(t-2)^{3}-(p_{12}^2+p_{23}^2+p_{31}^2)(t-2)-2p_{12}p_{23}p_{31}.
\end{equation}
Consider $f'(t)=3(t-2)^2-(p_{12}^2+p_{23}^2+p_{31}^2)$. Then, by $p_{12},p_{23},p_{31} \geq 2$, we have 
\begin{equation}\label{eq: first derivation}
f'(0)=12-(p_{12}^2+p_{23}^2+p_{31}^2) \leq 0.
\end{equation}
Since $f'(t)$ has a positive leading coefficient, one root of $f'(t)=0$ is non-negative and the other root is non-positive. In particular, $f(t)$ is unimodal when $t \geq 0$. By a direct calculation with \eqref{eq: rank 3 skew-symmetrizable condition}, we have 
\begin{equation}\label{eq: Markov constant inequality}
f(0)= -2(4-C(p_{12},p_{23},p_{31})) \leq 0.
\end{equation}
Thus, we may show that there exists precisely one positive solution $\lambda > 0$. Since $\tilde{A}$ is symmetric, there should exist three real eigenvalues, counting multiplicity. Thus, there exist other two eigenvalues $\nu_1,\nu_2 \leq 0$. Moreover, by $f(2+\sqrt{p_{12}^2+p_{23}^2+p_{31}^2})=-2p_{12}p_{23}p_{31}<0$, the unique positive eigenvalue $\lambda$ should satisfy $2+\sqrt{p_{12}^2+p_{23}^2+p_{31}^2}< \lambda$. Next, we show that an eigenvector of $\lambda$ may be given by (\ref{eq: positive eigenvector of lambda}). This is because we can calculate the following equatity:
\begin{equation}\label{eq: eigenvector of DA}
\left(\begin{matrix}
\lambda-2 & -p_{12} & -p_{13}\\
-p_{21} & \lambda-2 & -p_{23}\\
-p_{31} & -p_{32} & \lambda-2
\end{matrix}\right){\bf v}={\bf 0}.
\end{equation}
For example, the equality of the first component may be verified as follows.
\begin{equation}
\begin{aligned}
(\lambda-2,-p_{12},-p_{13}){\bf v}&=(\lambda-2)^3+(p_{12}+p_{13})(\lambda-2)^2+(p_{12}p_{23}+p_{13}p_{32}-p_{23}^2)(\lambda-2)\\
&\qquad -p_{12}(\lambda-2)^2-p_{12}(p_{23}+p_{21})(\lambda-2)-p_{12}(p_{23}p_{31}+p_{21}p_{13}-p_{31}^2)\\
&\qquad -p_{13}(\lambda-2)^2-p_{13}(p_{31}+p_{32})(\lambda-2)-p_{13}(p_{31}p_{12}+p_{32}p_{21}-p_{12}^2)
\\
&=(\lambda-2)^3-(p_{12}^2+p_{23}^2+p_{31}^2)(\lambda-2)-2p_{12}p_{23}p_{31}=0,
\end{aligned}
\end{equation}
where the last equality follows from $f(\lambda)=0$.
Last, we show that all entries of ${\bf v}$ are positive. For example, we may show that the first component is strictly positive by $\lambda >2$ and 
\begin{equation}
(\lambda-2)^2-p_{23}^2>\sqrt{p_{12}^2+p_{23}^2+p_{31}^2}^2-p_{23}^2=p_{12}^2+p_{31}^2>0,
\end{equation}
where the first inequality follows from $\lambda-2 > \sqrt{p_{12}^2+p_{23}^2+p_{31}^2}$.
\end{proof}
\begin{remark}\label{rem: classification of surface type}
From \eqref{eq: first derivation} and \eqref{eq: Markov constant inequality}, we have the following classification of these three kinds of surface type.
\begin{itemize}
\item $\nu_1=\nu_2=0 \Longleftrightarrow p_{12}=p_{23}=p_{31}=2$.
\item $\nu_1=0, \nu_2<0 \Longleftrightarrow C(p_{12},p_{23},p_{31})=4$ and $\max(p_{12},p_{23},p_{31})>2$.
\item $\nu_1,\nu_2 < 0 \Longleftrightarrow C(p_{12},p_{23},p_{31})<4$.
\end{itemize}
In particular, the real parallel planes case ($\nu_1=\nu_2=0$) happens only when the matrix $B$ is given by \eqref{eq: value-rigid type}.
\end{remark}
For later purpose, we include information about a skew-symmetrizer $D=\mathrm{diag}(d_1,d_2,d_3)$. (However, we treat it as a triple of positive numbers $d_1,d_2,d_3 > 0$ in this subsection.)
\begin{definition}\label{def: for global upper bounds}
For each $\tilde{A}$ and $D=\mathrm{diag}(d_1,d_2,d_3)$, let ${\bf v}$ be the positive eigenvector of $\tilde{A}$ given by (\ref{eq: positive eigenvector of lambda}). 
Define the following sets:
\begin{equation}\label{eq: set Q}
\begin{aligned}
Q&=\{{\bf x} \in \mathbb{R}^3 \mid (D^{\frac{1}{2}}{\bf x})^{\top}\tilde{A}(D^{\frac{1}{2}}{\bf x}) > 0\} \cup \{{\bf 0}\},&
\quad
H&=\left\{{\bf x} \in \mathbb{R}^3\ \left|\ (D^{\frac{1}{2}}{\bf x})^{\top}\tilde{A}(D^{\frac{1}{2}}{\bf x}) =2\right.\right\},
\\
Q^{+}&=\{{\bf x} \in Q \mid  \langle D^{\frac{1}{2}}{\bf x},{\bf v} \rangle \geq 0\},&
\quad
H^{+}&=\{{\bf x} \in H \mid \langle D^{\frac{1}{2}}{\bf x},{\bf v} \rangle \geq 0\},\\
Q^{-}&=\{{\bf x} \in Q \mid \langle D^{\frac{1}{2}}{\bf x},{\bf v} \rangle \leq 0\},&
\quad
H^{-}&=\{{\bf x} \in H \mid \langle D^{\frac{1}{2}}{\bf x},{\bf v} \rangle \leq 0\}.
\end{aligned}
\end{equation}
\end{definition}
Later, we take $D$ to be a skew-symmetrizer of a given skew-symmetrizable matrix $B$. In this case, the three sets $Q, Q^{+}$, and $Q^{-}$ are independent of the choice of $D$ since all skew-symmetrizers of $B$ are identical up to a scalar. However, $H, H^{+}$, and $H^{-}$ depend on $D$ due to this scalar indefiniteness.
\par
Since $\mathbf{v}$ is an eigenvector with respect to the unique positive eigenvalue $\lambda$ of $\tilde{A}$, we have $Q^{+}=\{{\bf x} \in Q \mid  \langle D^{\frac{1}{2}}{\bf x},{\bf v} \rangle > 0\} \cup \{{\bf 0}\}$ and $Q^{-}=\{{\bf x} \in Q \mid  \langle D^{\frac{1}{2}}{\bf x},{\bf v} \rangle < 0\} \cup \{{\bf 0}\}$.

\begin{lemma}\label{lem: convexity of Q}
The sets $Q^{+}$ and $Q^{-}$ are both convex cones.
\end{lemma}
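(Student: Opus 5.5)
Since $D^{\frac{1}{2}}$ is an invertible linear map and $\mathbf{x}\in Q^{\pm}$ is defined through $\mathbf{y}=D^{\frac{1}{2}}\mathbf{x}$, it suffices to treat $D=I$. Then $Q=\{\mathbf{y}\mid \mathbf{y}^{\top}\tilde{A}\mathbf{y}>0\}\cup\{\mathbf{0}\}$ and $Q^{+}=\{\mathbf{y}\in Q\mid \langle \mathbf{y},\mathbf{v}\rangle\ge 0\}$; the case $Q^{-}$ follows from $Q^{-}=-Q^{+}$. Closure under positive scalars is immediate because the quadratic form is homogeneous of degree $2$ and the linear condition is homogeneous of degree $1$. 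The real content is closure under addition.

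Diagonalize $\tilde{A}$ by \Cref{lem: eigenvalues lemma for cluster-cyclic matrix}($a$). Let $\mathbf{u}=\mathbf{v}/\|\mathbf{v}\|$ and choose an orthonormal basis $\mathbf{u}_1,\mathbf{u}_2$ of $\mathbf{v}^{\perp}$ consisting of eigenvectors for $\nu_1,\nu_2$. Write $\mathbf{y}=a\mathbf{u}+\mathbf{z}$ with $\mathbf{z}\in\mathbf{v}^{\perp}$. Then
\begin{equation*}
\mathbf{y}^{\top}\tilde{A}\mathbf{y}=\lambda a^2-N(\mathbf{z}),\qquad N(\mathbf{z})=-\mathbf{z}^{\top}\tilde{A}\mathbf{z}=|\nu_1|z_1^2+|\nu_2|z_2^2\ge 0,
\end{equation*}
so $N$ is a positive semidefinite quadratic form on $\mathbf{v}^{\perp}$. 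Hence $\mathbf{y}\in Q\setminus\{\mathbf{0}\}$ iff $\sqrt{\lambda}\,|a|>\sqrt{N(\mathbf{z})}$. In particular $a\neq 0$ for every nonzero $\mathbf{y}\in Q$, which re-proves the remark that the inequality $\langle\mathbf{y},\mathbf{v}\rangle\ge 0$ is strict off the origin. Therefore
\begin{equation*}
Q^{+}=\{\mathbf{0}\}\cup\{a\mathbf{u}+\mathbf{z}\mid a>0,\ \sqrt{N(\mathbf{z})}<\sqrt{\lambda}\,a\}.
\end{equation*}

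The main step is that $\mathbf{z}\mapsto\sqrt{N(\mathbf{z})}$ is a seminorm on $\mathbf{v}^{\perp}$, because $N$ is positive semidefinite. Concretely, by Cauchy--Schwarz for the semi-inner product induced by $-\tilde{A}$ on $\mathbf{v}^{\perp}$, it satisfies the triangle inequality $\sqrt{N(\mathbf{z}+\mathbf{z}')}\le\sqrt{N(\mathbf{z})}+\sqrt{N(\mathbf{z}')}$. Now take $a\mathbf{u}+\mathbf{z}$ and $a'\mathbf{u}+\mathbf{z}'$ in $Q^{+}\setminus\{\mathbf{0}\}$. Their sum has first coordinate $a+a'>0$ and
\begin{equation*}
\sqrt{N(\mathbf{z}+\mathbf{z}')}\le\sqrt{N(\mathbf{z})}+\sqrt{N(\mathbf{z}')}<\sqrt{\lambda}(a+a').
\end{equation*}
So the sum lies in $Q^{+}$. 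Sums involving $\mathbf{0}$ are trivial. This gives the convex-cone property; the strict inequality is preserved because at least one summand is nonzero with $a>0$.

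I expect no real obstacle. The only point to watch is the degenerate cases $\nu_1=0$ or $\nu_1=\nu_2=0$, where $N$ is only semidefinite. Cauchy--Schwarz still holds for semidefinite forms, so the argument goes through uniformly; in the planes case $Q^{+}$ is simply the open half-space $a>0$ together with $\mathbf{0}$. Finally, transport back via $\mathbf{x}=D^{-\frac{1}{2}}\mathbf{y}$, using that linear images of convex cones are convex cones.
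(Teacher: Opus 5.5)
Your proof is correct and follows essentially the same route as the paper. Both decompose along the orthogonal eigenbasis $\mathbf{v}',\mathbf{u}_1,\mathbf{u}_2$ of $\tilde{A}$ and reduce convexity to Cauchy--Schwarz for the semidefinite form $(-\nu_1)\alpha^2+(-\nu_2)\beta^2$ on $\mathbf{v}^{\perp}$. The differences are only in packaging: the paper expands the quadratic form of $a_1\mathbf{x}_1+a_2\mathbf{x}_2$ and shows the cross term $\lambda r_1r_2-(-\nu_1)\alpha_1\alpha_2-(-\nu_2)\beta_1\beta_2$ is non-negative, whereas you state the same inequality as the triangle inequality for the seminorm $\sqrt{N}$ and make the reduction to $D=I$ explicit.
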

\begin{proof}
We prove the case for $Q^{+}$.
Let $\mathbf{v}'=\frac{\mathbf{v}}{|\mathbf{v}|}$.
Since $\tilde{A}$ is symmetric, there exists an orthogonal eigenbasis $[{\bf v}',{\bf u}_{1},{\bf u}_2]$ corresponding to the eigenvalues $(\lambda,\nu_1,\nu_2)$ of $\tilde{A}$. Note that $[D^{-\frac{1}{2}}{\bf v}',D^{-\frac{1}{2}}{\bf u}_1,D^{-\frac{1}{2}}{\bf u}_2]$ is also a basis of $\mathbb{R}^3$.
Take any elements ${\bf x}_1,{\bf x}_2 \in Q^{+}$ and $a_1,a_2 \in \mathbb{R}_{> 0}$. If either ${\bf x}_1$ or ${\bf x}_2$ is ${\bf 0}$, we can prove $a_1{\bf x}_1+a_2{\bf x}_2 \in Q^{+}$ by a direct calculation. Thus, we may assume ${\bf x}_1,{\bf x}_2 \neq {\bf 0}$. For each $j=1,2$, we express
\begin{equation}\label{eq: matrix expression of xj}
{\bf x}_j=r_{j}D^{-\frac{1}{2}}{\bf v}'+\alpha_{j}D^{-\frac{1}{2}}{\bf u}_{1}+\beta_{j}D^{-\frac{1}{2}}{\bf u}_2=D^{-\frac{1}{2}}({\bf v}',{\bf u}_1,{\bf u}_2)\left(\begin{smallmatrix}
r_j\\
\alpha_j\\
\beta_j
\end{smallmatrix}\right).
\end{equation}
for some $r_j,\alpha_j,\beta_j \in \mathbb{R}$. Since $\langle D^{\frac{1}{2}}{\bf x}_j,{\bf v}' \rangle = r_j$, we have $r_j \geq 0$. Thus, we may show that $\langle D^{\frac{1}{2}}(a_1{\bf x}_1+a_2{\bf x}_2), {\bf v}' \rangle=a_1r_1+a_2r_2 \geq 0$. Now, it suffices to prove that  \begin{align}\left\{D^{\frac{1}{2}}(a_1{\bf x}_1+a_2{\bf x}_2)\right\}^{\top} \tilde{A}\left\{D^{\frac{1}{2}}(a_1{\bf x}_1+a_2{\bf x}_2)\right\} > 0.\end{align} 
In fact, we may express
\begin{equation}
\begin{aligned}
&\ \left\{D^{\frac{1}{2}}(a_1{\bf x}_1+a_2{\bf x}_2)\right\}^{\top}\tilde{A}\left\{D^{\frac{1}{2}}(a_1{\bf x}_1+a_2{\bf x}_2)\right\}
\\
=&\ a_1^2(D^{\frac{1}{2}}{\bf x}_1)^{\top}\tilde{A}(D^{\frac{1}{2}}{\bf x}_1)+2a_1a_2(D^{\frac{1}{2}}{\bf x}_{1})^{\top}\tilde{A}(D^{\frac{1}{2}}{\bf x}_2)+a_2^2(D^{\frac{1}{2}}{\bf x}_2)^{\top}\tilde{A}(D^{\frac{1}{2}}{\bf x}_2)
\\
>&\ 2a_1a_2(D^{\frac{1}{2}}{\bf x}_1)^{\top}\tilde{A}(D^{\frac{1}{2}}{\bf x}_2).
\end{aligned}
\end{equation}
Hence, we only need to prove that $(D^{\frac{1}{2}}{\bf x}_1)^{\top}\tilde{A}(D^{\frac{1}{2}}{\bf x}_2) \geq 0$. Since $({\bf v}',{\bf u}_1,{\bf u}_2)^{\top}\tilde{A}({\bf v}',{\bf u}_1,{\bf u}_2)=\mathrm{diag}(\lambda,\nu_1,\nu_2)$, by using \eqref{eq: matrix expression of xj}, we obtain that 
\begin{equation}
\begin{aligned}
(D^{\frac{1}{2}}{\bf x}_1)^{\top}\tilde{A}(D^{\frac{1}{2}}{\bf x}_2) 
=\lambda r_1r_2+\nu_1 \alpha_1\alpha_2 + \nu_2\beta_1\beta_2.
\end{aligned}
\end{equation}
Now, our desired inequality is reduced to
\begin{equation}\label{eq: desired inequality for the convexity of Q}
\lambda r_1r_2  \geq (-\nu_1)\alpha_1\alpha_2+(-\nu_2)\beta_1\beta_2. 
\end{equation}
This inequality can be shown as follows. Note that $(D^{\frac{1}{2}}{\bf x}_j)^{\top}\tilde{A} (D^{\frac{1}{2}}{\bf x}_{j}) \geq 0$. Then, we have $\lambda r_j^2 \geq (-\nu_1)\alpha_j^2+(-\nu_2)\beta_j^2$. Since both sides are non-negative and $r_j\geq 0$, we have $\sqrt{\lambda}r_j \geq \sqrt{(-\nu_1)\alpha_j^2+(-\nu_2)\beta_j^2}$.  In particular, we obtain that  
\begin{equation}\label{eq: lemma 1 for the convexity of Q}
\lambda r_1r_2=(\sqrt{\lambda}r_1)(\sqrt{\lambda}r_2) \geq \sqrt{(-\nu_1)\alpha_1^2+(-\nu_2)\beta_1^2}\sqrt{(-\nu_1)\alpha_2^2+(-\nu_2)\beta_2^2}.
\end{equation}
Consider the vectors ${\bf a}_{j}=(\sqrt{-\nu_{1}}\alpha_{j},\sqrt{-\nu_2}\beta_{j})^{\top}$ for $j=1,2$. Since $\nu_1,\nu_2 \leq 0$, these vectors lie in $\mathbb{R}^2$. Thus, by the Cauchy-Schwarz inequality $|{\bf a}_1||{\bf a}_2| \geq \langle {\bf a}_1,{\bf a}_2\rangle$ with respect to the Euclidean inner product, we have
\begin{equation}\label{eq: lemma 2 for the convexity of Q}
\sqrt{(-\nu_1)\alpha_1^2+(-\nu_2)\beta_1^2}\sqrt{(-\nu_1)\alpha_2^2+(-\nu_2)\beta_2^2} \geq (-\nu_1)\alpha_1\alpha_2+(-\nu_2)\beta_1\beta_2.
\end{equation}
By combining these two inequalities (\ref{eq: lemma 1 for the convexity of Q}) and (\ref{eq: lemma 2 for the convexity of Q}), we obtain (\ref{eq: desired inequality for the convexity of Q}) as desired, which implies that $a_1{\bf x}_1+a_2{\bf x}_2 \in Q^{+}$.
\end{proof}
By \Cref{lem: eigenvalues lemma for cluster-cyclic matrix}, $H$ is decomposed into the connected components $H^{+}$ and $H^{-}$ such that
\begin{equation}
H^{+} = H \cap \Epositiveregion{D^{\frac{1}{2}}\mathbf{v}},
\quad
H^{-} = H \cap \Enegativeregion{D^{\frac{1}{2}}\mathbf{v}}.
\end{equation}
By \Cref{lem: convexity of Q}, $Q$ is the minimum convex cone containing $H$ and the origin $\mathbf{0}$. Hence, the above situation also occurs in $Q \setminus\{\mathbf{0}\}$. More precisely, $Q^{+}\setminus\{\mathbf{0}\}$ and $Q^{-}\setminus\{\mathbf{0}\}$ are the connected component of $Q\setminus\{\mathbf{0}\}$ such that
\begin{equation}\label{eq: separateness for Q}
Q^{+}\setminus\{\mathbf{0}\} = (Q\setminus\{\mathbf{0}\}) \cap \Epositiveregion{D^{\frac{1}{2}}\mathbf{v}},
\quad
Q^{-}\setminus\{\mathbf{0}\} = (Q\setminus\{\mathbf{0}\}) \cap \Enegativeregion{D^{\frac{1}{2}}\mathbf{v}}.
\end{equation}
For later use, we establish the following preliminary lemma.
\begin{lemma}\label{lem: separation lemma for Q}
For any ${\bf x} \in H^{+}$ and ${\bf y} \in H^{-}$, we have $ {\bf x} + {\bf y}  \notin Q \setminus \{{\bf 0}\}$.
\end{lemma}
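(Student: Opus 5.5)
We work in the coordinates of the proof of \Cref{lem: convexity of Q}. Write
\begin{equation*}
\mathbf{x}=D^{-\frac{1}{2}}(\mathbf{v}',\mathbf{u}_1,\mathbf{u}_2)(r_1,\alpha_1,\beta_1)^{\top},
\quad
\mathbf{y}=D^{-\frac{1}{2}}(\mathbf{v}',\mathbf{u}_1,\mathbf{u}_2)(r_2,\alpha_2,\beta_2)^{\top},
\end{equation*}
and set $q(\mathbf{z})=(D^{\frac{1}{2}}\mathbf{z})^{\top}\tilde{A}(D^{\frac{1}{2}}\mathbf{z})$. Then $q(\mathbf{x})=q(\mathbf{y})=2$, so $\lambda r_j^2=2+(-\nu_1)\alpha_j^2+(-\nu_2)\beta_j^2$. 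Since $\mathbf{x}\in H^+$ and $\mathbf{y}\in H^-$, and these points lie on $H$ (hence not on $\Ezeroregion{D^{\frac12}\mathbf{v}}$, because $q\le 0$ there), we have $r_1>0$ and $r_2<0$.

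We expand
\begin{equation*}
q(\mathbf{x}+\mathbf{y})=q(\mathbf{x})+q(\mathbf{y})+2\bigl(\lambda r_1r_2+\nu_1\alpha_1\alpha_2+\nu_2\beta_1\beta_2\bigr)=4+2\bigl(\lambda r_1r_2-\langle \mathbf{a}_1,\mathbf{a}_2\rangle\bigr),
\end{equation*}
where $\mathbf{a}_j=(\sqrt{-\nu_1}\alpha_j,\sqrt{-\nu_2}\beta_j)^{\top}$. Since $r_1>0>r_2$, we get $\lambda r_1r_2=-\sqrt{(2+|\mathbf{a}_1|^2)(2+|\mathbf{a}_2|^2)}$. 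Cauchy--Schwarz gives $-\langle\mathbf{a}_1,\mathbf{a}_2\rangle\le|\mathbf{a}_1||\mathbf{a}_2|$. Hence
\begin{equation*}
q(\mathbf{x}+\mathbf{y})\le 4-2\sqrt{(2+s^2)(2+t^2)}+2st,\qquad s=|\mathbf{a}_1|,\ t=|\mathbf{a}_2|.
\end{equation*}

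The key step is the elementary inequality $\sqrt{(2+s^2)(2+t^2)}\ge 2+st$. Squaring both sides, it reduces to $2s^2+2t^2\ge 4st$, i.e.\ $2(s-t)^2\ge 0$. Therefore $q(\mathbf{x}+\mathbf{y})\le 0$, so $\mathbf{x}+\mathbf{y}\notin Q\setminus\{\mathbf{0}\}$, since points of $Q\setminus\{\mathbf{0}\}$ satisfy $q>0$ strictly.

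The only delicate point is the sign bookkeeping. We must confirm that $r_1$ and $r_2$ have strictly opposite signs, which is exactly what makes $\lambda r_1r_2$ negative and lets the square-root identity be used with the correct sign. The degenerate cases $\nu_1=0$ or $\nu_2=0$ need no separate treatment, since the vectors $\mathbf{a}_j$ remain well defined.
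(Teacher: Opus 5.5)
Your proof is correct and follows essentially the same route as the paper: the same eigenbasis coordinates, the same expansion $q(\mathbf{x}+\mathbf{y})=4+2(\lambda r_1r_2+\nu_1\alpha_1\alpha_2+\nu_2\beta_1\beta_2)$, and a Cauchy--Schwarz bound. The difference is only cosmetic. The paper applies Cauchy--Schwarz once in $\mathbb{R}^3$ to $(\sqrt{2},\sqrt{-\nu_1}a_1,\sqrt{-\nu_2}a_2)$ and $(\sqrt{2},-\sqrt{-\nu_1}b_1,-\sqrt{-\nu_2}b_2)$. You instead split this into a planar Cauchy--Schwarz step plus the inequality $\sqrt{(2+s^2)(2+t^2)}\ge 2+st$, and you also justify the strict signs $r_1>0>r_2$, which the paper only asserts.
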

\begin{proof}
It suffices to show that 
\begin{equation}
\{D^{\frac{1}{2}}({\bf x}+{\bf y})\}^{\top}\tilde{A}\{D^{\frac{1}{2}}({\bf x}+{\bf y})\} \leq 0.
\end{equation}
Let $[{\bf v}',{\bf u}_1,{\bf u}_2]$ be an orthogonal eigenbasis corresponding to the eigenvalues $(\lambda,\nu_1,\nu_2)$ of $\tilde{A}$. Then, we may express
\begin{equation}
{\bf x}=\alpha D^{-\frac{1}{2}}{\bf v}'+a_1D^{-\frac{1}{2}}{\bf u}_1+a_2D^{-\frac{1}{2}}{\bf u}_2,
\quad
{\bf y}=\beta D^{-\frac{1}{2}}{\bf v}'+b_1 D^{-\frac{1}{2}}{\bf u}_1+b_2 D^{-\frac{1}{2}}{\bf u}_2
\end{equation}
for some $\alpha,\beta,a_i,b_i \in \mathbb{R}$. Since ${\bf x} \in H^{+}$ and ${\bf y} \in H^{-}$, we have $\beta < 0 <\alpha$ and $(D^{\frac{1}{2}}{\bf x})^{\top}\tilde{A}(D^{\frac{1}{2}}{\bf x})=(D^{\frac{1}{2}}{\bf y})^{\top}\tilde{A}(D^{\frac{1}{2}}{\bf y})=2$. Thus, we have
\begin{equation}
\{D^{\frac{1}{2}}({\bf x}+{\bf y})\}^{\top}\tilde{A}\{D^{\frac{1}{2}}({\bf x}+{\bf y})\}=4+2(D^{\frac{1}{2}}{\bf x})^{\top}\tilde{A}(D^{\frac{1}{2}}{\bf y})=4+2(\lambda \alpha\beta + \nu_1a_1b_1+\nu_2a_2b_2),
\end{equation}
where the last equality follows from the fact that $[{\bf v}', {\bf u}_1, {\bf u}_2]$ is an orthogonal eigenbasis of $\tilde{A}$. Thus, our desired inequality is reduced to
\begin{equation}\label{eq: desired inequality for separation}
-\lambda \alpha \beta \geq 2 + \nu_1a_1b_1 + \nu_2a_2b_2.
\end{equation}
Since $(D^{\frac{1}{2}}{\bf x})^{\top}\tilde{A}(D^{\frac{1}{2}}{\bf x})=2$, we have
\begin{equation}
\lambda \alpha^2=2-\nu_1a_1^2-\nu_2a_2^2.
\end{equation}
Since $\alpha \geq 0$ and $\lambda > 0$, this implies $\sqrt{\lambda}\alpha=\sqrt{2-\nu_1a_1^2-\nu_2a_2^2}$.
Using a similar argument, we may obtain $-\sqrt{\lambda}\beta=\sqrt{2-\nu_1b_1^2-\nu_2b_2^2}$. (Note that $\beta \leq 0$.) Thus, the inequality (\ref{eq: desired inequality for separation}) is equivalent to
\begin{equation}\label{eq: desired inequality for separation 2}
\sqrt{2-\nu_1a_1^2-\nu_2a_2^2}\sqrt{2-\nu_1b_1^2-\nu_2b_2^2} \geq 2+\nu_1a_1b_1+\nu_2a_2b_2.
\end{equation}
In fact, this inequality can be shown as follows. Let ${\bf a}=(\sqrt{2},\sqrt{-\nu_1}a_1,\sqrt{-\nu_2}a_2)^{\top}$ and ${\bf b}=(\sqrt{2},-\sqrt{-\nu_1}b_1,-\sqrt{-\nu_2}b_2)^{\top}$. Since $\nu_1,\nu_2 \leq 0$, both vectors lie in $\mathbb{R}^3$. By the Cauchy-Schwarz inequality $|{\bf a}||{\bf b}| \geq \langle {\bf a},{\bf b} \rangle$ with respect to the Euclidean inner product, we may obtain the desired inequality (\ref{eq: desired inequality for separation 2}).
\end{proof}

\subsection{Main theorem}
In the previous subsection, we have investigated several geometric properties of the quadratic form. In this subsection, we apply these properties to $g$-vectors.
\begin{definition}
Let $B \in \mathrm{M}_3(\mathbb{R})$ be a cluster-cyclic initial exchange matrix. For each initial mutation direction $i=1,2,3$, define $Q_i$, $Q^{+}_i$, $Q^{-}_i$, $H_i$, $H_i^{+}$, and $H_i^{-}$ as the corresponding sets  in (\ref{eq: set Q}) with respect to $\tilde{A}=\tilde{A}^{[i]}$ and a skew-symmetrizer $D$ of $B^{[i]}$. We call $Q^{+}_{i}$ the {\em global upper bound} for the direction $i$.
\end{definition}
Note that these sets $Q_i$, $Q^{+}_i$, and $Q^{-}_i$ are independent of the choice of $D$. Thanks to Lemma~\ref{lem: surface for g-vectors}, all $g$-vectors belong to $Q_i$. The main result of this section shows that the upper bound can be refined as follows.
\begin{theorem}[Global upper bound]\label{thm: global upper bound theorem}
Let $B \in \mathrm{M}_3(\mathbb{R})$ be a cluster-cyclic initial exchange matrix. Fix any initial mutation direction $i=1,2,3$. Then, it holds that 
\begin{equation}
|\Delta^{\geq [i]}(B)| \subset Q^{+}_i.
\end{equation} In particular, we have $|\Delta(B)| \subset Q^{+}_1 \cup Q^{+}_2 \cup Q^{+}_3.$
\end{theorem}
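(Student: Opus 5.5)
By \Cref{lem: convexity of Q}, $Q_i^{+}$ is a convex cone containing $\mathbf{0}$. Since every cone of $\Delta^{\geq [i]}(B)$ is spanned by modified $g$-vectors $\tilde{\mathbf{g}}_l^{\mathbf{w}}$ with $\mathbf{w} \geq [i]$, it suffices to prove that every such vector lies in $H_i^{+}$. By \Cref{lem: surface for modified g-vectors}, all these vectors lie on $H_i = H_i^{+} \sqcup H_i^{-}$, so the task is to show that none of them lies in $H_i^{-}$. The second statement then follows from $\mathcal{T} = \{\emptyset\} \cup \bigcup_i \mathcal{T}^{\geq [i]}$, since the initial cone $\mathcal{C}(\mathbf{e}_1,\mathbf{e}_2,\mathbf{e}_3)$ already lies in $\Delta^{\geq [i]}(B)$. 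Indeed, $\tilde{\mathbf{g}}_{s_0}^{[i]}=\tilde{\mathbf{e}}_{s_0}$ and $\tilde{\mathbf{g}}_{t_0}^{[i]}=\tilde{\mathbf{e}}_{t_0}$ by \Cref{lem: kst}, and $\tilde{\mathbf{e}}_{k_0}=\tilde{\mathbf{g}}_{k_0}^{[i,i]}$ with $[i][i]=\emptyset$. Alternatively, since $\mathbf{v}$ is positive by \Cref{lem: eigenvalues lemma for cluster-cyclic matrix}($b$), each $\tilde{\mathbf{e}}_j$ lies in $Q^{+}$ as soon as $\tilde{\mathbf{e}}_j \in Q$, which holds because $(D^{1/2}\tilde{\mathbf{e}}_j)^{\top}\tilde{A}(D^{1/2}\tilde{\mathbf{e}}_j)=2$.

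I argue by induction along $\mathcal{T}^{\geq [i]}$, showing that all three vectors $\tilde{\mathbf{g}}_K^{\mathbf{w}},\tilde{\mathbf{g}}_S^{\mathbf{w}},\tilde{\mathbf{g}}_T^{\mathbf{w}}$ lie in $H_i^{+}$.

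\emph{Base case $\mathbf{w}=[i]$.} Here $\tilde{\mathbf{g}}_S^{[i]}=\tilde{\mathbf{e}}_{s_0}$ and $\tilde{\mathbf{g}}_T^{[i]}=\tilde{\mathbf{e}}_{t_0}$ lie in $H_i^{+}$ because $\mathbf{v}>\mathbf{0}$. For $\tilde{\mathbf{g}}_K^{[i]}=-\tilde{\mathbf{e}}_{k_0}+p_{k_0s_0}\tilde{\mathbf{e}}_{s_0}$, I use the separation \Cref{lem: separation lemma for Q}.
\begin{itemize}
\item The vector $\tilde{\mathbf{g}}_K^{[i]}+\tilde{\mathbf{e}}_{k_0}=p_{k_0s_0}\tilde{\mathbf{e}}_{s_0}$ lies in $Q\setminus\{\mathbf{0}\}$.
\item The vector $\tilde{\mathbf{e}}_{k_0}$ lies in $H_i^{+}$.
\item If $\tilde{\mathbf{g}}_K^{[i]}$ were in $H_i^{-}$, the lemma would forbid this sum from lying in $Q\setminus\{\mathbf{0}\}$, a contradiction.
\end{itemize}

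\emph{Inductive step.} By \Cref{lem: S-mutations of modified c- g-vectors} and \Cref{lem: T-mutations of modified c- g-vectores}, each mutation $\mathbf{w}\mapsto \mathbf{w}X$ ($X=S,T$) keeps two $g$-vectors. It creates one new vector $\mathbf{y}=-\tilde{\mathbf{g}}_{M}^{\mathbf{w}}+p\,\tilde{\mathbf{g}}_{M'}^{\mathbf{w}}$, where $M \neq M'$ and $p=p_{MM'}^{\mathbf{w}}\geq 2$ by \Cref{prop: ordinary classification of cluster-cyclicity}. By the induction hypothesis $\mathbf{x}=\tilde{\mathbf{g}}_M^{\mathbf{w}}\in H_i^{+}$, and $\mathbf{x}+\mathbf{y}=p\,\tilde{\mathbf{g}}_{M'}^{\mathbf{w}}$ is a nonzero element of $Q$ (it lies on a positive multiple of $H_i$). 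If $\mathbf{y}\in H_i^{-}$, then \Cref{lem: separation lemma for Q} would give $\mathbf{x}+\mathbf{y}\notin Q\setminus\{\mathbf{0}\}$, a contradiction. Since \Cref{lem: surface for modified g-vectors} gives $\mathbf{y}\in H_i$, we conclude $\mathbf{y}\in H_i^{+}$.

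The only point needing care is that \Cref{lem: separation lemma for Q} is stated for the sets built from $\tilde{A}^{[i]}$ with a skew-symmetrizer of $B^{[i]}$, while the modified $g$-vectors are normalized with the skew-symmetrizer $D$ of $B$. These coincide because mutation preserves skew-symmetrizers, so $D$ also skew-symmetrizes $B^{[i]}$ and the same $H_i$ appears in \Cref{lem: surface for modified g-vectors}. I also need $H_i^{+}\cap H_i^{-}=\emptyset$, which follows from the remark that $\langle D^{1/2}\mathbf{x},\mathbf{v}\rangle\neq 0$ on $Q\setminus\{\mathbf{0}\}$.

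The argument is short once one notices that the mutation formula always writes the new vector as (positive multiple of an old vector) minus (an old vector), which is exactly the shape \Cref{lem: separation lemma for Q} handles. The step I expect to be the main obstacle is the base case for $\tilde{\mathbf{g}}_K^{[i]}$. It is handled the same way, using $\tilde{\mathbf{e}}_{k_0}\in H_i^{+}$, which requires the positivity of $\mathbf{v}$ from \Cref{lem: eigenvalues lemma for cluster-cyclic matrix}($b$).
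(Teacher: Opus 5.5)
Your proposal is correct and follows essentially the paper's proof. You reduce via convexity of $Q_i^+$ to showing every modified $g$-vector lies in $H_i^+$; then, for each newly created vector $\mathbf{y}$, you note $\mathbf{y}+\tilde{\mathbf g}_M^{\mathbf w}=p\,\tilde{\mathbf g}_{M'}^{\mathbf w}\in Q_i\setminus\{\mathbf 0\}$ and invoke \Cref{lem: separation lemma for Q}. The paper phrases this as a minimal counterexample rather than an induction, and your treatment of the base case $\tilde{\mathbf g}_K^{[i]}$ via $\tilde{\mathbf e}_{k_0}\in H_i^+$ spells out a step the paper simply calls direct.

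One side remark is wrong: the initial cone does not lie in $\Delta^{\geq[i]}(B)$, since $[i][i]=\emptyset\notin\mathcal{T}^{\geq[i]}$. However, your alternative argument (each $\tilde{\mathbf e}_j\in H_i^+$ because $\mathbf v>\mathbf 0$) is exactly what the paper uses, and it suffices for the ``in particular'' statement.
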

\begin{remark}
More strongly, by \Cref{lem: surface for modified g-vectors}, all the modified $g$-vectors $\mathbf{g}_l^{\mathbf{w}}$ ($\mathbf{w} \in \mathcal{T}^{\geq [i]}$) are on the same plane $H_i^{+}$.
\end{remark}
\begin{proof}
Let ${\bf w} \geq [i]$. Since $Q^{+}_i$ is a convex cone by Lemma~\ref{lem: convexity of Q}, it suffices to show that every modified $g$-vector $\tilde{\bf g}_{l}^{\bf w}$ belongs to $H^{+}_i \subset Q^{+}_i$. By Lemma~\ref{lem: surface for modified g-vectors}, we have already known that $\tilde{\bf g}_{l}^{\bf w}\in H_i$.
\par
Suppose that the claim does not hold. Then, there exists a modified $g$-vector $\tilde{\bf g}_l^{{\bf w}}$ with ${\bf w} \geq [i]$ and $l\in \{1,2,3\}$, such that it does not belong to $H^{+}_{i}$. Without loss of generality, we may assume that such ${\bf w}$ is a minimal one. That is to say, every proper subsequence  ${\bf u} \leq {\bf w}$ satisfies $\tilde{\bf g}_{j}^{\bf u} \in H^{+}_i$. Note that $\tilde{\bf g}_{l}^{[i]}\in H^{+}_i$ ($i,l=1,2,3$). It implies that  $|{\bf w}| \geq 2$. Let ${\bf u}$ be the mutation subsequence that is one step shorter than ${\bf w}$. Hence, there exists $M\in \{S,T\}$, such that ${\bf w}={\bf u}M$. 
\par
The key point is the following three conditions.
\begin{equation}\label{eq: key point for the global upper bound}
\tilde{\bf g}_{M}^{\bf u} \in H^{+}_{i},
\quad
\tilde{\bf g}_{K}^{\bf w} \in H^{-}_{i}, 
\quad
\tilde{\bf g}_{K}^{\bf w}+\tilde{\bf g}_{M}^{\bf u} \in Q_i\setminus\{\mathbf{0}\}.
\end{equation}
For simplicity, we might assume that $M=S$ and we can do a similar argument for $M=T$.
The first condition is direct since ${\bf u}$ is a proper subsequence of ${\bf w}$.
By Lemma~\ref{lem: S-mutations of modified c- g-vectors}, $\tilde{\bf g}_{S}^{\bf w}$ and $\tilde{\bf g}_{T}^{\bf w}$ have already appeared in $\{\tilde{\bf g}_{K}^{\bf u},\tilde{\bf g}_{S}^{\bf u},\tilde{\bf g}_{T}^{\bf u}\}$. Since we assumed that \Cref{thm: global upper bound theorem} breaks at this ${\bf w}$, $\tilde{\bf g}_{K}^{\bf w}$ should do so. Namely, the first two conditions of \eqref{eq: key point for the global upper bound} hold. By \Cref{lem: S-mutations of modified c- g-vectors}, we have $\tilde{\mathbf{g}}_{K}^{\mathbf{w}}+\tilde{\mathbf{g}}_{S}^{\mathbf{u}}=p_{SK}^{\bf u}\tilde{\mathbf{g}}_{K}^{\bf u}$. Since $\tilde{\mathbf{g}}_{K}^{\mathbf{u}} \in Q_i\setminus\{\mathbf{0}\}$, we have the third condition of \eqref{eq: key point for the global upper bound}. 

However, due to Lemma~\ref{lem: separation lemma for Q}, the first and the second conditions should imply that $\tilde{\bf g}_{K}^{\bf w}+\tilde{\bf g}_{M}^{\bf u} \notin Q_i \setminus\{{\bf 0}\}$, which is a contradiction. Hence, all the modified $g$-vectors lie on $H^{+}_i$ and $|\Delta^{\geq [i]}(B)| \subset Q^{+}_i.$ Moreover, it is direct that $\tilde{\bf g}_j^{\emptyset}=\tilde{\mathbf{e}}_j\in Q^{+}_i$ for any $i,j=1,2,3$, which implies that $|\Delta(B)| \subset Q^{+}_1 \cup Q^{+}_2 \cup Q^{+}_3.$
\end{proof}
\begin{example}
We can respectively refer to the thick blue lines in \Cref{fig: pictures of G-fans} as the global upper bounds for the case of $\nu_1=\nu_2=0$ and for the general cluster-cyclic type ($\nu_1,\nu_2<0$). In general, the three global upper bounds $Q_1^{+}$, $Q_2^{+}$, and $Q_3^{+}$ are different. However, when an initial exchange matrix $B$ is given by \eqref{eq: value-rigid type}, it holds that
\begin{equation}
Q_1^{+}=Q_2^{+}=Q_3^{+}=\{x_1\tilde{\mathbf{e}}_1+x_2\tilde{\mathbf{e}}_2+x_3\tilde{\mathbf{e}}_3 \in \mathbb{R}^3 \mid x_1+x_2+x_3 > 0\} \cup \{{\bf 0}\}.
\end{equation}
\end{example}

\section{$S$-mutations}\label{sec: S-mutations}
Following \Cref{def: K S T labeling}, for any $M=K,S,T$, we write the modified $c$-vectors as $\tilde{\bf c}_{M}^{\bf w}=\tilde{\bf c}_{M({\bf w})}^{\bf w}$ and the modified $g$-vectors as $\tilde{\bf g}_{M}^{\bf w}=\tilde{\bf g}_{M({\bf w})}^{\bf w}$, and similarly for the remaining data. In Subsection~\ref{sec: mutation formulas}, we have seen that the mutations can be distinguished by $S$ and $T$. In this section, we focus on the structure of repeating $S$-mutations. Note that the behavior of $\mathbf{w}S^n$ is detailed in \eqref{eq: index S-mutations}.
\subsection{Formulas for $S$-mutations}
Firstly, we give an explicit $S$-mutation formula.
In fact, it is closely related to the {\em Chebyshev polynomials} $U_{n}(p)$ of the second kind, which is defined by the following recursion:
\begin{equation}
\begin{aligned}
U_{-2}(p)&=-1,\ U_{-1}(p)=0,\\
U_{n+1}(p)&=2pU_{n}(p)-U_{n-1}(p)\quad (n \in \mathbb{Z}_{\geq -1}).
\end{aligned}
\end{equation}
Set $u_n(p)=U_n\left(\frac{p}{2}\right)$. Then, it obeys the recursion
\begin{equation}\label{eq: recursion for u}
u_{n+1}(p)=pu_{n}(p)-u_{n-1}(p).
\end{equation} 
For a given $p \geq 2$, an important fact is that $\{u_n(p)\}$ is monotonically increasing for $n$. We also obtain the following formula for $u_{n}(p)$.
\begin{equation}\label{eq: explicit values of u_n}
u_{n}(p)=\begin{cases}
n+1 & p=2,\\ 
\frac{1}{\sqrt{p^2-4}}(\alpha_p^{n+1}-\alpha_p^{-n+1}) & p>2,
\end{cases}
\end{equation}
where $\alpha_p=\frac{p+\sqrt{p^2-4}}{2}$ and $\alpha_p^{-1}=\frac{p-\sqrt{p^2-4}}{2}$. In particular, we have
\begin{equation}\label{eq: limit of Chebychev polynomials}
\lim_{n \to \infty}\frac{\sqrt{p^2-4}}{\alpha_{p}^{n}}u_n(p)=\alpha_p \ (p>2),
\qquad
\lim_{n \to \infty} \frac{1}{n}u_n(2)=\alpha_2=1.
\end{equation}
By using $u_n(p)$, we can express the $S$-mutation formulas as follows.
\begin{lemma}\label{lem: infinite S mutations}
Let ${\bf w} \in \mathcal{T}\backslash\{\emptyset\}$ and $n \in \mathbb{Z}_{\geq 0}$. Then, we have the following formulas:
\\
\begin{enumerate} 
\item Formulas for $p_{MM'}^{\mathbf{w}S^n}$:
\begin{equation}\label{eq: infinite S mutations for p}
\begin{aligned}
p_{ST}^{{\bf w}S^n}&=p_{TS}^{{\bf w}S^n}=-u_{n-1}(p^{\bf w}_{SK})p_{KT}^{\bf w}+u_{n}(p^{\bf w}_{SK})p_{ST}^{\bf w},\\
p_{KT}^{{\bf w}S^n}&=p_{TK}^{{\bf w}S^n}=-u_{n-2}(p^{\bf w}_{SK})p_{KT}^{\bf w}+u_{n-1}(p^{\bf w}_{SK})p_{ST}^{\bf w},\\
p_{SK}^{{\bf w}S^n}&=p_{KS}^{{\bf w}S^n}=p^{\bf w}_{SK}.\\
\end{aligned}
\end{equation}
\item Formulas for modified $g$-vectors $\tilde{\mathbf{g}}_{M}^{\mathbf{w}S^n}$:
\begin{equation}\label{eq: infinite S mutations for g vectors}
\begin{aligned}
\tilde{{\bf g}}_{K}^{{\bf w}S^n}&=-u_{n-1}(p^{\bf w}_{SK})\tilde{\bf g}_{S}^{\bf w}+u_{n}(p^{\bf w}_{SK})\tilde{\bf g}_{K}^{\bf w},\\
\tilde{{\bf g}}_{S}^{{\bf w}S^n}&=-u_{n-2}(p^{\bf w}_{SK})\tilde{\bf g}_{S}^{\bf w}+u_{n-1}(p^{\bf w}_{SK})\tilde{\bf g}_{K}^{\bf w},\\
\tilde{{\bf g}}_{T}^{{\bf w}S^n}&=\tilde{\bf g}_{T}^{{\bf w}}.\\
\end{aligned}
\end{equation}
\item Formulas for modified $c$-vectors $\tilde{\mathbf{c}}_{M}^{\mathbf{w}S^n}$:
\begin{equation}\label{eq: infinite S mutations for c vectors}
\begin{aligned}
\tilde{\bf c}_{K}^{{\bf w}S^n}&=-u_{n-2}(p^{\bf w}_{SK})\tilde{\bf c}_{K}^{\bf w}-u_{n-1}(p^{\bf w}_{SK})\tilde{\bf c}_{S}^{\bf w},\\
\tilde{\bf c}_{S}^{{\bf w}S^n}&=u_{n-1}(p^{\bf w}_{SK})\tilde{\bf c}_{K}^{\bf w}+u_{n}(p^{\bf w}_{SK})\tilde{\bf c}_{S}^{\bf w},\\
\tilde{\bf c}_{T}^{{\bf w}S^n}&=\tilde{\bf c}_{T}^{\bf w}.
\end{aligned}
\end{equation}
\end{enumerate}
\end{lemma}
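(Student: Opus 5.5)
We prove all three families of formulas simultaneously by induction on $n$, using only the single-step $S$-mutation rules of \Cref{lem: S-mutations of modified c- g-vectors} applied at $\mathbf{w}S^{n}$. The induction needs one structural fact about how the labels move. By \eqref{eq: K S T mutation formula by S}, an $S$-mutation swaps the roles of $K$ and $S$ and fixes $T$. Consequently $p_{SK}^{\mathbf{w}S^n}=p_{SK}^{\mathbf{w}}$ for all $n$: by \eqref{eq: S-mutations of p} the $(K,S)$ entry is preserved at each step. We abbreviate $p=p_{SK}^{\mathbf{w}}$ and $u_n=u_n(p)$. The base case $n=0$ is immediate from $u_{-2}=-1$, $u_{-1}=0$, $u_0=1$. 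For example, $\tilde{\mathbf{g}}_K^{\mathbf{w}}=-0\cdot\tilde{\mathbf{g}}_S^{\mathbf{w}}+1\cdot\tilde{\mathbf{g}}_K^{\mathbf{w}}$ and $\tilde{\mathbf{g}}_S^{\mathbf{w}}=\tilde{\mathbf{g}}_S^{\mathbf{w}}+0\cdot\tilde{\mathbf{g}}_K^{\mathbf{w}}$; the formulas for $p$ and $c$ check the same way.

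For the inductive step, assume the formulas hold at $\mathbf{w}S^n$ and apply \Cref{lem: S-mutations of modified c- g-vectors} with $\mathbf{w}S^n$ in place of $\mathbf{w}$.

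\begin{enumerate}
\item For $g$-vectors, we have $\tilde{\mathbf{g}}_S^{\mathbf{w}S^{n+1}}=\tilde{\mathbf{g}}_K^{\mathbf{w}S^n}$, which is exactly the claimed expression with indices shifted. We also have
\[
\tilde{\mathbf{g}}_K^{\mathbf{w}S^{n+1}}=-\tilde{\mathbf{g}}_S^{\mathbf{w}S^n}+p\,\tilde{\mathbf{g}}_K^{\mathbf{w}S^n}.
\]
Substituting the hypothesis, the coefficient of $\tilde{\mathbf{g}}_K^{\mathbf{w}}$ becomes $pu_n-u_{n-1}=u_{n+1}$ and the coefficient of $-\tilde{\mathbf{g}}_S^{\mathbf{w}}$ becomes $pu_{n-1}-u_{n-2}=u_n$, both by \eqref{eq: recursion for u}. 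The vector $\tilde{\mathbf{g}}_T$ is unchanged.
\item For $c$-vectors, we have $\tilde{\mathbf{c}}_K^{\mathbf{w}S^{n+1}}=-\tilde{\mathbf{c}}_S^{\mathbf{w}S^n}$, which matches the claimed formula shifted by one. We also have
\[
\tilde{\mathbf{c}}_S^{\mathbf{w}S^{n+1}}=\tilde{\mathbf{c}}_K^{\mathbf{w}S^n}+p\,\tilde{\mathbf{c}}_S^{\mathbf{w}S^n}.
\]
Its coefficients are $-u_{n-2}+pu_{n-1}=u_n$ on $\tilde{\mathbf{c}}_K^{\mathbf{w}}$ and $-u_{n-1}+pu_n=u_{n+1}$ on $\tilde{\mathbf{c}}_S^{\mathbf{w}}$, again by \eqref{eq: recursion for u}.
\item For the entries $p_{MM'}$, \eqref{eq: S-mutations of p} gives $p_{KT}^{\mathbf{w}S^{n+1}}=p_{ST}^{\mathbf{w}S^n}$ and
\[
p_{ST}^{\mathbf{w}S^{n+1}}=p\,p_{ST}^{\mathbf{w}S^n}-p_{KT}^{\mathbf{w}S^n}.
\]
This is the same linear recursion with the same pattern of coefficients, so the same three-term identity closes the induction. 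Symmetry $p_{MM'}=p_{M'M}$ holds by definition.
\end{enumerate}

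No step is a genuine obstacle. The only point requiring care is the bookkeeping of labels. At each stage the rules of \Cref{lem: S-mutations of modified c- g-vectors} must be read relative to the current $K,S,T$ labels of $\mathbf{w}S^n$, not those of $\mathbf{w}$. This is legitimate because that lemma holds for every nonempty reduced sequence, whether it lies in a trunk or a branch, and \eqref{eq: K S T mutation formula by S} applies in both cases. Invariance of $p_{SK}$ is what keeps the parameter of $u_n$ constant along the induction.
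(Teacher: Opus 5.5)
Your proposal is correct and is essentially the paper's proof: induction on $n$ using the one-step rules of \Cref{lem: S-mutations of modified c- g-vectors} at $\mathbf{w}S^n$, with the invariance $p_{SK}^{\mathbf{w}S^n}=p_{SK}^{\mathbf{w}}$ and the recursion \eqref{eq: recursion for u} closing each step. The paper writes out only the $\tilde{\mathbf{g}}_K$ case, and your coefficient checks for the remaining $g$-vector, $c$-vector and $p$ formulas, as well as your base case, are accurate.
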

\begin{proof}
They can be established by the induction on $n$, where the inductive step follows from the formulas in \Cref{lem: S-mutations of modified c- g-vectors}. More precisely, we exhibit the proof for $\tilde{{\bf g}}_{K}^{{\bf w}S^{n+1}}$ and others are similar. Note that by \eqref{eq: recursion for u} and the induction conditions for $n$, we have 
\begin{equation}\label{eq: proof on behalf}
\begin{aligned}
\tilde{{\bf g}}_{K}^{{\bf w}S^{n+1}} &= -\tilde{{\bf g}}_{S}^{{\bf w}S^{n}}+  p^{{\bf w}S^n}_{SK}\tilde{{\bf g}}_{K}^{{\bf w}S^{n}} \\ &= u_{n-2}(p^{\bf w}_{SK})\tilde{\bf g}_{S}^{\bf w}-u_{n-1}(p^{\bf w}_{SK})\tilde{\bf g}_{K}^{\bf w}+ p^{{\bf w}}_{SK} [-u_{n-1}(p^{\bf w}_{SK})\tilde{\bf g}_{S}^{\bf w}+u_{n}(p^{\bf w}_{SK})\tilde{\bf g}_{K}^{\bf w}] \\ &= -u_{n}(p^{\bf w}_{SK})\tilde{\bf g}_{S}^{\bf w}+u_{n+1}(p^{\bf w}_{SK})\tilde{\bf g}_{K}^{\bf w}.
\end{aligned}
\end{equation}
\end{proof}

\subsection{Asymptotic structure} In this subsection, we exhibit the asymptotic phenomenon about $S$-mutations.
Now, we consider the inner product $\langle {\bf a}, {\bf b} \rangle_{D}={\bf a}^{\top}D{\bf b}$ on $\mathbb{R}^3$ and write its norm by $\|{\bf a}\|_{D}=\sqrt{\langle {\bf a},{\bf a}\rangle_{D}}$.
We introduce the following equivalence relation $\sim$ on $\mathbb{R}^{3} \setminus\{{\bf 0}\}$:
\begin{equation}
{\bf a} \sim {\bf b} \Longleftrightarrow {\bf b}=\lambda {\bf a}\ \textup{for some $\lambda > 0$}.
\end{equation}
Then, the sphere $S^2_D=\{{\bf a} \in \mathbb{R}^3 \mid \|{\bf a}\|_{D}=1\} \subset \mathbb{R}^3$ is a representative of this equivalence class. By identifying $(\mathbb{R}^3\setminus\{{\bf 0}\})/{\sim}$ with $S^2_D$, we introduce the topology. Namely, for any sequence of nonzero vectors ${\bf v}_n \in \mathbb{R}^3 \setminus \{{\bf 0}\}$, if $\lim_{n \to \infty}\, \frac{{\bf v}_n}{\|{\bf v}_n\|_{D}}$ exists, we write
\begin{equation}
\tildelim_{n \to \infty} {\bf v}_{n}=\lim_{n \to \infty} \frac{{\bf v}_n}{\|{\bf v}_n\|_{D}}.
\end{equation}
Since the sphere $S^2_D$ is a closed set, $\left\lVert\tildelim\limits_{n \to \infty} {\bf v}_n\right\rVert_{D}=1$ holds if it exists.
\par
We abuse this symbol $\sim$ to express the following equivalence relation on $\mathbb{R}$:
\begin{equation}
a \sim b \Longleftrightarrow \mathrm{sign}(a)=\mathrm{sign}(b).
\end{equation}
For any $\mathbf{a},\mathbf{b} \in \mathbb{R}^3 \setminus \{\mathbf{0}\}$ and $\mathbf{x} \in \mathbb{R}^3$, the relation $\mathbf{a} \sim \mathbf{b}$ on $\mathbb{R}^3 \setminus \{\mathbf{0}\}$ implies $\langle \mathbf{a}, \mathbf{x} \rangle_D \sim \langle \mathbf{b}, \mathbf{x}\rangle_D$ on $\mathbb{R}$.
\par
Firstly, we calculate $\tildelim\limits_{n \to \infty} \tilde{\bf c}^{{\bf w}S^n}_{S}$ and $\tildelim\limits_{n \to \infty} \tilde{\bf g}^{{\bf w}S^n}_{K}$, which play essential roles in the later argument. We have already known that $\tilde{\bf c}^{{\bf w}S^n}_{S}$ and $\tilde{\bf g}^{{\bf w}S^n}_{K}$ can be expressed as in \eqref{eq: infinite S mutations for g vectors} and \eqref{eq: infinite S mutations for c vectors}. To calculate their limits, we show the following lemma.
\begin{lemma}\label{lem: limit of Chebyshev difference}
Let ${\bf a},{\bf b} \in \mathbb{R}^3$ be linearly independent vectors, and let $p \geq 2$. Set ${\bf v}_{n}=u_{n-1}(p){\bf a}+u_{n}(p){\bf b}$. Then, we have
\begin{equation}
\tildelim_{n \to \infty}{{\bf v}_{n}} \sim {{\bf a}+\alpha_{p}{\bf b}},
\end{equation}
where $\alpha_{p}=\frac{p+\sqrt{p^2-4}}{2}$.
\end{lemma}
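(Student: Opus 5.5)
The plan is to divide $\mathbf{v}_n$ by a positive normalizing sequence $N_n$ chosen so that $\mathbf{v}_n/N_n$ converges to a nonzero multiple of $\mathbf{a}+\alpha_p\mathbf{b}$. Once that is done, the normalized vectors $\mathbf{v}_n/\|\mathbf{v}_n\|_D=(\mathbf{v}_n/N_n)/\|\mathbf{v}_n/N_n\|_D$ converge by continuity of the norm away from $\mathbf{0}$, and the limit is a positive multiple of $\mathbf{a}+\alpha_p\mathbf{b}$. The limit vector is nonzero because $\mathbf{a},\mathbf{b}$ are linearly independent and the coefficient of $\mathbf{a}$ equals $1$, so $\mathbf{a}+\alpha_p\mathbf{b}\neq\mathbf{0}$. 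The argument splits into the cases $p>2$ and $p=2$, following \eqref{eq: explicit values of u_n}.

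For $p>2$, take $N_n=\alpha_p^{n}/\sqrt{p^2-4}>0$. By \eqref{eq: limit of Chebychev polynomials}, $u_n(p)/N_n\to\alpha_p$. Also
\[
\frac{u_{n-1}(p)}{N_n}=\frac{u_{n-1}(p)\sqrt{p^2-4}}{\alpha_p^{n-1}}\cdot\alpha_p^{-1}\to\alpha_p\cdot\alpha_p^{-1}=1.
\]
Hence $\mathbf{v}_n/N_n\to\mathbf{a}+\alpha_p\mathbf{b}$, and the result follows.

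For $p=2$, take $N_n=n$ (for $n\geq1$). Since $u_n(2)=n+1$ and $u_{n-1}(2)=n$, we get $\mathbf{v}_n/n=\mathbf{a}+\frac{n+1}{n}\mathbf{b}\to\mathbf{a}+\mathbf{b}=\mathbf{a}+\alpha_2\mathbf{b}$, because $\alpha_2=1$.

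The only step needing care is the final passage from $\mathbf{v}_n/N_n\to\mathbf{x}\neq\mathbf{0}$ to $\widetilde{\lim}\,\mathbf{v}_n\sim\mathbf{x}$. Since $\mathbf{x}\neq\mathbf{0}$, $\|\mathbf{v}_n/N_n\|_D\to\|\mathbf{x}\|_D>0$, so $\mathbf{v}_n\neq\mathbf{0}$ for large $n$. The quotient map $\mathbf{y}\mapsto\mathbf{y}/\|\mathbf{y}\|_D$ is continuous on $\mathbb{R}^3\setminus\{\mathbf{0}\}$ and invariant under positive scaling, so the limit $\mathbf{x}/\|\mathbf{x}\|_D$ is reached, and this is $\sim\mathbf{a}+\alpha_p\mathbf{b}$. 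I expect no real obstacle beyond stating the limits from \eqref{eq: limit of Chebychev polynomials} correctly with the index shift.
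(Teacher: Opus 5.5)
Your proposal is correct and follows essentially the same route as the paper: you normalize by $\alpha_p^{n}/\sqrt{p^2-4}$ when $p>2$, using the Chebyshev limit, and by $n$ when $p=2$, with $\mathbf{a}+\alpha_p\mathbf{b}\neq\mathbf{0}$ coming from linear independence. Your final paragraph, which passes from $\mathbf{v}_n/N_n\to\mathbf{x}\neq\mathbf{0}$ to $\widetilde{\lim}\,\mathbf{v}_n\sim\mathbf{x}$, spells out a step the paper leaves implicit.
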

\begin{proof}
Suppose $p>2$. Then, by \eqref{eq: limit of Chebychev polynomials}, we have
\begin{equation}
\lim_{n \to \infty}\frac{\sqrt{p^2-4}}{\alpha_p^{n}}{\bf v}_n=\mathbf{a}+\alpha_p\mathbf{b}.
\end{equation}
Since $\mathbf{a}$ and $\mathbf{b}$ are linearly independent, the right hand side is a nonzero vector. Thus, the claim holds.
If $p=2$, then it holds that $u_n(p)=n+1$ by \eqref{eq: explicit values of u_n}.  Thus, we obtain $\mathbf{v}_n=n(\mathbf{a}+\mathbf{b})+\mathbf{b}$. Note that $\alpha_2=1$. Hence, we have
\begin{equation}
\lim_{n \to \infty}\frac{{\bf v}_n}{n}={\bf a}+{\bf b}.
\end{equation}
\end{proof}
As in \Cref{def: K S T labeling}, we define $p_{MM'}^{\bf w}=p_{M(\mathbf{w}),M'(\mathbf{w})}^{\mathbf{w}}$ and $\alpha_{MM'}^{\bf w}=\alpha_{M(\mathbf{w}),M'(\mathbf{w})}^{\mathbf{w}}$.
Based on these lemmas, we may obtain the limit of $S$-mutations.
\begin{lemma}\label{lem: limit of S mutations}
For any ${\bf w} \in \mathcal{T}$, we have
\begin{equation}\label{eq: limit of S mutations}
\begin{aligned}
\tildelim_{n \to \infty} \tilde{\bf g}_{K}^{{\bf w}S^n}&\sim \alpha_{SK}^{\bf w}\tilde{\bf g}_{K}^{\bf w}-\tilde{\bf g}_{S}^{\bf w},& \tildelim_{n \to \infty} \tilde{\bf c}^{{\bf w}S^n}_{S}&\sim\alpha_{SK}^{\bf w}\tilde{\bf c}^{\bf w}_{S}+\tilde{\bf c}_{K}^{\bf w}.
\end{aligned}
\end{equation}
\end{lemma}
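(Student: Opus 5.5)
The plan is to read off the explicit $S$-mutation formulas of \Cref{lem: infinite S mutations} and then apply \Cref{lem: limit of Chebyshev difference}. Throughout, set $p=p_{SK}^{\mathbf{w}}$. Since $B$ is cluster-cyclic, \Cref{prop: ordinary classification of cluster-cyclicity} gives $p\geq 2$, so the lemma applies with $\alpha_p=\alpha_{SK}^{\mathbf{w}}$. (For $\mathbf{w}\neq\emptyset$ the labels $K,S$ are defined; I read the statement as concerning nonempty $\mathbf{w}$.)

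For the $g$-vectors, \eqref{eq: infinite S mutations for g vectors} gives
\[
\tilde{\mathbf{g}}_{K}^{\mathbf{w}S^n}=u_{n-1}(p)\bigl(-\tilde{\mathbf{g}}_{S}^{\mathbf{w}}\bigr)+u_{n}(p)\tilde{\mathbf{g}}_{K}^{\mathbf{w}}.
\]
This is exactly of the form $\mathbf{v}_n=u_{n-1}(p)\mathbf{a}+u_n(p)\mathbf{b}$ with $\mathbf{a}=-\tilde{\mathbf{g}}_{S}^{\mathbf{w}}$ and $\mathbf{b}=\tilde{\mathbf{g}}_{K}^{\mathbf{w}}$. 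The vectors $\mathbf{a}$ and $\mathbf{b}$ are linearly independent because $\{\tilde{\mathbf{g}}_1^{\mathbf{w}},\tilde{\mathbf{g}}_2^{\mathbf{w}},\tilde{\mathbf{g}}_3^{\mathbf{w}}\}$ is a basis. This follows from \Cref{prop: fundamental properties under sign-coherency}, applicable since sign-coherence holds by \Cref{thm: recursion for tropical signs}, and from the rescaling in \Cref{def: modified c g vectors}. Hence \Cref{lem: limit of Chebyshev difference} yields
\[
\tildelim_{n\to\infty}\tilde{\mathbf{g}}_K^{\mathbf{w}S^n}\sim \alpha_{SK}^{\mathbf{w}}\tilde{\mathbf{g}}_K^{\mathbf{w}}-\tilde{\mathbf{g}}_S^{\mathbf{w}}.
\]

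For the $c$-vectors, \eqref{eq: infinite S mutations for c vectors} gives
\[
\tilde{\mathbf{c}}_S^{\mathbf{w}S^n}=u_{n-1}(p)\tilde{\mathbf{c}}_K^{\mathbf{w}}+u_n(p)\tilde{\mathbf{c}}_S^{\mathbf{w}},
\]
so we take $\mathbf{a}=\tilde{\mathbf{c}}_K^{\mathbf{w}}$ and $\mathbf{b}=\tilde{\mathbf{c}}_S^{\mathbf{w}}$. These are linearly independent because $C^{\mathbf{w}}$ is invertible (again by \Cref{prop: fundamental properties under sign-coherency}). The lemma then gives the limit $\sim\tilde{\mathbf{c}}_K^{\mathbf{w}}+\alpha_{SK}^{\mathbf{w}}\tilde{\mathbf{c}}_S^{\mathbf{w}}$, as claimed.

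There is no real obstacle. The points to check carefully are the index conventions: \Cref{lem: infinite S mutations} is stated with $p_{SK}^{\mathbf{w}}$ fixed along the $S$-chain (by \eqref{eq: infinite S mutations for p}, $p_{SK}^{\mathbf{w}S^n}=p_{SK}^{\mathbf{w}}$), and the inequality $p\geq2$ needed for the Chebyshev asymptotics must hold, which it does. Finally, the limit exists in $S^2_D$ and is nonzero, as ensured inside \Cref{lem: limit of Chebyshev difference} by the linear independence.
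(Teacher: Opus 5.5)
Your proposal is correct and matches the paper's proof, which simply substitutes the formulas of \Cref{lem: infinite S mutations} into \Cref{lem: limit of Chebyshev difference}. You use $\mathbf{a}=-\tilde{\mathbf{g}}_S^{\mathbf{w}}$, $\mathbf{b}=\tilde{\mathbf{g}}_K^{\mathbf{w}}$ for the $g$-vectors and $\mathbf{a}=\tilde{\mathbf{c}}_K^{\mathbf{w}}$, $\mathbf{b}=\tilde{\mathbf{c}}_S^{\mathbf{w}}$ for the $c$-vectors, and your added checks ($p_{SK}^{\mathbf{w}}\geq 2$, linear independence via \Cref{prop: fundamental properties under sign-coherency}, and $\mathbf{w}\neq\emptyset$) are the details the paper leaves implicit.
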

\begin{proof}
This is immediately shown by Lemma~\ref{lem: infinite S mutations} and Lemma~\ref{lem: limit of Chebyshev difference}.
\end{proof}
\begin{example}
Fix ${\bf w} \in \mathcal{T}\backslash\{\emptyset\}$. Then, all cones $\mathcal{C}(G^{{\bf w}S^n})$ obtained by $S$-mutations can be illustrated as in Figure~\ref{fig: S mutation} in the stereographic projection.
\begin{figure}[htbp]
\centering
\begin{tikzpicture}
\fill ({5*cos(120)},{5*sin(120)}) node [above left] {$\tilde{\bf g}_{S}^{\bf w}$} circle (0.05);
\draw ({5*cos(120)},{5*sin(120)}) arc (120:45:5);
\fill ({3*cos(120)},{3*sin(120)}) node [below left] {$\tilde{\bf g}_{T}^{\bf w}=\tilde{\bf g}_{T}^{{\bf w}S^n}$} circle (0.05);
\draw ({5*cos(45)},{5*sin(45)}) node [right] {$\zeroregion{\tilde{\bf c}_{T}^{\bf w}}$};

\foreach \n in {1,1.5,2,2.5,3,4,5,...,10,12,14,...,50}
    {
    \fill ({5*cos(60+60/\n)},{5*sin(60+60/\n)}) circle (0.05);
    \draw ({3*cos(120)},{3*sin(120)}) to [out=-30,in=-150,relative] ({5*cos(60+60/\n)},{5*sin(60+60/\n)});
    };
\fill[blue] ({5*cos(60)},{5*sin(60)}) node [above right] {$\displaystyle{\tildelim_{n \to \infty} \tilde{\bf g}_{K}^{{\bf w}S^n}\sim\alpha_{SK}^{\bf w}\tilde{\bf g}_{K}^{\bf w}-\tilde{\bf g}_{S}^{\bf w}}$} circle (0.05);
\draw[blue, dashed, very thick] ({3*cos(120)},{3*sin(120)}) to [out=-30,in=-150,relative] ({5*cos(60)},{5*sin(60)});
\draw ({5*cos(60+60/1.5)},{5*sin(60+60/1.5)}) node [above left] {$\tilde{\bf g}_{K}^{\bf w}$};
\draw  ({5*cos(60+60/2)},{5*sin(60+60/2)}) node [above] {$\tilde{\bf g}_{K}^{{\bf w}S}$};
\draw  ({5*cos(60+60/2.5)},{5*sin(60+60/2.5)}) node [above right] {$\tilde{\bf g}_{K}^{{\bf w}S^2}$};
\draw ({(5*cos(120)+5*cos(100)+3*cos(120))/3+0.25},{(5*sin(120)+5*sin(100)+3*sin(120))/3}) node {${\bf w}$};
\end{tikzpicture}
\caption{$G$-cones obtained by $S$-mutations.}\label{fig: S mutation}
\end{figure}
\end{example}

\section{Support of trunks}\label{sec: trunks}
Fix an initial mutation direction $i\in \{1,2,3\}$.
Recall from \Cref{def: K S T labeling} that we defined the two kinds of subsets of the subtree $\mathcal{T}^{\geq [i]}$: the trunk $\mathcal{T}^{<[i]S^{\infty}}$ and the branches $\mathcal{T}^{\geq [i]X}$, where $X$ contains at least one letter $T$. Correspondingly, for the $G$-fan, we define the {\em trunk} of $\Delta^{\geq [i]}(B)$ by
\begin{equation}
\Delta^{<[i]S^{\infty}}(B)=\{\mathcal{C}_{J}(G^{\mathbf{w}}) \mid \mathbf{w} \in \mathcal{T}^{<[i]S^{\infty}},\ J \subset \{1,2,3\}\},
\end{equation}
and a {\em branch} by
\begin{equation}
\Delta^{\geq [i]X}(B)=\{\mathcal{C}_J(G^{\mathbf{w}}) \mid \mathbf{w} \in \mathcal{T}^{\geq [i]X},\ J \subset \{1,2,3\}\},
\end{equation}
where $X \in \mathcal{M}$ contains at least one letter $T$.
In this section, we give an explicit description of the trunks $\Delta^{<[i]S^{\infty}}(B)$.
\begin{lemma}\label{lem: infinite S mutation in trunks}
Fix an initial mutation direction $i=1,2,3$. Set $k_0,s_0,t_0 \in \{1,2,3\}$ as in \Cref{tab: List of initial indices}. For each $n \in \mathbb{Z}_{\geq 0}$, we have the following formulas.
\begin{enumerate}
\item Formulas for $p_{MM'}^{[i]S^n}$:
\begin{equation}\label{eq: p in trunk}
\begin{aligned}
p_{ST}^{[i]S^n}&=p_{TS}^{[i]S^n}=-u_{n}(p_{k_0s_0})p_{s_0t_0}+u_{n+1}(p_{k_0s_0})p_{k_0t_0},\\
p_{KT}^{[i]S^n}&=p_{TK}^{[i]S^n}=-u_{n-1}(p_{k_0s_0})p_{s_0t_0}+u_{n}(p_{k_0s_0})p_{k_0t_0},\\
p_{SK}^{[i]S^n}&=p_{KS}^{[i]S^n}=p_{k_0s_0}.
\end{aligned}
\end{equation}
\item Formulas for modified $g$-vectors $\tilde{\mathbf{g}}_{M}^{[i]S^n}$:
\begin{equation}\label{eq: g vectors in trunk}
\begin{aligned}
\tilde{{\bf g}}_{K}^{[i]S^n}&=-u_{n}(p_{k_0s_0})\tilde{\bf e}_{k_0}+u_{n+1}(p_{k_0s_0})\tilde{\bf e}_{s_0},\\
\tilde{{\bf g}}_{S}^{[i]S^n}&=-u_{n-1}(p_{k_0s_0})\tilde{\bf e}_{k_0}+u_{n}(p_{k_0s_0})\tilde{\bf e}_{s_0},\\
\tilde{{\bf g}}_{T}^{[i]S^n}&=\tilde{\bf e}_{t_0}.
\end{aligned}
\end{equation}
\item Formulas for modified $c$-vectors $\tilde{\mathbf{c}}_{M}^{[i]S^n}$:
\begin{equation}\label{eq: c vectors in trunk}
\begin{aligned}
\tilde{\bf c}_{K}^{[i]S^n}&=-u_{n-1}(p_{k_0s_0})\tilde{\bf e}_{s_0}-u_{n}(p_{k_0s_0})\tilde{\bf e}_{k_0},\\
\tilde{\bf c}_{S}^{[i]S^n}&=u_{n}(p_{k_0s_0})\tilde{\bf e}_{s_0}+u_{n+1}(p_{k_0s_0})\tilde{\bf e}_{k_0},\\
\tilde{\bf c}_{T}^{[i]S^n}&=\tilde{\bf e}_{t_0}.
\end{aligned}
\end{equation}
\end{enumerate}
\end{lemma}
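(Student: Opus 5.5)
The plan is to specialize \Cref{lem: infinite S mutations} to $\mathbf{w}=[i]$ and substitute the initial data from \Cref{lem: kst}. Since $[i]S^n$ is obtained from $[i]$ by $n$ $S$-mutations, \Cref{lem: infinite S mutations} applies with $\mathbf{w}=[i]$. It expresses every datum at $[i]S^n$ in terms of $p^{[i]}_{SK}$, $p^{[i]}_{KT}$, $p^{[i]}_{ST}$, $\tilde{\mathbf{g}}^{[i]}_M$ and $\tilde{\mathbf{c}}^{[i]}_M$. \Cref{lem: kst} gives these explicitly:
\begin{equation*}
p_{SK}^{[i]}=p_{k_0s_0},\quad p_{KT}^{[i]}=p_{k_0t_0},\quad p_{ST}^{[i]}=p_{k_0s_0}p_{k_0t_0}-p_{s_0t_0},
\end{equation*}
together with the listed initial $g$- and $c$-vectors. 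The only algebraic tool needed afterwards is the three-term recursion \eqref{eq: recursion for u}, $u_{m+1}(p)=pu_m(p)-u_{m-1}(p)$ with $p=p_{k_0s_0}$, used to absorb the extra factor $p_{k_0s_0}$ that appears after substitution.

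For (1), I substitute into \eqref{eq: infinite S mutations for p}. For $p^{[i]S^n}_{ST}$ this gives
\begin{equation*}
-u_{n-1}p_{k_0t_0}+u_n(p_{k_0s_0}p_{k_0t_0}-p_{s_0t_0})=(p_{k_0s_0}u_n-u_{n-1})p_{k_0t_0}-u_np_{s_0t_0}=u_{n+1}p_{k_0t_0}-u_np_{s_0t_0},
\end{equation*}
as claimed. The formula for $p_{KT}^{[i]S^n}$ is the same computation with indices shifted down by one, and this uses $u_{-2}=-1$, $u_{-1}=0$ at the boundary. The formula for $p_{SK}^{[i]S^n}$ is immediate.

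For (2), I substitute $\tilde{\mathbf{g}}_K^{[i]}=-\tilde{\mathbf{e}}_{k_0}+p_{k_0s_0}\tilde{\mathbf{e}}_{s_0}$ and $\tilde{\mathbf{g}}_S^{[i]}=\tilde{\mathbf{e}}_{s_0}$ into \eqref{eq: infinite S mutations for g vectors}. This gives
\begin{equation*}
\tilde{\mathbf{g}}_K^{[i]S^n}=-u_n\tilde{\mathbf{e}}_{k_0}+(p_{k_0s_0}u_n-u_{n-1})\tilde{\mathbf{e}}_{s_0}=-u_n\tilde{\mathbf{e}}_{k_0}+u_{n+1}\tilde{\mathbf{e}}_{s_0}.
\end{equation*}
The $S$-component follows likewise with indices lowered by one, and $\tilde{\mathbf{g}}_T$ is constant and equal to $\tilde{\mathbf{e}}_{t_0}$.

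For (3), I substitute $\tilde{\mathbf{c}}_K^{[i]}=-\tilde{\mathbf{e}}_{k_0}$ and $\tilde{\mathbf{c}}_S^{[i]}=\tilde{\mathbf{e}}_{s_0}+p_{k_0s_0}\tilde{\mathbf{e}}_{k_0}$ into \eqref{eq: infinite S mutations for c vectors}. The $\tilde{\mathbf{e}}_{k_0}$-coefficient of $\tilde{\mathbf{c}}_S^{[i]S^n}$ is $-u_{n-1}+p_{k_0s_0}u_n=u_{n+1}$. The $\tilde{\mathbf{e}}_{k_0}$-coefficient of $\tilde{\mathbf{c}}_K^{[i]S^n}$ is $u_{n-2}-p_{k_0s_0}u_{n-1}=-u_n$. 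The $\tilde{\mathbf{c}}_T$ component is constant.

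There is no real obstacle here; the argument is bookkeeping. The one point needing care is that \Cref{lem: infinite S mutations} is valid for all $n\ge 0$, including $n=0$ and $n=1$, where the conventions $u_{-2}=-1$ and $u_{-1}=0$ must reproduce \Cref{lem: kst} exactly. I will check the case $n=0$ directly as a sanity test of the indexing. For example, $\tilde{\mathbf{c}}_K^{[i]}=-u_{-1}\tilde{\mathbf{e}}_{s_0}-u_0\tilde{\mathbf{e}}_{k_0}=-\tilde{\mathbf{e}}_{k_0}$, which agrees with \Cref{lem: kst}.
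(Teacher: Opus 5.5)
Your proposal is correct and is exactly the paper's argument: set $\mathbf{w}=[i]$ in \Cref{lem: infinite S mutations}, substitute the initial data from \Cref{lem: kst}, and simplify with $u_{m+1}=p_{k_0s_0}u_m-u_{m-1}$. Your coefficient computations and the boundary conventions $u_{-2}=-1$, $u_{-1}=0$ check out.
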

\begin{proof}
This can be directly proved according to \Cref{lem: kst} and \Cref{lem: infinite S mutations}.
\end{proof}

\begin{proposition}
Fix an initial mutation direction $i=1,2,3$, and set $k_0=K([i])$, $s_0=S([i])$, and $t_0=T([i])$. Then, we have
\begin{equation}
\tildelim_{n \to \infty}  \tilde{\bf g}_{K}^{[i]S^n} \sim \alpha_{s_0k_0}\tilde{\bf e}_{s_0}-\tilde{\bf e}_{k_0},
\quad
\tildelim_{n \to \infty} \tilde{\bf c}_S^{[i]S^n} \sim \alpha_{s_0k_0}\tilde{\bf e}_{k_0}+\tilde{\bf e}_{s_0}.
\end{equation}
Moreover, we have
\begin{equation}\label{eq: support of the trunk}
\begin{aligned}
|\Delta^{< [i]S^\infty}(B)| &= \mathcal{C}^{\circ}(\tilde{\bf e}_{t_0},\tilde{\bf e}_{s_0},\alpha_{k_0s_0}\tilde{\bf e}_{s_0}-\tilde{\bf e}_{k_0}) \cup \mathcal{C}(\tilde{\mathbf{e}}_{s_0},\tilde{\mathbf{e}}_{t_0}) \cup \mathcal{C}^{\circ}(\tilde{\mathbf{e}}_{s_0},\alpha_{k_0s_0}\tilde{\bf e}_{s_0}-\tilde{\bf e}_{k_0})
\\
&= \left(\negativeclosure{\tilde{\mathbf{e}}_{k_0}} \cap \positiveclosure{\tilde{\mathbf{e}}_{t_0}} \cap \positiveregion{\alpha_{s_0k_0}\tilde{\mathbf{e}}_{k_0}+\tilde{\mathbf{e}}_{s_0}}\right) \cup \mathcal{C}(\tilde{\mathbf{e}}_{t_0}).
\end{aligned}
\end{equation}
\end{proposition}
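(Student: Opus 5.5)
The limits follow from \Cref{lem: limit of S mutations} with $\mathbf{w}=[i]$, after substituting the initial data of \Cref{lem: kst}. There $K([i])=k_0$, $S([i])=s_0$ and $p_{SK}^{[i]}=p_{k_0s_0}$, so $\alpha_{SK}^{[i]}=\alpha_{s_0k_0}$. Moreover $\tilde{\bf g}_K^{[i]}=-\tilde{\bf e}_{k_0}+p_{k_0s_0}\tilde{\bf e}_{s_0}$ and $\tilde{\bf g}_S^{[i]}=\tilde{\bf e}_{s_0}$. Hence
\[
\alpha\tilde{\bf g}_K^{[i]}-\tilde{\bf g}_S^{[i]}=-\alpha\tilde{\bf e}_{k_0}+(\alpha p-1)\tilde{\bf e}_{s_0}=\alpha\bigl(\alpha\tilde{\bf e}_{s_0}-\tilde{\bf e}_{k_0}\bigr),
\]
using $\alpha p-1=\alpha^2$ from \eqref{eq: minimum polynomial of alpha}. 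For the $c$-vector the same substitution gives $\alpha(\tilde{\bf e}_{s_0}+p\tilde{\bf e}_{k_0})-\tilde{\bf e}_{k_0}=\alpha(\tilde{\bf e}_{s_0}+\alpha\tilde{\bf e}_{k_0})$. Alternatively, one can read both limits off \eqref{eq: g vectors in trunk}, \eqref{eq: c vectors in trunk} and \Cref{lem: limit of Chebyshev difference}.

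For the first expression of the support, I would write $u_n=u_n(p_{k_0s_0})$ and $\mathbf{h}_n=\tilde{\bf g}_K^{[i]S^n}=u_{n+1}\tilde{\bf e}_{s_0}-u_n\tilde{\bf e}_{k_0}$. Then $G^{[i]S^n}$ has columns $\tilde{\bf e}_{t_0},\mathbf{h}_{n-1},\mathbf{h}_n$, with $\mathbf{h}_{-1}=\tilde{\bf e}_{s_0}$. All these vectors lie in the plane spanned by $\tilde{\bf e}_{s_0},\tilde{\bf e}_{k_0}$, apart from the common ray $\tilde{\bf e}_{t_0}$. So the trunk is the cone over $\tilde{\bf e}_{t_0}$ of the planar fan of rays $\mathbf{h}_{-1},\mathbf{h}_0,\mathbf{h}_1,\dots$.

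The key planar fact is that the slopes $u_n/u_{n+1}$ (the $\tilde{\bf e}_{k_0}$-coefficient over the $\tilde{\bf e}_{s_0}$-coefficient, in absolute value) increase strictly to $1/\alpha$. Monotonicity follows from the Cassini-type identity $u_n^2-u_{n-1}u_{n+1}=1$, which I would prove by induction from \eqref{eq: recursion for u}. The limit comes from \eqref{eq: limit of Chebychev polynomials}, or in the case $p=2$ from $u_n=n+1$. Consequently the consecutive planar cones $\mathcal{C}(\mathbf{h}_{n-1},\mathbf{h}_n)$ tile $\mathcal{C}(\tilde{\bf e}_{s_0},\alpha\tilde{\bf e}_{s_0}-\tilde{\bf e}_{k_0})$ minus the limiting open ray. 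Coning with $\tilde{\bf e}_{t_0}$ gives the union of $\mathcal{C}^{\circ}(\tilde{\bf e}_{t_0},\tilde{\bf e}_{s_0},\alpha\tilde{\bf e}_{s_0}-\tilde{\bf e}_{k_0})$, the face $\mathcal{C}(\tilde{\bf e}_{s_0},\tilde{\bf e}_{t_0})$, and $\mathcal{C}^{\circ}(\tilde{\bf e}_{s_0},\alpha\tilde{\bf e}_{s_0}-\tilde{\bf e}_{k_0})$. It includes the boundary pieces $\mathcal{C}(\tilde{\bf e}_{t_0},\mathbf{h}_n)$ but excludes the limiting face $\mathcal{C}^{\circ}(\tilde{\bf e}_{t_0},\alpha\tilde{\bf e}_{s_0}-\tilde{\bf e}_{k_0})$ and the limit ray itself.

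\textbf{Main obstacle.} The delicate part is this bookkeeping of which boundary faces are attained. In particular, the open face spanned by $\tilde{\bf e}_{t_0}$ and the limit ray is never reached, because every point of the union has slope strictly below $1/\alpha$. By contrast, the ray $\tilde{\bf e}_{t_0}$ and the segment to $\tilde{\bf e}_{s_0}$ are reached.

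For the second expression, I would convert the cone to half-spaces using the duality $\langle\tilde{\bf e}_a,\tilde{\bf e}_b\rangle_D=\delta_{ab}$. The cone $\mathcal{C}(\tilde{\bf e}_{t_0},\tilde{\bf e}_{s_0},\alpha\tilde{\bf e}_{s_0}-\tilde{\bf e}_{k_0})$ has facet normals $-\tilde{\bf e}_{k_0}$, $\tilde{\bf e}_{t_0}$ and $\alpha\tilde{\bf e}_{k_0}+\tilde{\bf e}_{s_0}$. The last one pairs to $0$ with both $\tilde{\bf e}_{t_0}$ and $\alpha\tilde{\bf e}_{s_0}-\tilde{\bf e}_{k_0}$, and positively with $\tilde{\bf e}_{s_0}$. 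Thus the cone equals $\negativeclosure{\tilde{\bf e}_{k_0}}\cap\positiveclosure{\tilde{\bf e}_{t_0}}\cap\positiveclosure{\alpha\tilde{\bf e}_{k_0}+\tilde{\bf e}_{s_0}}$. Removing the face on the plane $\zeroregion{\alpha\tilde{\bf e}_{k_0}+\tilde{\bf e}_{s_0}}$, which is $\mathcal{C}(\tilde{\bf e}_{t_0},\alpha\tilde{\bf e}_{s_0}-\tilde{\bf e}_{k_0})$, and then adding back the ray $\mathcal{C}(\tilde{\bf e}_{t_0})$ gives exactly the set computed above. Checking that this equals the first expression is a routine comparison of boundary pieces.
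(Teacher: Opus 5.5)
Your proposal is correct and follows essentially the paper's route: the limits come from \Cref{lem: limit of S mutations} at $\mathbf{w}=[i]$ with the initial data of \Cref{lem: kst} (your computations using $\alpha p-1=\alpha^2$ check out), and the support is read off from the explicit trunk cones $\mathcal{C}(\tilde{\mathbf{e}}_{t_0},\tilde{\mathbf{g}}_S^{[i]S^n},\tilde{\mathbf{g}}_K^{[i]S^n})$. The paper's proof is a one-line citation of that lemma; your Cassini identity $u_n^2-u_{n-1}u_{n+1}=1$, which gives the strictly monotone sweep of the rays toward the limit ray, and your half-space bookkeeping (both expressions reduce to $\{c\ge 0,\ \mu\ge 0,\ \lambda>0\}\cup\{c\ge 0,\ \lambda=\mu=0\}$ in the basis $\tilde{\mathbf{e}}_{t_0},\tilde{\mathbf{e}}_{s_0},\alpha\tilde{\mathbf{e}}_{s_0}-\tilde{\mathbf{e}}_{k_0}$) supply exactly the details the paper leaves implicit.
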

\begin{proof}
This can be shown by \Cref{lem: limit of S mutations}.
\end{proof}
\begin{example}
The $G$-cones in the trunks $\Delta^{<[i]S^{\infty}}(B)$ are illustrated as in Figure~\ref{fig: trunks}.
\begin{figure}[htbp]
\centering
\begin{tikzpicture}
\clip (-2.5,-2) rectangle (2.5,2.25);

\fill[fill=gray, fill opacity=0.5] (-0.8966,-0.5176) arc [radius=3.4641, start angle=-105.0, end angle=-75.0]--(0.8966,-0.5176) arc [radius=3.4641, start angle=15.0, end angle=-5.2644]--(1.0,-1.7321) arc [radius=2.4495, start angle=-95.2644, end angle=-150.0];

\fill[fill=gray, fill opacity=0.5] (-0.8966,-0.5176) arc [radius=3.4641, start angle=165.0, end angle=135.0]--(0.0,1.0353) arc [radius=2.4495, start angle=90.0, end angle=144.7356]--(-2.0,0.0) arc [radius=3.4641, start angle=-125.2644, end angle=-105.0];

\fill[fill=gray, fill opacity=0.5] (0.8966,-0.5176) arc [radius=3.4641, start angle=15.0, end angle=45.0]--(0.0,1.0353) arc [radius=3.4641, start angle=135.0, end angle=114.7356]--(1.0,1.7321) arc [radius=2.4495, start angle=24.7356, end angle=-30.0];

\draw (1.0086,-1.6176) arc [radius=3.4641, start angle=-3.3659, end angle=-3.1201];
\draw (1.0068,-1.6469) arc [radius=2.4518, start angle=-93.852, end angle=-147.5104];
\draw (-0.8966,-0.5176) arc [radius=2.4537, start angle=-146.6335, end angle=-93.3659];
\draw (1.0086,-1.6176) arc [radius=3.4641, start angle=-3.3659, end angle=-3.5613];
\draw (0.8966,-0.5176) arc [radius=3.4641, start angle=15.0, end angle=45.0];
\draw (1.013,-1.5206) arc [radius=3.4641, start angle=-1.7598, end angle=-2.3711];
\draw (1.0105,-1.5835) arc [radius=3.4641, start angle=-2.8012, end angle=-2.3711];
\draw (1.013,-1.5206) arc [radius=3.4641, start angle=-1.7598, end angle=-0.8227];
\draw (1.0079,-1.6294) arc [radius=2.4529, start angle=-93.5613, end angle=-146.9872];
\draw (-0.8966,-0.5176) arc [radius=2.4514, start angle=-147.7094, end angle=-93.963];
\draw (1.0073,-1.639) arc [radius=3.4641, start angle=-3.7202, end angle=-3.5613];
\draw (1.0116,-1.5575) arc [radius=2.4596, start angle=-92.3711, end angle=-144.8056];
\draw (-0.8966,-0.5176) arc [radius=2.4646, start angle=-143.6598, end angle=-91.7598];
\draw (1.0053,-1.1609) arc [radius=2.582, start angle=-85.8058, end angle=-131.5651];
\draw (0.0,1.0353) arc [radius=3.4641, start angle=135.0, end angle=165.0];
\draw (-0.8966,-0.5176) arc [radius=2.4567, start angle=-145.6013, end angle=-92.8012];
\draw (1.0105,-1.5835) arc [radius=3.4641, start angle=-2.8012, end angle=-3.1201];
\draw (1.0073,-1.639) arc [radius=3.4641, start angle=-3.7202, end angle=-3.852];
\draw (1.0095,-1.6028) arc [radius=2.4549, start angle=-93.1201, end angle=-146.1859];
\draw (1.0063,-1.6536) arc [radius=3.4641, start angle=-3.963, end angle=-3.852];
\draw (-0.8966,-0.5176) arc [radius=2.498, start angle=-138.6901, end angle=-89.209];
\draw (1.0143,-1.3664) arc [radius=3.4641, start angle=0.791, end angle=-0.8227];
\draw (-0.8966,-0.5176) arc [radius=3.4641, start angle=-105.0, end angle=-75.0];
\draw (-0.8966,-0.5176) arc [radius=2.4523, start angle=-147.2737, end angle=-93.7202];
\draw (1.0143,-1.464) arc [radius=2.4744, start angle=-90.8227, end angle=-141.8699];
\draw (1.0143,-1.3664) arc [radius=3.4641, start angle=0.791, end angle=4.1942];
\draw (0.8966,-0.5176) arc [radius=3.4641, start angle=15.0, end angle=4.1942];

\draw (0.8966,-0.5176) arc [radius=2.4646, start angle=-23.6598, end angle=28.2402];
\draw (0.9071,1.6876) arc [radius=2.4529, start angle=26.4387, end angle=-26.9872];
\draw (0.9157,1.6918) arc [radius=3.4641, start angle=116.2798, end angle=116.148];
\draw (0.7607,1.6103) arc [radius=2.4744, start angle=29.1773, end angle=-21.8699];
\draw (0.9229,1.6954) arc [radius=2.4518, start angle=26.148, end angle=-27.5104];
\draw (0.8966,-0.5176) arc [radius=3.4641, start angle=15.0, end angle=45.0];
\draw (0.8104,1.6376) arc [radius=3.4641, start angle=118.2402, end angle=117.6289];
\draw (0.8966,-0.5176) arc [radius=2.4523, start angle=-27.2737, end angle=26.2798];
\draw (0.8661,1.6668) arc [radius=3.4641, start angle=117.1988, end angle=116.8799];
\draw (0.8661,1.6668) arc [radius=3.4641, start angle=117.1988, end angle=117.6289];
\draw (0.8966,1.6823) arc [radius=3.4641, start angle=116.6341, end angle=116.8799];
\draw (0.0,1.0353) arc [radius=3.4641, start angle=135.0, end angle=124.1942];
\draw (0.8966,-0.5176) arc [radius=2.4567, start angle=-25.6013, end angle=27.1988];
\draw (0.6762,1.5616) arc [radius=3.4641, start angle=120.791, end angle=119.1773];
\draw (0.0,1.0353) arc [radius=3.4641, start angle=135.0, end angle=165.0];
\draw (0.8104,1.6376) arc [radius=3.4641, start angle=118.2402, end angle=119.1773];
\draw (0.6762,1.5616) arc [radius=3.4641, start angle=120.791, end angle=124.1942];
\draw (0.5027,1.4511) arc [radius=2.582, start angle=34.1942, end angle=-11.5651];
\draw (0.8966,1.6823) arc [radius=3.4641, start angle=116.6341, end angle=116.4387];
\draw (0.9157,1.6918) arc [radius=3.4641, start angle=116.2798, end angle=116.4387];
\draw (-0.8966,-0.5176) arc [radius=3.4641, start angle=-105.0, end angle=-75.0];
\draw (0.8966,-0.5176) arc [radius=2.4514, start angle=-27.7094, end angle=26.037];
\draw (0.843,1.6549) arc [radius=2.4596, start angle=27.6289, end angle=-24.8056];
\draw (0.9289,1.6983) arc [radius=3.4641, start angle=116.037, end angle=116.148];
\draw (0.8833,1.6756) arc [radius=2.4549, start angle=26.8799, end angle=-26.1859];
\draw (0.8966,-0.5176) arc [radius=2.498, start angle=-18.6901, end angle=30.791];
\draw (0.8966,-0.5176) arc [radius=2.4537, start angle=-26.6335, end angle=26.6341];

\draw (-1.8928,-0.0729) arc [radius=2.4549, start angle=146.8799, end angle=93.8141];
\draw (0.0,1.0353) arc [radius=2.4523, start angle=92.7263, end angle=146.2798];
\draw (0.8966,-0.5176) arc [radius=3.4641, start angle=15.0, end angle=45.0];
\draw (-1.8234,-0.117) arc [radius=3.4641, start angle=-121.7598, end angle=-120.8227];
\draw (-1.8234,-0.117) arc [radius=3.4641, start angle=-121.7598, end angle=-122.3711];
\draw (0.0,1.0353) arc [radius=2.4646, start angle=96.3402, end angle=148.2402];
\draw (-1.9052,-0.0647) arc [radius=3.4641, start angle=-123.3659, end angle=-123.1201];
\draw (-1.9052,-0.0647) arc [radius=3.4641, start angle=-123.3659, end angle=-123.5613];
\draw (-1.9231,-0.0529) arc [radius=3.4641, start angle=-123.7202, end angle=-123.852];
\draw (0.0,1.0353) arc [radius=2.4567, start angle=94.3987, end angle=147.1988];
\draw (0.0,1.0353) arc [radius=2.4514, start angle=92.2906, end angle=146.037];
\draw (-1.9352,-0.0447) arc [radius=3.4641, start angle=-123.963, end angle=-123.852];
\draw (-1.508,-0.2902) arc [radius=2.582, start angle=154.1942, end angle=108.4349];
\draw (-1.9297,-0.0484) arc [radius=2.4518, start angle=146.148, end angle=92.4896];
\draw (0.0,1.0353) arc [radius=3.4641, start angle=135.0, end angle=165.0];
\draw (0.0,1.0353) arc [radius=2.498, start angle=101.3099, end angle=150.791];
\draw (-1.9151,-0.0582) arc [radius=2.4529, start angle=146.4387, end angle=93.0128];
\draw (-1.7749,-0.1464) arc [radius=2.4744, start angle=149.1773, end angle=98.1301];
\draw (-1.9231,-0.0529) arc [radius=3.4641, start angle=-123.7202, end angle=-123.5613];
\draw (-1.8766,-0.0833) arc [radius=3.4641, start angle=-122.8012, end angle=-122.3711];
\draw (-0.8966,-0.5176) arc [radius=3.4641, start angle=-105.0, end angle=-75.0];
\draw (-0.8966,-0.5176) arc [radius=3.4641, start angle=-105.0, end angle=-115.8058];
\draw (-1.6905,-0.1952) arc [radius=3.4641, start angle=-119.209, end angle=-115.8058];
\draw (-1.8547,-0.0973) arc [radius=2.4596, start angle=147.6289, end angle=95.1944];
\draw (0.0,1.0353) arc [radius=2.4537, start angle=93.3665, end angle=146.6341];
\draw (-1.8766,-0.0833) arc [radius=3.4641, start angle=-122.8012, end angle=-123.1201];
\draw (-1.6905,-0.1952) arc [radius=3.4641, start angle=-119.209, end angle=-120.8227];

\draw (0,{2*sqrt(2)}) circle [radius= {2*sqrt(3)}];    
\draw ({-sqrt(6)},{-sqrt(2)}) circle [radius= {2*sqrt(3)}];    
\draw ({sqrt(6)},{-sqrt(2)}) circle [radius= {2*sqrt(3)}];    

\draw[thick, blue] (0.0,1.0353) arc [radius=3.4641, start angle=45.0, end angle=15.0];
\draw[thick, blue] (1.0,-1.7321) arc [radius=3.4641, start angle=-5.2644, end angle=15.0];
\draw[thick, blue] (0.8966,-0.5176) arc [radius=3.4641, start angle=-75.0, end angle=-105.0];
\draw[thick, blue, dashed] (1.0,-1.7321) arc [radius=2.4495, start angle=-95.2644, end angle=-150.0];
\draw[thick, blue] (0.0,1.0353) arc [radius=3.4641, start angle=135.0, end angle=165.0];

\draw[thick, blue] (-2.0,0.0) arc [radius=3.4641, start angle=-125.2644, end angle=-105.0];
\draw[thick, blue, dashed] (-2.0,0.0) arc [radius=2.4495, start angle=144.7356, end angle=90.0];

\draw[thick, blue] (1.0,1.7321) arc [radius=3.4641, start angle=114.7356, end angle=135.0];
\draw[thick, blue, dashed] (1.0,1.7321) arc [radius=2.4495, start angle=24.7356, end angle=-30.0];

\end{tikzpicture}
\caption{Trunks.}\label{fig: trunks}
\end{figure}
\end{example}

\section{Local upper bounds of branches}\label{sec: support of cluster-cyclic G-fan}
In this section, we introduce the local upper bound for each branch $\Delta^{\geq \mathbf{w}}(B)$ that possesses different properties from the global upper bound introduced in \Cref{sec: global}.
\par
Let $\Delta^{\geq \mathbf{w}}(B)$ be a branch. Namely, we assume that $\mathbf{w}=[i]X$, where $X \in \mathcal{M}$ contains at least one letter $T$. In this case, by \eqref{eq: T mutation of c- g-vectors in branches} and \eqref{eq: limit of S mutations}, we have
\begin{equation}
\tildelim_{n \to \infty}\tilde{\mathbf{g}}_{K}^{\mathbf{w}TS^n} \sim (p_{TK}^{\mathbf{w}}\alpha_{TK}^{\mathbf{w}}-1)\tilde{\mathbf{g}}_{K}^{\mathbf{w}}-\alpha_{TK}^{\mathbf{w}}\tilde{\mathbf{g}}_{T}^{\mathbf{w}},
\quad
\tildelim_{n \to \infty}\tilde{\mathbf{c}}_{S}^{\mathbf{w}TS^n} \sim (p_{TK}^{\mathbf{w}}\alpha_{TK}^{\mathbf{w}}-1)\tilde{\mathbf{c}}_{T}^{\mathbf{w}}+\alpha_{TK}^{\mathbf{w}}\tilde{\mathbf{c}}_{K}^{\mathbf{w}}.
\end{equation}
By \eqref{eq: minimum polynomial of alpha}, the equality $p_{TK}^{\mathbf{w}}\alpha_{TK}^{\mathbf{w}}-1=(\alpha_{TK}^{\mathbf{w}})^2$ holds. Thus, we have
\begin{equation}
\tildelim_{n \to \infty}\tilde{\mathbf{g}}_{K}^{\mathbf{w}TS^n} \sim \alpha_{TK}^{\mathbf{w}}\tilde{\mathbf{g}}_{K}^{\mathbf{w}}-\tilde{\mathbf{g}}_{T}^{\mathbf{w}},
\quad
\tildelim_{n \to \infty}\tilde{\mathbf{c}}_{S}^{\mathbf{w}TS^n} \sim \alpha_{TK}^{\mathbf{w}}\tilde{\mathbf{c}}_{T}^{\mathbf{w}}+\tilde{\mathbf{c}}_{K}^{\mathbf{w}}.
\end{equation}
Now, we focus on the two planes $\zeroregion{\alpha_{SK}^{\mathbf{w}}\tilde{\mathbf{c}}_{S}^{\mathbf{w}}+\tilde{\mathbf{c}}_K^{\mathbf{w}}}$ and $\zeroregion{\alpha_{TK}^{\mathbf{w}}\tilde{\mathbf{c}}_{T}^{\mathbf{w}}+\tilde{\mathbf{c}}_{K}^{\mathbf{w}}}$. The first one appears as the limit of $\tilde{\mathbf{c}}_{S}^{\mathbf{w}S^n}$, and the second one appears as the limit of $\tilde{\mathbf{c}}_{S}^{\mathbf{w}TS^n}$.
Since such two vectors $\alpha_{SK}^{\mathbf{w}}\tilde{\mathbf{c}}_{S}^{\mathbf{w}}+\tilde{\mathbf{c}}_K^{\mathbf{w}}$ and $\alpha_{TK}^{\mathbf{w}}\tilde{\mathbf{c}}_{T}^{\mathbf{w}}+\tilde{\mathbf{c}}_K^{\mathbf{w}}$ are linearly independent, their intersection is spanned by one nonzero vector, which will be denoted by $\bar{\mathbf{g}}^{\mathbf{w}}$ later.
This is illustrated in \Cref{fig: local upper bound of 3D picture}.
\begin{figure}[htbp]
\centering
\tdplotsetmaincoords{60}{-75}
\begin{tikzpicture}[tdplot_main_coords, scale=0.7]
\coordinate (limS) at (-0.5,-8,9.5); 
\coordinate (limT) at (8.5,-9,1.5); 
\fill[blue, opacity=0.2] (0,0,0)--(-1,0,2)--(limS)--(3,-3,-3)--cycle;
\fill[blue, opacity=0.2] (0,0,0)--(0,0,1)--(limT)--(3,-3,-3)--cycle;
\draw[->] (0,0,0)->(-1,0,2);
\draw[->] (0,0,0)->(0,-1,2);
\draw[->] (0,0,0)->(0,0,1);
\draw[->] (0,0,0)->(1,-2,2);
\fill[gray, opacity=0.4] (8,-9,2)--(-1,0,2)--(0,0,1)--(limT)--cycle;
\fill[gray, opacity=0.4] (0,-8,9)--(0,0,1)--(-1,0,2)--(limS)--cycle;
\foreach \n in{1,2,...,7}
    {
    \draw[->] (0,0,0)->(0,-\n-1,\n+2);
    };
\draw (0,-8.3,9.3) node {\rotatebox{45}{$\cdots$}};
\draw (0,-9,10) node {$\tilde{\bf g}_{K}^{{\bf w}S^n}$};
\foreach \n in{1,2,...,7}
    {
    \draw[->] (0,0,0)->(\n+1,-\n-2,2);
    };
\draw (8.3,-9.3,2) node {\rotatebox{25}{$\cdots$}};
\draw (9,-10,2) node {$\tilde{\bf g}_{K}^{{\bf w}TS^n}$};
\draw (-1/3,-1/3,5/3) node {${\bf w}$};
\draw[blue] (-1,0,2)--(limS) node [left=4pt] {\small $\zeroregion{\tildelim\limits_{n \to \infty} \tilde{\bf c}_{S}^{{\bf w}S^n}}$};
\draw[blue] (0,0,1)--(limT) node [below right] {\small $\zeroregion{\tildelim\limits_{n \to \infty} \tilde{\bf c}_{S}^{{\bf w}TS^n}}$};
\draw[->, red, very thick] (0,0,0)->(1,-1,-1) node [below] {$\bar{\bf g}^{\bf w}$};
\draw[red] (0,0,0)--(3,-3,-3);
\draw (0,0,0) node [below left] {$O$};
\end{tikzpicture}
\caption{$\bar{\mathbf{g}}^{\bf w}$ and $\mathscr{V}^{\bf w}$.}\label{fig: local upper bound of 3D picture}
\end{figure}
\par
We first give an explicit expression for such a vector $\bar{\mathbf{g}}^{\mathbf{w}}$.
\begin{lemma}\label{lem: observation of g for local upper bound}
Let $\Delta^{\geq {\bf w}}(B)$ be a branch. Let
\begin{equation}
\bar{\mathbf{g}}^{\mathbf{w}}=\tilde{\bf g}_{K}^{\bf w}-(\alpha_{SK}^{\bf w})^{-1}\tilde{\bf g}_{S}^{\bf w}-(\alpha_{TK}^{\bf w})^{-1}\tilde{\bf g}_{T}^{\bf w}.
\end{equation}
Then, we have
\begin{equation}
\zeroregion{\alpha_{SK}^{\bf w}\tilde{\bf c}_{S}^{\bf w}+\tilde{\bf c}_{K}^{\bf w}} \cap \zeroregion{\alpha_{TK}^{\bf w}\tilde{\bf c}_{T}^{\bf w}+\tilde{\bf c}_{K}^{\bf w}}=\langle \bar{\mathbf{g}}^{\mathbf{w}}  \rangle_{\mathrm{vec}}.
\end{equation}
\end{lemma}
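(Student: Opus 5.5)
The plan is to use the duality \eqref{eq: orthogonal relation for modified vectors}: $\langle \tilde{\bf g}_i^{\bf w},\tilde{\bf c}_j^{\bf w}\rangle_D=\delta_{ij}$. Since $\{\tilde{\bf c}_K^{\bf w},\tilde{\bf c}_S^{\bf w},\tilde{\bf c}_T^{\bf w}\}$ and $\{\tilde{\bf g}_K^{\bf w},\tilde{\bf g}_S^{\bf w},\tilde{\bf g}_T^{\bf w}\}$ are dual bases with respect to $\langle\,,\,\rangle_D$ (both are bases by \Cref{prop: fundamental properties under sign-coherency}), membership of any vector in the planes $\zeroregion{\cdot}$ reduces to linear conditions on its coordinates in the $g$-basis. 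This is just a matter of choosing the right basis.

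Concretely, write an arbitrary ${\bf x}=x_K\tilde{\bf g}_K^{\bf w}+x_S\tilde{\bf g}_S^{\bf w}+x_T\tilde{\bf g}_T^{\bf w}$. The duality gives
\begin{equation*}
\langle {\bf x},\alpha_{SK}^{\bf w}\tilde{\bf c}_S^{\bf w}+\tilde{\bf c}_K^{\bf w}\rangle_D=\alpha_{SK}^{\bf w}x_S+x_K,
\qquad
\langle {\bf x},\alpha_{TK}^{\bf w}\tilde{\bf c}_T^{\bf w}+\tilde{\bf c}_K^{\bf w}\rangle_D=\alpha_{TK}^{\bf w}x_T+x_K .
\end{equation*}
So ${\bf x}$ lies in the intersection of the two planes if and only if $x_S=-(\alpha_{SK}^{\bf w})^{-1}x_K$ and $x_T=-(\alpha_{TK}^{\bf w})^{-1}x_K$. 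This requires $\alpha^{\bf w}_{SK},\alpha^{\bf w}_{TK}\neq 0$, which holds because $p\ge 2$ gives $\alpha(p)\ge 1$, since $|b_{ij}^{\bf w}b_{ji}^{\bf w}|\ge 4$ by \Cref{prop: ordinary classification of cluster-cyclicity}. The solution set is therefore $\{x_K\,\bar{\bf g}^{\bf w}\mid x_K\in\mathbb{R}\}=\langle\bar{\bf g}^{\bf w}\rangle_{\mathrm{vec}}$.

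It is worth noting that $\bar{\bf g}^{\bf w}\neq{\bf 0}$, since its $\tilde{\bf g}_K^{\bf w}$-coefficient is $1$ and the $g$-vectors are linearly independent. This also confirms that the two normal vectors are linearly independent, so the intersection is indeed a line. There is no real obstacle here; the only points to watch are the nonvanishing of the $\alpha$'s and the correct matching of the $K,S,T$ labels in the duality at the same ${\bf w}$.
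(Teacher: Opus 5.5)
Your proof is correct and rests on the same key fact as the paper's proof, namely the duality \eqref{eq: orthogonal relation for modified vectors} between the modified $g$- and $c$-vectors at the same $\mathbf{w}$. The paper argues by dimension: the two distinct planes meet in a line, and $\bar{\mathbf{g}}^{\mathbf{w}}\neq\mathbf{0}$ lies in both. You instead solve the linear conditions directly in the dual-basis coordinates, which gives both inclusions at once and makes the linear independence of the two normal vectors automatic.
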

\begin{proof}
Note that $\zeroregion{\alpha_{SK}^{\bf w}\tilde{\bf c}_{S}^{\bf w}+\tilde{\bf c}_{K}^{\bf w}}$ and $\zeroregion{\alpha_{TK}^{\bf w}\tilde{\bf c}_{T}^{\bf w}+\tilde{\bf c}_{K}^{\bf w}}$ are $2$-dimensional subspaces of $\mathbb{R}^3$. Since $\alpha_{SK}^{\bf w}\tilde{\bf c}_{S}^{\bf w}+\tilde{\bf c}_{K}^{\bf w}$ and $\alpha_{TK}^{\bf w}\tilde{\bf c}_{T}^{\bf w}+\tilde{\bf c}_{K}^{\bf w}$ are linearly independent, these two subspaces $\zeroregion{\alpha_{SK}^{\bf w}\tilde{\bf c}_{S}^{\bf w}+\tilde{\bf c}_{K}^{\bf w}}$ and $\zeroregion{\alpha_{TK}^{\bf w}\tilde{\bf c}_{T}^{\bf w}+\tilde{\bf c}_{K}^{\bf w}}$ are different. In particular, $\zeroregion{\alpha_{SK}^{\bf w}\tilde{\bf c}_{S}^{\bf w}+\tilde{\bf c}_{K}^{\bf w}} \cap \zeroregion{\alpha_{TK}^{\bf w}\tilde{\bf c}_{T}^{\bf w}+\tilde{\bf c}_{K}^{\bf w}}$ is a proper subspace of $\zeroregion{\alpha_{SK}^{\bf w}\tilde{\bf c}_{S}^{\bf w}+\tilde{\bf c}_{K}^{\bf w}}$. Hence, the dimension of their intersection is one. By \eqref{eq: orthogonal relation for modified vectors}, the vector $\bar{\bf g}^{\bf w}$ is orthogonal to both $\alpha_{SK}^{\bf w}\tilde{\bf c}_{S}^{\bf w}+\tilde{\bf c}_{K}^{\bf w}$ and $\alpha_{TK}^{\bf w}\tilde{\bf c}_{T}^{\bf w}+\tilde{\bf c}_{K}^{\bf w}$. Thus, this vector is in $\zeroregion{\alpha_{SK}^{\bf w}\tilde{\bf c}_{S}^{\bf w}+\tilde{\bf c}_{K}^{\bf w}} \cap \zeroregion{\alpha_{TK}^{\mathbf{w}}\tilde{\mathbf{c}}^{\mathbf{w}}_{T}+\tilde{\mathbf{c}}^{\mathbf{w}}_{K}}$. Since it is nonzero vector, the subspace $\zeroregion{\alpha_{SK}^{\bf w}\tilde{\bf c}_{S}^{\bf w}+\tilde{\bf c}_{K}^{\bf w}} \cap \zeroregion{\alpha_{TK}^{\mathbf{w}}\tilde{\mathbf{c}}^{\mathbf{w}}_{T}+\tilde{\mathbf{c}}^{\mathbf{w}}_{K}}$ is spanned by $\bar{\bf g}^{\bf w}$.
\end{proof}
\begin{lemma}\label{lem: observation of c for local upper bound}
Let $\Delta^{\geq {\bf w}}(B)$ be a branch. Let $\bar{\bf g}^{\bf w}=\tilde{\bf g}_{K}^{\bf w}-(\alpha_{SK}^{\bf w})^{-1}\tilde{\bf g}_{S}^{\bf w}-(\alpha_{TK}^{\bf w})^{-1}\tilde{\bf g}_{T}^{\bf w}$. Then, one of the normal vectors of $\mathcal{C}({\bf g}_{K}^{\bf w}, \bar{\bf g}^{\bf w})$ is
\begin{equation}
\alpha_{SK}^{\bf w}\tilde{\bf c}_{S}^{\bf w}-\alpha_{TK}^{\bf w}\tilde{\bf c}_{T}^{\bf w}.
\end{equation}
\end{lemma}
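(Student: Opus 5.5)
The plan is to use only the duality \eqref{eq: orthogonal relation for modified vectors} between modified $c$- and $g$-vectors at the fixed $\mathbf{w}$. Set $\mathbf{n}=\alpha_{SK}^{\bf w}\tilde{\bf c}_{S}^{\bf w}-\alpha_{TK}^{\bf w}\tilde{\bf c}_{T}^{\bf w}$. Since ${\bf g}_{K}^{\bf w}=\sqrt{d_{K(\mathbf{w})}}\,\tilde{\bf g}_{K}^{\bf w}$ is a positive multiple of $\tilde{\bf g}_{K}^{\bf w}$, we have $\mathcal{C}({\bf g}_{K}^{\bf w},\bar{\bf g}^{\bf w})=\mathcal{C}(\tilde{\bf g}_{K}^{\bf w},\bar{\bf g}^{\bf w})$. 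Hence it suffices to show three things:
\begin{itemize}
\item $\mathbf{n}\neq\mathbf{0}$;
\item the cone is $2$-dimensional;
\item both generators lie in $\zeroregion{\mathbf{n}}$.
\end{itemize}
The last point gives $\mathcal{C}({\bf g}_{K}^{\bf w},\bar{\bf g}^{\bf w})\subset\zeroregion{\mathbf{n}}\subset\positiveclosure{\mathbf{n}}$, so $\mathbf{n}$ is a legitimate normal vector. Moreover $\mathrm{face}_{\mathbf{n}}$ of the cone is the cone itself, and $\zeroregion{\mathbf{n}}$ is exactly its linear span.

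For nonvanishing, I will use that $\tilde{\bf c}_{S}^{\bf w},\tilde{\bf c}_{T}^{\bf w}$ are linearly independent. This follows from \Cref{prop: fundamental properties under sign-coherency}, since modified vectors are positive rescalings of the ordinary ones. Since $\alpha_{SK}^{\bf w},\alpha_{TK}^{\bf w}\geq 1>0$, this gives $\mathbf{n}\neq\mathbf{0}$. For the dimension, I will expand $\bar{\bf g}^{\bf w}$ in the basis $\tilde{\bf g}_{K}^{\bf w},\tilde{\bf g}_{S}^{\bf w},\tilde{\bf g}_{T}^{\bf w}$. Its $\tilde{\bf g}_{S}^{\bf w}$-coefficient is $-(\alpha_{SK}^{\bf w})^{-1}\neq0$, so $\bar{\bf g}^{\bf w}$ is not a multiple of $\tilde{\bf g}_{K}^{\bf w}$.

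The orthogonality is a direct computation with \eqref{eq: orthogonal relation for modified vectors}. First, $\langle\tilde{\bf g}_{K}^{\bf w},\mathbf{n}\rangle_D=0$, because $K(\mathbf{w})\neq S(\mathbf{w}),T(\mathbf{w})$. Second, pairing $\bar{\bf g}^{\bf w}$ with $\mathbf{n}$, only the diagonal terms survive:
\begin{equation*}
\langle\bar{\bf g}^{\bf w},\mathbf{n}\rangle_D=-(\alpha_{SK}^{\bf w})^{-1}\alpha_{SK}^{\bf w}+(\alpha_{TK}^{\bf w})^{-1}\alpha_{TK}^{\bf w}=0 .
\end{equation*}
This completes the argument.

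There is no real obstacle. The only point needing care is the convention for a normal vector of a $2$-dimensional cone. Here the cone lies in the hyperplane $\zeroregion{\mathbf{n}}$ rather than strictly on one side of it, so $-\mathbf{n}$ is equally valid, which is consistent with the phrasing ``one of the normal vectors''.
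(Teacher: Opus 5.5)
Your proof is correct and is essentially the paper's argument: the paper only says the claim follows by direct calculation from the duality \eqref{eq: orthogonal relation for modified vectors}, and your pairing computation is exactly that calculation. Your additional checks make explicit what the paper leaves implicit: that $\mathbf{n}\neq\mathbf{0}$, that the cone is $2$-dimensional, and that replacing ${\bf g}_{K}^{\bf w}$ by its positive multiple $\tilde{\bf g}_{K}^{\bf w}$ does not change the cone.
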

\begin{proof}
By \eqref{eq: orthogonal relation for modified vectors}, this can be shown by a direct calculation.
\end{proof}
Based on these facts, we define the following vectors and sets.
\begin{definition}\label{def: local upper bound}
For each branch $\Delta^{\geq {\bf w}}(B)$, let
\begin{equation}\label{eq: definition of gbar and cbar}
\bar{\bf g}^{\bf w}=\tilde{\bf g}_{K}^{\bf w}-(\alpha_{SK}^{\bf w})^{-1}\tilde{\bf g}_{S}^{\bf w}-(\alpha_{TK}^{\bf w})^{-1}\tilde{\bf g}_{T}^{\bf w},
\quad
\bar{\bf c}^{\bf w}=\alpha_{SK}^{\bf w}\tilde{\bf c}_{S}^{\bf w}-\alpha_{TK}^{\bf w}\tilde{\bf c}_{T}^{\bf w}.
\end{equation}
We also define the following two subsets of $\mathbb{R}^3$.
\begin{equation}\label{eq: upper bound region}
\mathscr{V}^{\bf w}_{\circ}=\mathcal{C}^{\circ}(\tilde{\bf g}_{S}^{\bf w},\tilde{\bf g}_{T}^{\bf w},\bar{\bf g}^{\bf w}),\quad
\mathscr{V}^{\bf w}=\mathcal{C}(\tilde{\bf g}_{S}^{\bf w},\tilde{\bf g}_{T}^{\bf w}) \cup \mathscr{V}^{\bf w}_{\circ}.
\end{equation}
The set $\mathscr{V}^{\mathbf{w}}$ is called the {\em local upper bound} of the branch $\Delta^{\geq \mathbf{w}}(B)$.
\end{definition}
Note that $\mathscr{V}_{\circ}^{\mathbf{w}}$ may be expressed as
\begin{equation}\label{eq: hyperplane expression of local upper bound}
\mathscr{V}_{\circ}^{\mathbf{w}}=\positiveregion{\tildelim\limits_{n \to \infty}\tilde{\mathbf{c}}_{S}^{\mathbf{w}S^n}} \cap \positiveregion{\tildelim\limits_{n \to \infty}\tilde{\mathbf{c}}_{S}^{\mathbf{w}TS^n}} \cap \positiveregion{\tilde{\mathbf{c}}_{K}^{\mathbf{w}}}.
\end{equation}
By definition, these two sets $\mathscr{V}_{\circ}^{\bf w}$ and $\mathscr{V}^{\bf w}$ are convex cones. Since
\begin{equation}\label{eq: initail inclusion lemma}
\tilde{\bf g}_{K}^{\bf w}=\bar{\bf g}^{\bf w}+(\alpha_{SK}^{\bf w})^{-1}\tilde{\bf g}_{S}^{\bf w}+(\alpha_{TK}^{\bf w})^{-1}\tilde{\bf g}_{T}^{\bf w} \in \mathscr{V}^{\bf w}_{\circ},
\end{equation}
we obtain $\mathcal{C}(G^{\bf w}) \subset \mathscr{V}^{\bf w}$. Hence, these vectors and sets can be illustrated as in Figure~\ref{fig: local upper bound of 3D picture} and Figure~\ref{fig: stereo of Vw}.

\begin{figure}[htbp]
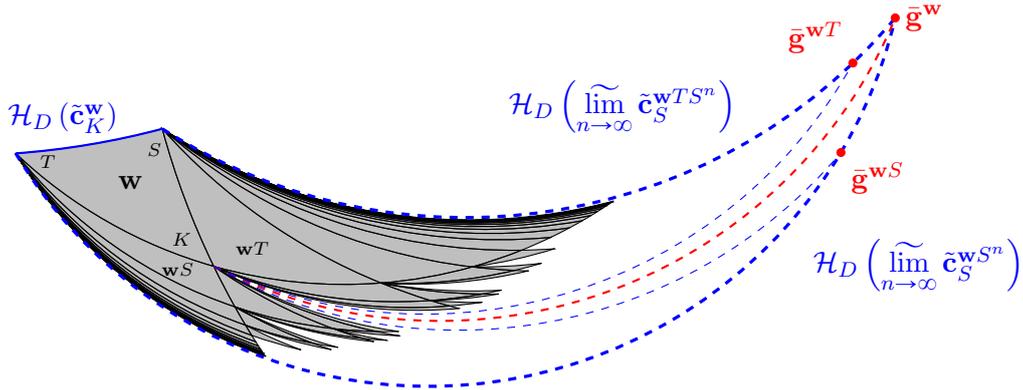

\centering
\subfile{tikzfile/partial_pictures_of_G_fan/local_upperbound_stereographic_projection}
\caption{Local upper bounds. The thick blue lines form $\mathscr{V}^{\mathbf{w}}$, and the thin blue lines form $\mathscr{V}^{\mathbf{w}S}$ and $\mathscr{V}^{\mathbf{w}T}$. The central red line is $\zeroregion{\bar{\mathbf{c}}^{\mathbf{w}}}$.}
\label{fig: stereo of Vw}
\end{figure}
The following theorem gives a strong restriction for the structure of branches in the $G$-fan.
\begin{theorem}[Local upper bound]\label{thm: local upper bound theorem}
For any branch $\Delta^{\geq {\bf w}}(B)$, the following inclusions hold.
\begin{equation}\label{eq: local upper bound inclusions}
|\Delta^{\geq {\bf w}}(B)| \subset \mathcal{C}(G^{\bf w}) \cup \mathscr{V}_{\circ}^{{\bf w}S} \cup \mathscr{V}_{\circ}^{{\bf w}T} \subset \mathscr{V}^{\bf w}.
\end{equation}
Moreover, the following statements hold.\\
\textup{($a$)} Two sets $\mathscr{V}_{\circ}^{{\bf w}S}$ and $\mathscr{V}_{\circ}^{{\bf w}T}$ are separated by the plane $\zeroregion{\bar{\bf c}^{\bf w}}$, that is, $\mathscr{V}_{\circ}^{{\bf w}S} \subset \negativeregion{\bar{\bf c}^{\bf w}}$ and $\mathscr{V}_{\circ}^{{\bf w}T} \subset \positiveregion{\bar{\bf c}^{\bf w}}$ hold.
Moreover, the union $\mathcal{C}(G^{\mathbf{w}}) \cup \mathscr{V}_{\circ}^{\mathbf{w}S} \cup \mathscr{V}_{\circ}^{\mathbf{w}T}$ is disjoint.
\\
\textup{($b$)} The local upper bounds are monotonically decreasing. Namely, $\mathscr{V}^{\bf w} \supset \mathscr{V}^{{\bf w}S}$ and $\mathscr{V}^{\bf w} \supset\mathscr{V}^{{\bf w}T}$ hold.
\end{theorem}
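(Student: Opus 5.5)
The plan is to prove the two assertions (a) and (b) first. Both are purely local, in that they involve only the data at $\mathbf{w}$, $\mathbf{w}S$ and $\mathbf{w}T$. Once they are in place, the inclusion \eqref{eq: local upper bound inclusions} follows by induction over the branch.

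\emph{Step 1 (express the next bounds in coordinates at $\mathbf{w}$).} Since $\mathbf{w}$ lies in a branch, so do $\mathbf{w}S$ and $\mathbf{w}T$. By \Cref{lem: S-mutations of modified c- g-vectors} and \Cref{lem: T-mutations of modified c- g-vectores}~($b$), I rewrite $\tilde{\mathbf{g}}_M^{\mathbf{w}S}$, $\tilde{\mathbf{g}}_M^{\mathbf{w}T}$ and the numbers $p^{\mathbf{w}S}_{MM'}$, $p^{\mathbf{w}T}_{MM'}$ in terms of the data at $\mathbf{w}$. Then I expand $\bar{\mathbf{g}}^{\mathbf{w}S}$ and $\bar{\mathbf{g}}^{\mathbf{w}T}$ in the basis $\tilde{\mathbf{g}}_K^{\mathbf{w}},\tilde{\mathbf{g}}_S^{\mathbf{w}},\tilde{\mathbf{g}}_T^{\mathbf{w}}$. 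For instance, $\mathscr{V}^{\mathbf{w}S}_\circ=\mathcal{C}^\circ(\tilde{\mathbf{g}}_K^{\mathbf{w}},\tilde{\mathbf{g}}_T^{\mathbf{w}},\bar{\mathbf{g}}^{\mathbf{w}S})$ with
\begin{equation*}
\bar{\mathbf{g}}^{\mathbf{w}S}=p^{\mathbf{w}}_{SK}\tilde{\mathbf{g}}^{\mathbf{w}}_K-\tilde{\mathbf{g}}^{\mathbf{w}}_S-(\alpha^{\mathbf{w}}_{SK})^{-1}\tilde{\mathbf{g}}^{\mathbf{w}}_K-(\alpha^{\mathbf{w}S}_{TK})^{-1}\tilde{\mathbf{g}}^{\mathbf{w}}_T
=\alpha^{\mathbf{w}}_{SK}\tilde{\mathbf{g}}^{\mathbf{w}}_K-\tilde{\mathbf{g}}^{\mathbf{w}}_S-(\alpha^{\mathbf{w}S}_{TK})^{-1}\tilde{\mathbf{g}}^{\mathbf{w}}_T,
\end{equation*}
using $p-\alpha^{-1}=\alpha$. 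Here $p^{\mathbf{w}S}_{TK}=p^{\mathbf{w}}_{ST}$. The case of $\mathbf{w}T$ is analogous, with $p^{\mathbf{w}T}_{KS}=p^{\mathbf{w}}_{TK}$ and $p^{\mathbf{w}T}_{KT}=p^{\mathbf{w}}_{ST}$.

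\emph{Step 2 (prove (a) and (b) by pairing with dual vectors).} Every vector is now written in the $\tilde{\mathbf{g}}^{\mathbf{w}}$-basis, so \eqref{eq: orthogonal relation for modified vectors} turns pairings with $\tilde{\mathbf{c}}^{\mathbf{w}}_M$ into reading off coordinates. For (a), I pair each generator of $\mathscr{V}^{\mathbf{w}S}_\circ$ with $\bar{\mathbf{c}}^{\mathbf{w}}=\alpha_{SK}\tilde{\mathbf{c}}_S-\alpha_{TK}\tilde{\mathbf{c}}_T$:
\begin{itemize}
\item $\tilde{\mathbf{g}}_K^{\mathbf{w}}$ gives $0$;
\item $\tilde{\mathbf{g}}_T^{\mathbf{w}}$ gives $-\alpha_{TK}<0$;
\item $\bar{\mathbf{g}}^{\mathbf{w}S}$ gives $-\alpha_{SK}+\alpha_{TK}/\alpha^{\mathbf{w}}_{ST}$.
\end{itemize}
The last quantity is $\le 0$ exactly when $\alpha_{TK}\le\alpha_{SK}\alpha_{ST}$, which is the triangle inequality \eqref{eq: 3rd form of cluster-cyclic}. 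Since the open cone has strictly positive coefficients and at least one pairing is negative, $\mathscr{V}^{\mathbf{w}S}_\circ\subset\negativeregion{\bar{\mathbf{c}}^{\mathbf{w}}}$. The $T$-side is symmetric. Disjointness from $\mathcal{C}(G^{\mathbf{w}})$ comes from pairing with $\tilde{\mathbf{c}}_S^{\mathbf{w}S^n}$-limits, or more simply $\tilde{\mathbf{c}}_S^{\mathbf{w}}$ / $\tilde{\mathbf{c}}_T^{\mathbf{w}}$. Indeed, $\mathscr{V}^{\mathbf{w}S}_\circ$ has negative $S$-coordinate (coefficient $-1$ in $\bar{\mathbf{g}}^{\mathbf{w}S}$), while $\mathcal{C}(G^{\mathbf{w}})$ has nonnegative coordinates. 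For (b), I use \eqref{eq: hyperplane expression of local upper bound} and check that each generator of $\mathscr{V}^{\mathbf{w}S}$ lies in the three closed half-spaces defining $\mathscr{V}^{\mathbf{w}}$. Pairing with $\tilde{\mathbf{c}}_K$ and with $\alpha_{SK}\tilde{\mathbf{c}}_S+\tilde{\mathbf{c}}_K$ is immediate; for instance, $\bar{\mathbf{g}}^{\mathbf{w}S}$ gives $\alpha_{SK}>0$ and $-\alpha_{SK}+\alpha_{SK}=0$. Pairing with $\alpha_{TK}\tilde{\mathbf{c}}_T+\tilde{\mathbf{c}}_K$ reduces again to an inequality of the form $\alpha_{SK}\ge \alpha_{TK}/\alpha_{ST}$, i.e.\ \eqref{eq: 3rd form of cluster-cyclic}. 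I then have to check that open interiors go into $\mathscr{V}^{\mathbf{w}}_\circ$ and boundary faces into $\mathscr{V}^{\mathbf{w}}$, treating the faces with care. The last inclusion in \eqref{eq: local upper bound inclusions} then follows from (b) together with \eqref{eq: initail inclusion lemma}.

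\emph{Step 3 (induction for the first inclusion).} For $\mathbf{u}\ge\mathbf{w}$, I induct on $|\mathbf{u}|-|\mathbf{w}|$. The claim is that $\mathcal{C}(G^{\mathbf{u}})\subset\mathscr{V}^{\mathbf{w}}$, and more precisely that, for $\mathbf{u}\ge\mathbf{w}M$, it lies in $\mathscr{V}^{\mathbf{w}M}\subset\mathcal{C}(G^{\mathbf{w}S})\cup\mathscr{V}^{\mathbf{w}S}_\circ\cup\dots$. The plan is to show $\mathcal{C}(G^{\mathbf{w}M})\setminus(\text{shared face }\mathcal{C}(\tilde{\mathbf{g}}_S^{\mathbf{w}M},\tilde{\mathbf{g}}_T^{\mathbf{w}M}))\subset\mathscr{V}^{\mathbf{w}M}_\circ$ and apply the inductive hypothesis to the sub-branch $\mathscr{V}^{\mathbf{w}M}$. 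Iterating (b) gives $|\Delta^{\ge\mathbf{w}M}|\subset\mathscr{V}^{\mathbf{w}M}$. The points on the face $\mathcal{C}(\tilde{\mathbf{g}}_S^{\mathbf{w}M},\tilde{\mathbf{g}}_T^{\mathbf{w}M})$ are then absorbed into $\mathcal{C}(G^{\mathbf{w}})$, since that face is a face of $\mathcal{C}(G^{\mathbf{w}})$ by the mutation formulas.

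The main obstacle is Step 3: this is a statement about infinitely many cones, and the natural induction needs exactly the strict/boundary bookkeeping that (b) provides. I will therefore arrange the induction as an inclusion of \emph{all} descendants into $\mathscr{V}^{\mathbf{u}}$ for each $\mathbf{u}$, proved simultaneously with its boundary refinement. The sign computations in Step 2 I expect to be routine given \eqref{eq: 3rd form of cluster-cyclic}.
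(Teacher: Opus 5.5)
Your proposal is correct and follows essentially the same route as the paper. Part (a) pairs the generators of $\mathscr{V}^{\mathbf{w}S}_\circ$ and $\mathscr{V}^{\mathbf{w}T}_\circ$ with $\bar{\mathbf{c}}^{\mathbf{w}}$ and uses \eqref{eq: 3rd form of cluster-cyclic}, with disjointness coming from the sign of the $\tilde{\mathbf{c}}_S^{\mathbf{w}}$- and $\tilde{\mathbf{c}}_T^{\mathbf{w}}$-coordinates. Part (b) locates $\bar{\mathbf{g}}^{\mathbf{w}S}$ relative to $\mathscr{V}^{\mathbf{w}}$: the paper writes $\bar{\mathbf{g}}^{\mathbf{w}S}\sim\bar{\mathbf{g}}^{\mathbf{w}}+\beta\tilde{\mathbf{g}}_T^{\mathbf{w}}$ with $\beta\ge 0$, which is just the coordinate form of your pairing with the three facet normals. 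The first inclusion then comes from chaining (b).

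The step you flag as the main obstacle is immediate and needs no separate induction. For $X=M_1\cdots M_r$, one has $\mathcal{C}(G^{\mathbf{w}X})\subset\mathscr{V}^{\mathbf{w}X}\subset\cdots\subset\mathscr{V}^{\mathbf{w}M_1}=\mathcal{C}(\tilde{\mathbf{g}}_S^{\mathbf{w}M_1},\tilde{\mathbf{g}}_T^{\mathbf{w}M_1})\cup\mathscr{V}^{\mathbf{w}M_1}_\circ\subset\mathcal{C}(G^{\mathbf{w}})\cup\mathscr{V}^{\mathbf{w}M_1}_\circ$. Note the correct target is $\mathcal{C}(G^{\mathbf{w}})$, not $\mathcal{C}(G^{\mathbf{w}S})$ as you wrote at one point.
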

This theorem can be illustrated in Figure~\ref{fig: stereo of Vw}. To proceed, we first prove the following relations, which form a key point in the proof of \Cref{thm: local upper bound theorem}.
\begin{lemma}\label{lem: next g bar}
Let $\Delta^{\geq {\bf w}}(B)$ be a branch. Then, we have
\begin{equation}
\begin{aligned}
\bar{\bf g}^{{\bf w}S} &\sim \bar{\bf g}^{\bf w}+\left[(\alpha_{TK}^{\bf w})^{-1}-(\alpha_{TS}^{\bf w}\alpha_{SK}^{\bf w})^{-1}\right]\tilde{\bf g}_{T}^{\bf w},
\\
\bar{\bf g}^{{\bf w}T} &\sim \bar{\bf g}^{\bf w}+ \left[(\alpha_{SK}^{\bf w})^{-1}-(\alpha_{TS}^{\bf w}\alpha_{TK}^{\bf w})^{-1}\right]{\bf g}_{S}^{\bf w}.
\end{aligned}
\end{equation}
In particular, $\bar{\bf g}^{{\bf w}S} \in \mathcal{C}(\bar{\bf g}^{\bf w},\tilde{\bf g}_{T}^{\bf w})$ and $\bar{\bf g}^{{\bf w}T} \in \mathcal{C}(\bar{\bf g}^{\bf w},\tilde{\bf g}_{S}^{\bf w})$ hold.
\end{lemma}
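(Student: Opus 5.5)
The plan is a direct computation. I will express $\bar{\mathbf{g}}^{\mathbf{w}S}$ and $\bar{\mathbf{g}}^{\mathbf{w}T}$ through the data at $\mathbf{w}$ using the mutation formulas. I will then use the identity $p-\alpha(p)^{-1}=\alpha(p)$, which follows from \eqref{eq: minimum polynomial of alpha}, to pull out a positive scalar. Since $\mathbf{w}$ lies in a branch, so do $\mathbf{w}S$ and $\mathbf{w}T$; hence $\bar{\mathbf{g}}^{\mathbf{w}S}$ and $\bar{\mathbf{g}}^{\mathbf{w}T}$ are defined by \eqref{eq: definition of gbar and cbar}, and for $\mathbf{w}T$ we must use the branch case of \Cref{lem: T-mutations of modified c- g-vectores}.

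For the $S$-mutation, \Cref{lem: S-mutations of modified c- g-vectors} gives
\[
\tilde{\mathbf{g}}_K^{\mathbf{w}S}=-\tilde{\mathbf{g}}_S^{\mathbf{w}}+p_{SK}^{\mathbf{w}}\tilde{\mathbf{g}}_K^{\mathbf{w}},\quad
\tilde{\mathbf{g}}_S^{\mathbf{w}S}=\tilde{\mathbf{g}}_K^{\mathbf{w}},\quad
\tilde{\mathbf{g}}_T^{\mathbf{w}S}=\tilde{\mathbf{g}}_T^{\mathbf{w}}.
\]
From \eqref{eq: S-mutations of p} we get $p_{SK}^{\mathbf{w}S}=p_{SK}^{\mathbf{w}}$ and $p_{TK}^{\mathbf{w}S}=p_{ST}^{\mathbf{w}}$. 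Therefore $\alpha_{SK}^{\mathbf{w}S}=\alpha_{SK}^{\mathbf{w}}$ and $\alpha_{TK}^{\mathbf{w}S}=\alpha_{TS}^{\mathbf{w}}$. Substituting these into the definition and using $p_{SK}^{\mathbf{w}}-(\alpha_{SK}^{\mathbf{w}})^{-1}=\alpha_{SK}^{\mathbf{w}}$ gives
\[
\bar{\mathbf{g}}^{\mathbf{w}S}=\alpha_{SK}^{\mathbf{w}}\tilde{\mathbf{g}}_K^{\mathbf{w}}-\tilde{\mathbf{g}}_S^{\mathbf{w}}-(\alpha_{TS}^{\mathbf{w}})^{-1}\tilde{\mathbf{g}}_T^{\mathbf{w}}
=\alpha_{SK}^{\mathbf{w}}\Bigl(\bar{\mathbf{g}}^{\mathbf{w}}+\bigl[(\alpha_{TK}^{\mathbf{w}})^{-1}-(\alpha_{TS}^{\mathbf{w}}\alpha_{SK}^{\mathbf{w}})^{-1}\bigr]\tilde{\mathbf{g}}_T^{\mathbf{w}}\Bigr).
\]
Because $\alpha_{SK}^{\mathbf{w}}>0$, this yields the first relation $\sim$.

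For the $T$-mutation in a branch, \eqref{eq: T mutation of c- g-vectors in branches} gives
\[
\tilde{\mathbf{g}}_K^{\mathbf{w}T}=-\tilde{\mathbf{g}}_T^{\mathbf{w}}+p_{KT}^{\mathbf{w}}\tilde{\mathbf{g}}_K^{\mathbf{w}},\quad
\tilde{\mathbf{g}}_S^{\mathbf{w}T}=\tilde{\mathbf{g}}_K^{\mathbf{w}},\quad
\tilde{\mathbf{g}}_T^{\mathbf{w}T}=\tilde{\mathbf{g}}_S^{\mathbf{w}}.
\]
From \eqref{eq: T mutation of B in branches} we get $p_{SK}^{\mathbf{w}T}=p_{TK}^{\mathbf{w}}$ and $p_{TK}^{\mathbf{w}T}=p_{ST}^{\mathbf{w}}$. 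Hence $\alpha_{SK}^{\mathbf{w}T}=\alpha_{TK}^{\mathbf{w}}$ and $\alpha_{TK}^{\mathbf{w}T}=\alpha_{TS}^{\mathbf{w}}$. The same manipulation gives
\[
\bar{\mathbf{g}}^{\mathbf{w}T}=\alpha_{TK}^{\mathbf{w}}\Bigl(\bar{\mathbf{g}}^{\mathbf{w}}+\bigl[(\alpha_{SK}^{\mathbf{w}})^{-1}-(\alpha_{TS}^{\mathbf{w}}\alpha_{TK}^{\mathbf{w}})^{-1}\bigr]\tilde{\mathbf{g}}_S^{\mathbf{w}}\Bigr).
\]
This is the second relation; the $\mathbf{g}_S^{\mathbf{w}}$ in the statement should be read as $\tilde{\mathbf{g}}_S^{\mathbf{w}}$, which is harmless since the two differ by a positive scalar.

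The only non-mechanical step is the ``in particular'' claim, which needs both bracketed coefficients to be non-negative. That is, we need
\[
\alpha_{TS}^{\mathbf{w}}\alpha_{SK}^{\mathbf{w}}\geq\alpha_{TK}^{\mathbf{w}}
\quad\text{and}\quad
\alpha_{TS}^{\mathbf{w}}\alpha_{TK}^{\mathbf{w}}\geq\alpha_{SK}^{\mathbf{w}}.
\]
Both are instances of the triangle inequality \eqref{eq: 3rd form of cluster-cyclic}, applied to the matrix $B^{\mathbf{w}}$. This is legitimate because $B^{\mathbf{w}}$ is cluster-cyclic (it is mutation equivalent to $B$) and the $\alpha$'s are symmetric in their indices. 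Once the coefficients are non-negative, $\bar{\mathbf{g}}^{\mathbf{w}S}\in\mathcal{C}(\bar{\mathbf{g}}^{\mathbf{w}},\tilde{\mathbf{g}}_T^{\mathbf{w}})$ and $\bar{\mathbf{g}}^{\mathbf{w}T}\in\mathcal{C}(\bar{\mathbf{g}}^{\mathbf{w}},\tilde{\mathbf{g}}_S^{\mathbf{w}})$ follow immediately. The main point to double-check is the index bookkeeping for $\alpha^{\mathbf{w}S}$ and $\alpha^{\mathbf{w}T}$ via \Cref{lem: K S T mutation formula}.
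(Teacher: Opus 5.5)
Your proposal is correct and follows the paper's proof. You substitute the $S$-mutation formulas (resp.\ the branch $T$-mutation formulas) into the definition of $\bar{\mathbf{g}}$, use $p-\alpha^{-1}=\alpha$, factor out $\alpha_{SK}^{\mathbf{w}}$ (resp.\ $\alpha_{TK}^{\mathbf{w}}$), and obtain non-negativity of the coefficients from \eqref{eq: 3rd form of cluster-cyclic}. The paper writes out only the $S$ case, and your $T$-case bookkeeping ($\alpha_{SK}^{\mathbf{w}T}=\alpha_{TK}^{\mathbf{w}}$, $\alpha_{TK}^{\mathbf{w}T}=\alpha_{TS}^{\mathbf{w}}$) checks out.

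One small correction: reading $\mathbf{g}_S^{\mathbf{w}}$ as $\tilde{\mathbf{g}}_S^{\mathbf{w}}$ is right, but it is not ``harmless'' for the $\sim$ relation itself. Rescaling one summand of $\bar{\mathbf{g}}^{\mathbf{w}}+c\,\tilde{\mathbf{g}}_S^{\mathbf{w}}$ by $\sqrt{d_{S(\mathbf{w})}}$ changes the ray whenever $c\neq 0$ and $d_{S(\mathbf{w})}\neq 1$. Only the cone-membership conclusion is insensitive to this.
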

\begin{proof}
We aim to prove the first relation for ${\bf w}S$. (The second one can be shown by a similar argument.)
By \Cref{def: local upper bound}, we have
\begin{equation}
\bar{\bf g}^{{\bf w}S}=\tilde{\bf g}_{K}^{{\bf w}S}-(\alpha_{SK}^{{\bf w}S})^{-1}\tilde{\bf g}_{S}^{{\bf w}S}-(\alpha_{TK}^{{\bf w}S})^{-1}\tilde{\bf g}_{T}^{{\bf w}S}.
\end{equation}
By \Cref{lem: S-mutations of modified c- g-vectors}, we may obtain $\alpha_{SK}^{{\bf w}S}=\alpha_{SK}^{\bf w}$ and $\alpha_{TK}^{{\bf w}S}=\alpha_{TS}^{\bf w}$. Moreover, by substituting the equalities in \Cref{lem: S-mutations of modified c- g-vectors}, we obtain
\begin{equation}
\begin{aligned}
\bar{\bf g}^{{\bf w}S}&=(-\tilde{\bf g}_{S}^{\bf w}+p_{SK}^{\bf w}\tilde{\bf g}_{K}^{\bf w})-(\alpha_{SK}^{\bf w})^{-1}\tilde{\bf g}_{K}^{\bf w}-(\alpha_{TS}^{\bf w})^{-1}\tilde{\bf g}_{T}^{\bf w}\\
&= \left[p_{SK}^{\bf w}-(\alpha_{SK}^{\bf w})^{-1}\right] \tilde{\bf g}_{K}^{\bf w} -\tilde{\bf g}_{S}^{\bf w} - (\alpha_{TS}^{\bf w})^{-1}\tilde{\bf g}_{T}^{\bf w}.
\end{aligned}
\end{equation}
By \eqref{eq: minimum polynomial of alpha}, we have $p_{SK}^{\bf w}-(\alpha_{SK}^{\bf w})^{-1}=\alpha_{SK}^{\bf w}$. Thus, it holds that
\begin{equation}\label{eq: g bar mutation}
\begin{aligned}
\bar{\bf g}^{{\bf w}S} &= \alpha_{SK}^{\bf w}\tilde{\bf g}_{K}^{\bf w}-\tilde{\bf g}_{S}^{\bf w}-(\alpha_{TS}^{\bf w})^{-1}\tilde{\bf g}_{T}^{\bf w}
\\
&\sim \tilde{\bf g}_{K}^{\bf w}-(\alpha_{SK}^{\bf w})^{-1}\tilde{\bf g}_{S}^{\bf w}-(\alpha_{SK}^{\bf w}\alpha_{TS}^{\bf w})^{-1}\tilde{\bf g}_{T}^{\bf w}\\
&= \bar{\bf g}^{\bf w}+\left[(\alpha_{TK}^{\bf w})^{-1}-(\alpha_{TS}^{\bf w}\alpha_{SK}^{\bf w})^{-1}\right]\tilde{\bf g}_{T}^{\bf w},
\end{aligned}
\end{equation}
where $\sim$ on the second row is obtained by multiplying $(\alpha_{SK}^{\bf w})^{-1}$ to the previous one. Moreover, by \eqref{eq: 3rd form of cluster-cyclic}, the coefficient of $\tilde{\bf g}^{\bf w}_{T}$ is non-negative. Thus, $\bar{\bf g}^{{\bf w}S} \in \mathcal{C}(\bar{\bf g},\tilde{\bf g}_{T}^{\bf w})$ holds.
\end{proof}

\begin{proof}[Proof of \Cref{thm: local upper bound theorem}]
Firstly, we show ($a$). The inclusion $\mathscr{V}_{\circ}^{{\bf w}S} \subset \positiveregion{\bar{\bf c}^{\bf w}}$ can be proved by:
\begin{equation}
\begin{aligned}
\langle \tilde{\bf g}_{S}^{{\bf w}S}, \bar{\bf c}^{\bf w} \rangle_{D}&=\langle \tilde{\bf g}_{K}^{{\bf w}}, \alpha_{SK}^{\bf w}\tilde{\bf c}_{S}^{\bf w}-\alpha_{TK}^{\bf w}\tilde{\bf c}_{T}^{\bf w} \rangle_{D}=0,
\\
\langle \tilde{\bf g}_{T}^{{\bf w}S}, \bar{\bf c}^{\bf w} \rangle_{D}&=\langle \tilde{\bf g}_{T}^{{\bf w}}, \alpha_{SK}^{\bf w}\tilde{\bf c}_{S}^{\bf w}-\alpha_{TK}^{\bf w}\tilde{\bf c}_{T}^{\bf w} \rangle_{D}=-\alpha_{TK}^{\bf w}<0,
\\
\langle \bar{\bf g}^{{\bf w}S}, \bar{\bf c}^{\bf w} \rangle_{D}& \sim \langle \bar{\bf g}^{\bf w}+\beta \tilde{\bf g}_{T}^{\bf w}, \alpha_{SK}^{\bf w}\tilde{\bf c}_{S}^{\bf w}-\alpha_{TK}^{\bf w}\tilde{\bf c}_{T}^{\bf w} \rangle_{D}=-\beta\alpha_{TK}^{\bf w} \leq 0,
\end{aligned}
\end{equation}
where we set $\beta=(\alpha_{TK}^{\bf w})^{-1}-(\alpha_{TS}^{\bf w}\alpha_{SK}^{\bf w})^{-1}\geq 0$. Note that the intersection $\mathcal{C}(\bar{\bf g}^{{\bf w}S},\tilde{\bf g}^{{\bf w}S}_{S},\tilde{\bf g}^{{\bf w}S}_{T}) \cap \zeroregion{\bar{\bf c}^{\bf w}}$ is a subset of $\mathcal{C}(\bar{\bf g}^{{\bf w}S},\tilde{\bf g}^{{\bf w}S}_{S})$, which does not intersect with $\mathscr{V}_{\circ}^{{\bf w}S}$. By the same argument, we may obtain $\mathscr{V}_{\circ}^{{\bf w}T} \subset \negativeregion{\bar{\bf c}^{\bf w}}$. The remaining problem is to show $\mathcal{C}(G^{\bf w}) \cap \mathscr{V}_{\circ}^{{\bf w}S}=\mathcal{C}(G^{\bf w}) \cap \mathscr{V}_{\circ}^{{\bf w}T}=\emptyset$. This is shown by $\mathcal{C}(G^{\mathbf{w}}) \subset \positiveclosure{\tilde{\mathbf{c}}^{\mathbf{w}}_{S}} \cap \positiveclosure{\tilde{\mathbf{c}}^{\mathbf{w}}_{T}}$, $\mathscr{V}_{\circ}^{\mathbf{w}S} \subset \positiveregion{\tilde{\mathbf{c}}_{K}^{\mathbf{w}S}}=\negativeregion{\tilde{\mathbf{c}}_{S}^{\mathbf{w}}}$, and $\mathscr{V}_{\circ}^{\mathbf{w}T} \subset \positiveregion{\tilde{\mathbf{c}}_{T}^{\mathbf{w}S}}=\negativeregion{\tilde{\mathbf{c}}_{T}^{\mathbf{w}}}$.
\par
Next, we show ($b$). By definition and \eqref{eq: initail inclusion lemma}, we have $\tilde{\bf g}_{S}^{{\bf w}S}=\tilde{\bf g}_{K}^{\bf w} \in \mathscr{V}^{\bf w}$ and $\tilde{\bf g}_{T}^{{\bf w}S}=\tilde{\bf g}_{T}^{\bf w} \in \mathscr{V}^{\bf w}$. By \Cref{lem: next g bar}, we have $\bar{\bf g}^{{\bf w}S} \in \mathcal{C}({\bf g}_{T}^{{\bf w}S},\bar{\bf g}^{\bf w})$, which is a subset of the boundary of $\mathscr{V}^{\bf w}$. Thus, we have $\mathscr{V}^{{\bf w}S} \subset \mathscr{V}^{\bf w}$.
\par
Lastly, we show the inclusions \eqref{eq: local upper bound inclusions}. The second one has already been shown by $(b)$. We aim to show the first one. For any cone $\mathcal{C}(G^{{\bf w}X})$ with $X=M_1M_2\cdots M_r \in \mathcal{M}$ ($r \geq 1$, $M_i \in \{S,T\}$),  the following inclusions hold.
\begin{equation}
\mathscr{V}^{{\bf w}M_1} \supset \mathscr{V}^{{\bf w}M_1M_2} \supset \cdots \supset \mathscr{V}^{{\bf w}X} \supset \mathcal{C}(G^{{\bf w}X}).
\end{equation}
Thus, we have $\mathcal{C}(G^{{\bf w}X}) \subset \mathscr{V}^{{\bf w}M_1} \subset \mathcal{C}(G^{\bf w}) \cup \mathscr{V}_{\circ}^{{\bf w}M_1}$. 
\end{proof}

\begin{example}
We give two examples in \Cref{fig: pictures of G-fans}. The left one is the case of $\nu_1=\nu_2=0$ and the right one is the case of $\nu_1,\nu_2 < 0$.
There is remarkable, special phenomenon when $\nu_1=\nu_2=0$, which is equivalent to $p_{12}=p_{23}=p_{31}=2$ as in \Cref{rem: classification of surface type}. In this case, for any ${\bf w} \in \mathcal{T}$, $p_{12}^{\bf w}=p_{23}^{\bf w}=p_{31}^{\bf w}=2$ and $\alpha_{12}^{\bf w}=\alpha_{23}^{\bf w}=\alpha_{31}^{\bf w}=1$ hold. By considering the equality in \Cref{lem: next g bar}, this condition implies that $\bar{\bf g}^{\bf w}\sim \bar{\bf g}^{{\bf w}S} \sim \bar{\bf g}^{{\bf w}T}$. Thus, if $\nu_1=\nu_2=0$, the direction of $\bar{\bf g}^{\bf w}$ depends on its maximal branch $\Delta^{\geq {[i]S^nT}}(B)$. In fact, more strongly, this vector $\bar{\mathbf{g}}^{\mathbf{w}}$ depends on the initial mutation $i=1,2,3$ only, which is given by 
\begin{equation}\bar{\mathbf{g}}^{\mathbf{w}}=\tilde{\mathbf{e}}_{k_0}-\tilde{\mathbf{e}}_{s_0}-\tilde{\mathbf{e}}_{t_0}.\end{equation}

\begin{figure}[htbp]
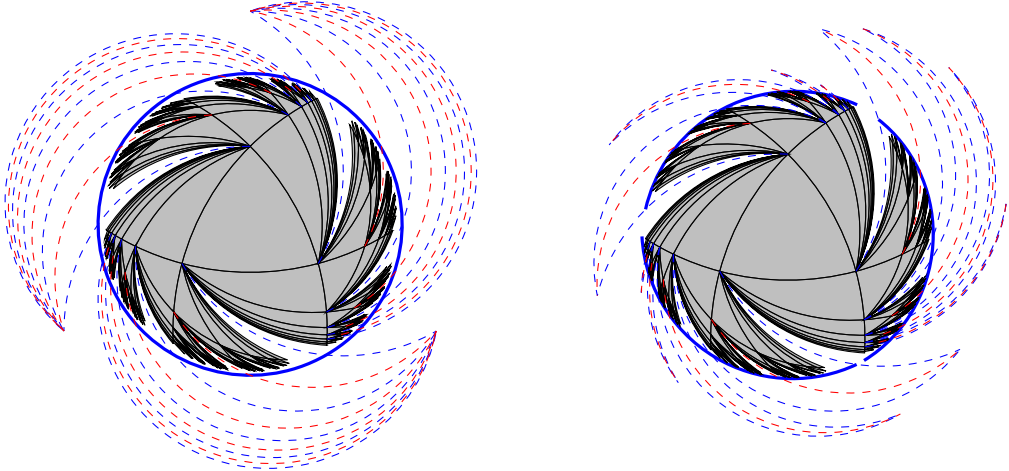
 
\centering
\begin{minipage}{0.45\textwidth}
\centering
\subfile{tikzfile/total_pictures_of_G_fans/Markov_G_fan_via_stereographic_projection}
\end{minipage}
\begin{minipage}{0.45\textwidth}
\centering
\subfile{tikzfile/total_pictures_of_G_fans/G_fan_hyperbolic_case}
\end{minipage}
\caption{Pictures of $G$-fans. The left one corresponds to the case $\nu_1=\nu_2=0$ (e.g. the Markov quiver), and the right one corresponds to the case $\nu_1,\nu_2<0$ (general cluster-cyclic type). The thick blue lines indicate the global upper bounds. The dashed blue lines indicate the local upper bounds of maximal branches.}
\label{fig: pictures of G-fans}
\end{figure}
\end{example}

\section{Separateness among local upper bounds}\label{sec: separateness}
Recall that \Cref{thm: local upper bound theorem} guarantees that every $G$-cone except the initial one $\mathcal{C}(G^{\emptyset})$ is a subset of either $\mathscr{V}^{[i]S^nT}$ or $|\Delta^{<[i]S^{\infty}}(B)|$, where $i=1,2,3$ and $n \in \mathbb{Z}_{\geq 0}$. Let $\mathscr{T}_i$ be the interior of $|\Delta^{<[i]S^{\infty}}|$. By \eqref{eq: support of the trunk}, it is given by
\begin{equation}\label{eq: definition of T}
\mathscr{T}_i= \mathcal{C}^{\circ}(\tilde{\bf e}_{t_0},\tilde{\bf e}_{s_0},\alpha_{k_0s_0}\tilde{\bf e}_{s_0}-\tilde{\bf e}_{k_0})=\negativeregion{\tilde{\mathbf{e}}_{k_0}} \cap \positiveregion{\tilde{\mathbf{e}}_{t_0}} \cap \positiveregion{\alpha_{s_0k_0}\tilde{\mathbf{e}}_{k_0}+\tilde{\mathbf{e}}_{s_0}}.
\end{equation}
In this section, we establish the following proposition concerning the
separateness among the local upper bounds and the interiors of trunks.
\begin{proposition}\label{prop: separateness of upper bounds}
For any $W,U \in \{\mathscr{T}_1, \mathscr{T}_2, \mathscr{T}_3\}\cup\{\mathscr{V}_{\circ}^{[i]S^nT} \mid i=1,2,3;\ n \in \mathbb{Z}_{\geq 0}\}$, if $W \neq U$, we have
\begin{equation}
W \cap U = \emptyset.
\end{equation}
\end{proposition}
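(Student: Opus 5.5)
I would split into three kinds of pairs: two maximal-branch bounds with the same initial direction $i$; a maximal-branch bound (or trunk) of direction $i$ against one of direction $j\neq i$; and a trunk against a maximal branch of the same direction.

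\emph{Same direction.} First I would show that $\mathscr{V}_{\circ}^{[i]S^nT}$ is disjoint from $\mathscr{T}_i$ and from $\mathscr{V}_{\circ}^{[i]S^mT}$ for $m\neq n$. For each $\mathbf{w}=[i]S^n$ in the trunk, I would prove the trunk analogue of \Cref{thm: local upper bound theorem}($a$). The cone $\mathscr{V}_{\circ}^{\mathbf{w}T}$ lies in $\negativeregion{\tilde{\mathbf{c}}_T^{\mathbf{w}}}$. This is checked on generators via \Cref{lem: T-mutations of modified c- g-vectores}($a$) and the duality \eqref{eq: orthogonal relation for modified vectors}. Everything obtained from $\mathbf{w}S$ (the further trunk and the branches $[i]S^mT$, $m>n$) lies in $\positiveclosure{\tilde{\mathbf{c}}_T^{\mathbf{w}}}=\positiveclosure{\tilde{\mathbf{e}}_{t_0}}$, because $\tilde{\mathbf{c}}_T$ is constant along the trunk and equals $\tilde{\mathbf{e}}_{t_0}$ by \eqref{eq: c vectors in trunk}. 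Combined with \eqref{eq: definition of T}, this would give disjointness from $\mathscr{T}_i$. For two branches $m>n$, I would separate them by the plane $\zeroregion{\tilde{\mathbf{c}}_K^{[i]S^{n+1}}}$ or use the $S$-side region. This amounts to checking generator signs with the explicit formulas in \Cref{lem: infinite S mutation in trunks}, using $\alpha\geq1$ and \eqref{eq: 3rd form of cluster-cyclic} where necessary.

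\emph{Different directions.} This is the main obstacle. My first attempt would be to compute, using \Cref{tab: List of initial indices}, \eqref{eq: i to s0 indices} and \eqref{eq: i to t0 indices}, explicit half-spaces containing all of $|\Delta^{\geq[i]}(B)|$ except the face $\mathcal{C}(\tilde{\mathbf{e}}_{s_0},\tilde{\mathbf{e}}_{t_0})$. The natural candidate is $\negativeregion{\tilde{\mathbf{e}}_{k_0}}$: every $G$-cone after the first mutation $[i]$ lies on the side $\negativeclosure{\tilde{\mathbf{c}}_K^{[i]}}$ reversed, namely $\langle\cdot,\tilde{\mathbf{e}}_{i}\rangle_D\le 0$. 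I would try to prove this by running the local-upper-bound monotonicity from $[i]$ itself. That is, I would define an analogous bound for $[i]$ and show by \Cref{thm: local upper bound theorem} and the trunk computation that everything lies in $\mathcal{C}(\tilde{\mathbf{e}}_{s_0},\tilde{\mathbf{e}}_{t_0},\text{limits})\subset\negativeclosure{\tilde{\mathbf{e}}_i}$. In the open interiors this gives $x_i<0$ for direction $i$ and $x_j<0$ for direction $j$. This alone does not separate the two, so I would refine using the sign-coherence of $G$-matrices together with a second coordinate. Concretely, for direction $i$ the interior should lie in $\{x_{k_0}<0,\ x_{t_0}>0\}$ for the part near the trunk and $\{x_{k_0}<0,\ x_{s_0}>0\}$ for the other part.

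The cleaner route, if the coordinate approach gets messy, is to use the fan property (\Cref{prop: conjecture is true}). Suppose two distinct regions intersect in an open set. Since each region is a union of interiors of $G$-cones up to lower-dimensional boundary pieces, as given by \Cref{thm: local upper bound theorem} and the trunk description, two distinct chambers would overlap, which contradicts the fan property unless they coincide. They can only coincide if the same $G$-matrix appears in different positions, which would be excluded by the determinant sign $(-1)^{|\mathbf{w}|}$ and the $c$-vector distinctness. The catch is that $\mathscr{V}_\circ$ is only an upper bound, not the exact support. So this route would require a covering statement that $\mathscr{V}_{\circ}^{\mathbf{w}}$ is densely filled by the branch, and I expect that to be the real difficulty. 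I would therefore commit to the explicit half-space separation, checking signs of pairings of generators $\tilde{\mathbf{g}}_S,\tilde{\mathbf{g}}_T,\bar{\mathbf{g}}$ at $[i]S^nT$ against $\tilde{\mathbf{e}}_{k}$ and $\alpha\tilde{\mathbf{e}}_k+\tilde{\mathbf{e}}_s$.
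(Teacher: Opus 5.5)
Two of your cases are fine: trunk versus trunk by coordinate signs, and $\mathscr T_i$ versus $\mathscr V_\circ^{[i]S^nT}$. For the latter, $\tilde{\mathbf c}_K^{[i]S^nT}=-\tilde{\mathbf e}_{t_0}$ gives $\mathscr V_\circ^{[i]S^nT}\subset\negativeregion{\tilde{\mathbf e}_{t_0}}$, while $\mathscr T_i\subset\positiveregion{\tilde{\mathbf e}_{t_0}}$.

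The ``trunk analogue'' you state along the way is false, though. Every maximal branch $\mathscr V_\circ^{[i]S^mT}$, including those with $m>n$, lies in $\negativeregion{\tilde{\mathbf e}_{t_0}}$. So $\tilde{\mathbf e}_{t_0}$ says nothing about two branches of the same direction.

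The plane you propose for those branches also fails. Write $u_l=u_l(p_{k_0s_0})$ and $\tilde{\mathbf c}_K^{[i]S^{n+1}}=-\tilde{\mathbf c}_S^{[i]S^n}=-(u_n\tilde{\mathbf e}_{s_0}+u_{n+1}\tilde{\mathbf e}_{k_0})$. Then $\langle\tilde{\mathbf g}_T^{[i]S^{n+1}T},\tilde{\mathbf c}_S^{[i]S^n}\rangle_D=u_{n+2}u_n-u_{n+1}^2=-1$, but $\langle\bar{\mathbf g}^{[i]S^{n+1}T},\tilde{\mathbf c}_S^{[i]S^n}\rangle_D=(\alpha_{TK}^{[i]S^{n+1}})^{-1}>0$. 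Hence $\mathscr V_\circ^{[i]S^{n+1}T}$ meets both sides of $\zeroregion{\tilde{\mathbf c}_S^{[i]S^n}}$.

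The separating plane must be tilted by an $\tilde{\mathbf e}_{t_0}$-component. It is $\zeroregion{\mathfrak c_i^{+}(n)}$ with $\mathfrak c_i^{+}(n)=\tilde{\mathbf c}_S^{[i]S^n}+\alpha_{ST}^{[i]S^n}\tilde{\mathbf e}_{t_0}\sim\tildelim_{m\to\infty}\tilde{\mathbf c}_S^{[i]S^nTS^m}$, the face of $\mathscr V^{[i]S^nT}$ through $\tilde{\mathbf g}_T^{[i]S^nT}$ and $\bar{\mathbf g}^{[i]S^nT}$. Showing $\langle\bar{\mathbf g}^{[i]S^mT},\mathfrak c_i^{+}(n)\rangle_D\le 0$ for every $m>n$ needs two ingredients. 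One is $u_l-\alpha u_{l-1}=\alpha^{-l}$ with $\alpha=\alpha_{k_0s_0}$. The other is the monotonicity of $\alpha_{TK}^{\mathbf wS^l}\alpha^{l}$ along the trunk (\Cref{lem: monotonicity of alpha product}), i.e.\ the triangle inequality iterated over the whole trunk rather than applied at one node. This is \Cref{lem: hyperplane lemma}, the core of the argument, and neither of your two suggestions supplies it.

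For different directions, the plan rests on a false claim and on a tool that cannot suffice. It is not true that $|\Delta^{\geq[i]}(B)|\subset\negativeclosure{\tilde{\mathbf e}_i}$. By \Cref{thm: sign of G matrix} the cones $\mathcal C(G^{[i]TS^nTX})$ have $\tau_{k_0}=+$; for example, $G^{[1,2,1]}$ in \Cref{ex: conter example of the monotonicity} has first row $(1,0,0)$. Indeed $\tilde{\mathbf c}_K^{[i]TS^nT}=\tilde{\mathbf e}_{k_0}$, so $\mathscr V_\circ^{[i]TS^nT}\subset\positiveregion{\tilde{\mathbf e}_{k_0}}$.

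More fundamentally, no refinement by coordinate signs can work. Both $\mathscr T_i$ and $\mathcal C^{\circ}(G^{[s_0]T^2})\subset\mathscr V_\circ^{[s_0]T}$ lie in the same open orthant $\{x_{k_0}<0,\ x_{s_0}>0,\ x_{t_0}>0\}$.

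What is needed are the non-coordinate planes of \Cref{lem: rough upper bound for branches}, \eqref{eq: 2nd rough upper bound} and \eqref{eq: 1st rough upper bound}:
\begin{itemize}
\item $\mathscr V_\circ^{[i]S^nT}\subset\negativeregion{\tilde{\mathbf e}_{k_0}+\tilde{\mathbf e}_{t_0}}\cap\negativeregion{\tilde{\mathbf e}_{t_0}}\subset\negativeregion{\tilde{\mathbf e}_{k_0}+\alpha_{k_0t_0}\tilde{\mathbf e}_{t_0}}$;
\item on the other side, $\mathscr T_{t_0}\subset\positiveregion{\tilde{\mathbf e}_{k_0}+\tilde{\mathbf e}_{t_0}}$ and $\mathscr T_{s_0}\subset\positiveregion{\tilde{\mathbf e}_{t_0}}$;
\item \eqref{eq: 1st rough upper bound} for direction $t_0$ gives $\mathscr V_\circ^{[t_0]S^mT}\subset\positiveregion{\tilde{\mathbf e}_{k_0}+\alpha_{k_0t_0}\tilde{\mathbf e}_{t_0}}$;
\item the pair of directions $(i,s_0)$ reduces to this case because $i=T([s_0])$.
\end{itemize}
In the paper these bounds come from $\mathfrak c_i^{-}(0)$ and from the limit of $\mathfrak c_i^{+}(n)$, so they again rest on \Cref{lem: hyperplane lemma}.

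Your fallback of pairing the generators $\tilde{\mathbf g}_S,\tilde{\mathbf g}_T,\bar{\mathbf g}$ of $\mathscr V^{[i]S^nT}$ directly against such vectors can be made to work, and it would even bypass the limits. For instance, $\langle\bar{\mathbf g}^{[i]S^nT},\tilde{\mathbf e}_{s_0}+\alpha\tilde{\mathbf e}_{k_0}\rangle_D=\alpha^{-n}\bigl(\alpha_{ST}^{[i]S^n}-(\alpha\,\alpha_{TK}^{[i]S^n})^{-1}\bigr)\ge 0$ with $\alpha=\alpha_{s_0k_0}$. But it only works once the specific planes $\tilde{\mathbf e}_{k_0}+\tilde{\mathbf e}_{t_0}$ and $\tilde{\mathbf e}_{k_0}+\alpha_{k_0t_0}\tilde{\mathbf e}_{t_0}$ are identified, and the proposal does not identify them.
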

\subsection{Relations among maximal branches}
In this subsection, we fix an initial mutation direction $i\in \{1,2,3\}$, and set $k_0=K([i])$, $s_0=S([i])$, and $t_0=T([i])$ as in \Cref{tab: List of initial indices}.
\par
Firstly, we give the expressions of some vectors for maximal branches $\mathscr{V}_{\circ}^{[i]S^nT}$.
\begin{lemma}\label{lem: explicit formulas for the root of maximal branches}
Let $i=1,2,3$, and set $k_0,s_0,t_0 \in \{1,2,3\}$ as in \Cref{tab: List of initial indices}. Then, for any $n \in \mathbb{Z}_{\geq 0}$, we have
\begin{align}
\tilde{\bf g}_{M}^{[i]S^nT}&=\begin{cases}
-\tilde{\bf e}_{t_0}+p_{ST}^{[i]S^n}(u_n(p_{k_0s_0})\tilde{\bf e}_{s_0}-u_{n-1}(p_{k_0s_0})\tilde{\bf e}_{k_0}) & M=K,\\
u_n(p_{k_0s_0})\tilde{\bf e}_{s_0}-u_{n-1}(p_{k_0s_0})\tilde{\bf e}_{k_0} & M=S,\\
u_{n+1}(p_{k_0s_0})\tilde{\bf e}_{s_0}-u_{n}(p_{k_0s_0})\tilde{\bf e}_{k_0} & M=T,
\end{cases}\label{eq: g vectors at the root of maximal branches}\\
\bar{\bf g}^{[i]S^nT}
&
=
\begin{aligned}[t]
&-\tilde{\bf e}_{t_0}-\left[(\alpha_{TK}^{[i]S^n})^{-1}u_{n+1}(p_{k_0s_0})-\alpha_{ST}^{[i]S^n}u_{n}(p_{k_0s_0})\right]\tilde{\bf e}_{s_0}\\
&\ \qquad+\left[(\alpha_{TK}^{[i]S^n})^{-1}u_n(p_{k_0s_0})-\alpha_{ST}^{[i]S^n}u_{n-1}(p_{k_0s_0})\right]\tilde{\bf e}_{k_0}.
\end{aligned}
\label{eq: gbar  vector at the root of maximal branches}
\\
\tildelim_{m \to \infty} \tilde{\mathbf{c}}_{S}^{[i]S^nTS^m} &\sim \alpha_{ST}^{[i]S^n}\tilde{\mathbf{e}}_{t_0}+u_n(p_{k_0s_0})\tilde{\mathbf{e}}_{s_0}+u_{n+1}(p_{k_0s_0})\tilde{\mathbf{e}}_{k_0},
\label{eq: motivation of ci+n}
\\
\tildelim_{m \to \infty} \tilde{\mathbf{c}}_{S}^{[i]S^nT^2S^m} &\sim -(\alpha_{TK}^{[i]S^n})^{-1}\tilde{\mathbf{e}}_{t_0}-u_{n-1}(p_{k_0s_0})\tilde{\mathbf{e}}_{s_0}-u_n(p_{k_0s_0})\tilde{\mathbf{e}}_{k_0}.
\label{eq: motivation of ci-n}
\end{align}
\end{lemma}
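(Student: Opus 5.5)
The whole lemma is a direct computation. I would start from the explicit trunk data of \Cref{lem: infinite S mutation in trunks}, apply one $T$-mutation in the trunk form (\Cref{lem: T-mutations of modified c- g-vectores}~($a$)), and then apply the branch-level formulas already derived in \Cref{sec: support of cluster-cyclic G-fan}. Throughout, write $u_n=u_n(p_{k_0s_0})$.

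\textbf{Step 1: the $g$-vectors \eqref{eq: g vectors at the root of maximal branches}.} Since $[i]S^n$ lies in the trunk, \eqref{eq: T mutation of c- g-vectors in trunks} gives
\[
\tilde{\bf g}_S^{[i]S^nT}=\tilde{\bf g}_S^{[i]S^n},\qquad
\tilde{\bf g}_T^{[i]S^nT}=\tilde{\bf g}_K^{[i]S^n},\qquad
\tilde{\bf g}_K^{[i]S^nT}=-\tilde{\bf g}_T^{[i]S^n}+p_{ST}^{[i]S^n}\tilde{\bf g}_S^{[i]S^n}.
\]
Substituting \eqref{eq: g vectors in trunk} yields \eqref{eq: g vectors at the root of maximal branches} immediately.

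\textbf{Step 2: the vector $\bar{\bf g}^{[i]S^nT}$.} I need the $\alpha$'s at $[i]S^nT$. By \eqref{eq: T mutation of B in trunks},
\[
p_{SK}^{[i]S^nT}=p_{TS}^{[i]S^n},\qquad p_{TK}^{[i]S^nT}=p_{KT}^{[i]S^n}.
\]
Hence $\alpha_{SK}^{[i]S^nT}=\alpha_{ST}^{[i]S^n}$ and $\alpha_{TK}^{[i]S^nT}=\alpha_{TK}^{[i]S^n}$. Plug these into \eqref{eq: definition of gbar and cbar} together with Step 1. The coefficient of $\tilde{\bf e}_{s_0}$ is then
\[
p_{ST}^{[i]S^n}u_n-(\alpha_{ST}^{[i]S^n})^{-1}u_n-(\alpha_{TK}^{[i]S^n})^{-1}u_{n+1}.
\]
By \eqref{eq: minimum polynomial of alpha}, $p-\alpha^{-1}=\alpha$, so this collapses to $\alpha_{ST}^{[i]S^n}u_n-(\alpha_{TK}^{[i]S^n})^{-1}u_{n+1}$. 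The $\tilde{\bf e}_{k_0}$ coefficient collapses the same way, and the $\tilde{\bf e}_{t_0}$ coefficient is $-1$. This matches \eqref{eq: gbar  vector at the root of maximal branches} exactly as an equality, not merely up to $\sim$.

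\textbf{Step 3: the first limit \eqref{eq: motivation of ci+n}.} By \Cref{lem: limit of S mutations} applied at ${\bf w}=[i]S^nT$,
\[
\tildelim_{m\to\infty}\tilde{\bf c}_S^{[i]S^nTS^m}\sim \alpha_{SK}^{[i]S^nT}\tilde{\bf c}_S^{[i]S^nT}+\tilde{\bf c}_K^{[i]S^nT}.
\]
The $c$-vectors at $[i]S^nT$ come from \eqref{eq: T mutation of c- g-vectors in trunks}: $\tilde{\bf c}_K^{[i]S^nT}=-\tilde{\bf c}_T^{[i]S^n}=-\tilde{\bf e}_{t_0}$ and $\tilde{\bf c}_S^{[i]S^nT}=\tilde{\bf c}_S^{[i]S^n}+p_{ST}^{[i]S^n}\tilde{\bf e}_{t_0}$, with $\tilde{\bf c}_S^{[i]S^n}$ taken from \eqref{eq: c vectors in trunk}. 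Using $\alpha p-1=\alpha^2$ and dividing by $\alpha_{ST}^{[i]S^n}>0$ gives \eqref{eq: motivation of ci+n}.

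\textbf{Step 4: the second limit \eqref{eq: motivation of ci-n}.} Here ${\bf w}=[i]S^nT$ is in a branch, so I use the branch formula
\[
\tildelim_{m\to\infty}\tilde{\bf c}_S^{{\bf w}TS^m}\sim \alpha_{TK}^{\bf w}\tilde{\bf c}_T^{\bf w}+\tilde{\bf c}_K^{\bf w},
\]
derived at the start of \Cref{sec: support of cluster-cyclic G-fan}. The inputs are $\tilde{\bf c}_T^{[i]S^nT}=\tilde{\bf c}_K^{[i]S^n}$, from \eqref{eq: c vectors in trunk}, and $\tilde{\bf c}_K^{[i]S^nT}=-\tilde{\bf e}_{t_0}$, with $\alpha_{TK}^{\bf w}=\alpha_{TK}^{[i]S^n}$ from Step 2. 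Dividing by $\alpha_{TK}^{[i]S^n}$ gives \eqref{eq: motivation of ci-n}.

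\textbf{Expected obstacle.} There is no conceptual difficulty. The main risk is bookkeeping: correctly tracking the relabeling of $K,S,T$ under a trunk $T$-mutation (\Cref{lem: K S T mutation formula}), and which $p$'s become the new $p_{SK}$ and $p_{TK}$. Getting these wrong would swap $\alpha_{ST}$ with $\alpha_{TK}$. I would double-check the relabeling against \Cref{fig: tropical signs}.
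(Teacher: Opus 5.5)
Your proposal is correct and is essentially the paper's proof, which simply cites the trunk formulas (\Cref{lem: infinite S mutation in trunks}), the $T$-mutation rules (\Cref{lem: T-mutations of modified c- g-vectores}), and the $S$-limit lemma (\Cref{lem: limit of S mutations}); your Step 4 uses the branch limit formula from the start of \Cref{sec: support of cluster-cyclic G-fan}, which is built from those same ingredients. Your identifications $\alpha_{SK}^{[i]S^nT}=\alpha_{ST}^{[i]S^n}$ and $\alpha_{TK}^{[i]S^nT}=\alpha_{TK}^{[i]S^n}$ check out, as do the simplifications $p-\alpha^{-1}=\alpha$ and $\alpha p-1=\alpha^2$, so all four formulas come out as stated.
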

\begin{proof}
They can be shown by \Cref{lem: infinite S mutation in trunks}, \Cref{lem: T-mutations of modified c- g-vectores}, and \Cref{lem: limit of S mutations}.
\end{proof}
Motivated by the equalities \eqref{eq: motivation of ci+n} and \eqref{eq: motivation of ci-n}, we define
\begin{align}
\mathfrak{c}_{i}^{+}(n)&=
\alpha_{ST}^{[i]S^n}\tilde{\mathbf{e}}_{t_0}+u_n(p_{k_0s_0})\tilde{\mathbf{e}}_{s_0}+u_{n+1}(p_{k_0s_0})\tilde{\mathbf{e}}_{k_0},\label{eq: definition of c+ vector}\\
\mathfrak{c}_{i}^{-}(n)&=-(\alpha_{TK}^{[i]S^n})^{-1}\tilde{\mathbf{e}}_{t_0}-u_{n-1}(p_{k_0s_0})\tilde{\mathbf{e}}_{s_0}-u_n(p_{k_0s_0})\tilde{\mathbf{e}}_{k_0}.\label{eq: definition of c- vector}
\end{align}
By \eqref{eq: hyperplane expression of local upper bound} and $\tilde{\mathbf{c}}_{K}^{[i]S^nT}=-\tilde{\mathbf{e}}_{t_0}$, we have
\begin{equation}\label{eq: first upper bound expression via the plane}
\mathscr{V}_{\circ}^{[i]S^nT}=\positiveregion{\mathfrak{c}_{i}^{+}(n)} \cap \positiveregion{\mathfrak{c}_{i}^{-}(n)} \cap \negativeregion{\tilde{\mathbf{e}}_{t_0}}. 
\end{equation}
Then, the relation among maximal branches can be given as follows:
\begin{lemma}\label{lem: hyperplane lemma}
Fix an initial mutation direction $i=1,2,3$. Then, for any $m,n \in \mathbb{Z}_{\geq 0}$, the following statements hold.
\\
\textup{($a$)} If $m<n$, we have
\begin{equation}
\mathscr{V}_{\circ}^{[i]S^mT} \subset \positiveregion{\mathfrak{c}_{i}^{+}(n)},
\quad
\mathscr{V}_{\circ}^{[i]S^mT} \subset \negativeregion{\mathfrak{c}_{i}^{-}(n)}.
\end{equation}
\textup{($b$)} If $m>n$, we have
\begin{equation}
\mathscr{V}_{\circ}^{[i]S^mT} \subset \negativeregion{\mathfrak{c}_{i}^{+}(n)},
\quad
\mathscr{V}_{\circ}^{[i]S^mT} \subset \positiveregion{\mathfrak{c}_{i}^{-}(n)}.
\end{equation}
\textup{($c$)} If $m=n$, we have
\begin{equation}
\mathscr{V}_{\circ}^{[i]S^nT} \subset \positiveregion{\mathfrak{c}_{i}^{+}(n)},
\quad
\mathscr{V}_{\circ}^{[i]S^nT} \subset \positiveregion{\mathfrak{c}_{i}^{-}(n)}.
\end{equation}
\end{lemma}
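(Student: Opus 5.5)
Since $\mathscr{V}_{\circ}^{[i]S^mT}=\mathcal{C}^{\circ}(\tilde{\mathbf{g}}_{S}^{[i]S^mT},\tilde{\mathbf{g}}_{T}^{[i]S^mT},\bar{\mathbf{g}}^{[i]S^mT})$ is the relative interior of a simplicial cone, each inclusion reduces to a sign condition on its three generators. It suffices to show that $\langle\cdot,\mathfrak{c}_{i}^{\pm}(n)\rangle_D$ has the required weak sign on all three generators and is nonzero on at least one of them. So the plan is to compute the pairings of $\tilde{\mathbf{g}}_{S}^{[i]S^mT}$, $\tilde{\mathbf{g}}_{T}^{[i]S^mT}$, $\bar{\mathbf{g}}^{[i]S^mT}$ with $\mathfrak{c}_{i}^{\pm}(n)$ explicitly, using \eqref{eq: g vectors at the root of maximal branches}, \eqref{eq: gbar  vector at the root of maximal branches} and $\langle\tilde{\mathbf{e}}_a,\tilde{\mathbf{e}}_b\rangle_D=\delta_{ab}$. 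Abbreviate $u_j=u_j(p_{k_0s_0})$.

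\textbf{Step 1: the trunk generators.} The generators $\tilde{\mathbf{g}}_{S}^{[i]S^mT}=u_m\tilde{\mathbf{e}}_{s_0}-u_{m-1}\tilde{\mathbf{e}}_{k_0}$ and $\tilde{\mathbf{g}}_{T}^{[i]S^mT}=u_{m+1}\tilde{\mathbf{e}}_{s_0}-u_m\tilde{\mathbf{e}}_{k_0}$ have no $\tilde{\mathbf{e}}_{t_0}$ component. Their pairings with $\mathfrak{c}_{i}^{+}(n)$ are therefore $u_mu_n-u_{m-1}u_{n+1}$ and $u_{m+1}u_n-u_mu_{n+1}$. Their pairings with $\mathfrak{c}_{i}^{-}(n)$ are $-u_mu_{n-1}+u_{m-1}u_n$ and $-u_{m+1}u_{n-1}+u_mu_n$. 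These are Chebyshev ``Casoratian'' expressions, and the standard identity $u_au_b-u_{a-1}u_{b+1}=u_{a-b-1}$ (for $a\ge b$, with the sign flipping otherwise, via $u_{-k}=-u_{k-2}$) evaluates them to $\pm u_{|m-n|-1}$ and similar terms. Since $u_j\ge0$ is increasing for $j\ge -1$ and $p\ge2$, their signs are read off directly: positive, zero, or negative according to the comparison of $m$ and $n$. I will verify this identity by induction on $a-b$ using \eqref{eq: recursion for u}.

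\textbf{Step 2: the $\bar{\mathbf{g}}$ generator.} Here the $\tilde{\mathbf{e}}_{t_0}$ coefficient $-1$ enters. Against $\mathfrak{c}_{i}^{+}(n)$ it contributes $-\alpha_{ST}^{[i]S^n}$, and against $\mathfrak{c}_{i}^{-}(n)$ it contributes $(\alpha_{TK}^{[i]S^n})^{-1}$. The remaining part is a combination of Casoratians with coefficients $(\alpha_{TK}^{[i]S^m})^{-1}$ and $\alpha_{ST}^{[i]S^m}$. To control these, I would use \eqref{eq: p in trunk}: the quantities $p_{ST}^{[i]S^n}$ and $p_{KT}^{[i]S^n}=p_{ST}^{[i]S^{n-1}}$ form a Chebyshev-type sequence in $n$, so they are successive terms of one sequence $x_n=-u_{n}p_{s_0t_0}+u_{n+1}p_{k_0t_0}$ satisfying $x_{n+1}=p_{k_0s_0}x_n-x_{n-1}$. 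The key is then to express $\alpha(x_n)$ in terms of the sequence, in the spirit of the proof of \eqref{eq: 3rd form of cluster-cyclic}. Cluster-cyclicity, applied to the triple $(p_{k_0s_0},x_{n-1},x_n)$, satisfies $C\le4$ and is Markov-invariant along the trunk. It gives $\alpha(x_n)\approx\alpha_{k_0s_0}\alpha(x_{n-1})$, and more precisely inequalities of the form $\alpha(x_{n})\le \alpha_{k_0s_0}\alpha(x_{n-1})$, as in \eqref{eq: 3rd form of cluster-cyclic} and \eqref{eq: 2nd form of cluster-cyclic}. Combined with the growth $u_{j+1}\le\alpha_{k_0s_0}u_j+\dots$, this should give the correct sign of the full pairing.

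\textbf{Main obstacle.} The main obstacle is case (a)/(b) for the $\bar{\mathbf{g}}^{[i]S^mT}$ generator with $|m-n|$ large. There the pairing is a difference of two large quantities, a ratio of $\alpha(x_m)$ against $u_{m}$ compared with $\alpha(x_n)$ against $u_n$. I would first attempt an induction on $|m-n|$. For this I would use \Cref{lem: next g bar}-like monotonicity along the trunk, namely that $\mathscr{V}^{[i]S^{m}T}$ moves monotonically as $m$ increases. This reduces the comparison to adjacent indices $m=n\pm1$, where the explicit identities and the triangle inequality \eqref{eq: 3rd form of cluster-cyclic} should suffice. Case (c) is then immediate from \eqref{eq: first upper bound expression via the plane}.
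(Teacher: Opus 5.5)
Three parts of your proposal are fine: reducing to weak signs on the three generators (with at least one strict), computing the pairings of $\tilde{\mathbf{g}}_S^{[i]S^mT},\tilde{\mathbf{g}}_T^{[i]S^mT}$, and reading (c) off \eqref{eq: first upper bound expression via the plane}. There is one slip in the first computation. The correct identity is $u_au_b-u_{a-1}u_{b+1}=u_{b-a}$ (with $u_{-k}=-u_{k-2}$), not $u_{a-b-1}$; for $a=2$, $b=0$ the left side is $-1$. With your version, $\langle\tilde{\mathbf{g}}_S^{[i]S^mT},\mathfrak{c}_i^{+}(n)\rangle_D$ for $m>n$ comes out with the wrong sign.

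The genuine gap is the sign of $\langle\bar{\mathbf{g}}^{[i]S^mT},\mathfrak{c}_i^{\pm}(n)\rangle_D$ for $m\neq n$. This is the whole content of the lemma, and your proposal only sketches a plan for it, one that does not work as stated. \Cref{lem: next g bar} concerns $\bar{\mathbf{g}}^{\mathbf{w}S},\bar{\mathbf{g}}^{\mathbf{w}T}$ for $\mathbf{w}$ inside a branch, giving the nesting $\mathscr{V}^{\mathbf{w}M}\subset\mathscr{V}^{\mathbf{w}}$. The roots $[i]S^mT$ for different $m$ lie in different maximal branches, and none arises from another by such a mutation. So ``$\mathscr{V}^{[i]S^mT}$ moves monotonically in $m$'' is essentially the statement to be proved, not a tool. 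Even granting $m=n\pm1$, you give no mechanism carrying this to distant $n$. As you note, for large $l=|m-n|$ the pairing is a near-cancellation of terms of size $u_{l-1}$, and only a quantitative estimate controls it.

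The missing idea is that this cancellation is exact, so no induction on $|m-n|$ is needed. Put $\alpha=\alpha(p_{k_0s_0})=\alpha_{SK}^{[i]S^j}$ for all $j$. For $m>n$ and $l=m-n$,
\[
\langle\bar{\mathbf{g}}^{[i]S^mT},\mathfrak{c}_i^{+}(n)\rangle_D=-\alpha_{ST}^{[i]S^n}+(\alpha_{TK}^{[i]S^m})^{-1}u_{l-1}-\alpha_{ST}^{[i]S^m}u_{l-2}.
\]
Apply \eqref{eq: 3rd form of cluster-cyclic} at $[i]S^m$ in the form $\alpha_{TK}^{[i]S^m}\alpha_{ST}^{[i]S^m}\ge\alpha$. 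Then use the exact identity $u_j-\alpha u_{j-1}=\alpha^{-j}$, which follows by induction from $p=\alpha+\alpha^{-1}$. The pairing becomes at most $-\alpha_{ST}^{[i]S^n}+(\alpha_{TK}^{[i]S^m})^{-1}\alpha^{1-l}\le 0$, simply because every $\alpha$ is $\ge 1$. The case $m<n$ for $\mathfrak{c}_i^{-}(n)$ is the same, using the companion identity $u_j-\alpha^{-1}u_{j-1}=\alpha^{j}$.

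In the two remaining cases ($m\le n$ for $\mathfrak{c}_i^{+}(n)$, and $m\ge n$ for $\mathfrak{c}_i^{-}(n)$), the same two steps reduce the claim to the growth bounds
\[
\alpha_{ST}^{[i]S^n}\le\alpha^{n-m}\alpha_{ST}^{[i]S^m},
\qquad
\alpha_{TK}^{[i]S^n}\le\alpha^{m-n}\alpha_{TK}^{[i]S^m}.
\]
These follow by telescoping the triangle inequality one $S$-step at a time along the trunk, using $\alpha_{TK}^{\mathbf{w}S}=\alpha_{ST}^{\mathbf{w}}$; this is \Cref{lem: monotonicity of alpha product}. Your inequality $\alpha(x_n)\le\alpha_{k_0s_0}\alpha(x_{n-1})$ is one such step, so you had the raw ingredient. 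What is missing is the exact Chebyshev identities and this direct assembly.
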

In this proof, we often use the following fact.
\begin{lemma}\label{lem: monotonicity of alpha product}
Fix one $\mathbf{w} \in \mathcal{T} \setminus \{\emptyset\}$, and set $p=p_{SK}^{\mathbf{w}}$ and $\alpha=\alpha_{SK}^{\mathbf{w}}$. For each $l\in \mbZ_{\geq 0}$, let $F(l)=\alpha_{TK}^{\mathbf{w}S^l}\alpha^l$ and $G(l)=\alpha_{TK}^{\mathbf{w}S^l}\alpha^{-l}$. Then, $F(l)$ is non-decreasing and $G(l)$ is non-increasing.
\end{lemma}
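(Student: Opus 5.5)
The plan is to compare consecutive terms, $F(l+1)/F(l)$ and $G(l+1)/G(l)$, and to reduce both comparisons to the triangle inequality \eqref{eq: 3rd form of cluster-cyclic} applied at the single matrix $B^{\mathbf{w}S^l}$. Since all $\alpha$'s are $\geq 1 > 0$, it suffices to show $F(l+1)/F(l) \geq 1$ and $G(l+1)/G(l) \leq 1$ for every $l \in \mathbb{Z}_{\geq 0}$.

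\textbf{Step 1: the exponent base is constant.} By \eqref{eq: infinite S mutations for p} in \Cref{lem: infinite S mutations}, $p_{SK}^{\mathbf{w}S^l}=p_{SK}^{\mathbf{w}}=p$ for every $l$. Hence $\alpha_{SK}^{\mathbf{w}S^l}=\alpha$ for every $l$, and $\alpha=\alpha_{SK}^{\mathbf{w}S^l}$ is simply the $(S,K)$-datum at the current step.

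\textbf{Step 2: one $S$-mutation.} Write $\mathbf{u}=\mathbf{w}S^l$. By \eqref{eq: S-mutations of p} in \Cref{lem: S-mutations of modified c- g-vectors}, $p_{TK}^{\mathbf{u}S}=p_{KT}^{\mathbf{u}S}=p_{ST}^{\mathbf{u}}$. Therefore
\begin{equation*}
\alpha_{TK}^{\mathbf{w}S^{l+1}}=\alpha_{TS}^{\mathbf{u}},
\qquad
\frac{F(l+1)}{F(l)}=\frac{\alpha_{TS}^{\mathbf{u}}\,\alpha_{SK}^{\mathbf{u}}}{\alpha_{TK}^{\mathbf{u}}},
\qquad
\frac{G(l+1)}{G(l)}=\frac{\alpha_{TS}^{\mathbf{u}}}{\alpha_{TK}^{\mathbf{u}}\,\alpha_{KS}^{\mathbf{u}}}.
\end{equation*}

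\textbf{Step 3: apply the triangle inequality.} The matrix $B^{\mathbf{u}}$ is mutation equivalent to $B$, hence cluster-cyclic, so \eqref{eq: 3rd form of cluster-cyclic} applies to it with $\{i,j,k\}=\{K(\mathbf{u}),S(\mathbf{u}),T(\mathbf{u})\}$. Since $p_{ij}$, and hence $\alpha_{ij}$, is symmetric in $i,j$, we may use the inequality freely in either index order.
\begin{itemize}
\item Taking the middle index $k=S(\mathbf{u})$ gives $\alpha_{TS}^{\mathbf{u}}\alpha_{SK}^{\mathbf{u}}\geq\alpha_{TK}^{\mathbf{u}}$, so $F(l+1)\geq F(l)$.
\item Taking the middle index $k=K(\mathbf{u})$ gives $\alpha_{TK}^{\mathbf{u}}\alpha_{KS}^{\mathbf{u}}\geq\alpha_{TS}^{\mathbf{u}}$, so $G(l+1)\leq G(l)$.
\end{itemize}

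There is no substantial obstacle. The only care needed is the index bookkeeping in Step 2, namely that $T(\mathbf{u}S)=T(\mathbf{u})$ and $K(\mathbf{u}S)=S(\mathbf{u})$ by \eqref{eq: K S T mutation formula by S}. This is exactly what makes the new $(T,K)$-entry equal to the old $(T,S)$-entry, after which each monotonicity claim is a single instance of \eqref{eq: 3rd form of cluster-cyclic}.
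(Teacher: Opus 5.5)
Your proposal is correct and is essentially the paper's proof. The paper uses the same two facts from \eqref{eq: S-mutations of p}, namely $\alpha_{SK}^{\mathbf{w}S^l}=\alpha$ and $\alpha_{TK}^{\mathbf{w}S^{l+1}}=\alpha_{ST}^{\mathbf{w}S^l}$, and then applies \eqref{eq: 3rd form of cluster-cyclic} at $B^{\mathbf{w}S^l}$; the only cosmetic difference is that it writes $F(l+1)-F(l)$ and $G(l+1)-G(l)$ as positive multiples of $\alpha_{ST}^{\mathbf{w}S^l}\alpha_{SK}^{\mathbf{w}S^l}-\alpha_{TK}^{\mathbf{w}S^l}$ and $\alpha_{ST}^{\mathbf{w}S^l}-\alpha_{TK}^{\mathbf{w}S^l}\alpha_{SK}^{\mathbf{w}S^l}$, where you take ratios.
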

\begin{proof}
Note that $\alpha=\alpha_{SK}^{\mathbf{w}S^{l}}$ and $\alpha_{TK}^{\mathbf{w}S^{l+1}}=\alpha_{ST}^{\mathbf{w}S^{l}}$ hold by \eqref{eq: S-mutations of p}. Thus, we have
\begin{equation}
\begin{aligned}
F(l+1)-F(l)&=\alpha^l\left(\alpha_{ST}^{\mathbf{w}S^{l}}\alpha_{SK}^{\mathbf{w}S^{l}}-\alpha_{TK}^{\mathbf{w}S^{l}}\right) \geq 0,
\\
G(l+1)-G(l)&=\alpha^{-l-1}\left(\alpha_{ST}^{\mathbf{w}S^{l}}-\alpha_{TK}^{\mathbf{w}S^{l}}\alpha_{SK}^{\mathbf{w}S^{l}}\right) \leq 0,
\end{aligned}
\end{equation}
where the last inequalities are obtained by \eqref{eq: 3rd form of cluster-cyclic}.
\end{proof}

\begin{proof}[Proof of \Cref{lem: hyperplane lemma}]
For simplicity, we write $p=p_{k_0s_0}$ and $u_n=u_n(p)$ for any $n \in \mathbb{Z}_{\geq -2}$. 
\par
Firstly, we prove the claim for $\mathfrak{c}_{i}^{+}(n)$. By \Cref{lem: explicit formulas for the root of maximal branches} and \eqref{eq: definition of c+ vector}, we have
\begin{equation}\label{eq: g-frac c inner product}
\begin{aligned}
\langle \tilde{\bf g}_{S}^{[i]S^mT}, \mathfrak{c}_{i}^{+}(n) \rangle_D&=u_{m}u_{n}-u_{m-1}u_{n+1},
\quad
\langle \tilde{\bf g}_{T}^{[i]S^mT}, \mathfrak{c}_{i}^{+}(n) \rangle_D=u_{m+1}u_{n}-u_{m}u_{n+1},\\
\langle \bar{\bf g}^{[i]S^mT}, \mathfrak{c}_{i}^{+}(n) \rangle_D&=-\alpha_{ST}^{[i]S^n}-(\alpha_{TK}^{[i]S^m})^{-1}(u_{m+1}u_{n}-u_{m}u_{n+1})+\alpha_{ST}^{[i]S^m}(u_{m}u_{n}-u_{m-1}u_{n+1}).
\end{aligned}
\end{equation}
Set $f(m,n)=u_{m+1}u_{n}-u_{m}u_{n+1}$. Then, the above equalities can be expressed as
\begin{equation}
\begin{aligned}
\langle \tilde{\bf g}_{S}^{[i]S^mT}, \mathfrak{c}_{i}^{+}(n) \rangle_D&=f(m-1,n),
\quad
\langle \tilde{\bf g}_{T}^{[i]S^mT}, \mathfrak{c}_{i}^{+}(n) \rangle_D=f(m,n),\\
\langle \bar{\bf g}^{[i]S^mT}, \mathfrak{c}_{i}^{+}(n) \rangle_D&=-\alpha_{ST}^{[i]S^n}-(\alpha_{TK}^{[i]S^m})^{-1}f(m,n)+\alpha_{ST}^{[i]S^m}f(m-1,n).
\end{aligned}
\end{equation}
If $m,n \geq -1$, we have
\begin{equation}\label{eq: preservation of f}
f(m,n)=(pu_m-u_{m-1})u_n-u_m(pu_n-u_{n-1})=u_mu_{n-1}-u_{m-1}u_n=f(m-1,n-1).
\end{equation}
Let $m>n$. Then, by applying the above equality repeatedly, we have
\begin{equation}\label{eq: f expression m>n}
f(m,n)=f(m-n-2,-2)=-u_{m-n-1},
\quad
f(m-1,n)=f(m-n-3,-2)=-u_{m-n-2}.
\end{equation}
Since $m-n \geq 1$, we obtain
\begin{equation}
\langle \tilde{\bf g}_{S}^{[i]S^mT}, \mathfrak{c}_{i}^{+}(n) \rangle_D=-u_{m-n-2} \leq 0,
\quad
\langle \tilde{\bf g}_{T}^{[i]S^mT}, \mathfrak{c}_{i}^{+}(n) \rangle_D=-u_{m-n-1} < 0.
\end{equation}
Moreover, we obtain
\begin{equation}
\begin{aligned}
\langle \bar{\bf g}^{[i]S^mT}, \mathfrak{c}_{i}^{+}(n) \rangle_D
&=-\alpha_{ST}^{[i]S^n}+(\alpha_{TK}^{[i]S^m})^{-1}u_{m-n-1}-\alpha_{ST}^{[i]S^m}u_{m-n-2}
\\
&=-\alpha_{ST}^{[i]S^n}+(\alpha_{TK}^{[i]S^m})^{-1}\left(u_{m-n-1}-\alpha_{TK}^{[i]S^m}\alpha_{ST}^{[i]S^m}u_{m-n-2}\right).
\end{aligned}
\end{equation}
By \eqref{eq: 3rd form of cluster-cyclic}, we have $\alpha_{TK}^{[i]S^m}\alpha_{ST}^{[i]S^m} \geq \alpha_{SK}^{[i]S^m}$. Moreover, by \eqref{eq: infinite S mutations for p}, $p_{SK}^{[i]S^m}=p_{SK}^{[i]S^n}=p$ holds. So, let us write $\alpha=\alpha(p)=\alpha_{SK}^{[i]S^m}=\alpha_{SK}^{[i]S^n}$. Then, we have
\begin{equation}
\langle \bar{\bf g}^{[i]S^mT}, \mathfrak{c}_{i}^{+}(n) \rangle_D \leq -\alpha_{ST}^{[i]S^n}+(\alpha_{TK}^{[i]S^m})^{-1}\left(u_{m-n-1}-\alpha u_{m-n-2}\right)
\end{equation}
Set $l=m-n \geq 1$. Then, we have
\begin{equation}\label{eq: u-au equality}
u_{l-1}-\alpha u_{l-2}= (p-\alpha)u_{l-2}-u_{l-3}=\alpha^{-1}u_{l-2}-u_{l-3}=\alpha^{-1}(u_{l-2}-\alpha u_{l-3}).
\end{equation}
By applying this equality repeatedly, we obtain 
\begin{equation}\label{eq: u-au equality}
u_{l-1}-\alpha u_{l-2}=\alpha^{1-l}
\end{equation}
and
\begin{equation}
\langle \bar{\bf g}^{[i]S^mT}, \mathfrak{c}_{i}^{+}(n) \rangle_D \leq -\alpha_{ST}^{\mathbf{w}}+(\alpha_{TK}^{\mathbf{w}S^l})^{-1}\alpha^{1-l},
\end{equation}
where we set $\mathbf{w}=[i]S^n$. If $l=1$, the right hand side is non-positive by $\alpha_{SK}^{\mathbf{w}},\alpha_{TK}^{\mathbf{w}} \geq 1$. Moreover, since $(\alpha_{TK}^{\mathbf{w}S^l})^{-1}\alpha^{1-l}=\alpha\left(\alpha_{TK}^{\mathbf{w}S^{l}}\alpha^{l}\right)^{-1}$, the right hand side is non-increasing by \Cref{lem: monotonicity of alpha product}. Thus, $\langle \bar{\bf g}^{[i]S^mT}, \mathfrak{c}_{i}^{+}(n) \rangle_D \leq 0$ holds. Therefore, we obtain $\mathscr{V}_{\circ}^{[i]S^mT} \subset \negativeregion{\mathfrak{c}_{i}^{+}(n)}$ when $m>n$.
\par
Let $m \leq n$. Then, by applying \eqref{eq: preservation of f} repeatedly, we have
\begin{equation}\label{eq: f expression m<n}
f(m,n)=f(-2,n-m-2)=u_{n-m-1},
\quad
f(m-1,n)=f(-2,n-m-1)=u_{n-m}.
\end{equation}
Thus, by \eqref{eq: g-frac c inner product}, we have
\begin{equation}
\langle \tilde{\bf g}_{S}^{[i]S^mT}, \mathfrak{c}_{i}^{+}(n) \rangle_D=u_{n-m} > 0,
\quad
\langle \tilde{\bf g}_{T}^{[i]S^mT}, \mathfrak{c}_{i}^{+}(n) \rangle_D=u_{n-m-1} > 0,
\end{equation}
and
\begin{equation}
\begin{aligned}
\langle \bar{\bf g}^{[i]S^mT}, \mathfrak{c}_{i}^{+}(n) \rangle_D&=-\alpha_{ST}^{[i]S^n}-(\alpha_{TK}^{[i]S^m})^{-1}u_{n-m-1}+\alpha_{ST}^{[i]S^m}u_{n-m}
\\
&=-\alpha_{ST}^{[i]S^n}+ \alpha_{ST}^{[i]S^m}\left(u_{n-m}-(\alpha_{ST}^{[i]S^m}\alpha_{TK}^{[i]S^m})^{-1}u_{n-m-1}\right)
\\
\overset{\eqref{eq: 3rd form of cluster-cyclic}}&{\geq} -\alpha_{ST}^{[i]S^n}+\alpha_{ST}^{[i]S^m}\left(u_{n-m} - \alpha^{-1}u_{n-m-1}\right),
\end{aligned}
\end{equation}
where $\alpha=\alpha_{SK}^{[i]S^m}$. Set $\mathbf{w}=[i]S^m$ and $l=n-m \geq 0$. Then, by the same argument in \eqref{eq: u-au equality}, we have 
\begin{equation}\label{eq: u-ainvu equality}
u_l-\alpha^{-1}u_{l-1}=\alpha^{l}.
\end{equation}
Thus, we obtain
\begin{equation}
\langle \bar{\bf g}^{[i]S^mT}, \mathfrak{c}_{i}^{+}(n) \rangle_D \geq -\alpha_{ST}^{\mathbf{w}S^{l}} + \alpha_{ST}^{\mathbf{w}}\alpha^{l}=\alpha^{l}\left(-\alpha_{ST}^{\mathbf{w}S^l}\alpha^{-l}+\alpha_{ST}^{\mathbf{w}}\right).
\end{equation}
When $l=0$, $-\alpha_{ST}^{\mathbf{w}S^l}\alpha^{-l}+\alpha_{ST}^{\mathbf{w}}=0$ holds. Moreover, by \Cref{lem: monotonicity of alpha product}, $\alpha_{ST}^{\mathbf{w}S^l}\alpha^{-l}=\alpha \alpha_{TK}^{\mathbf{w}S^{l+1}}\alpha^{-(l+1)}$ is non-increasing. Thus, $-\alpha_{ST}^{\mathbf{w}S^l}\alpha^{-l}+\alpha_{ST}^{\mathbf{w}}$ is non-decreasing, and we have $\langle \bar{\bf g}^{[i]S^mT}, \mathfrak{c}_{i}^{+}(n) \rangle_D \geq 0$. Therefore, we obtain $\mathscr{V}_{\circ}^{[i]S^mT} \subset \positiveregion{\mathfrak{c}_{i}^{+}(n)}$ when $m \leq n$.
\par
For $\mathfrak{c}_{i}^{-}(n)$, we have
\begin{equation}
\begin{aligned}
\langle \tilde{\bf g}_{S}^{[i]S^mT}, \mathfrak{c}_{i}^{-}(n) \rangle_D&=-f(m-1,n-1),
\quad
\langle \tilde{\bf g}_{T}^{[i]S^mT}, \mathfrak{c}_{i}^{-}(n) \rangle_D=-f(m,n-1),\\
\langle \bar{\bf g}^{[i]S^mT}, \mathfrak{c}_{i}^{-}(n) \rangle_D&=(\alpha_{TK}^{[i]S^n})^{-1}+(\alpha_{TK}^{[i]S^m})^{-1}f(m,n-1)-\alpha_{ST}^{[i]S^m}f(m-1,n-1).
\end{aligned}
\end{equation}
If $m<n$, by \eqref{eq: f expression m<n}, we have
\begin{equation}
\langle \tilde{\bf g}_{S}^{[i]S^mT}, \mathfrak{c}_{i}^{-}(n) \rangle_D=-u_{n-m-1} < 0,
\quad
\langle \tilde{\bf g}_{T}^{[i]S^mT}, \mathfrak{c}_{i}^{-}(n) \rangle_D= -u_{n-m-2} \leq 0,
\end{equation}
and
\begin{equation}
\begin{aligned}
\langle \bar{\bf g}^{[i]S^mT},\mathfrak{c}_{i}^{-}(n) \rangle_D&=(\alpha_{TK}^{[i]S^n})^{-1}+(\alpha_{TK}^{[i]S^m})^{-1}u_{n-m-2}-\alpha_{ST}^{[i]S^m}u_{n-m-1}
\\
&=
(\alpha_{TK}^{[i]S^n})^{-1}-\alpha_{ST}^{[i]S^m}\left(u_{n-m-1}-(\alpha_{TK}^{[i]S^m}\alpha_{ST}^{[i]S^m})^{-1}u_{n-m-2}\right)
\\
\overset{\eqref{eq: 3rd form of cluster-cyclic}}&{\leq}
(\alpha_{TK}^{[i]S^n})^{-1}-\alpha_{ST}^{[i]S^m}\left(u_{n-m-1}-\alpha^{-1}u_{n-m-2}\right).
\end{aligned}
\end{equation}
Set $l=n-m \geq 1$ and $\mathbf{w}=[i]S^m$. By \eqref{eq: u-ainvu equality}, we have
\begin{equation}
\langle \bar{\bf g}^{[i]S^mT},\mathfrak{c}_i(n) \rangle_D \leq (\alpha_{TK}^{\mathbf{w}S^l})^{-1}-\alpha_{ST}^{\mathbf{w}}\alpha^{l-1}=(\alpha_{TK}^{\mathbf{w}S^l})^{-1}\left(1-\alpha_{ST}^{\mathbf{w}}\alpha_{TK}^{\mathbf{w}S^l}\alpha^{l-1}\right).
\end{equation}
If $l=1$, $1-\alpha_{ST}^{\mathbf{w}}\alpha_{TK}^{\mathbf{w}S^l}\alpha^{l-1}$ is non-positive because $\alpha_{ST}^{\mathbf{w}},\alpha_{TK}^{\mathbf{w}S^l} \geq 1$. Moreover, by \Cref{lem: monotonicity of alpha product}, the factor $1-\alpha_{ST}^{\mathbf{w}}\alpha_{TK}^{\mathbf{w}S^l}\alpha^{l-1}$ is non-increasing. Thus, $\langle \bar{\bf g}^{[i]S^mT},\mathfrak{c}_{i}^{-}(n) \rangle_D \leq 0$ holds. Therefore, we obtain $\mathscr{V}_{\circ}^{[i]S^mT} \subset \negativeregion{\mathfrak{c}_{i}^{-}(n)}$ when $m<n$.
\par
If $m \geq n$, by \eqref{eq: f expression m>n}, we have
\begin{equation}
\langle \tilde{\bf g}_{S}^{[i]S^mT}, \mathfrak{c}_{i}^{-}(n) \rangle_D=u_{m-n-1}>0,
\quad
\langle \tilde{\bf g}_{T}^{[i]S^mT}, \mathfrak{c}_{i}^{-}(n) \rangle_D=u_{m-n}>0,
\end{equation}
and
\begin{equation}
\begin{aligned}
\langle \bar{\bf g}^{[i]S^mT}, \mathfrak{c}_{i}^{-}(n) \rangle_D&=(\alpha_{TK}^{[i]S^n})^{-1}-(\alpha_{TK}^{[i]S^m})^{-1}u_{m-n}+\alpha_{ST}^{[i]S^m}u_{m-n-1}
\\
&=
(\alpha_{TK}^{[i]S^n})^{-1}-(\alpha_{TK}^{[i]S^m})^{-1}\left(u_{m-n}-\alpha_{TK}^{[i]S^m}\alpha_{ST}^{[i]S^m}u_{m-n-1}\right)
\\
\overset{\eqref{eq: 3rd form of cluster-cyclic}}&{\geq}
(\alpha_{TK}^{[i]S^n})^{-1}-(\alpha_{TK}^{[i]S^m})^{-1}\left(u_{m-n}-\alpha u_{m-n-1}\right).
\end{aligned}
\end{equation}
Set $l=m-n \geq 0$ and $\mathbf{w}=[i]S^n$. Then, by \eqref{eq: u-au equality}, we have
\begin{equation}
\langle \bar{\bf g}^{[i]S^mT}, \mathfrak{c}_{i}^{-}(n) \rangle_D \geq (\alpha_{TK}^{\mathbf{w}})^{-1}-(\alpha_{TK}^{\mathbf{w}S^l})^{-1}\alpha^{-l} \geq (\alpha_{TK}^{\mathbf{w}})^{-1}-(\alpha_{TK}^{\mathbf{w}S^0})^{-1}\alpha^{0}=0
\end{equation}
by \Cref{lem: monotonicity of alpha product}. Therefore, we obtain $\mathscr{V}_{\circ}^{[i]S^mT} \subset \positiveregion{\mathfrak{c}_{i}^{-}(n)}$ when $m \geq n$.
\end{proof}
Thanks to \Cref{lem: hyperplane lemma}, every local upper bound $\mathscr{V}_{\circ}^{[i]S^mT}$ is eventually included in $\positiveregion{\mathfrak{c}_{i}^{+}(n)}$ for enough large $n$. 
The limit of $\mathfrak{c}_{i}^{+}(n)$ is given as follows.
\begin{lemma}
For each $i=1,2,3$, we have
\begin{equation}
\tildelim_{n \to \infty} \mathfrak{c}_{i}^{+}(n) \sim (-p_{s_0t_0}+\alpha_{s_0k_0}p_{k_0t_0})\tilde{\mathbf{e}}_{t_0}+\tilde{\mathbf{e}}_{s_0}+\alpha_{s_0k_0}\tilde{\mathbf{e}}_{k_0}
\end{equation}
\end{lemma}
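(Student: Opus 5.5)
Since $\tildelim$ only depends on the direction, the plan is to divide $\mathfrak{c}_{i}^{+}(n)$ by a suitable growing scalar and compute the honest limit of each coordinate. Write $p=p_{k_0s_0}$, $\alpha=\alpha(p)=\alpha_{s_0k_0}$, $u_n=u_n(p)$. The definition \eqref{eq: definition of c+ vector} has three coefficients:
\begin{equation*}
\alpha_{ST}^{[i]S^n}\ \text{on}\ \tilde{\mathbf{e}}_{t_0},\qquad u_n\ \text{on}\ \tilde{\mathbf{e}}_{s_0},\qquad u_{n+1}\ \text{on}\ \tilde{\mathbf{e}}_{k_0}.
\end{equation*}
For $p>2$ we divide by $\alpha^{n+1}/\sqrt{p^2-4}$, and for $p=2$ by $n$. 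By \eqref{eq: limit of Chebychev polynomials}, the $\tilde{\mathbf{e}}_{s_0}$ and $\tilde{\mathbf{e}}_{k_0}$ coefficients then tend to $1$ and $\alpha$ respectively. This already gives the last two terms of the claimed vector, with the correct ratio $1:\alpha$.

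Next we show that the $\tilde{\mathbf{e}}_{t_0}$ coefficient, after the same normalization, tends to $-p_{s_0t_0}+\alpha p_{k_0t_0}$.

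1. From \eqref{eq: p in trunk},
\begin{equation*}
P_n:=p_{ST}^{[i]S^n}=-u_n p_{s_0t_0}+u_{n+1}p_{k_0t_0},
\end{equation*}
so $P_n/(\text{scaling})\to -p_{s_0t_0}+\alpha p_{k_0t_0}$ by the same Chebyshev limits.
2. Since $\alpha(x)=\tfrac12\big(x+\sqrt{x^2-4}\big)$, we have $0\le x-\alpha(x)=\alpha(x)^{-1}\le 1$ for $x\ge 2$. Hence $\alpha_{ST}^{[i]S^n}=\alpha(P_n)$ differs from $P_n$ by at most $1$, and this bounded difference vanishes after dividing by a scaling that tends to infinity.
3. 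Consequently the normalized $\tilde{\mathbf{e}}_{t_0}$ coefficient converges to $-p_{s_0t_0}+\alpha p_{k_0t_0}$.

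Combining the three coordinate limits, the rescaled vectors converge to
\begin{equation*}
\mathbf{L}:=(-p_{s_0t_0}+\alpha p_{k_0t_0})\tilde{\mathbf{e}}_{t_0}+\tilde{\mathbf{e}}_{s_0}+\alpha\tilde{\mathbf{e}}_{k_0}.
\end{equation*}
This vector is nonzero because its $\tilde{\mathbf{e}}_{s_0}$ component is $1$. The limit of a positive rescaling of a sequence, once nonzero, determines $\tildelim$ up to $\sim$. This is the same reasoning as in \Cref{lem: limit of Chebyshev difference}: after normalizing by $\|\cdot\|_D$, the positive scaling factor cancels.

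The expected obstacle is confirming that the $\tilde{\mathbf{e}}_{t_0}$ coefficient really has the stated limit, that is, replacing $\alpha_{ST}^{[i]S^n}$ by $p_{ST}^{[i]S^n}$. Step 2 handles this with the bound $x-1\le\alpha(x)\le x$.

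A secondary point is checking that $\alpha_{SK}^{[i]S^n}=\alpha_{s_0k_0}$ for all $n$, which follows from \eqref{eq: p in trunk}. We also note that $-p_{s_0t_0}+\alpha p_{k_0t_0}\ge 0$ by \eqref{eq: 2nd form of cluster-cyclic}. This nonnegativity is not needed for the limit, but it is consistent with $P_n\ge 2$ for all $n$.
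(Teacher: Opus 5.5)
Your proof is correct and follows essentially the same route as the paper: you normalize $\mathfrak{c}_{i}^{+}(n)$ by $\alpha^{n+1}/\sqrt{p^2-4}$ (or by $n$ when $p=2$), read off the coordinate limits from the Chebyshev asymptotics and the trunk formula for $p_{ST}^{[i]S^n}$, and then transfer that limit from $p_{ST}^{[i]S^n}$ to $\alpha_{ST}^{[i]S^n}$. Your bound $0\le x-\alpha(x)=\alpha(x)^{-1}\le 1$ makes explicit a step the paper only justifies by citing $\alpha(x)=\frac{1}{2}(x+\sqrt{x^2-4})$, and it remains valid even if the limiting $\tilde{\mathbf{e}}_{t_0}$-coefficient were zero.
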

\begin{proof}
Set $p=p_{s_0k_0}$ and $\alpha=\alpha_{s_0k_0}$. Suppose $p>2$.
By \eqref{eq: limit of Chebychev polynomials} and \eqref{eq: p in trunk}, we have
\begin{equation}
\lim_{n \to \infty} \frac{\sqrt{p^2-4}}{\alpha^{n+1}}p_{ST}^{[i]S^{n}}=- p_{s_0t_0} + \alpha p_{k_0t_0}.
\end{equation}
Since $\alpha_{ST}^{[i]S^n}=\frac{1}{2}\left(p_{ST}^{[i]S^n}+\sqrt{(p_{ST}^{[i]S^n})^2-4}\right)$, we also have
\begin{equation}
\lim_{n \to \infty} \frac{\sqrt{p^2-4}}{\alpha^{n+1}}\alpha_{ST}^{[i]S^n}=- p_{s_0t_0} + \alpha p_{k_0t_0}
\end{equation}
Therefore, we obtain
\begin{equation}
\lim\limits_{n \to \infty} \frac{\sqrt{p^2-4}}{\alpha^{n+1}}\mathfrak{c}_{i}^{+}(n)=(- p_{s_0t_0} + \alpha p_{k_0t_0})\tilde{\mathbf{e}}_{t_0}+\tilde{\mathbf{e}}_{s_0}+\alpha\tilde{\mathbf{e}}_{k_0}.
\end{equation}
When $p=2$, the claim is shown by replacing $\frac{\sqrt{p^2-4}}{\alpha^{n+1}}$ with $\frac{1}{n}$ in the above argument.
\end{proof}
Based on the observations in this subsection, let us define
\begin{equation}
\mathfrak{c}_{i}^{+}=(-p_{s_0t_0}+\alpha_{s_0k_0}p_{k_0t_0})\tilde{\mathbf{e}}_{t_0}+\tilde{\mathbf{e}}_{s_0}+\alpha_{s_0k_0}\tilde{\mathbf{e}}_{k_0},
\quad
\mathfrak{c}_{i}^{-}=\mathfrak{c}_{i}^{-}(0)=-\tilde{\mathbf{e}}_{k_0}-\alpha_{t_0k_0}^{-1}\tilde{\mathbf{e}}_{t_0}.
\end{equation}
Then, we obtain the following upper bound.
\begin{lemma}\label{lem: rough upper bound for branches}
For each $i=1,2,3$, we have
\begin{equation}\label{eq: rough upper bound for branches}
\bigcup_{m \in \mathbb{Z}_{\geq 0}} \mathscr{V}_{\circ}^{[i]S^mT} \subset \positiveregion{\mathfrak{c}_{i}^{+}} \cap \positiveregion{\mathfrak{c}_{i}^{-}} \cap \negativeregion{\tilde{\mathbf{e}}_{t_0}}.
\end{equation}
\end{lemma}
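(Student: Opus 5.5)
The plan is to check the three half-spaces on the right-hand side of \eqref{eq: rough upper bound for branches} separately, each time for a fixed $m \in \mathbb{Z}_{\geq 0}$ and an arbitrary point $\mathbf{x} \in \mathscr{V}_{\circ}^{[i]S^mT}$. Everything will come from \Cref{lem: hyperplane lemma} and the expression \eqref{eq: first upper bound expression via the plane}.

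\emph{The half-space $\negativeregion{\tilde{\mathbf{e}}_{t_0}}$.} By \eqref{eq: first upper bound expression via the plane}, $\mathscr{V}_{\circ}^{[i]S^mT} \subset \negativeregion{\tilde{\mathbf{e}}_{t_0}}$ for every $m$, so nothing remains to prove here.

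\emph{The half-space $\positiveregion{\mathfrak{c}_{i}^{-}}$.} By definition $\mathfrak{c}_{i}^{-}=\mathfrak{c}_{i}^{-}(0)$. Apply \Cref{lem: hyperplane lemma} with $n=0$. If $m>0$, part ($b$) gives $\mathscr{V}_{\circ}^{[i]S^mT} \subset \positiveregion{\mathfrak{c}_{i}^{-}(0)}$. If $m=0$, part ($c$) gives the same inclusion. So every $\mathscr{V}_{\circ}^{[i]S^mT}$ lies in $\positiveregion{\mathfrak{c}_{i}^{-}}$.

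\emph{The half-space $\positiveregion{\mathfrak{c}_{i}^{+}}$.} This is the only step that needs a limit argument.
\begin{itemize}
\item For every $n \geq m$, parts ($a$) and ($c$) of \Cref{lem: hyperplane lemma} give $\langle \mathbf{x}, \mathfrak{c}_{i}^{+}(n)\rangle_D>0$. Dividing by $\|\mathfrak{c}_{i}^{+}(n)\|_D$ and letting $n\to\infty$, the preceding lemma on $\tildelim_{n\to\infty}\mathfrak{c}_{i}^{+}(n)$ gives $\langle \mathbf{x}, \mathfrak{c}_{i}^{+}\rangle_D \geq 0$. This uses that $\tildelim_{n\to\infty}\mathfrak{c}_{i}^{+}(n)$ is a positive multiple of $\mathfrak{c}_{i}^{+}$.
\item The vector $\mathfrak{c}_{i}^{+}$ is nonzero, since its $\tilde{\mathbf{e}}_{s_0}$-coefficient is $1$.
\item To get strict inequality, use that $\mathscr{V}_{\circ}^{[i]S^mT}=\mathcal{C}^{\circ}(\tilde{\mathbf{g}}_S,\tilde{\mathbf{g}}_T,\bar{\mathbf{g}})$ is an open cone, because its three generators are linearly independent. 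Suppose $\langle \mathbf{x}, \mathfrak{c}_{i}^{+}\rangle_D=0$. Then $\mathbf{x}-\epsilon\,\mathfrak{c}_{i}^{+}\in\mathscr{V}_{\circ}^{[i]S^mT}$ for small $\epsilon>0$. Its pairing with $\mathfrak{c}_{i}^{+}$ is $-\epsilon\|\mathfrak{c}_{i}^{+}\|_D^2<0$, contradicting the non-strict inequality just proved.
\end{itemize}

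The three inclusions together give \eqref{eq: rough upper bound for branches}. The main delicate point is the passage from the strict inequalities for each $n$ to a strict inequality in the limit. It is handled by the openness of $\mathscr{V}_{\circ}^{[i]S^mT}$, which rests on the linear independence of its generators. That independence follows from the fact that $\mathscr{V}_{\circ}^{[i]S^mT}$ is cut out by three independent linear conditions in \eqref{eq: first upper bound expression via the plane}.
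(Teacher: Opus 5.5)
Your proposal is correct and follows the paper's proof essentially step for step. The inclusions in $\negativeregion{\tilde{\mathbf{e}}_{t_0}}$ and $\positiveregion{\mathfrak{c}_{i}^{-}}$ come from \eqref{eq: first upper bound expression via the plane} and \Cref{lem: hyperplane lemma} with $n=0$, and the inclusion in $\positiveregion{\mathfrak{c}_{i}^{+}}$ comes from the normalized limit of $\langle \mathbf{x},\mathfrak{c}_{i}^{+}(n)\rangle_D>0$ followed by the openness of $\mathscr{V}_{\circ}^{[i]S^mT}$ (which is in fact immediate from \eqref{eq: first upper bound expression via the plane}, since that display writes it as a finite intersection of open half-spaces).
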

\begin{proof}
The inclusions $\mathscr{V}_{\circ}^{[i]S^mT} \subset \negativeregion{\tilde{\mathbf{e}}_{t_0}}$ and $\mathscr{V}_{\circ}^{[i]S^mT} \subset \positiveregion{\mathfrak{c}_{i}^{-}}$ are shown by \eqref{eq: first upper bound expression via the plane} and \Cref{lem: hyperplane lemma}. We show $\mathscr{V}_{\circ}^{[i]S^mT} \subset \positiveregion{\mathfrak{c}_{i}^{+}}$. Let $\mathbf{x} \in \mathscr{V}_{\circ}^{[i]S^mT}$. Then, by \Cref{lem: hyperplane lemma}, we have $\langle \mathbf{x}, \mathfrak{c}_{i}^{+}(n) \rangle_D > 0$ for any $n > m$. Since $\mathfrak{c}_{i}^{+}\sim\tildelim\limits_{n \to \infty} \mathfrak{c}_{i}^{+}(n)$, by taking its limit with some normalization, we have
\begin{equation}
\langle \mathbf{x}, \mathfrak{c}_{i}^{+} \rangle_D \geq 0.
\end{equation}
Thus, we have $\mathscr{V}_{\circ}^{[i]S^mT} \subset \positiveclosure{\mathfrak{c}_{i}^{+}}$. Since $\mathscr{V}_{\circ}^{[i]S^mT}$ is an open set, this implies $\mathscr{V}_{\circ}^{[i]S^mT} \subset \left(\positiveclosure{\mathfrak{c}_{i}^{+}}\right)^{\circ}=\positiveregion{\mathfrak{c}_{i}^{+}}$ as desired.
\end{proof}
From \Cref{lem: rough upper bound for branches}, we may give other rough but simple upper bounds via some planes. For example, since
\begin{equation}
-\tilde{\mathbf{e}}_{k_0}-\tilde{\mathbf{e}}_{t_0}=\mathfrak{c}_{i}^{-}-(1-\alpha_{t_0k_0}^{-1})\tilde{\mathbf{e}}_{t_0}
\end{equation}
and $1-\alpha_{t_0k_0}^{-1} \geq 0$, we have
\begin{equation}\label{eq: 2nd rough upper bound}
\bigcup_{m \in \mathbb{Z}_{\geq 0}}\mathscr{V}_{\circ}^{[i]S^mT} \subset \positiveregion{-\tilde{\mathbf{e}}_{k_0}-\tilde{\mathbf{e}}_{t_0}}.
\end{equation}
This is because, if $\mathbf{x}$ belongs to the set in the right hand size of \eqref{eq: rough upper bound for branches}, then we have
\begin{equation}
\langle \mathbf{x}, -\tilde{\mathbf{e}}_{t_0}-\tilde{\mathbf{e}}_{k_0} \rangle_D = \langle \mathbf{x}, \mathfrak{c}_{i}^{-} \rangle_D-(1-\alpha_{s_0k_0}^{-1})\langle \mathbf{x}, \tilde{\mathbf{e}}_{t_0} \rangle_D > 0.
\end{equation}
Similarly, since
\begin{equation}
\tilde{\mathbf{e}}_{s_0}+\alpha_{s_0k_0}\tilde{\mathbf{e}}_{k_0}=\mathfrak{c}_{i}^{+}-(-p_{s_0t_0}+\alpha_{s_0k_0}p_{k_0t_0})\tilde{\mathbf{e}}_{t_0}
\end{equation}
and $-p_{s_0t_0}+\alpha_{s_0k_0}p_{k_0t_0} \geq 0$ by \eqref{eq: 2nd form of cluster-cyclic}, we have
\begin{equation}\label{eq: 1st rough upper bound}
\bigcup_{m \in \mathbb{Z}_{\geq 0}}\mathscr{V}_{\circ}^{[i]S^mT} \subset \positiveregion{\tilde{\mathbf{e}}_{s_0}+\alpha_{s_0k_0}\tilde{\mathbf{e}}_{k_0}}.
\end{equation}
Moreover, by \eqref{eq: support of the trunk}, this region $\positiveregion{\tilde{\mathbf{e}}_{s_0}+\alpha_{s_0k_0}\tilde{\mathbf{e}}_{k_0}}$ also includes the trunk $\Delta^{<[i]S^{\infty}}(B)$ except for one line $\mathcal{C}(\tilde{\mathbf{e}}_{t_0})$. So, we obtain the following proposition.
\begin{proposition}
For each $i=1,2,3$, we have
\begin{equation}
|\Delta^{\geq [i]}(B)| \subset \positiveregion{\tilde{\mathbf e}_{s_0}+\alpha_{s_0k_0}\tilde{\mathbf{e}}_{k_0}} \cup \mathcal{C}(\tilde{\mathbf{e}}_{t_0}).
\end{equation}
\end{proposition}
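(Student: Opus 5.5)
The plan is to split $|\Delta^{\geq [i]}(B)|$ into the trunk and the maximal branches, which is the decomposition recorded after \Cref{def: K S T labeling}:
\[
|\Delta^{\geq [i]}(B)| = |\Delta^{<[i]S^{\infty}}(B)| \cup \bigcup_{n \geq 0} |\Delta^{\geq [i]S^nT}(B)|.
\]
I will bound each piece separately.

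\emph{Trunk.} Use the second expression in \eqref{eq: support of the trunk}. The trunk support is the union of $\mathcal{C}(\tilde{\mathbf{e}}_{t_0})$ with a set contained in $\positiveregion{\alpha_{s_0k_0}\tilde{\mathbf{e}}_{k_0}+\tilde{\mathbf{e}}_{s_0}}$. So the trunk already lies in the target set.

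\emph{Maximal branches.} By \Cref{thm: local upper bound theorem}, $|\Delta^{\geq [i]S^nT}(B)| \subset \mathscr{V}^{[i]S^nT} = \mathcal{C}(\tilde{\mathbf{g}}_S^{[i]S^nT},\tilde{\mathbf{g}}_T^{[i]S^nT}) \cup \mathscr{V}_\circ^{[i]S^nT}$. The open part $\mathscr{V}_\circ^{[i]S^nT}$ is handled by \eqref{eq: 1st rough upper bound}, which was just derived from \Cref{lem: rough upper bound for branches} together with \eqref{eq: 2nd form of cluster-cyclic}.

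The boundary cone $\mathcal{C}(\tilde{\mathbf{g}}_S^{[i]S^nT},\tilde{\mathbf{g}}_T^{[i]S^nT})$ needs separate treatment. By \Cref{lem: T-mutations of modified c- g-vectores}(a), its generators are $\tilde{\mathbf{g}}_S^{[i]S^n}$ and $\tilde{\mathbf{g}}_K^{[i]S^n}$. Both are trunk $g$-vectors, so this cone is a face of the trunk cone $\mathcal{C}(G^{[i]S^n})$ and is covered by the trunk case.

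For a direct check, use \eqref{eq: g vectors in trunk} and the inner product $\langle \tilde{\mathbf{e}}_a,\tilde{\mathbf{e}}_b\rangle_D=\delta_{ab}$. Pairing $u_n\tilde{\mathbf{e}}_{s_0}-u_{n-1}\tilde{\mathbf{e}}_{k_0}$ with $\tilde{\mathbf{e}}_{s_0}+\alpha\tilde{\mathbf{e}}_{k_0}$, where $\alpha=\alpha_{s_0k_0}$, gives
\[
u_n-\alpha u_{n-1}=\alpha^{-n}>0,
\]
by the same telescoping identity used in the proof of \Cref{lem: hyperplane lemma}. The identity holds for $n\ge 0$ with $u_{-1}=0$. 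Since both generators pair strictly positively, the whole cone minus the origin lies in the open half-space.

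\emph{Origin and assembly.} The origin is the one point needing care, since it is not in an open half-space. It belongs to $\mathcal{C}(\tilde{\mathbf{e}}_{t_0})$, so it is covered. Taking the union over the trunk and all $n$ then gives the claim.

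The only potential obstacle is this boundary/closure bookkeeping: making sure every non-open face of a local upper bound is either a trunk face or pairs strictly positively. The computation above settles it.
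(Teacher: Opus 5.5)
Your proof is correct and follows the paper's route: the paper's one-line argument also splits $|\Delta^{\geq [i]}(B)|$ into the trunk, handled by \eqref{eq: support of the trunk}, and the maximal branches, handled by \eqref{eq: 1st rough upper bound} via \Cref{thm: local upper bound theorem}. Your extra treatment of the closed face $\mathcal{C}(\tilde{\mathbf{g}}_S^{[i]S^nT},\tilde{\mathbf{g}}_T^{[i]S^nT})$ of $\mathscr{V}^{[i]S^nT}$ makes explicit a step the paper leaves implicit: this face is a face of the trunk cone $\mathcal{C}(G^{[i]S^n})$, and your identity $u_n-\alpha u_{n-1}=\alpha^{-n}>0$ gives a direct check.
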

\begin{proof}
This follows from \eqref{eq: support of the trunk} and \eqref{eq: 1st rough upper bound}. 
\end{proof}

\subsection{Proof of \Cref{prop: separateness of upper bounds}} In this subsection, we focus on proving \Cref{prop: separateness of upper bounds}.
\begin{proof}[Proof of \Cref{prop: separateness of upper bounds}]
We prove the claim by case-by-case calculation in the following list:
\begin{enumerate}
\item $W=\mathscr{T}_i$ and $U=\mathscr{T}_j$ with $i \neq j$.\label{item: case 1 for the proof of separateness}
\item $W=\mathscr{V}_{\circ}^{[i]S^nT}$ and $U=\mathscr{V}_{\circ}^{[i]S^mT}$ with $n \neq m$.\label{item: case 2 for the proof of separateness}
\item $W=\mathscr{V}_{\circ}^{[i]S^nT}$ and $U=\mathscr{T}_j$.\label{item: case 3 for the proof of separateness}
\item $W=\mathscr{V}_{\circ}^{[i]S^nT}$ and $U=\mathscr{V}_{\circ}^{[j]S^mT}$ with $i \neq j$.\label{item: case 4 for the proof of separateness}
\end{enumerate}
In this proof, we set $k_0=K([i])$, $s_0=S([i])$, and $t_0=T([i])$.
\\
\eqref{item: case 1 for the proof of separateness} This follows from $\mathscr{T}_i \subset \negativeregion{\tilde{\mathbf{e}}_{i}} \cap \positiveregion{\tilde{\mathbf{e}}_{j}}$ and $\mathscr{T}_j \subset \positiveregion{\tilde{\mathbf{e}}_{i}} \cap \negativeregion{\tilde{\mathbf{e}}_{j}}$ by \Cref{fig: trunks}.
\\
\eqref{item: case 2 for the proof of separateness} This follows from \Cref{lem: hyperplane lemma}.
\\
\eqref{item: case 3 for the proof of separateness} If $j=i$, this follows from $\mathscr{V}_{\circ}^{[i]S^nT} \subset \negativeregion{\tilde{\mathbf{e}}_{t_0}}$ (\Cref{lem: rough upper bound for branches}) and $\mathscr{T}_i \subset \positiveregion{\tilde{\mathbf{e}}_{t_0}}$. Suppose $i \neq j$. If $j=s_0$, we have $K([j])=s_0$, $S([j])=t_0$, and $T([j])=k_0$ by \eqref{eq: i to s0 indices}. By \eqref{eq: definition of T}, we have
\begin{equation}
\mathscr{T}_{j}=\negativeregion{\tilde{\mathbf{e}}_{s_0}} \cap \positiveregion{\tilde{\mathbf{e}}_{k_0}} \cap \positiveregion{\alpha_{t_0s_0}\tilde{\mathbf{e}}_{s_0}+\tilde{\mathbf{e}}_{t_0}}.
\end{equation}
Note that $\negativeregion{\tilde{\mathbf{e}}_{s_0}} \cap \positiveregion{\alpha_{t_0s_0}\tilde{\mathbf{e}}_{s_0}+\tilde{\mathbf{e}}_{t_0}} \subset \positiveregion{\tilde{\mathbf{e}}_{t_0}}$ since for any ${\bf x}$ in the left hand side,
\begin{equation}
\langle \mathbf{x}, \tilde{\mathbf{e}}_{t_0}\rangle_D = \langle \mathbf{x}, \alpha_{t_0s_0}\tilde{\mathbf{e}}_{s_0}+\tilde{\mathbf{e}}_{t_0} \rangle_D - \alpha_{t_0s_0} \langle \mathbf{x}, \tilde{\mathbf{e}}_{s_0} \rangle_D > 0. 
\end{equation}
Hence, $\mathscr{T}_{j} \subset \positiveregion{\tilde{\mathbf{e}}_{t_0}}$ holds. On the other hand, by \eqref{eq: rough upper bound for branches}, we have $\mathscr{V}_{\circ}^{[i]S^nT} \subset \negativeregion{\tilde{\mathbf{e}}_{t_0}}$. Thus, they are disjoint. If $j=t_0$, we have $K([j])=t_0$, $S([j])=k_0$, and $T([j])=s_0$ by \eqref{eq: i to t0 indices}. Then, we have
\begin{equation}
\mathscr{T}_{j}=\negativeregion{\tilde{\mathbf{e}}_{t_0}} \cap \positiveregion{\tilde{\mathbf{e}}_{s_0}} \cap \positiveregion{\alpha_{k_0t_0}\tilde{\mathbf{e}}_{t_0}+\tilde{\mathbf{e}}_{k_0}}.
\end{equation}
Note that $\negativeregion{\tilde{\mathbf{e}}_{t_0}} \cap \positiveregion{\alpha_{k_0t_0}\tilde{\mathbf{e}}_{t_0}+\tilde{\mathbf{e}}_{k_0}} \subset \positiveregion{\tilde{\mathbf{e}}_{t_0}+\tilde{\mathbf{e}}_{k_0}}$ holds because, for any $\mathbf{x}$ in the left hand side,
\begin{equation}
\langle \mathbf{x}, \tilde{\mathbf{e}}_{t_0}+\tilde{\mathbf{e}}_{k_0} \rangle_D=\langle \mathbf{x}, \alpha_{k_0t_0}\tilde{\mathbf{e}}_{t_0}+\tilde{\mathbf{e}}_{k_0} \rangle_D -(\alpha_{k_0t_0}-1) \langle \mathbf{x}, \tilde{\mathbf{e}}_{t_0} \rangle_D > 0,
\end{equation}
where we use $\alpha_{k_0t_0} \geq 1$. On the other hand, by \eqref{eq: 2nd rough upper bound}, we have $\mathscr{V}_{\circ}^{[i]S^nT} \subset \negativeregion{\tilde{\mathbf{e}}_{t_0}+\tilde{\mathbf{e}}_{k_0}}$ holds. Thus, the claim holds.
\\
\eqref{item: case 4 for the proof of separateness} In this case, we may additionally assume $j=t_0$ without loss of generality. (If $j=s_0$, then $i=T([j])$ holds.) Then, we have $K([j])=t_0$, $S([j])=k_0$, and $T([j])=s_0$ by \eqref{eq: i to t0 indices}. By \eqref{eq: 1st rough upper bound}, we have $\mathscr{V}_{\circ}^{[j]S^mT} \subset \positiveregion{\tilde{\mathbf{e}}_{k_0}+\alpha_{k_0t_0}\tilde{\mathbf{e}}_{t_0}}$.
On the other hand, $\negativeregion{\tilde{\mathbf{e}}_{t_0}+\tilde{\mathbf{e}}_{k_0}} \cap \negativeregion{\mathbf{e}_{t_0}} \subset \negativeregion{\tilde{\mathbf{e}}_{k_0}+\alpha_{k_0t_0}\tilde{\mathbf{e}}_{t_0}}$ holds since for any $\mathbf{x}$ in the left hand side,
\begin{equation}
\langle \mathbf{x}, \tilde{\mathbf{e}}_{k_0}+\alpha_{k_0t_0}\tilde{\mathbf{e}}_{t_0} \rangle_D
=\langle \mathbf{x}, \tilde{\mathbf{e}}_{k_0}+\tilde{\mathbf{e}}_{t_0} \rangle_D + (\alpha_{s_0k_0}-1)\langle \mathbf{x}, \tilde{\mathbf{e}}_{t_0} \rangle_D<0.
\end{equation} 
Thus, by \eqref{eq: rough upper bound for branches} and \eqref{eq: 2nd rough upper bound}, we have $\mathscr{V}_{\circ}^{[i]S^nT} \subset \negativeregion{\tilde{\mathbf{e}}_{k_0}+\alpha_{k_0t_0}\tilde{\mathbf{e}}_{t_0}}$, which completes the proof.
\end{proof}

\section{Applications to $g$-vectors}\label{sec: application}
In this section, we present an application of \Cref{thm: global upper bound theorem}, \Cref{thm: local upper bound theorem}, and \Cref{prop: separateness of upper bounds} to the $g$-vectors, especially about the non-periodicity and the sign structure.
\subsection{Non-periodicity of $g$-vectors}
In \cite[Thm.~5.2]{AC25b}, we showed that there is no periodicity among $G$-matrices, nor among $G$-cones. Here, we prove a stronger result on $g$-vectors.
\par
We focus on an equality of $g$-vectors $\mathbf{g}_l^{\mathbf{w}}=\mathbf{g}_{m}^{\mathbf{u}}$ for $\mathbf{w},\mathbf{u} \in \mathcal{T}$ and $l,m=1,2,3$. An equality $\mathbf{g}_l^{\mathbf{w}}=\mathbf{g}_m^{\mathbf{u}}$ is said to be {\em trivial} if $l=m$ and ${\bf u}={\bf w}[k_1,\dots,k_r]$ with $k_i \neq l$. (In this case, this equality always holds by the mutation rules \eqref{eq: mutation of g-vectors} of $g$-vectors.)
\begin{theorem}\label{thm: non periodicity}
Suppose that an initial exchange matrix is cluster-cyclic of rank $3$. Then, all equalities ${\bf g}^{\bf w}_{l}={\bf g}^{\bf u}_{m}$ ($l,m=1,2,3$, $\mathbf{w},\mathbf{u} \in \mathcal{T}$) of $g$-vectors are trivial.
\end{theorem}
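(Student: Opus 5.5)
The plan is to reduce the statement to a claim about the ``birth'' of each $g$-vector, and then separate births using the upper bounds.

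\textbf{Reduction to births.} For $\mathbf{u} \in \mathcal{T}$ and $l=1,2,3$, let $\mathbf{v} \leq \mathbf{u}$ be the longest prefix whose last index is $l$. If no such prefix exists, set $\mathbf{v}=\emptyset$. By \eqref{eq: mutation of g-vectors}, $\mathbf{g}_l^{\mathbf{u}}=\mathbf{g}_l^{\mathbf{v}}$, and we call $(\mathbf{v},l)$ the birth of $\mathbf{g}_l^{\mathbf{u}}$. A vector born at $\mathbf{v}\neq\emptyset$ is $\mathbf{g}_K^{\mathbf{v}}$, while the births at $\emptyset$ are $\mathbf{e}_1,\mathbf{e}_2,\mathbf{e}_3$.

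Suppose all births give pairwise distinct rays. Then an equality $\mathbf{g}_l^{\mathbf{w}}=\mathbf{g}_m^{\mathbf{u}}$ forces the two vectors to have the same birth $(\mathbf{v},l)$, so in particular $l=m$. Both $\mathbf{w}$ and $\mathbf{u}$ are then reached from $\mathbf{v}$ by reduced words avoiding $l$. Going back from $\mathbf{w}$ to $\mathbf{v}$ and then forward to $\mathbf{u}$ gives $\mathbf{u}=\mathbf{w}[k_1,\dots,k_r]$ with all $k_j\neq l$, so the equality is trivial. It therefore suffices to prove that distinct births give non-proportional vectors.

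\textbf{Trunks and births inside one branch.} In the trunk of direction $i$, \Cref{lem: infinite S mutation in trunks} gives $\tilde{\mathbf{g}}_K^{[i]S^n}=-u_n\tilde{\mathbf{e}}_{k_0}+u_{n+1}\tilde{\mathbf{e}}_{s_0}$. These lie in the plane spanned by $\tilde{\mathbf{e}}_{k_0},\tilde{\mathbf{e}}_{s_0}$, and they are pairwise non-proportional because $u_n/u_{n+1}$ is strictly monotone in $n$. They also differ from the vectors $\tilde{\mathbf{e}}_j$. Every such vector lies in the closure of $\mathscr{T}_i$.

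Now consider a branch point $\mathbf{w}$. If $\mathbf{u}\ge\mathbf{w}M$ with $M\in\{S,T\}$, then iterating the monotonicity in \Cref{thm: local upper bound theorem}($b$), together with $\tilde{\mathbf{g}}_K^{\mathbf{u}}\in\mathscr{V}_\circ^{\mathbf{u}}$ from \eqref{eq: initail inclusion lemma}, gives $\tilde{\mathbf{g}}_K^{\mathbf{u}}\in \mathscr{V}^{\mathbf{w}M}$. I also need to rule out that $\tilde{\mathbf{g}}_K^{\mathbf{u}}$ lands on the face $\mathcal{C}(\tilde{\mathbf{g}}_S^{\mathbf{w}M},\tilde{\mathbf{g}}_T^{\mathbf{w}M})$, which lies in $\mathcal{C}(G^{\mathbf{w}})$. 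For this I will check that the open cones are nested too, $\mathscr{V}_\circ^{\mathbf{u}M'}\subset\mathscr{V}_\circ^{\mathbf{u}}$. This holds because the generators of $\mathscr{V}^{\mathbf{u}M'}$ lie in $\mathscr{V}^{\mathbf{u}}$ and are not all on one boundary face, as the computation in the proof of \Cref{thm: local upper bound theorem}($a$) shows. This yields $\tilde{\mathbf{g}}_K^{\mathbf{u}}\in\mathscr{V}_\circ^{\mathbf{w}M}$.

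By \Cref{thm: local upper bound theorem}($a$), the three sets $\mathcal{C}(G^{\mathbf{w}})$, $\mathscr{V}_\circ^{\mathbf{w}S}$ and $\mathscr{V}_\circ^{\mathbf{w}T}$ are pairwise disjoint. Hence births in the $S$-subtree, births in the $T$-subtree, and births at or before $\mathbf{w}$ (whose vectors lie in $\mathcal{C}(G^{\mathbf{w}})$) are all distinct. For two births $\mathbf{v}\neq\mathbf{v}'$ inside one maximal branch, I apply this at their longest common prefix $\mathbf{w}$. If one of them is $\mathbf{w}$ itself, I use $\mathcal{C}(G^{\mathbf{w}})$ versus $\mathscr{V}_\circ^{\mathbf{w}M}$.

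\textbf{Different pieces.} Births in the maximal branch $[i]S^nT$ lie in the open set $\mathscr{V}_\circ^{[i]S^nT}$; this includes the root, since $\tilde{\mathbf{g}}_K^{[i]S^nT}\in\mathscr{V}_\circ^{[i]S^nT}$. Trunk births of direction $j$ lie in $\overline{\mathscr{T}_j}$. By \Cref{prop: separateness of upper bounds}, distinct open pieces are disjoint, so an open piece also misses the closure of any other piece. This separates branch births from all trunk births and initial vectors, and separates births in different maximal branches. Trunk births of different directions $i\ne j$ and the vectors $\mathbf{e}_l$ I compare directly from the explicit formulas. The sign pattern $-u_n\tilde{\mathbf{e}}_{k_0}+u_{n+1}\tilde{\mathbf{e}}_{s_0}$ identifies the ordered pair $(k_0,s_0)$, and the pairs for different $i$ differ by \eqref{eq: i to s0 indices}.

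I expect the main obstacle to be the interior-nesting step: ensuring that newly born vectors land in the open cones $\mathscr{V}_\circ$ and not on their boundary, which could coincide with faces of $\mathcal{C}(G^{\mathbf{w}})$.
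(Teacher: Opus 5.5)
Your proposal is correct and follows essentially the same route as the paper: you reduce to the newly created vectors $\tilde{\mathbf{g}}_K^{\mathbf{w}}$ (your ``births''), handle trunks via the explicit Chebyshev formulas, separate trunks and different maximal branches with \Cref{prop: separateness of upper bounds}, and separate births within one maximal branch at their longest common prefix using \Cref{thm: local upper bound theorem}($a$). The interior-nesting step you flag is what the paper uses implicitly when it writes $\tilde{\mathbf{g}}_K^{\mathbf{w}}\in\mathscr{V}_{\circ}^{\mathbf{w}}\subset\mathscr{V}_{\circ}^{[i]S^nT}$, and it is immediate: $\mathscr{V}_{\circ}^{\mathbf{u}M'}$ is an open set contained in $\mathscr{V}^{\mathbf{u}}$ by \Cref{thm: local upper bound theorem}($b$), so it lies in the interior of $\mathscr{V}^{\mathbf{u}}$, which is $\mathscr{V}_{\circ}^{\mathbf{u}}$.
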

\begin{proof}
More strongly, we claim that there are no nontrivial cone equalities $\mathcal{C}(\mathbf{g}_{l}^{\mathbf{w}})=\mathcal{C}(\mathbf{g}_{m}^{\mathbf{u}})$. For this purpose, we can use the modified $g$-vectors instead of ordinary ones because $\mathcal{C}(\mathbf{g}_{l}^{\mathbf{w}})=\mathcal{C}(\tilde{\mathbf{g}}_{l}^{\mathbf{w}})$ holds.
Note that by \Cref{lem: S-mutations of modified c- g-vectors} and \Cref{lem: T-mutations of modified c- g-vectores}, for any $M=S,T$, $\tilde{\mathbf{g}}_{S}^{\mathbf{w}M}$ and $\tilde{\mathbf{g}}_{T}^{\mathbf{w}M}$ respectively coincide with one of $\{\tilde{\mathbf{g}}_{K}^{\mathbf{w}}, \tilde{\mathbf{g}}_{S}^{\mathbf{w}}, \tilde{\mathbf{g}}_{T}^{\mathbf{w}}\}$, and the induced equalities are trivial. Moreover, all modified $g$-vectors except the initial ones $\tilde{\mathbf{e}}_1$, $\tilde{\mathbf{e}}_2$, $\tilde{\mathbf{e}}_3$ can be expressed as the form $\tilde{\mathbf{g}}_{K}^{\mathbf{w}}$ with $\mfw \neq \emptyset$. Thus, if a nontrivial equality exists, one of them should be expressed as $\mathcal{C}(\tilde{\mathbf{g}}_{K}^{\mathbf{w}})=\mathcal{C}(\tilde{\mathbf{g}}_{K}^{\mathbf{u}})$ with $\mathbf{w} \neq \mathbf{u}$ or $\mathcal{C}(\tilde{\mathbf{g}}^{\mathbf{w}}_{K})=\mathcal{C}(\tilde{\mathbf{e}}_m)$, where $m\in \{1,2,3\}$ and $\mfu,\mfw \neq \emptyset$. We will prove that this phenomenon never occurs.
\par
If both $\mathbf{w}$ and $\mathbf{u}$ are in trunks, this fact can be shown by \Cref{lem: infinite S mutations}. Suppose that $\mathbf{w}$ is in a branch, that is, ${\bf w}=[i]S^nTX$ for some $i=1,2,3$, $n \in \mathbb{Z}_{\geq 0}$, and $X \in \mathcal{M}$. Then, by \eqref{eq: initail inclusion lemma} and \Cref{thm: local upper bound theorem}, we have $\tilde{\mathbf{g}}_K^{\mathbf{w}} \in \mathscr{V}_{\circ}^{\mathbf{w}} \subset \mathscr{V}_{\circ}^{[i]S^nT}$. If $\mathbf{u}$ is in a trunk, then $\tilde{\mathbf{g}}_{K}^{\mathbf{u}}$ is on the boundary of some $\overline{\mathscr{T}}_j$ ($j=1,2,3$), which is the closure of $\mathscr{T}_j$. Since $\mathscr{V}_{\circ}^{\bf w}$ is an open set, $\mathscr{V}_{\circ}^{[i]S^nT} \cap \overline{\mathscr{T}}_j=\emptyset$ holds by \Cref{prop: separateness of upper bounds}. Thus, $\mathcal{C}(\tilde{\mathbf{g}}_K^{\mathbf{w}}) \neq \mathcal{C}(\tilde{\mathbf{g}}_K^{\mathbf{u}})$ holds. The same argument works if we replace $\tilde{\mathbf{g}}_K^{\mathbf{u}}$ with $\tilde{\mathbf{e}}_m$ because $\tilde{\mathbf{e}}_m$ also belongs to $\overline{\mathscr{T}}_j$. Suppose that $\mathbf{u}$ is also in a branch, that is $\mathbf{u}=[j]S^mTY$ for some $j=1,2,3$, $m \in \mathbb{Z}_{\geq 0}$, and $Y \in \mathcal{M}$. If $(i,n) \neq (j,m)$, then the claim follows from \Cref{prop: separateness of upper bounds}. (Note that $\tilde{\mathbf{g}}_{K}^{\mathbf{w}} \in \mathscr{V}_{\circ}^{[i]S^nT}$ and $\tilde{\mathbf{g}}_{K}^{\mathbf{u}} \in \mathscr{V}_{\circ}^{[j]S^mT}$.) We now assume that $(i,n)=(j,m)$.
Let $\mathbf{w}_0$ be the maximum common prefix of $\mathbf{w}$ and $\mathbf{u}$. Since $[i]S^nT \leq \mathbf{w}_0$, this prefix $\mathbf{w}_0$ also belongs to a branch. Then, by \Cref{thm: local upper bound theorem}, each of $\tilde{\mathbf{g}}_{K}^{\mathbf{w}}$ and $\tilde{\mathbf{g}}_{K}^{\mathbf{u}}$ belongs to one of $\mathcal{C}(G^{\mathbf{w}_0})$, $\mathscr{V}_{\circ}^{\mathbf{w}_0S}$, or $\mathscr{V}_{\circ}^{\mathbf{w}_0T}$. Furthermore, since $\mathbf{w}_0$ is the maximum common prefix of $\mathbf{w}$ and $\mathbf{u}$, these two vectors cannot be contained in the same set. Note that these three sets are pairwise disjoint. Hence, we obtain $\mathcal{C}(\tilde{\mathbf{g}}_{K}^{\mathbf{w}}) \neq \mathcal{C}(\tilde{\mathbf{g}}_K^{\mathbf{u}})$.
\end{proof}

Thanks to the results in \cite{CKLP13, CL20, Nak23}, we can restate this theorem via cluster variables.
Let us consider an {\em integer} cluster-cyclic initial exchange matrix. In this case, cluster variables $x_{i}^{\mathbf{w}}$ ($i=1,2,3$, $\mathbf{w} \in \mathcal{T}$) are defined as in \cite{FZ02}. By \cite[Thm.~II.7.2]{Nak23}, the equality $\mathbf{g}_{l}^{\mathbf{w}}=\mathbf{g}_{m}^{\mathbf{u}}$ of $g$-vectors holds if and only if the equality $x_{l}^{\mathbf{w}}=x_{m}^{\mathbf{u}}$ of cluster variables holds for any $l,m=1,2,3$ and $\mathbf{w},\mathbf{u} \in \mathcal{T}$. Thus, \Cref{thm: non periodicity} implies the following corollary. (Here, we define the triviality of equalities of cluster variables as the same way in $g$-vectors.)
\begin{corollary}\label{cor: no periodicity among cluster variables}
Suppose that an initial exchange matrix is {\em integer} cluster-cyclic of rank $3$. Then, all equalities of cluster variables $x_{l}^{\mathbf{w}}=x_{m}^{\mathbf{u}}$ are trivial. 
\end{corollary}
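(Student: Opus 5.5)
This corollary should follow directly from \Cref{thm: non periodicity}, combined with the dictionary between $g$-vectors and cluster variables. The plan is as follows. Let $B$ be an integer cluster-cyclic initial exchange matrix of rank $3$, and suppose $x_{l}^{\mathbf{w}}=x_{m}^{\mathbf{u}}$ for some $l,m\in\{1,2,3\}$ and $\mathbf{w},\mathbf{u}\in\mathcal{T}$. Since $B$ has integer entries, we are in the ordinary cluster algebra setting: sign-coherence holds by \cite{GHKK18} (and also by \Cref{thm: recursion for tropical signs}), and the $g$-vectors of \Cref{def: C G matrices} coincide with the usual $g$-vectors of \cite{FZ07}. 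We then invoke \cite[Thm.~II.7.2]{Nak23}, which says $\mathbf{g}_{l}^{\mathbf{w}}=\mathbf{g}_{m}^{\mathbf{u}}$ if and only if $x_{l}^{\mathbf{w}}=x_{m}^{\mathbf{u}}$. This converts the assumed equality of cluster variables into the equality $\mathbf{g}_{l}^{\mathbf{w}}=\mathbf{g}_{m}^{\mathbf{u}}$.

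Next, \Cref{thm: non periodicity} applies to this $g$-vector equality and shows it is trivial. So $l=m$ and $\mathbf{u}=\mathbf{w}[k_1,\dots,k_r]$ with every $k_i\neq l$. By definition, triviality of the equality of cluster variables is exactly this same combinatorial condition on $(l,\mathbf{w})$ and $(m,\mathbf{u})$. Hence the equality $x_{l}^{\mathbf{w}}=x_{m}^{\mathbf{u}}$ is trivial.

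The only point needing care is that the hypotheses of \cite[Thm.~II.7.2]{Nak23} are met. That result is stated for skew-symmetrizable integer matrices with principal coefficients, or equivalently up to the coefficient-free specialization used in \cite{CKLP13, CL20}. So I would check that the convention for labelled seeds along reduced sequences $\mathbf{w}\in\mathcal{T}$ matches the one used there, and that no extra nondegeneracy assumption is required. I expect this bookkeeping to be the main, though mild, obstacle; the mathematical content is entirely carried by \Cref{thm: non periodicity}.
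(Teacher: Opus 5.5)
Your proposal is correct and is exactly the paper's argument: use \cite[Thm.~II.7.2]{Nak23} to turn the equality $x_{l}^{\mathbf{w}}=x_{m}^{\mathbf{u}}$ into $\mathbf{g}_{l}^{\mathbf{w}}=\mathbf{g}_{m}^{\mathbf{u}}$, then apply \Cref{thm: non periodicity}. Triviality for cluster variables is defined by the same combinatorial condition, so the conclusion transfers directly.
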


\subsection{Sign structure of $g$-vectors}
It is known that every row vector of each $G$-matrix is sign-coherent \cite{GHKK18}. Let $\tau_{i}^{\bf w} \in \{\pm 1\}$ be the sign of the $i$th row vector of $G^{\bf w}$. As expected from \Cref{fig: main graph}, these signs are given as follows.
\begin{theorem}\label{thm: sign of G matrix}
Consider the $G$-matrices associated with a cluster-cyclic initial exchange matrix $B$ of rank $3$. Fix an initial mutation direction $i=1,2,3$, and set $k_0=K([i])$, $s_0=S([i])$, $t_0=T([i])$ as in \Cref{tab: List of initial indices}. Then, for each $n \in \mathbb{Z}_{\geq 0}$ and $X \in \mathcal{M}$, the signs of $G$-matrices are given in \Cref{tab: list of G-signs}.
\begin{table}[htbp]
\centering
\[
\begin{array}{c|cccc}
{\bf w} & [i]S^n & [i]S^{n+1}TX & [i]TS^n & [i]TS^nTX \\ \hline
\rule{0pt}{25pt}
\left(\begin{matrix}
\tau_{k_0}^{\bf w}\\ \tau_{s_0}^{\bf w}\\ \tau_{t_0}^{\bf w}
\end{matrix}\right)
&
\left(\begin{matrix}
-\\ + \\ +
\end{matrix}\right)
&
\left(\begin{matrix}
-\\ + \\ -
\end{matrix}\right)
&
\left(\begin{matrix}
-\\ + \\ -
\end{matrix}\right)
&
\left(\begin{matrix}
+\\ + \\ -
\end{matrix}\right)
\end{array}
\]
\caption{The list of signs of $G$-matrices.}
\label{tab: list of G-signs}
\end{table}
\end{theorem}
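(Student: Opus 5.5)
Since row signs of $G^{\mathbf{w}}$ are sign-coherent, $\tau_j^{\mathbf{w}}$ is the sign of the $j$th coordinate of any nonzero vector in the interior of $\mathcal{C}(G^{\mathbf{w}})$. It therefore suffices to find, for each column of \Cref{tab: list of G-signs}, a region that contains $\mathcal{C}(G^{\mathbf{w}})$ minus a boundary part and on which every coordinate has a fixed sign. We work with modified $g$-vectors. Their coordinates differ from the original ones only by positive factors, and $\langle \mathbf{x},\tilde{\mathbf{e}}_j\rangle_D$ has the same sign as $x_j$, so all sign checks can be done with the half-spaces $\positiveregion{\tilde{\mathbf{e}}_j}$ and $\negativeregion{\tilde{\mathbf{e}}_j}$.

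\textbf{Trunk, $\mathbf{w}=[i]S^n$.} Read the signs off \eqref{eq: g vectors in trunk}. The vectors $\tilde{\mathbf{g}}_K^{[i]S^n}$ and $\tilde{\mathbf{g}}_S^{[i]S^n}$ have nonpositive $k_0$-coordinate and nonnegative $s_0$-coordinate, because $u_n\ge0$ for $n\ge -1$. The vector $\tilde{\mathbf{g}}_T^{[i]S^n}=\tilde{\mathbf{e}}_{t_0}$ gives the $t_0$-row. Each row has a nonzero entry, so the signs are $(-,+,+)$. This agrees with the description $\mathscr{T}_i=\negativeregion{\tilde{\mathbf{e}}_{k_0}}\cap\positiveregion{\tilde{\mathbf{e}}_{t_0}}\cap\cdots$ from \eqref{eq: definition of T}. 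The case $\mathbf{w}=[i]S^n$ with $n=0$ is included here.

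\textbf{Branches.} Take $\mathbf{w}=[i]S^mTX$. By \Cref{thm: local upper bound theorem} and \eqref{eq: initail inclusion lemma}, the interior of $\mathcal{C}(G^{\mathbf{w}})$ lies in $\mathscr{V}_\circ^{[i]S^mT}$. By \Cref{lem: rough upper bound for branches}, \eqref{eq: 2nd rough upper bound} and \eqref{eq: 1st rough upper bound}, this set is contained in
\[
\negativeregion{\tilde{\mathbf{e}}_{t_0}}\cap\positiveregion{-\tilde{\mathbf{e}}_{k_0}-\tilde{\mathbf{e}}_{t_0}}\cap\positiveregion{\tilde{\mathbf{e}}_{s_0}+\alpha_{s_0k_0}\tilde{\mathbf{e}}_{k_0}}\cap\positiveregion{\mathfrak{c}_i^-}.
\]
This immediately gives $\tau_{t_0}=-$. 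The other two signs depend on $m$, and I will get them from a sharper description of $\mathscr{V}_\circ^{[i]S^mT}$. The cone is spanned by $\tilde{\mathbf{g}}_S^{[i]S^mT}$, $\tilde{\mathbf{g}}_T^{[i]S^mT}$ and $\bar{\mathbf{g}}^{[i]S^mT}$, whose explicit forms are in \eqref{eq: g vectors at the root of maximal branches} and \eqref{eq: gbar  vector at the root of maximal branches}.

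\textbf{Case $m\ge1$, i.e. $\mathbf{w}=[i]S^{n+1}TX$.} Here $\tilde{\mathbf{g}}_S$ and $\tilde{\mathbf{g}}_T$ have $k_0$-coordinate $-u_{m-1},-u_m<0$ and $s_0$-coordinate $u_m,u_{m+1}>0$. For $\bar{\mathbf{g}}$ I need
\[
(\alpha_{TK}^{[i]S^m})^{-1}u_m-\alpha_{ST}^{[i]S^m}u_{m-1}\le0
\qquad\text{and}\qquad
(\alpha_{TK}^{[i]S^m})^{-1}u_{m+1}-\alpha_{ST}^{[i]S^m}u_m\le0 .
\]
Using $\alpha_{TK}\alpha_{ST}\ge\alpha_{SK}=\alpha$ from \eqref{eq: 3rd form of cluster-cyclic}, together with the identities \eqref{eq: u-au equality} and \eqref{eq: u-ainvu equality}, the first reduces to $u_m\le\alpha u_{m-1}$. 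That is false, since $u_m-\alpha u_{m-1}=\alpha^{-m}>0$. So that bound is too crude, and this is the main obstacle. I would instead estimate $\alpha_{TK}^{[i]S^m}$ from below using \Cref{lem: monotonicity of alpha product} together with the trunk formula \eqref{eq: p in trunk}. The expected sign of $\bar{\mathbf{g}}$ in the $k_0$-coordinate is negative: on the boundary plane $\zeroregion{\tilde{\mathbf{e}}_{k_0}}$, the trunk $\mathscr{T}_i$ touches only along $\mathcal{C}(\tilde{\mathbf{e}}_{s_0},\tilde{\mathbf{e}}_{t_0})$.

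Failing a direct estimate, I will use a separation argument instead. The fan property (\Cref{prop: conjecture is true}) and \Cref{prop: separateness of upper bounds} imply that $\mathcal{C}(G^{\mathbf{w}})$ cannot cross $\zeroregion{\tilde{\mathbf{e}}_{k_0}}$ or $\zeroregion{\tilde{\mathbf{e}}_{s_0}}$ without meeting a cone of another trunk or branch. Combined with the separation of branches by the planes $\mathfrak{c}_i^{\pm}(n)$ in \Cref{lem: hyperplane lemma}, this should show that each sign is constant on the whole branch. Then it suffices to compute the sign at the root $\mathbf{w}=[i]S^mT$ from \eqref{eq: g vectors at the root of maximal branches}, where $\tilde{\mathbf{g}}_K$ has $t_0$-coordinate $-1$ and $s_0,k_0$ parts of the same signs as $\tilde{\mathbf{g}}_S$. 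This gives $(-,+,-)$.

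\textbf{Case $m=0$, i.e. $\mathbf{w}=[i]TS^n$ or $[i]TS^nTX$.} Here $\tilde{\mathbf{g}}_S^{[i]T}=\tilde{\mathbf{e}}_{s_0}$ and $\tilde{\mathbf{g}}_T^{[i]T}=\tilde{\mathbf{g}}_K^{[i]}$. I split $\mathscr{V}_\circ^{[i]T}$ by \Cref{thm: local upper bound theorem}($a$) into $\mathscr{V}_\circ^{[i]TS}$ and $\mathscr{V}_\circ^{[i]TT}$, separated by $\zeroregion{\bar{\mathbf{c}}^{[i]T}}$. Along the sub-trunk $[i]TS^n$, I use \Cref{lem: infinite S mutations} to get explicit $g$-vectors; their $k_0$-coordinates stay $\le0$, giving $(-,+,-)$. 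On the other side, $\mathscr{V}_\circ^{[i]TT}$ lies in $\positiveregion{\tilde{\mathbf{e}}_{k_0}}$: its generators have nonnegative $k_0$-coordinate, which I will check from the explicit vectors together with \Cref{lem: next g bar}. This gives $(+,+,-)$ for $[i]TS^nTX$, using the sub-branches $\mathscr{V}_\circ^{[i]TS^nT}$ analogously.
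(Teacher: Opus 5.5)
Parts of your argument are sound: the trunk $[i]S^n$, the sub-trunk $[i]TS^n$, the row $\tau_{t_0}=-$ on every branch, and $\tau_{k_0}=+$ on $[i]TS^nTX$ (via $\tilde{\mathbf c}_K^{[i]TS^nT}=\tilde{\mathbf e}_{k_0}$ and $\mathscr V_\circ^{\mathbf w}\subset\positiveregion{\tilde{\mathbf c}_K^{\mathbf w}}$).

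\textbf{The gap.} It lies in the rows $\tau_{k_0},\tau_{s_0}$ for $[i]S^{n+1}TX$, and in $\tau_{s_0}$ for $[i]TS^nTX$, which you assert without argument. These signs cannot be obtained from the local upper bounds, because $\mathscr V_\circ^{[i]S^mT}$ and $\mathscr V_\circ^{[i]TS^nT}$ genuinely contain points of the wrong sign.

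Take the Markov case: all $p^{\mathbf w}_{jl}=2$, all $\alpha=1$, and $u_n=n+1$. \Cref{lem: explicit formulas for the root of maximal branches} then gives $\bar{\mathbf g}^{[i]S^mT}=\tilde{\mathbf e}_{k_0}-\tilde{\mathbf e}_{s_0}-\tilde{\mathbf e}_{t_0}$ for every $m$, and the same vector is $\bar{\mathbf g}^{[i]TS^nT}$. Your required inequality $(\alpha_{TK}^{[i]S^m})^{-1}u_m-\alpha_{ST}^{[i]S^m}u_{m-1}\le 0$ therefore reads $1\le 0$. So the inequality is false, not merely out of reach of a crude bound, and no lower estimate on $\alpha_{TK}^{[i]S^m}$ can rescue it.

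The fallback does not close the gap either. Nothing in the fan property or in \Cref{prop: separateness of upper bounds} says that the planes $\zeroregion{\tilde{\mathbf e}_{k_0}}$ and $\zeroregion{\tilde{\mathbf e}_{s_0}}$, which cut through $\mathscr V_\circ^{[i]S^mT}$, separate anything. Moreover, ``each sign is constant on the whole branch'' is exactly what has to be proved. A row sign flips under a mutation precisely when the two shared $g$-vectors both have zero entry in that row, as happens at $[i]S^n\to[i]S^nT$. You give no reason this cannot happen inside a branch.

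\textbf{The missing ingredient} is the global upper bound. By \Cref{thm: global upper bound theorem}, every cone of $\Delta^{\geq[i]}(B)$ lies in $Q_i^+$. Hence, for skew-symmetric $B$, interiors of cones lie in $\Epositiveregion{\mathbf v}$, where $\mathbf v$ is the positive eigenvector of $\tilde A^{[i]}$. This half-space cuts off the $\bar{\mathbf g}$-vertex: in the Markov case $\mathbf v$ is a positive multiple of $(1,1,1)^{\top}$, so $\langle\bar{\mathbf g},\mathbf v\rangle<0$.

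The paper combines this with the componentwise inequality $\beta_{lm}v_l-v_m\ge 0$ from \eqref{eq: alpha v inequality}, where $\beta_{lm}=\alpha^{[i]}_{lm}$. That inequality is derived from \eqref{eq: lambda inequality} and \eqref{eq: 2nd form of cluster-cyclic}. The two cases then go as follows.
\begin{itemize}
\item On $[i]S^{n+1}TX$: intersect with $\Epositiveregion{\mathfrak{c}_{i}^{-}(1)}\cap\Enegativeregion{\mathbf e_{t_0}}$, which comes from \Cref{lem: hyperplane lemma} and requires $m\ge 1$. Eliminating $x_{s_0}$ gives $x_{k_0}<0$, and then $\langle\mathbf x,\mathbf v\rangle>0$ forces $x_{s_0}>0$.
\item On $[i]TS^nTX$: use $\Epositiveregion{-\mathbf e_{t_0}-\beta_{k_0t_0}\mathbf e_{k_0}}\cap\Epositiveregion{\mathbf e_{k_0}}$ together with $\Epositiveregion{\mathbf v}$. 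This gives $x_{s_0}v_{s_0}>x_{k_0}(\beta_{k_0t_0}v_{t_0}-v_{k_0})\ge 0$.
\end{itemize}
Without $\mathbf v$, or some substitute global constraint, the local data you use cannot decide these signs.
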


\begin{example}
For a given $B$ and $i$, we can easily obtain the characterization of $\mathbf{w}$ in \Cref{tab: list of G-signs}. For example, consider the case of \eqref{eq: initial conditions for example of tropical signs}. Then, by \Cref{tab: List of initial indices}, we have $k_0=1$, $s_0=3$, and $t_0=2$. By following the rule in \Cref{lem: K S T mutation formula}, each $\mathbf{w} \geq [i]$ can be characterized by the following list. See also  \eqref{eq: index S-mutations}.
\begin{itemize}
\item $\mathbf{w}=[i]S^n$: A finite sequence of the form $\mathbf{w}=[k_0,s_0,k_0,s_0,\dots]$.
\item $\mathbf{w}=[i]S^{n+1}TX$: A finite sequence starting with $[k_0,s_0]$ and containing $t_0$ at least once. (Namely, $\mathbf{w}=[k_0,s_0,\dots,t_0,\dots]$.)
\item $\mathbf{w}=[i]TS^{n}$: A finite sequence of the form $\mathbf{w}=[k_0,t_0,s_0,t_0,s_0,t_0,s_0,\dots]$.
\item $\mathbf{w}=[i]TS^{n}TX$: A finite sequence starting with $[k_0,t_0]$ and containing a subsequent occurrence of $k_0$. (Namely, $\mathbf{w}=[k_0,t_0,\dots,k_0,\dots]$.)
\end{itemize}
\end{example}
\begin{remark}
For the later purpose, we give one corollary of \Cref{thm: sign of G matrix}. Consider a $g$-vector $\mathbf{g}_j^{\mathbf{w}}=(x_1,x_2,x_3)^{\top}$ with $\mathbf{w} \geq [i]$. Then, by \Cref{thm: sign of G matrix}, we directly obtain
\begin{equation}\label{eq: sign s0}
x_{s_0} \geq 0.
\end{equation}
Consider the sign of $x_{t_0}$. Then, by \Cref{thm: sign of G matrix}, if $\mathbf{w} \notin \mathcal{T}^{< [i]S^{\infty}}$, we have $x_{t_0} \leq 0$. In fact, by \eqref{eq: g vectors in trunk}, the unique positive vector is $\mathbf{e}_{t_0}$. Thus, we can conclude that, except for one vector $\mathbf{g}_{t_0}^{[i]S^n}=\mathbf{e}_{t_0}$, we have
\begin{equation}\label{eq: sign t0}
x_{t_0} \leq 0.
\end{equation}
\end{remark}

The claim for the first case $\mfw=[i]S^n$ in \Cref{thm: sign of G matrix} can be obtained directly by \Cref{lem: infinite S mutation in trunks}. For the third case $\mfw=[i]TS^n$, this follows from the following formulas.
\begin{lemma}
Set $p'=p_{SK}^{[i]T}$.
For any $n \in \mathbb{Z}_{\geq 0}$, we have
\begin{equation}\label{eq: iTS^n expressions}
\begin{aligned}
\tilde{\bf g}_{K}^{[i]TS^n}&=u_{n+1}(p')\tilde{\bf e}_{s_0}-u_n(p')\tilde{\bf e}_{t_0},
\\
\tilde{\bf g}_{S}^{[i]TS^n}&=u_n(p')\tilde{\bf e}_{s_0}-u_{n-1}(p')\tilde{\bf e}_{t_0},\\
\tilde{\bf g}_{T}^{[i]TS^n}&=-\tilde{\bf e}_{k_0}+p_{k_0s_0}\tilde{\bf e}_{s_0}.
\end{aligned}
\end{equation}
\end{lemma}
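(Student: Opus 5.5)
We will prove the formulas \eqref{eq: iTS^n expressions} by first computing the data at $[i]T$ explicitly, and then applying the general $S$-mutation formula of \Cref{lem: infinite S mutations} with $\mathbf{w}=[i]T$.

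\textbf{Step 1: the data at $[i]T$.} Since $[i]$ is in the trunk, we use the trunk version of the $T$-mutation, \eqref{eq: T mutation of c- g-vectors in trunks}, together with the initial data of \Cref{lem: kst}. This gives
\begin{equation}
\tilde{\bf g}_{K}^{[i]T}=-\tilde{\bf g}_{T}^{[i]}+p_{ST}^{[i]}\tilde{\bf g}_{S}^{[i]}=-\tilde{\bf e}_{t_0}+p_{ST}^{[i]}\tilde{\bf e}_{s_0},
\quad
\tilde{\bf g}_{S}^{[i]T}=\tilde{\bf e}_{s_0},
\quad
\tilde{\bf g}_{T}^{[i]T}=\tilde{\bf g}_{K}^{[i]}=-\tilde{\bf e}_{k_0}+p_{k_0s_0}\tilde{\bf e}_{s_0}.
\end{equation}
The third line of \eqref{eq: T mutation of B in trunks} gives $p'=p_{SK}^{[i]T}=p_{TS}^{[i]}=p_{ST}^{[i]}$. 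Since $u_0(p')=1$ and $u_1(p')=p'$, we can rewrite $\tilde{\bf g}_{K}^{[i]T}=u_1(p')\tilde{\bf e}_{s_0}-u_0(p')\tilde{\bf e}_{t_0}$, which matches the claimed form at $n=0$.

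\textbf{Step 2: apply the $S$-mutation formula.} Take $\mathbf{w}=[i]T$ in \eqref{eq: infinite S mutations for g vectors}. The $T$-component is immediate: $\tilde{\bf g}_{T}^{[i]TS^n}=\tilde{\bf g}_{T}^{[i]T}=-\tilde{\bf e}_{k_0}+p_{k_0s_0}\tilde{\bf e}_{s_0}$. For the $K$-component we get
\begin{equation}
\tilde{\bf g}_{K}^{[i]TS^n}=-u_{n-1}(p')\tilde{\bf e}_{s_0}+u_n(p')\bigl(p'\tilde{\bf e}_{s_0}-\tilde{\bf e}_{t_0}\bigr).
\end{equation}
The recursion \eqref{eq: recursion for u} gives $p'u_n-u_{n-1}=u_{n+1}$, so this equals $u_{n+1}(p')\tilde{\bf e}_{s_0}-u_n(p')\tilde{\bf e}_{t_0}$. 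The $S$-component is the same computation with the index shifted down by one, using $p'u_{n-1}-u_{n-2}=u_n$.

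\textbf{Main point to check.} The only delicate step is the index bookkeeping. We must confirm that $[i]$ is treated as lying in a trunk, so that case ($a$) of \Cref{lem: T-mutations of modified c- g-vectores} applies. We must also confirm that the $S$-mutation formulas of \Cref{lem: infinite S mutations} hold for every nonempty $\mathbf{w}$, including $\mathbf{w}=[i]T$, which lies in a branch; this is fine because \Cref{lem: S-mutations of modified c- g-vectors} does not depend on whether $\mathbf{w}$ is in a trunk or a branch. Finally, we check the labels: $K([i]T)=t_0$ and $S([i]T)=s_0$ from \Cref{lem: K S T mutation formula}, which is consistent with the basis vectors appearing above.
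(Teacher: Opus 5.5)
Your proposal is correct and follows the paper's proof: you compute the $n=0$ data at $[i]T$ from \Cref{lem: kst} and the trunk case of \Cref{lem: T-mutations of modified c- g-vectores}, then substitute $\mathbf{w}=[i]T$ into \eqref{eq: infinite S mutations for g vectors}. One small citation slip: the identity $p_{SK}^{[i]T}=p_{TS}^{[i]}$ comes from the first line of \eqref{eq: T mutation of B in trunks} (the $(K,S),(S,K)$ case), not the third line.
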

\begin{proof}
When $n=0$, the claim holds by a direct calculation based on \Cref{lem: kst} and \Cref{lem: T-mutations of modified c- g-vectores}. Moreover, by taking $\mfw=[i]T$ in \eqref{eq: infinite S mutations for g vectors} , we obtain the formula.
\end{proof}
In the following proof, we always assume that $B$ is skew-symmetric. By \Cref{prop: skew-symmetrizing method}, it is enough to show this claim. In this case, by taking $D=\mathrm{diag}(1,1,1)$, each modified $g$-vector coincides with the corresponding ordinary $g$-vector, and the inner product $\langle\,,\,\rangle_D$ coincides with the usual Euclidean inner product $\langle\,,\,\rangle$.
\subsubsection{Key lemma}
For the proof of \Cref{thm: sign of G matrix}, we fix $i$, $k_0$, $s_0$ and $t_0$ as stated before. Let $\mathbf{v}$ be the vector given by \eqref{eq: positive eigenvector of lambda} with respect to $B^{[i]}$, and write $\mathbf{v}=(v_1,v_2,v_3)^{\top}$. To simplify the notation, we write 
\begin{equation}\label{eq: notation in the proof of sign structure}
q_{lm}=p_{lm}^{[i]}, \quad \beta_{lm}=\alpha_{lm}^{[i]}.
\end{equation}
Note that $q_{lm}$ and $\beta_{lm}$ correspond to not the $(l,m)$th entry of the initial exchange matrix $B$ but the one of $B^{[i]}$ (although they coincide if $l=i$ or $m=i$).
\par
The following inequality plays an essential role in the forthcoming proof.
\begin{lemma}
For any $l,m \in \{1,2,3\}$ with $l \neq m$, we have
\begin{equation}\label{eq: alpha v inequality}
\beta_{lm}v_l-v_m \geq 0.
\end{equation}
\end{lemma}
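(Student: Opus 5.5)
The plan is to reduce \eqref{eq: alpha v inequality} to an explicit formula for the ratio $v_m/v_l$, read off from the eigenvector equation \eqref{eq: eigenvector of DA}, and then apply the second form \eqref{eq: 2nd form of cluster-cyclic} of the cluster-cyclic inequalities to $B^{[i]}$. This is legitimate because $B^{[i]}$ is itself cluster-cyclic, and $q_{lm}=q_{ml}$, $\beta_{lm}=\beta_{ml}$.

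Fix $l\neq m$ and let $k$ be the remaining index. Write $\mu=\lambda-2$, $q=q_{lm}$ and $\beta=\beta_{lm}$. By \Cref{lem: eigenvalues lemma for cluster-cyclic matrix}, applied to $\tilde{A}^{[i]}$, we have $\mu>\sqrt{q_{12}^2+q_{23}^2+q_{31}^2}>0$, and all entries of $\mathbf{v}$ are positive. The $l$th and $m$th rows of \eqref{eq: eigenvector of DA} give
\begin{equation*}
\mu v_l-qv_m=q_{lk}v_k,\qquad \mu v_m-qv_l=q_{mk}v_k .
\end{equation*}
Eliminating $v_k>0$ by cross-multiplication gives $q_{mk}(\mu v_l-qv_m)=q_{lk}(\mu v_m-qv_l)$, that is,
\begin{equation*}
v_m(\mu q_{lk}+q q_{mk})=v_l(\mu q_{mk}+q q_{lk}).
\end{equation*}
The coefficient $\mu q_{lk}+qq_{mk}$ is positive, so the desired inequality $\beta v_l\geq v_m$ is equivalent to
\begin{equation*}
\mu q_{mk}+qq_{lk}\leq \beta\mu q_{lk}+\beta q q_{mk},
\quad\text{i.e.}\quad
\mu\,(q_{mk}-\beta q_{lk})\leq q\,(\beta q_{mk}-q_{lk}).
\end{equation*}

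Both brackets are controlled by \eqref{eq: 2nd form of cluster-cyclic} for $B^{[i]}$, which states $\alpha_{ik}p_{kj}\geq p_{ij}$ for distinct indices. Taking the triple $(m,l,k)$ gives $\beta_{ml}q_{lk}\geq q_{mk}$, so the left-hand side is $\leq 0$ because $\mu>0$. Taking the triple $(l,m,k)$ gives $\beta_{lm}q_{mk}\geq q_{lk}$, so the right-hand side is $\geq 0$ because $q>0$. This proves the claim.

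The only step needing care is the elimination of $v_k$: one must check that the coefficients used are positive, so that multiplying and dividing does not flip the inequality. This uses $\mu>0$, the positivity of $\mathbf{v}$, and $q_{lm}\geq 2$. Everything else is a direct citation of the earlier lemmas.
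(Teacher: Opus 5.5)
Your proof is correct, but it takes a different route from the paper's.

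The paper substitutes the explicit formula \eqref{eq: positive eigenvector of lambda} for $v_l$ and $v_m$ and expands $\beta_{lm}v_l-v_m$ as a polynomial in $\lambda-2$. It discards the linear and constant terms using \eqref{eq: 2nd form of cluster-cyclic} together with $\beta_{lm}\geq 1$. It then needs the sharper bound $(\lambda-2)^2>q_{12}^2+q_{23}^2+q_{31}^2$ from \eqref{eq: lambda inequality} to make the remaining quadratic part nonnegative.

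You instead use only two rows of the eigen-equation. This gives the exact ratio
\[
\frac{v_m}{v_l}=\frac{\mu q_{mk}+q\,q_{lk}}{\mu q_{lk}+q\,q_{mk}},
\]
which is a convex combination of $q_{mk}/q_{lk}$ and $q_{lk}/q_{mk}$, with weights proportional to $\mu q_{lk}$ and $q\,q_{mk}$. Your two instances of \eqref{eq: 2nd form of cluster-cyclic} say exactly that both of these fractions are at most $\beta_{lm}$.

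What your route buys:
\begin{itemize}
\item It needs only $\lambda>2$ and $v_l>0$, not the quadratic lower bound on $\lambda$ or the explicit entries of $\mathbf{v}$. The positivity could even be taken from Perron--Frobenius, since $\tilde{A}^{[i]}$ has positive entries.
\item It is independent of how $\mathbf{v}$ is normalized.
\item It makes transparent why the inequality holds.
\end{itemize}
The paper's computation is more mechanical, but it simply reuses the explicit eigenvector formula it had already derived.
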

\begin{proof}
Let $n \in \{1,2,3\} \setminus \{l,m\}$.
To simplify the notation, we write $p=q_{lm}$, $q=q_{ln}$, $r=q_{mn}$, and $\alpha_p=\beta_{lm}$. Then, by \eqref{eq: eigenvector of DA}, we have
\begin{equation}
\begin{aligned}
v_{l}&=(\lambda-2)^2+(p+q)(\lambda-2)+pr+qr-r^2,
\\
v_{m}&=(\lambda-2)^2+(p+r)(\lambda-2)+pq+rq-q^2.
\end{aligned}
\end{equation}
By a direct calculation, we have
\begin{equation}
\begin{aligned}
\beta_{lm} v_l - v_m 
&=(\alpha_p-1)(\lambda-2)^2 + \alpha_p qr -\alpha_p r^2 - qr + q^2
\\
&\ \qquad +[(\alpha_p -1)p+(\alpha_p q -r)](\lambda-2)+p(\alpha_p r-q)
\\
&\geq(\alpha_p-1)(\lambda-2)^2 + \alpha_p qr -\alpha_p r^2 - qr + q^2,
\end{aligned}
\end{equation}
where the last inequality follows from \eqref{eq: 2nd form of cluster-cyclic} and $\alpha_p-1 \geq 0$.
Since $(\lambda - 2)^2 > p^2+q^2+r^2$ by \eqref{eq: lambda inequality}, we have
\begin{equation}
\begin{aligned}
\beta_{lm} v_l - v_m &\geq (\alpha_p-1)(p^2+q^2+r^2)+\alpha_p qr - qr - \alpha_p r^2 + q^2
\\
&= (\alpha_p - 1)p^2+q(\alpha_p q - r)+r(\alpha_p q -r) \geq 0.
\end{aligned}
\end{equation}
\end{proof}
\subsubsection{Case of $[i]TS^nTX$}
Let $\mathcal{D}$ be the union of $\mathcal{C}(G^{[i]TS^nTX})$ with $n \in \mathbb{Z}_{\geq 0}$ and $X \in \mathcal{M}$. Let $\mathcal{D}^{\circ}$ be its interior.
We give the following upper bound.
\begin{lemma}
Suppose that $B$ is skew-symmetric. Then, we have
\begin{equation}\label{eq: upper bound for [i]TSTX}
\mathcal{D}^{\circ} \subset \Epositiveregion{-{\bf e}_{t_0}-\beta_{k_0t_0}{\bf e}_{k_0}} \cap \Epositiveregion{{\bf e}_{k_0}} \cap \Epositiveregion{\mathbf{v}}.
\end{equation}
\end{lemma}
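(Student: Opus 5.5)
The plan is to prove the three closed-halfspace inclusions
\begin{equation*}
\mathcal{D} \subset \{\mathbf{x} \mid \langle \mathbf{x}, -\mathbf{e}_{t_0}-\beta_{k_0t_0}\mathbf{e}_{k_0}\rangle \geq 0\} \cap \{\mathbf{x} \mid \langle \mathbf{x}, \mathbf{e}_{k_0}\rangle \geq 0\} \cap \{\mathbf{x} \mid \langle \mathbf{x}, \mathbf{v}\rangle \geq 0\}
\end{equation*}
and then pass to interiors. The interior of a set contained in a closed halfspace $\{\langle \mathbf{x},\mathbf{a}\rangle \geq 0\}$ with $\mathbf{a}\neq\mathbf{0}$ lies in the open halfspace $\Epositiveregion{\mathbf{a}}$, which yields \eqref{eq: upper bound for [i]TSTX}.

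The $\mathbf{v}$-halfspace is immediate. Since $\mathcal{D} \subset |\Delta^{\geq [i]}(B)|$, \Cref{thm: global upper bound theorem} gives $\mathcal{D} \subset Q_i^{+}$. By \Cref{def: for global upper bounds}, with $D=I$, we have $Q_i^{+} \subset \{\langle \mathbf{x},\mathbf{v}\rangle \geq 0\}$.

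For the other two halfspaces I would use the local upper bounds. Each $\mathbf{w}=[i]TS^nT$ lies in the maximal branch $\mathcal{T}^{\geq [i]T}$, so \Cref{thm: local upper bound theorem} gives $\bigcup_{X}\mathcal{C}(G^{[i]TS^nTX}) \subset \mathscr{V}^{[i]TS^nT}$. It therefore suffices to check that the three generators $\tilde{\mathbf{g}}_S^{\mathbf{w}}$, $\tilde{\mathbf{g}}_T^{\mathbf{w}}$, $\bar{\mathbf{g}}^{\mathbf{w}}$ of $\mathscr{V}^{\mathbf{w}}$ lie in both closed halfspaces. Write $\mathbf{u}=[i]TS^n$. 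By \Cref{lem: T-mutations of modified c- g-vectores}(b) we have $\tilde{\mathbf{g}}_S^{\mathbf{w}}=\tilde{\mathbf{g}}_K^{\mathbf{u}}$, $\tilde{\mathbf{g}}_T^{\mathbf{w}}=\tilde{\mathbf{g}}_S^{\mathbf{u}}$ and $\tilde{\mathbf{g}}_K^{\mathbf{w}}=-\tilde{\mathbf{g}}_T^{\mathbf{u}}+p_{KT}^{\mathbf{u}}\tilde{\mathbf{g}}_K^{\mathbf{u}}$. Combined with \eqref{eq: iTS^n expressions}, the first two generators are $u_{n+1}(p')\mathbf{e}_{s_0}-u_n(p')\mathbf{e}_{t_0}$ and $u_n(p')\mathbf{e}_{s_0}-u_{n-1}(p')\mathbf{e}_{t_0}$. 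Both have zero $k_0$-coordinate and nonpositive $t_0$-coordinate, so they lie on the correct side of both planes.

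The substantive step is $\bar{\mathbf{g}}^{\mathbf{w}}$. I will expand
\begin{equation*}
\bar{\mathbf{g}}^{\mathbf{w}} = \tilde{\mathbf{g}}_K^{\mathbf{w}}-(\alpha_{SK}^{\mathbf{w}})^{-1}\tilde{\mathbf{g}}_S^{\mathbf{w}}-(\alpha_{TK}^{\mathbf{w}})^{-1}\tilde{\mathbf{g}}_T^{\mathbf{w}}
\end{equation*}
in the basis $\mathbf{e}_{k_0},\mathbf{e}_{s_0},\mathbf{e}_{t_0}$. The $k_0$-coordinate comes only from $-\tilde{\mathbf{g}}_T^{\mathbf{u}}$ and equals $+1>0$. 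The $t_0$-coordinate is
\begin{equation*}
-p_{KT}^{\mathbf{u}}u_n+(\alpha_{SK}^{\mathbf{w}})^{-1}u_n+(\alpha_{TK}^{\mathbf{w}})^{-1}u_{n-1},
\end{equation*}
where $u_m=u_m(p')$. The required inequality is therefore $\beta_{k_0t_0} \geq p_{KT}^{\mathbf{u}}u_n-(\alpha_{SK}^{\mathbf{w}})^{-1}u_n-(\alpha_{TK}^{\mathbf{w}})^{-1}u_{n-1}$.

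I expect this inequality to be the main obstacle. I would first rewrite $p_{KT}^{\mathbf{u}}-(\alpha_{SK}^{\mathbf{w}})^{-1}$ using \eqref{eq: T mutation of B in branches} and \eqref{eq: minimum polynomial of alpha}, since $p_{SK}^{\mathbf{w}}=p_{TK}^{\mathbf{u}}$ and $p_{TK}^{\mathbf{w}}=p_{ST}^{\mathbf{u}}$. I would then apply the triangle inequality \eqref{eq: 3rd form of cluster-cyclic} and the telescoping identities \eqref{eq: u-au equality} and \eqref{eq: u-ainvu equality}, in the same way as in the proof of \Cref{lem: hyperplane lemma}, so that the right-hand side reduces to something of the form $\beta_{k_0t_0}\cdot(\text{monotone factor}) \leq \beta_{k_0t_0}$, controlled by \Cref{lem: monotonicity of alpha product}.

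If this direct estimate does not close, the fallback is to use the $\mathbf{v}$-bound together with \eqref{eq: alpha v inequality}. Points with $x_{k_0} \geq 0$ and $x_{s_0} \geq 0$ in $Q_i^{+}$ should satisfy the $t_0$-bound via $\beta_{k_0t_0}v_{k_0}-v_{t_0}\geq 0$. This is where the key lemma \eqref{eq: alpha v inequality} is presumably intended to enter.
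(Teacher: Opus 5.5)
There is a genuine gap in the $t_0$-bound for $\bar{\mathbf{g}}^{\mathbf{w}}$, $\mathbf{w}=[i]TS^nT$. The rest is correct: your reduction to closed half-spaces, the $\mathbf{v}$-bound from \Cref{thm: global upper bound theorem}, your formulas for the generators of $\mathscr{V}^{[i]TS^nT}$, and the resulting $\mathbf{e}_{k_0}$-bound.

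The $k_0$-coordinate of $\bar{\mathbf{g}}^{\mathbf{w}}$ is $1$. So the condition $\langle \bar{\mathbf{g}}^{\mathbf{w}},-\mathbf{e}_{t_0}-\beta_{k_0t_0}\mathbf{e}_{k_0}\rangle\geq 0$ reads
\[
p_{KT}^{\mathbf{u}}u_n-(\alpha_{SK}^{\mathbf{w}})^{-1}u_n-(\alpha_{TK}^{\mathbf{w}})^{-1}u_{n-1}\geq \beta_{k_0t_0},
\]
which is the reverse of what you wrote. Your version is false in general:
\begin{itemize}
\item For $n=0$ the two sides coincide, since the left side is $p_{TK}^{[i]T}-(\alpha_{TK}^{[i]T})^{-1}=\alpha_{TK}^{[i]T}=\beta_{k_0t_0}$.
\item For the skew-symmetric matrix with $p_{12}=p_{23}=p_{31}=3$ and $n=1$, one gets $p'=6$ and $p_{KT}^{\mathbf{u}}=15$. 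The left side is then about $89.6$, against $\beta_{k_0t_0}\approx 2.62$.
\end{itemize}
So your planned reduction to ``$\beta_{k_0t_0}\cdot(\text{monotone factor})\leq\beta_{k_0t_0}$'' aims at a false bound, and no actual estimate is given.

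The fallback also fails. The vector $\mathbf{e}_{k_0}$ lies in $Q_i^{+}$, because $\mathbf{e}_{k_0}^{\top}\tilde{A}^{[i]}\mathbf{e}_{k_0}=2$ and $v_{k_0}>0$, and it has $x_{k_0},x_{s_0}\geq 0$. Yet $\langle \mathbf{e}_{k_0},-\mathbf{e}_{t_0}-\beta_{k_0t_0}\mathbf{e}_{k_0}\rangle=-\beta_{k_0t_0}<0$. In the paper, \eqref{eq: alpha v inequality} is used \emph{after} this lemma, to get the sign of $x_{s_0}$ from the three half-spaces; it cannot produce the $t_0$-bound.

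The missing idea is that this half-space needs no estimate if you use the local upper bound of the whole maximal branch $\Delta^{\geq[i]T}(B)$:
\begin{itemize}
\item $\mathcal{D}\subset|\Delta^{\geq [i]T}(B)|\subset\mathscr{V}^{[i]T}$.
\item By \eqref{eq: hyperplane expression of local upper bound}, $\mathscr{V}^{[i]T}_{\circ}\subset\Epositiveregion{\tildelim_{m\to\infty}\mathbf{c}_{S}^{[i]T^2S^m}}$, where $\tildelim_{m\to\infty}\mathbf{c}_S^{[i]T^2S^m}\sim\alpha_{TK}^{[i]T}\mathbf{c}_T^{[i]T}+\mathbf{c}_K^{[i]T}=-\beta_{k_0t_0}\mathbf{e}_{k_0}-\mathbf{e}_{t_0}$.
\item The remaining piece $\mathcal{C}(\mathbf{e}_{s_0},-\mathbf{e}_{k_0}+p_{k_0s_0}\mathbf{e}_{s_0})$ of $\mathscr{V}^{[i]T}$ lies in the closed half-space.
\end{itemize}
Equivalently, \Cref{thm: local upper bound theorem}($b$) puts $\bar{\mathbf{g}}^{[i]TS^nT}$ in the closure of $\mathscr{V}^{[i]T}$, which proves the corrected inequality for free.

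Keep your $\mathscr{V}^{[i]TS^nT}$ argument for the $\mathbf{e}_{k_0}$-bound only. $\mathscr{V}^{[i]T}$ is not contained in $\overline{\mathcal{H}}^{+}(\mathbf{e}_{k_0})$, because its generator $\mathbf{g}_T^{[i]T}=-\mathbf{e}_{k_0}+p_{k_0s_0}\mathbf{e}_{s_0}$ has negative $k_0$-coordinate. This split is exactly the paper's proof.
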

\begin{proof}
The inclusion $\mathcal{D}^{\circ} \subset \Epositiveregion{\mathbf{v}}$ follows from \Cref{thm: global upper bound theorem}.
By \Cref{thm: local upper bound theorem}, we have $\mathcal{D}^{\circ} \subset \mathscr{V}_{\circ}^{[i]T} = \Epositiveregion{\tildelim\limits_{n \to \infty} \mathbf{c}_S^{[i]TS^n}} \cap \Epositiveregion{\tildelim\limits_{n \to \infty} \mathbf{c}_S^{[i]T^2S^n}} \cap \Enegativeregion{\mathbf{e}_{t_0}}$.
By \eqref{eq: S mutation of c- g-vectors}, \eqref{eq: T mutation of c- g-vectors in trunks}, and \eqref{eq: T mutation of c- g-vectors in branches}, for any $m \in \mathbb{Z}_{\geq 0}$, we have
\begin{equation}
\mathbf{c}_{K}^{[i]TS^mT}=-\mathbf{c}_{T}^{[i]TS^m}=-\mathbf{c}_{T}^{[i]T}=-\mathbf{c}_{K}^{[i]}=\mathbf{e}_{k_0}.
\end{equation}
In particular, by \eqref{eq: hyperplane expression of local upper bound}, we have $\mathscr{V}_{\circ}^{[i]TS^m} \subset \Epositiveregion{\mathbf{e}_{k_0}}$.
Since $\mathcal{D}^{\circ}$ is a subset of the union of $\mathscr{V}_{\circ}^{[i]TS^mT}$, we have $\mathcal{D}^{\circ} \subset \Epositiveregion{\mathbf{e}_{k_0}}$. 
Moreover, by \Cref{lem: T-mutations of modified c- g-vectores} and \Cref{lem: limit of S mutations}, we have $\tildelim\limits_{n \to \infty} {\bf c}_{S}^{[i]T^2S^n} \sim -{\bf e}_{t_0}-\beta_{k_0t_0}{\bf e}_{k_0}$. Thus, we also obtain $\mathcal{D}^{\circ} \subset \Epositiveregion{-{\bf e}_{t_0}-\beta_{k_0t_0}{\bf e}_{k_0}}$.
\end{proof}
In what follows, let us prove this case in \Cref{thm: sign of G matrix}.
\begin{proof}
Let $\mathbf{x}=(x_1,x_2,x_3)^{\top} \in 
\Epositiveregion{-{\bf e}_{t_0}-\beta_{k_0t_0}{\bf e}_{k_0}} \cap \Epositiveregion{{\bf e}_{k_0}} \cap \Epositiveregion{\mathbf{v}}$.
By $\mathbf{x} \in \Epositiveregion{\mathbf{e}_{k_0}}$, $x_{k_0} > 0$ holds. By $\mathbf{x} \in \Epositiveregion{-\mathbf{e}_{t_0}-\beta_{k_0t_0}\mathbf{e}_{k_0}}$, we have $-x_{t_0}-\beta_{k_0t_0} x_{k_0} > 0$. This implies
\begin{equation}
x_{t_0}<-\beta_{k_0t_0} x_{k_0} <0.
\end{equation}
Since $\mathbf{x} \in \Epositiveregion{\mathbf{v}}$, we have $x_{k_0}v_{k_0}+x_{s_0}v_{s_0}+x_{t_0}v_{t_0} > 0$. Since $x_{t_0}<-\beta_{k_0t_0} x_{k_0}$, we have
\begin{equation}
x_{s_0}v_{s_0}>-x_{k_0}v_{k_0}-x_{t_0}v_{t_0}>x_{k_0}(\beta_{k_0t_0} v_{t_0} - v_{k_0}) \geq 0,
\end{equation}
where the last inequality follows from \eqref{eq: alpha v inequality}. This completes the proof.
\end{proof}

\subsubsection{Case of $[i]S^{n+1}TX$}
In parallel to the proof of the case of $[i]S^nTX$, we give the following upper bound.
\begin{lemma}
Suppose that $B$ is skew-symmetric. Then, we have
\begin{equation}
\bigcup_{n \in \mathbb{Z}_{\geq 0}} \mathscr{V}^{[i]S^{n+1}T}_{\circ} \subset \Epositiveregion{\mathfrak{c}_{i}^{-}(1)} \cap \Enegativeregion{\mathbf{e}_{t_0}} \cap \Epositiveregion{\mathbf{v}}.
\end{equation}
\end{lemma}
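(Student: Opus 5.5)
Each of the three half-spaces will be obtained from a separate, already established fact, and $\mathscr{V}_{\circ}^{[i]S^{n+1}T}$ is tested against each one.

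\emph{First half-space, $\Epositiveregion{\mathbf{v}}$.} We have $\mathscr{V}_{\circ}^{[i]S^{n+1}T}\subset\mathscr{V}^{[i]S^{n+1}T}$, and this set is spanned by $\tilde{\mathbf{g}}_S^{[i]S^{n+1}T}$, $\tilde{\mathbf{g}}_T^{[i]S^{n+1}T}$ and the limit vector $\bar{\mathbf{g}}^{[i]S^{n+1}T}$. The two $g$-vectors lie in $H_i^{+}$ by \Cref{thm: global upper bound theorem}. The vector $\bar{\mathbf{g}}^{[i]S^{n+1}T}$ is a positive multiple of a limit of $g$-vectors $\tilde{\mathbf{g}}_K^{\mathbf{w}TS^m}$ lying in $Q_i^+$. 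Hence it lies in the closure $\overline{Q_i^+}\subset\overline{\mathcal{H}}^{+}(\mathbf{v})$.

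Since $\mathscr{V}_{\circ}$ is the open cone on these generators, it is contained in the interior of $\overline{\mathcal{H}}^{+}(\mathbf{v})$, which is $\Epositiveregion{\mathbf{v}}$. Alternatively, one can argue exactly as in the $\mathcal{D}^{\circ}$ lemma: the union of the cones $\mathcal{C}(G^{\mathbf{u}})$ is dense in $\mathscr{V}_\circ$, and all of these cones lie in $Q_i^+$ by \Cref{thm: global upper bound theorem}. Either way we get openness inside $\overline{\mathcal{H}}^{+}(\mathbf{v})$. Note that in the skew-symmetric normalization $D=I$, so $\langle D^{1/2}\mathbf{x},\mathbf{v}\rangle=\langle\mathbf{x},\mathbf{v}\rangle$.

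\emph{Second half-space, $\Enegativeregion{\mathbf{e}_{t_0}}$.} This is exactly the third factor of \eqref{eq: first upper bound expression via the plane}, because $\tilde{\mathbf{c}}_K^{[i]S^{m}T}=-\tilde{\mathbf{e}}_{t_0}$ for every $m$, in particular for $m=n+1$. It is also recorded in \Cref{lem: rough upper bound for branches}.

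\emph{Third half-space, $\Epositiveregion{\mathfrak{c}_i^{-}(1)}$.} Apply \Cref{lem: hyperplane lemma} with the fixed index $1$ and $m=n+1\ge 1$. If $m>1$, part ($b$) gives $\mathscr{V}_{\circ}^{[i]S^mT}\subset\positiveregion{\mathfrak{c}_i^-(1)}$. If $m=1$, part ($c$) gives the same inclusion. With $D=I$ these regions coincide with the Euclidean ones.

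The only delicate point is the $\mathbf{v}$-condition: we must pass from the closed condition $\langle\cdot,\mathbf{v}\rangle\ge 0$ on $\overline{Q^+_i}$ to strict positivity on the open cone. I would handle it by the interior argument above. The boundary generator $\bar{\mathbf{g}}$ may satisfy $\langle\bar{\mathbf{g}},\mathbf{v}\rangle=0$, but this is harmless, because any point of $\mathscr{V}_\circ$ carries a strictly positive coefficient on $\tilde{\mathbf{g}}_S$ or $\tilde{\mathbf{g}}_T$. These vectors lie in $H_i^+$ and therefore satisfy $\langle\cdot,\mathbf{v}\rangle>0$ strictly, since on $H$ the value $0$ is excluded (a point of $H$ with $\langle\cdot,\mathbf{v}\rangle=0$ would have a nonpositive quadratic form). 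Consequently the inner product with $\mathbf{v}$ is strictly positive on all of $\mathscr{V}_{\circ}^{[i]S^{n+1}T}$, which completes the plan.
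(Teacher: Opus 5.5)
The $\Epositiveregion{\mathfrak{c}_{i}^{-}(1)}$ and $\Enegativeregion{\mathbf{e}_{t_0}}$ factors are handled exactly as in the paper: you apply \Cref{lem: hyperplane lemma} with fixed index $1$ (part ($b$) for $m>1$, part ($c$) for $m=1$), and you take the third factor of \eqref{eq: first upper bound expression via the plane}. The $\Epositiveregion{\mathbf{v}}$ factor has a genuine gap, and in fact it cannot be proved for all of $\mathscr{V}_{\circ}$.

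Your key claim, that $\bar{\mathbf{g}}^{\mathbf{w}}$ is a positive multiple of a limit of $g$-vectors, is false. The limits supplied by \Cref{lem: limit of S mutations} are $\alpha_{SK}^{\mathbf{w}}\tilde{\mathbf{g}}_K^{\mathbf{w}}-\tilde{\mathbf{g}}_S^{\mathbf{w}}$ and $\alpha_{TK}^{\mathbf{w}}\tilde{\mathbf{g}}_K^{\mathbf{w}}-\tilde{\mathbf{g}}_T^{\mathbf{w}}$. By contrast, $\bar{\mathbf{g}}^{\mathbf{w}}$ is only the intersection line of the two limiting planes, and it can lie strictly on the negative side of $\Ezeroregion{\mathbf{v}}$.

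Here is a concrete example. Take $B$ in \eqref{eq: Markov matrix} with $b_{12}=-2$ and $i=1$. Then $(k_0,s_0,t_0)=(1,3,2)$, $\mathbf{v}\sim(1,1,1)^{\top}$, and $H_1^{+}=\{x_1+x_2+x_3=1\}$. One computes $\tilde{\mathbf{g}}_K^{[1]ST}=(-2,-1,4)^{\top}$, $\tilde{\mathbf{g}}_S^{[1]ST}=(-1,0,2)^{\top}$ and $\tilde{\mathbf{g}}_T^{[1]ST}=(-2,0,3)^{\top}$. Since all $\alpha$'s equal $1$, this gives $\bar{\mathbf{g}}^{[1]ST}=(1,-1,-1)^{\top}$, with coordinate sum $-1$. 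This is forced, because every modified $g$-vector in $\mathcal{T}^{\geq[1]}$ has coordinate sum $1$. Now $\bar{\mathbf{g}}^{[1]ST}+\epsilon(\tilde{\mathbf{g}}_S^{[1]ST}+\tilde{\mathbf{g}}_T^{[1]ST})$ lies in $\mathscr{V}_{\circ}^{[1]ST}$ and has coordinate sum $-1+2\epsilon<0$ for $0<\epsilon<1/2$. So $\mathscr{V}_{\circ}^{[1]ST}\not\subset\Epositiveregion{\mathbf{v}}$.

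Your density alternative fails for the same reason. The $G$-cones lie in $Q_1^{+}$, which misses the nonempty open set $\mathscr{V}_{\circ}^{[1]ST}\cap\Enegativeregion{\mathbf{v}}$. Hence their union is not dense in $\mathscr{V}_{\circ}^{[1]ST}$. The $\mathcal{D}^{\circ}$ argument works only because $\mathcal{D}$ is itself a union of $G$-cones.

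So the $\mathbf{v}$-factor holds only on the part of $\mathscr{V}_{\circ}^{[i]S^{n+1}T}$ covered by the fan. That is all the paper's citation of \Cref{thm: global upper bound theorem} (which controls only $G$-cones) actually delivers, and it is all the subsequent sign argument uses.

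For $\mathbf{u}\geq[i]S^{n+1}T$, the open cone $\mathcal{C}^{\circ}(G^{\mathbf{u}})$ is an open subset of $\mathscr{V}^{[i]S^{n+1}T}$ by \Cref{thm: local upper bound theorem}. Hence it lies in $\mathscr{V}_{\circ}^{[i]S^{n+1}T}$. It also lies in $Q_i^{+}\setminus\{\mathbf{0}\}\subset\Epositiveregion{\mathbf{v}}$. This follows from \Cref{thm: global upper bound theorem} and the observation in your last paragraph: a nonzero $\mathbf{x}$ with $\langle\mathbf{x},\mathbf{v}\rangle=0$ has $\mathbf{x}^{\top}\tilde{A}^{[i]}\mathbf{x}\leq 0$, so $\mathbf{x}\notin Q_i$.

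You should therefore state and prove the inclusion for $\bigcup_{n}\mathscr{V}_{\circ}^{[i]S^{n+1}T}\cap Q_i^{+}$, or equivalently for the open $G$-cones of these branches. Drop the claim about $\bar{\mathbf{g}}$ entirely.
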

\begin{proof}
This can be shown by \Cref{thm: global upper bound theorem}, \Cref{lem: hyperplane lemma}, and \eqref{eq: rough upper bound for branches}.
\end{proof}
As a consequence, we can completely finish the proof of \Cref{thm: sign of G matrix}.
\begin{proof}
Let $\mathbf{x}=(x_1,x_2,x_3)^{\top} \in \Epositiveregion{\mathfrak{c}_{i}^{-}(1)} \cap \Enegativeregion{\mathbf{e}_{t_0}} \cap \Epositiveregion{\mathbf{v}_i}$. By $\mathbf{x} \in \Enegativeregion{\mathbf{e}_{t_0}}$, we have $x_{t_0} < 0$. By $\mathbf{x} \in \Epositiveregion{\mathfrak{c}_{i}^{-}(1)}$ and $p_{k_0s_0}=p_{k_0s_0}^{[i]}=q_{k_0s_0}$, we have
\begin{equation}
\langle \mathbf{x}, \mathfrak{c}_{i}^{-}(1) \rangle = -(\alpha_{TK}^{[i]S})^{-1}x_{t_0}-x_{s_0}-q_{k_0s_0}x_{k_0} > 0.
\end{equation}
Note that, by \eqref{eq: S-mutations of p}, we have $\alpha_{TK}^{[i]S}=\alpha_{ST}^{[i]}=\beta_{s_0t_0}$. Thus, we obtain
\begin{equation}\label{eq: lemma for the case of iSTX 1}
-\beta_{s_0t_0}^{-1}x_{t_0} - x_{s_0} - q_{k_0s_0}x_{k_0} > 0.
\end{equation}
By $\mathbf{x} \in \Epositiveregion{\mathbf{v}}$, we have
\begin{equation}\label{eq: lemma for the case of iSTX 2}
v_{t_0}x_{t_0}+v_{s_0}x_{s_0}+v_{k_0}x_{k_0}>0.
\end{equation}
By \eqref{eq: lemma for the case of iSTX 1} and \eqref{eq: lemma for the case of iSTX 2}, we obtain
\begin{equation}
(v_{t_0}-\beta_{s_0t_0}^{-1}v_{s_0})x_{t_0}+(v_{k_0}-q_{k_0s_0}v_{s_0})x_{k_0}>0,
\end{equation}
and it implies
\begin{equation}\label{eq: case of iSTX k0<0}
(q_{k_0s_0}v_{s_0}-v_{k_0})x_{k_0}<\beta_{s_0t_0}^{-1}(\beta_{s_0t_0}v_{t_0}-v_{s_0})x_{t_0}.
\end{equation}
Then, by $x_{t_0}<0$ and \eqref{eq: alpha v inequality}, the right hand side is non-positive. On the other hand, since $q_{k_0s_0} > \beta_{k_0s_0}$, we have
\begin{equation}
q_{k_0s_0}v_{s_0}-v_{k_0} > \beta_{k_0s_0}v_{s_0}-v_{k_0} \geq 0.
\end{equation}
Thus, the inequality \eqref{eq: case of iSTX k0<0} implies $x_{k_0}<0$. By $x_{t_0},x_{k_0}<0$ and \eqref{eq: lemma for the case of iSTX 2}, we obtain $x_{s_0}>0$.
\end{proof}

\section{Under the minimum assumption}\label{sec: under the minimum assumption}
For most rank $3$ cluster-cyclic matrices, it is known that their mutation equivalence classes contain the minimum element. By choosing the initial exchange matrix to be such a minimum one, several additional structural properties emerge. In this section, we establish the monotonicity of $g$-vectors and introduce a simple global upper bound under this condition.
\subsection{Minimum assumption}
To state the minimum assumption, we define the preorder $\leq$ on $\mathrm{M}_3(\mbR)$ as
\begin{equation}
A=(a_{ij}) \leq A'=(a'_{ij}) \Longleftrightarrow |a_{ij}| \leq |a'_{ij}| \quad (\forall i,j=1,2,3).
\end{equation}
Note that by \eqref{eq: rank 3 skew-symmetrizable condition} and \eqref{eq: mutation of b}, for any $j\in \{1,2,3\}$, either $\mu_{j}(B) \geq B$ or $B \geq \mu_{j}(B)$ (or both) holds. Moreover, the following fact is known.
\begin{lemma}[{\cite[Lem.~2.1~(a)]{BBH11}},\ {\cite[Lem.~7.5]{Aka24}}]\label{lem: monotonicity of B}
For any cluster-cyclic skew-symmetrizable matrix $B$, there are at least two indices $j\in \{1,2,3\}$ satisfying $\mu_{j}(B) \geq B$. Moreover, we set $\{j,l,m\} = \{1,2,3\}$. If $\mu_j(B) \geq B$, the largest entry in $\{p_{lm}^{[j]},p_{jl}^{[j]},p_{jm}^{[j]}\}$ is $p_{lm}^{[j]}$.
\end{lemma}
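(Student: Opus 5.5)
The plan is to reduce everything to the symmetric quantities $p_{ij}=\sqrt{|b_{ij}b_{ji}|}$. For a fixed skew-symmetrizer $D$, \Cref{lem: skew-symmetrized matrices formula} gives $|b^{\mathbf{w}}_{lm}|=\sqrt{d_m/d_l}\,p^{\mathbf{w}}_{lm}$ for every $\mathbf{w}$. Hence comparing $|b_{lm}|$ with $|b'_{lm}|$ is the same as comparing $p_{lm}$ with $p'_{lm}$, and the preorder $\mu_j(B)\ge B$ becomes a statement about the triple $(p_{12},p_{23},p_{31})$.

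Fix $\{j,l,m\}=\{1,2,3\}$. Since $B$ is cyclic, the computation in the proof of \Cref{lem: S-mutations of modified c- g-vectors} applies verbatim (it is exactly \eqref{eq: mutation of b} with $\mathrm{sign}(\tilde b_{lj})\tilde b_{jm}\ge 0$). It gives
\[
p^{[j]}_{jl}=p_{jl},\qquad p^{[j]}_{jm}=p_{jm},\qquad p^{[j]}_{lm}=p_{jl}p_{jm}-p_{lm}.
\]
The entries in row and column $j$ only change sign. Therefore
\[
\mu_j(B)\ge B \iff p_{jl}p_{jm}\ge 2p_{lm}.
\]

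First claim: suppose two indices fail, say $j=1$ and $j=2$. Then $p_{12}p_{13}<2p_{23}$ and $p_{12}p_{23}<2p_{13}$. Multiplying these two inequalities (all quantities are positive) gives $p_{12}^2p_{13}p_{23}<4p_{13}p_{23}$, hence $p_{12}<2$. This contradicts $|b_{12}b_{21}|\ge 4$ from \Cref{prop: ordinary classification of cluster-cyclicity}. So at least two indices satisfy $\mu_j(B)\ge B$.

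Second claim: set $B'=\mu_j(B)$ and write $q_{\ast\ast}=p^{[j]}_{\ast\ast}$. The matrix $B'$ is again cluster-cyclic, and $\mu_j(B')=B\le B'$. Applying the criterion above to $B'$, with the inequality now reversed, gives $q_{jl}q_{jm}\le 2q_{lm}$. Since $q_{jm}\ge 2$ (again by \Cref{prop: ordinary classification of cluster-cyclicity} applied to $B'$), we get
\[
2q_{jl}\le q_{jl}q_{jm}\le 2q_{lm}.
\]
So $q_{lm}\ge q_{jl}$, and symmetrically $q_{lm}\ge q_{jm}$. Thus $p^{[j]}_{lm}$ is the largest of the three entries.

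The only step needing care is the sign bookkeeping behind the formula $p^{[j]}_{lm}=p_{jl}p_{jm}-p_{lm}$. For a cyclic sign pattern the product $b_{lj}b_{jm}$ is positive exactly when the update term has the opposite sign to $b_{lm}$, so the magnitudes subtract. This is already verified in the proof of \Cref{lem: S-mutations of modified c- g-vectors}, and I would cite it rather than redo it.
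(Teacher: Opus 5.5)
Your proof is correct. The paper gives no proof of this lemma; it only cites \cite[Lem.~2.1~(a)]{BBH11} and \cite[Lem.~7.5]{Aka24}. Your argument is therefore a self-contained substitute, built from two ingredients the paper already has.
\begin{itemize}
\item The exchange relation $p^{[j]}_{lm}=p_{jl}p_{jm}-p_{lm}$. This is exactly \eqref{eq: initial mutations} in \Cref{lem: kst} with $k_0=j$ and $\{s_0,t_0\}=\{l,m\}$, so you can cite it directly rather than the proof of \Cref{lem: S-mutations of modified c- g-vectors}.
\item The bound $p_{ab}\ge 2$ from \Cref{prop: ordinary classification of cluster-cyclicity}.
\end{itemize}
The remaining steps are all sound: the reduction of $\mu_j(B)\ge B$ to $p_{jl}p_{jm}\ge 2p_{lm}$, the product trick forcing $p_{12}<2$ when two directions fail, and the reversed inequality $q_{jl}q_{jm}\le 2q_{lm}$ for $B'=\mu_j(B)$ combined with $q_{jm},q_{jl}\ge 2$. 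This is the standard Vieta-jumping argument for Markov-type triples.

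One small precision point. Writing $p^{[j]}_{lm}=p_{jl}p_{jm}-p_{lm}$ without an absolute value uses that $\mu_j(B)$ is again cyclic, via \eqref{eq: sign reversed relation}. That is cluster-cyclicity, not mere cyclicity of $B$ as your sentence ``Since $B$ is cyclic'' suggests. This is harmless, since cluster-cyclicity is the hypothesis. In fact both of your inequalities survive even with $p^{[j]}_{lm}=|p_{jl}p_{jm}-p_{lm}|$ kept. For $x,y>0$, $|x-y|\ge y$ forces $x\ge 2y$, and $|x-y|\le y$ forces $x\le 2y$. So the argument does not depend on the sign bookkeeping at all.
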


\begin{definition}
For any cluster-cyclic skew-symmetrizable matrix $B \in \mathrm{M}_{3}(\mathbb{R})$, we call the sequence (admitting empty, finite, and infinite) $\bm{\delta} (B)=[k_1,k_2,\dots]$ a {\em decreasing sequence} for $B$ if the following statements hold.
\begin{itemize}
\item For each $l=1,2,\dots$, the inequality $\mu_{k_{l}}(B) \geq B$ does {\em not} hold. (Note that the statements exclude the possibility that $\mu_{k_l}(B)=-B$.)
\item If $\bm{\delta}(B)$ is a finite sequence, then it holds that $\mu_{j}(B^{\bm{\delta}(B)}) \geq B^{\bm{\delta}(B)}$ for any $j=1,2,3$.
\end{itemize}
If the decreasing sequence for $B$ is empty ($\bm{\delta}(B)=\emptyset$), we say that $B$ is {\em minimum} in this $B$-pattern ${\bf B}(B)$.
\end{definition}
By \Cref{lem: monotonicity of B}, this decreasing sequence $\bm{\delta}(B)$ is uniquely determined by $B$. Moreover, since $\mu_j$ is an involution, $\bm{\delta}(B)$ should be reduced.
\par
By the following property, most $B$-patterns corresponding to cluster-cyclic matrices have the minimum element.
\begin{lemma}[{\cite[Lem.~7.5, Thm.~10.3]{Aka24}}]\label{lem: the condition for minimum assumption}
Let $B \in \mathrm{M}_{3}(\mathbb{R})$ be a cluster-cyclic skew-symmetrizable matrix.
\\
\textup{($a$)} The decreasing sequence $\bm{\delta}(B)$ is finite if and only if either of the following holds.
\begin{itemize}
\item $C(B) < 4$.
\item $\mathrm{Sk}(B)$ is mutation equivalent to the transposition or permutation of the following matrix.
\begin{equation}
\left(\begin{matrix}
0 & -p & p\\
p & 0 & -2\\
-p & 2 & 0
\end{matrix}\right) \quad (p \in \mathbb{R}_{\geq 2}).
\end{equation}
\end{itemize}
\textup{($b$)} Suppose that $\bm{\delta}(B)$ is finite. Let $B'$ be mutation equivalent to $B$. Then, $\bm{\delta}(B')$ is also finite, and $B^{\bm{\delta}(B)}=\pm(B')^{\bm{\delta}(B')}$ holds. In other words, if the minimum element $B^{\bm{\delta}(B)}$ exists, it is unique up to the difference of the sign. 
\end{lemma}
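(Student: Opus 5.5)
The plan is to reduce everything to the triple of absolute values and to the Markov constant. By \Cref{lem: skew-symmetrized matrices formula}, the preorder comparison $\mu_j(B)\geq B$ and the Markov constant $C(B)$ depend only on $(p_{12},p_{23},p_{31})$, and mutation commutes with $\mathrm{Sk}$. So I may assume $B$ is skew-symmetric and work with the triple of positive reals. By \Cref{lem: S-mutations of modified c- g-vectors} (or directly from \eqref{eq: mutation of b}), the mutation $\mu_k$ fixes $p_{kl}$ and $p_{km}$ and replaces $p_{lm}$ by $p_{lm}'=p_{kl}p_{km}-p_{lm}$. Thus $p_{lm}$ and $p_{lm}'$ are the two roots of
\[
x^2-p_{kl}p_{km}x+(p_{kl}^2+p_{km}^2-C)=0,
\]
and $C=C(B)$ is mutation invariant. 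The step is non-increasing precisely when $p_{lm}$ is the larger root. The discriminant is $(p_{kl}^2-4)(p_{km}^2-4)-4(4-C)$, which is $\geq 0$ by \eqref{eq: 1st form of cluster-cyclic}. In particular $\mu_k$ is strictly decreasing iff $p_{lm}$ is strictly the larger root, and \Cref{lem: monotonicity of B} says at most one direction can be strictly decreasing.

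For (a), along a decreasing sequence the sum $p_{12}+p_{23}+p_{31}$ strictly decreases and each entry stays $\geq 2$, so the triples converge to some limit $(\bar p_{12},\bar p_{23},\bar p_{31})$. The key step is to show that for $C<4$ each decreasing step drops the sum by at least a fixed positive amount, so the sequence must be finite. The drop equals the square root of the discriminant, so I need it bounded below along the sequence. I expect this to be the main obstacle, because the discriminant can become small when one of $p_{kl},p_{km}$ approaches $2$.

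I would first try to show that, in the limit, an entry equal to $2$ combined with $C<4$ contradicts the inequality $C\leq4$ forced by \Cref{prop: ordinary classification of cluster-cyclicity} together with the triangle inequality \eqref{eq: 3rd form of cluster-cyclic}. If that fails, I would use the hyperboloid picture of \Cref{lem: eigenvalues lemma for cluster-cyclic matrix}, where $\nu_1,\nu_2<0$ exactly when $C<4$ by \Cref{rem: classification of surface type}. There the triples play the role of a discrete orbit on a two-sheeted hyperboloid, and a monotone bounded orbit must stabilize.

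For the case $C=4$, the discriminant becomes $(p_{kl}^2-4)(p_{km}^2-4)$, which vanishes iff one of the fixed entries equals $2$. I would then show the orbit of such a triple reaches the displayed $(p,p,2)$ family in finitely many steps, and that otherwise the decreasing sequence is infinite, converging to a limit with an entry equal to $2$ that is never attained.

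For (b), first note that if $B$ is minimum then every $\mu_j(B)\geq B$. By the second half of \Cref{lem: monotonicity of B}, after $\mu_j$ the largest entry is the one just replaced. Hence every further mutation in a different direction replaces a non-maximal entry by the larger root, and so is non-decreasing. By induction, along any reduced path starting at a minimum the matrices weakly increase, and they strictly increase except in the Markov-type situation where $\mu_k(B)=-B$. Consequently, for any $B'$ in the class, its unique decreasing sequence $\bm\delta(B')$ must retrace the path back to the minimum. If $B^{\bm\delta(B)}$ and $(B')^{\bm\delta(B')}$ were both minima, the reduced path between them would have to be both non-decreasing and non-increasing. That forces every step to be of the form $\mu_k(X)=\pm X$, hence equality up to sign. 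Finiteness of $\bm\delta(B')$ then follows because $B'$ is a finite reduced path away from the minimum.
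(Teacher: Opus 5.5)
The paper does not prove this lemma (it is quoted from [Aka24]), so I am judging your argument on its own merits. Three parts are sound: the reduction to the triple $(p_{12},p_{23},p_{31})$, the invariance of $C$ via Vieta, and your part (b). For (b), start from a minimum. \Cref{lem: monotonicity of B} together with the estimate $p_{kl}p_{km}-p_{\mathrm{old}}\ge 2\max-p_{\mathrm{old}}\ge p_{\mathrm{old}}$ propagates weak increase along every reduced path. This gives both the retracing of $\bm{\delta}(B')$ and uniqueness up to sign.

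The gap is in (a), where you stop at a plan exactly where the content lies. For $C<4$, neither of your fallbacks works:
\begin{itemize}
\item Your heuristic that the drop $\sqrt{D}$ becomes small only when a fixed entry nears $2$ is false. For $C<4$, $D=(p_{kl}^2-4)(p_{km}^2-4)-4(4-C)$ vanishes on a curve where both entries exceed $2$.
\item Your first fallback presupposes that the limit has an entry equal to $2$, which is precisely what has to be proved.
\item The hyperboloid of \Cref{lem: eigenvalues lemma for cluster-cyclic matrix} lives in $g$-vector space, not in the space of triples; the triples lie on the cubic $x^2+y^2+z^2-xyz=C$.
\item A bounded monotone orbit of real triples need not stabilize, since nothing makes it discrete.
\end{itemize}

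What is missing is a limit argument.
\begin{itemize}
\item If $\bm{\delta}(B)$ is infinite, the coordinates converge to some $\bar p$. The drops $\sqrt{D}$ are summable, hence tend to $0$.
\item No tail can use only two directions $k,l$. In such a tail $p_{kl}=p$ stays fixed, and the successive new values satisfy $y_{n+1}=py_n-y_{n-1}$. The solutions $a\alpha^n+b\alpha^{-n}$ (or $a+bn$ if $p=2$) cannot keep decreasing while staying $\ge 2$.
\item So every $k$ occurs infinitely often. At a $k$-step the replaced value is the larger root $\frac12(p_{kl}p_{km}+\sqrt{D})$. Passing to the limit gives $\bar p_{lm}=\frac12\bar p_{kl}\bar p_{km}$ for all three $k$.
\item Multiplying two of these equations gives $\bar p_{12}=\bar p_{23}=\bar p_{31}=2$, hence $C=C(\bar p)=4$.
\end{itemize}
In particular, an infinite decreasing sequence converges to $(2,2,2)$, not merely to a limit with one entry equal to $2$.

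For $C=4$, you cannot directly prove that the orbit reaches the $(p,p,2)$ family, because this is an arithmetic condition. Write $p_{ij}=2\cosh t_{ij}$. Then one $t_{ij}$ is the sum of the other two, and a decreasing step is a step of the subtractive Euclidean algorithm. That algorithm terminates iff the ratio is rational; example ($c$) is a non-terminating case.

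Argue through minima instead. A finite $\bm{\delta}(B)$ ends at a minimum, and a minimum with $C=4$ is $(p,p,2)$ up to permutation. To see this, let $x=p_{lm}$ be its largest entry, with $q=p_{kl}\ge r=p_{km}$.
\begin{itemize}
\item Minimality in direction $k$ makes $x$ the smaller root of $f(t)=t^2-qrt+q^2+r^2-4$.
\item But $f(q)=(r-2)(r+2-q^2)<0$ if $r>2$, which would give $x<q$, a contradiction.
\item Hence $r=2$, and then $C=4+(q-x)^2$ forces $x=q$.
\end{itemize}
The converse direction, that being mutation equivalent to $(p,p,2)$ implies $\bm{\delta}$ is finite, is then your retracing argument from (b), since that triple is a minimum.
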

For any {\em integer} cluster-cyclic exchange matrix, its decreasing sequence is known to be finite. Thus, we can take the minimum element in its mutation equivalence class.
\begin{example}
We give three types of examples as follows.
\\
($a$) Consider the case of
\begin{equation}\label{eq: non minimum example}
B=\left(\begin{matrix}
0 & -228 & 1795\\
228 & 0 & -409252\\
-1795 & 409252 & 0
\end{matrix}\right).
\end{equation}
The Markov constant is $C(B)=-7$.
Then, we obtain $\bm{\delta}(B)=[1,2,3,2,1]$ as follows.
\begin{equation}
\begin{tikzpicture}[baseline={(0,-1.25)}]
\node (A0) at (0,0) {$\left(\begin{smallmatrix}
0 & -228 & 1795\\
228 & 0 & -409252\\
-1795 & 409252 & 0
\end{smallmatrix}\right)$};
\node (A1) at (4,0) {$\left(\begin{smallmatrix}
0 & 228 & -1795\\
-228 & 0 & 8\\
1795 & -8 & 0
\end{smallmatrix}\right)$};
\node (A2) at (7.5,0) {$\left(\begin{smallmatrix}
0 & -228 & 29\\
228 & 0 & -8\\
-29 & 8 & 0
\end{smallmatrix}\right)$};
\node (A3) at (10.5,0) {$\left(\begin{smallmatrix}
0 & 4 & -29\\
-4 & 0 & 8\\
29 & -8 & 0
\end{smallmatrix}\right)$};
\node (A4) at (0,-2) {$\left(\begin{smallmatrix}
0 & -4 & 3\\
4 & 0 & -8\\
-3 & 8 & 0
\end{smallmatrix}\right)$};
\node (A5) at (4,-2) [draw, rectangle] {$\left(\begin{smallmatrix}
0 & 4 & -3\\
-4 & 0 & 4\\
3 & -4 & 0
\end{smallmatrix}\right)$};
\node (A6) at (7.5,-1.5) {$\left(\begin{smallmatrix}
0 & -4 & 13\\
4 & 0 & -4\\
-13 & 4 & 0
\end{smallmatrix}\right)$};
\node (A7) at (7.5,-2.5) {$\left(\begin{smallmatrix}
0 & -8 & 3\\
8 & 0 & -4\\
-3 & 4 & 0
\end{smallmatrix}\right)$};
\draw[->] (A3.south west)->(A4.north east) node [pos=0.7, auto=left, sloped] {$\leq$} node [pos=0.8, auto=right] {$2$};
\draw[->] (A0.east)->(A1.west) node [pos=0.5, above] {$1$} node [pos=0.5, below] {$\geq$};
\draw[->] (A1.east)->(A2.west) node [pos=0.5, above] {$2$} node [pos=0.5, below] {$\geq$};
\draw[->] (A2.east)->(A3.west) node [pos=0.5, above] {$3$} node [pos=0.5, below] {$\geq$};
\draw[->] (A4.east)->(A5.west) node [pos=0.5, above] {$1$} node [pos=0.5, below] {$\geq$};
\draw[->] (A5.10)->(A6.west) node [pos=0.7, above] {$2$} node [pos=0.8, below, sloped] {$\leq$};
\draw[->] (A5.-10)->(A7.west) node [pos=0.55, above] {$3$} node [pos=0.5, below, sloped] {$\leq$};
\end{tikzpicture}
\end{equation}
In this $B$-pattern, the minimum element is uniquely determined.
\\
($b$) Consider that 
\begin{equation}
B=\left(\begin{matrix}
0 & -3 & 18\\
3 & 0 & -7\\
-18 & 7 & 0
\end{matrix}\right),
\quad
B'=\left(\begin{matrix}
0 & 3 & -18\\
-3 & 0 & 7\\
18 & -7 & 0
\end{matrix}\right).
\end{equation}
The Markov constant is $C(B)=4$. Then, we obtain the following decreasing sequence:
\begin{equation}
\begin{tikzpicture}
\node (A0) at (0,0) {$\left(\begin{smallmatrix}
0 & -3 & 18\\
3 & 0 & -7\\
-18 & 7 & 0
\end{smallmatrix}\right)$};
\node (A1) at (3,0) {$\left(\begin{smallmatrix}
0 & 3 & -3\\
-3 & 0 & 7\\
3 & -7 & 0
\end{smallmatrix}\right)$};
\node (A2) at (6,0) [draw, rectangle] {$\left(\begin{smallmatrix}
0 & -3 & 3\\
3 & 0 & -2\\
-3 & 2 & 0
\end{smallmatrix}\right)$};
\node (B0) at (0,-2) {$\left(\begin{smallmatrix}
0 & 3 & -18\\
-3 & 0 & 7\\
18 & -7 & 0
\end{smallmatrix}\right)$};
\node (B1) at (3,-2) {$\left(\begin{smallmatrix}
0 & -3 & 3\\
3 & 0 & -7\\
-3 & 7 & 0
\end{smallmatrix}\right)$};
\node (B2) at (6,-2) [draw, rectangle] {$\left(\begin{smallmatrix}
0 & 3 & -3\\
-3 & 0 & 2\\
3 & -2 & 0
\end{smallmatrix}\right)$};
\draw[->] (A0.east)->(A1.west) node [pos=0.5, above] {$2$} node [pos=0.5, below] {$\geq$};
\draw[->] (A1.east)->(A2.west) node [pos=0.5, above] {$1$} node [pos=0.5, below] {$\geq$};
\draw[->] (B0.east)->(B1.west) node [pos=0.5, above] {$2$} node [pos=0.5, below] {$\geq$};
\draw[->] (B1.east)->(B2.west) node [pos=0.5, above] {$1$} node [pos=0.5, below] {$\geq$};
\draw[<->] (A2.south)--(B2.north) node [pos=0.5, left] {$2,3$} node [pos=0.5, right] {\rotatebox{90}{$\leq$} \rotatebox{90}{$\geq$}};
\end{tikzpicture}
\end{equation}
In this $B$-pattern, there are two possible minimum elements (up to sign) as follows
\begin{equation}
B^{\bm{\delta}(B)}=\left(\begin{matrix}
0 & -3 & 3\\
3 & 0 & -2\\
-3 & 2 & 0
\end{matrix}\right),
\quad
(B')^{\bm{\delta}(B')}=\left(\begin{matrix}
0 & 3 & -3\\
-3 & 0 & 2\\
3 & -2 & 0
\end{matrix}\right).
\end{equation}
($c$) Lastly, we consider the exchange matrix 
\begin{equation}
B=\left(\begin{matrix}
0 & -14.5 & 4.75\\
14.5 & 0 & -3.5\\
-4.75 & 3.5 & 0
\end{matrix}\right).
\end{equation}
The Markov constant is $C(B)=4$. This is an example where $\bm{\delta}(B)$ is infinite. (The classification of this case is given by \cite[Rem.~4.6]{FT19}.) For example, we obtain the following decreasing sequence $\bm{\delta}(B)=[1,2,3,\dots]$.
\begin{equation}
\begin{tikzpicture}
\node (A0) at (0,0) {$\left(\begin{smallmatrix}
0 & -14.5 & 4.75\\
14.5 & 0 & -3.5\\
-4.75 & 3.5 & 0
\end{smallmatrix}\right)$};
\node (A1) at (4,0) {$\left(\begin{smallmatrix}
0 & 2.125 & -4.75\\
-2.125 & 0 & 3.5\\
4.75 & -3.5 & 0
\end{smallmatrix}\right)$};
\node (A2) at (8.5,0) {$\left(\begin{smallmatrix}
0 & -2.125 & 2.6875\\
2.125 & 0 & -3.5\\
-2.6875 & 3.5 & 0
\end{smallmatrix}\right)$};
\draw[->] (A0.east)->(A1.west) node [pos=0.5, above] {$1$} node [pos=0.5, below] {$\geq$};
\draw[->] (A1.east)->(A2.west) node [pos=0.5, above] {$2$} node [pos=0.5, below] {$\geq$};
\draw[->] (A2.east)->(11,0) node [pos=0.5, above] {$3$} node [pos=0.5, below] {$\geq$} node [right] {$\cdots$};
\end{tikzpicture}
\end{equation}
\end{example}
As a conclusion of this subsection, we introduce the following assumption.
\begin{definition}
For a $B$-pattern $\mathbf{B}$ associated with a cluster-cyclic exchange matrix, if we take an initial exchange matrix $B$ as the minimum element in $\mathbf{B}=\mathbf{B}(B)$, we call this choice the {\em minimum assumption}.
\end{definition}

\begin{lemma}[{\cite[Lem.~7.5]{Aka24}}]\label{lem: minimum assumption for B}
Let $B \in \mathrm{M}_3(\mathbb{R})$ be a cluster-cyclic initial exchange matrix and $i\in \{1,2,3\}$ satisfying $\mu_i(B) \geq B$. For any ${\bf w},{\bf u} \geq [i]$, if ${\bf u} \leq {\bf w}$, we have $B^{\bf u} \leq B^{\bf w}$. In particular, under the minimum assumption, we have $B^{\bf u} \leq B^{\bf w}$ for any $\mathbf{u} \leq \mathbf{w}$.
\end{lemma}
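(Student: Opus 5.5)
The plan is to reduce the statement to a one-step claim and prove that by induction on $|\mathbf{w}|$. Since $\leq$ on $\mathrm{M}_3(\mathbb{R})$ is transitive, and $\mathbf{u}\le\mathbf{w}$ means $\mathbf{w}$ is obtained from $\mathbf{u}$ by appending letters one at a time, it suffices to prove the following.

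\emph{Claim.} For every $\mathbf{w}\ge[i]$ with last index $k$, and every $l\neq k$, we have $\mu_l(B^{\mathbf{w}})\ge B^{\mathbf{w}}$. Equivalently, $B^{\mathbf{w}}\le B^{\mathbf{w}[l]}$.

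We prove the claim by induction on $|\mathbf{w}|$, carrying along the auxiliary statement $\mu_k(B^{\mathbf{w}})\le B^{\mathbf{w}}$, i.e.\ $B^{\mathbf{w}K}\le B^{\mathbf{w}}$.

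First translate ``$\mu_j(B')\ge B'$'' into the quantities $p_{ab}$. By \eqref{eq: mutation of b} and \Cref{lem: skew-symmetrized matrices formula}, for a cyclic $B'$ and $\{j,l,m\}=\{1,2,3\}$, the mutation $\mu_j$ leaves $p_{jl},p_{jm}$ unchanged. It replaces $p_{lm}$ by $p_{jl}p_{jm}-p_{lm}$, exactly as in \Cref{lem: S-mutations of modified c- g-vectors}. Hence $\mu_j(B')\ge B'$ is equivalent to
\[
p_{jl}p_{jm}\ge 2p_{lm}.
\]
The entries themselves (not only the products $p_{ab}^2=|b_{ab}b_{ba}|$) move monotonically together. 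This is because a mutation multiplies the pair $(b_{lm},b_{ml})$ by a common positive factor in absolute value: the ratio $|b_{lm}|/|b_{ml}|=d_m/d_l$ is preserved.

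Base case $\mathbf{w}=[i]$. The auxiliary statement $B=\mu_i(B^{[i]})\le B^{[i]}$ is exactly the hypothesis $\mu_i(B)\ge B$.

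Inductive step. Assume $\mu_k(B^{\mathbf{w}})\le B^{\mathbf{w}}$, i.e.\ $B^{\mathbf{w}}=\mu_k(B^{\mathbf{w}K})\ge B^{\mathbf{w}K}$. Write $\{k,l,m\}=\{1,2,3\}$. By the second part of \Cref{lem: monotonicity of B}, applied to $B^{\mathbf{w}K}$ with increasing direction $k$, the largest among $p^{\mathbf{w}}_{lm},p^{\mathbf{w}}_{kl},p^{\mathbf{w}}_{km}$ is $p^{\mathbf{w}}_{lm}$.

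Now take $j=l\neq k$. By the criterion above we must show $p^{\mathbf{w}}_{lk}p^{\mathbf{w}}_{lm}\ge 2p^{\mathbf{w}}_{km}$. This follows from $p^{\mathbf{w}}_{lk}\ge 2$ (\Cref{prop: ordinary classification of cluster-cyclicity}) together with $p^{\mathbf{w}}_{lm}\ge p^{\mathbf{w}}_{km}$. The same argument works for $j=m$. So $\mu_l(B^{\mathbf{w}})\ge B^{\mathbf{w}}$ for both $l\ne k$, which proves the claim at $\mathbf{w}$.

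It also gives the auxiliary statement at $\mathbf{w}[l]$: its last index is $l$, and $\mu_l(B^{\mathbf{w}[l]})=B^{\mathbf{w}}\le B^{\mathbf{w}[l]}$. This closes the induction.

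For the ``in particular'' part, under the minimum assumption $\mu_j(B)\ge B$ holds for every $j$, so the first part applies to every initial direction $i$. For $\mathbf{u}=\emptyset\le\mathbf{w}=[i]\cdots$, we have $B\le B^{[i]}$ and then $B^{[i]}\le B^{\mathbf{w}}$, so $B\le B^{\mathbf{w}}$ by transitivity. All other pairs $\mathbf{u}\le\mathbf{w}$ with $\mathbf{u}\neq\emptyset$ lie in a common $\mathcal{T}^{\ge[i]}$.

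The main point needing care is the equivalence between $\mu_j(B')\ge B'$ and the inequality $p_{jl}p_{jm}\ge2p_{lm}$ on the symmetrized quantities. This is routine via the skew-symmetrizer, but it is where entrywise comparison of $B$ must be matched with comparison of the $p_{ab}$.
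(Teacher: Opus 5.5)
Your proof is correct. The paper does not prove this lemma itself; it imports it from \cite[Lem.~7.5]{Aka24}, so the comparison is with that cited source rather than with an argument in the text.

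Your route is the natural self-contained one, and it matches how the paper later uses the lemma. You reduce to the one-step claim that every non-backtracking mutation after $[i]$ is weakly increasing. You translate $\mu_j(B')\ge B'$ into $p_{jl}p_{jm}\ge 2p_{lm}$, using that mutation preserves the skew-symmetrizer, so the ratio $|b_{lm}|/|b_{ml}|$ is fixed. You then run an induction that carries the auxiliary statement $B^{\mathbf{w}K}\le B^{\mathbf{w}}$, seeded by the hypothesis $\mu_i(B)\ge B$.

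Two small points.

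First, in the inductive step you invoke the second part of \Cref{lem: monotonicity of B}. The paper attributes that lemma to the same \cite[Lem.~7.5]{Aka24}, so it is worth noting that it follows in one line from your own criterion. If $p_{jl}p_{jm}\ge 2p_{lm}$, then
\[
p^{[j]}_{lm}=p_{jl}p_{jm}-p_{lm}\ge \tfrac12 p_{jl}p_{jm}\ge \max(p_{jl},p_{jm}),
\]
because $p_{jl},p_{jm}\ge 2$. With this, your argument uses nothing beyond \Cref{prop: ordinary classification of cluster-cyclicity} and the mutation rule.

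Second, the update $p_{lm}\mapsto p_{jl}p_{jm}-p_{lm}$ holds for cluster-cyclic $B'$. For a merely cyclic $B'$ it is $|p_{jl}p_{jm}-p_{lm}|$. This is harmless here, since every $B^{\mathbf{w}}$ in the pattern is cluster-cyclic, and the criterion $p_{jl}p_{jm}\ge 2p_{lm}$ is valid in either case.
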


\subsection{Monotonicity of $g$-vectors}
For any vector ${\bf x}=(x_1,x_2,x_3)^{\top} \in \mathbb{R}^3$, we define its absolute value vector as 
\begin{align}
\mathrm{abs}({\bf x})=(|x_1|,|x_2|,|x_3|)^{\top}.
\end{align}
Under the minimum assumption, we can show the following monotonicity of $g$-vectors.
\begin{theorem}\label{thm: monotonicity of g-vectors}
Let $B \in \mathrm{M}_3(\mathbb{R})$ be a cluster-cyclic initial exchange matrix, and let $i\in \{1,2,3\}$ satisfy $\mu_i(B) \geq B$.
\\
\textup{($a$)} For any ${\bf w},{\bf u} \geq [i]$, if ${\bf w} \leq {\bf u}$, we have
\begin{equation}\label{eq: K monotonicity}
\mathrm{abs}(\tilde{\bf g}_{K}^{\bf w}) \leq \mathrm{abs}(\tilde{\bf g}_{K}^{\bf u}). 
\end{equation}
\textup{($b$)} For any ${\bf w},{\bf u} \geq [i]$ and $j=1,2,3$, if ${\bf w} \leq {\bf u}$, we have
\begin{equation}\label{eq: j monotonicity}
\mathrm{abs}({\bf g}_{j}^{\bf w}) \leq \mathrm{abs}({\bf g}_{j}^{\bf u}),
\end{equation}
where ${\bf g}_{j}^{\bf w}$ and ${\bf g}_{j}^{\bf u}$ are the ordinary $g$-vectors, not the modified ones.
\par
In particular, under the minimum assumption, the above inequalities \eqref{eq: K monotonicity} and \eqref{eq: j monotonicity} always hold for any ${\bf w} \leq {\bf u}$ and $j=1,2,3$.
\end{theorem}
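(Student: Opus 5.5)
Part ($a$) is the core; ($b$) follows from it. Every $g$-vector $\mathbf{g}_j^{\mathbf{u}}$ with $\mathbf{u}\geq[i]$ is either $\mathbf{g}_j^{[i]}$ itself, or equals $\sqrt{d_j}\,\tilde{\mathbf{g}}_K^{\mathbf{u}'}$ for the last prefix $\mathbf{u}'$ with $[i]\leq\mathbf{u}'\leq\mathbf{u}$ at which direction $j$ was mutated. The same holds for $\mathbf{w}$, with a prefix $\mathbf{w}'\leq\mathbf{u}'$. Then ($a$) gives $\mathrm{abs}(\mathbf{g}_j^{\mathbf{w}})\leq\mathrm{abs}(\mathbf{g}_j^{\mathbf{u}})$. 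The remaining case is when $\mathbf{g}_j^{\mathbf{w}}=\mathbf{g}_j^{[i]}$. Here the vector is $\mathbf{e}_{s_0}$, $\mathbf{e}_{t_0}$, or $-\mathbf{e}_{k_0}+p_{k_0s_0}\mathbf{e}_{s_0}$ up to scaling. I will handle it directly with the sign pattern of \Cref{thm: sign of G matrix} and the explicit trunk formulas of \Cref{lem: infinite S mutation in trunks}. The key point is that every later $g$-vector in slot $s_0$ has $x_{s_0}\geq 1$ (in modified units). This needs a small argument; I will use the quadratic surface of \Cref{lem: surface for modified g-vectors} together with the sign information. The ``in particular'' clause then follows, since under the minimum assumption every $i$ satisfies $\mu_i(B)\geq B$ and the empty prefix gives $\tilde{\mathbf{e}}_j$, which is dominated trivially.

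For ($a$), it suffices to treat $\mathbf{u}=\mathbf{w}M$ with $M\in\{S,T\}$ and then iterate. By \Cref{lem: S-mutations of modified c- g-vectors} and \Cref{lem: T-mutations of modified c- g-vectores}, $\tilde{\mathbf{g}}_K^{\mathbf{w}M}=-\tilde{\mathbf{g}}_{M}^{\mathbf{w}}+p\,\tilde{\mathbf{g}}_{N}^{\mathbf{w}}$, where $p\geq 2$ and $N\in\{K,S\}$ is the appropriate index. I will argue coordinatewise. By \Cref{thm: sign of G matrix}, within one $G$-matrix all three vectors share the sign of each coordinate. So for every $l$,
\begin{equation*}
|(\tilde{\mathbf{g}}_K^{\mathbf{w}M})_l| = \bigl|\,p\,|(\tilde{\mathbf{g}}_N^{\mathbf{w}})_l|-|(\tilde{\mathbf{g}}_M^{\mathbf{w}})_l|\,\bigr|.
\end{equation*}
It is therefore enough to show that $p\,|(\tilde{\mathbf{g}}_N^{\mathbf{w}})_l|-|(\tilde{\mathbf{g}}_M^{\mathbf{w}})_l|\geq|(\tilde{\mathbf{g}}_K^{\mathbf{w}})_l|$.

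When $N=K$ (an $S$-mutation, or a $T$-mutation in a branch), the claim reduces to $(p-1)|(\tilde{\mathbf{g}}_K^{\mathbf{w}})_l|\geq|(\tilde{\mathbf{g}}_M^{\mathbf{w}})_l|$. This is a statement that $\tilde{\mathbf{g}}_K^{\mathbf{w}}$ dominates the older vector, and I will prove it by induction along $\mathbf{w}$. Since $\tilde{\mathbf{g}}_M^{\mathbf{w}}$ was itself created earlier as a $K$-vector, the inductive hypothesis $\mathrm{abs}(\tilde{\mathbf{g}}_K^{\mathbf{w}})\geq\mathrm{abs}(\tilde{\mathbf{g}}_{S}^{\mathbf{w}}),\mathrm{abs}(\tilde{\mathbf{g}}_{T}^{\mathbf{w}})$ (for vectors not equal to initial ones) together with $p-1\geq 1$ gives the claim. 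I will carry this stronger hypothesis in the induction: the newest vector dominates the other two, coordinatewise in absolute value. The case $N=S$ (a $T$-mutation in the trunk) is explicit via \Cref{lem: infinite S mutation in trunks} and \eqref{eq: iTS^n expressions}, so the Chebyshev monotonicity of $u_n$ settles it.

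The main obstacle is where $p\geq 2$ alone is not enough. This occurs when the monotonicity of $B^{\mathbf{w}}$ from \Cref{lem: minimum assumption for B} must be used: the relevant coefficient $p$ at $\mathbf{w}M$ must be compared with the one used at the previous step. It also occurs in the base cases near the root $[i]$, where the vectors are $\tilde{\mathbf{e}}$'s and dominance is only partial. I will use \Cref{lem: monotonicity of B}, which says the new entry $p_{lm}^{[j]}$ is the largest one. This will give $p^{\mathbf{w}M}_{\cdot\cdot}\geq p^{\mathbf{w}}_{\cdot\cdot}$, and hence the needed inequality $p\,a-b\geq a$ whenever $a\geq b\geq 0$. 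I will check the first two or three mutation levels by hand using \Cref{lem: kst}.
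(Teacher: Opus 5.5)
Your dominance-propagation step is essentially the paper's \Cref{lem: key lemma for the monotonicity}. However, the unconditional one-step induction you build around it has a genuine gap, and it sits in the step you call explicit.

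You aim to show $\mathrm{abs}(\tilde{\mathbf g}_K^{\mathbf wM})\ge\mathrm{abs}(\tilde{\mathbf g}_K^{\mathbf w})$ for every $\mathbf w\ge[i]$, and this is false for $\mathbf w=[i]$, $M=T$. By \Cref{lem: kst}, $\tilde{\mathbf g}_K^{[i]}=-\tilde{\mathbf e}_{k_0}+p_{k_0s_0}\tilde{\mathbf e}_{s_0}$. By \eqref{eq: iTS^n expressions}, every $\tilde{\mathbf g}_K^{[i]TS^n}=u_{n+1}(p')\tilde{\mathbf e}_{s_0}-u_n(p')\tilde{\mathbf e}_{t_0}$ has $k_0$-entry $0$. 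Concretely, take the Markov matrix $B=\left(\begin{smallmatrix}0&-2&2\\2&0&-2\\-2&2&0\end{smallmatrix}\right)$, which is minimum, and $i=1$. Then $[1]T=[1,2]$, $\mathbf g_K^{[1]}=(-1,0,2)^{\top}$ and $\mathbf g_K^{[1,2]}=(0,-1,2)^{\top}$, and their absolute vectors are incomparable. So (a) as literally stated fails for the pairs $([i],[i]TS^n)$, $n\ge0$, and no Chebyshev argument can settle that step.

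What is true is (a) for all other pairs. This includes all pairs with $K(\mathbf w)=K(\mathbf u)$, since $[i]TS^n$ never ends in $k_0$, and that is all your reduction of (b) uses. You should state and prove (a) in that form. The paper's reduction to four checks leaves the same link uncovered: its case (2) only treats $[i]S^nT$ with $n\ge1$, so it too really proves only the restricted form. Even for $n\ge1$, the trunk $T$-step is not a matter of Chebyshev monotonicity. At $n=1$ it is exactly $p^{[i]S}_{ST}\ge p_{k_0s_0}$, which needs \Cref{lem: monotonicity of B} and \Cref{lem: minimum assumption for B}.

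Second, the invariant you want to carry (the newest vector dominates the other two non-initial ones) fails on the whole infinite path $[i]TS^n$, not just in the first few levels. There $\tilde{\mathbf g}_T^{[i]TS^n}=\tilde{\mathbf g}_K^{[i]}$ is non-initial with $k_0$-entry $-d_{k_0}^{-1/2}$, which $\tilde{\mathbf g}_K^{[i]TS^n}$ does not dominate. Consider the branch $T$-step out of $[i]TS^n$ (your case $N=K$, $M=T$). There the left-hand side of your sufficient inequality $p|(\tilde{\mathbf g}_N^{\mathbf w})_l|-|(\tilde{\mathbf g}_M^{\mathbf w})_l|\ge|(\tilde{\mathbf g}_K^{\mathbf w})_l|$ equals $-d_{k_0}^{-1/2}$ at $l=k_0$. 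The prefix monotonicity you would fall back on is exactly the false comparison above.

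The paper handles this by computing the paths $[i]S^n$ and $[i]TS^n$ explicitly. It then establishes the hypothesis of \Cref{lem: key lemma for the monotonicity} afresh at every root $[i]S^nT$ ($n\ge1$) and $[i]TS^nT$ ($n\ge0$). It does so via $p^{[i]S^n}_{ST}u_{n-1}(p_{k_0s_0})\ge u_n(p_{k_0s_0})$ and $(p^{[i]TS^n}_{KT}-1)u_{n+1}(p')\ge p_{k_0s_0}$, with $p'=p^{[i]}_{s_0t_0}$. These are the only places where $\mu_i(B)\ge B$ enters, through $p^{[i]S^n}_{ST}\ge p_{k_0s_0}$ and $p'\ge p_{k_0s_0}$ from \Cref{lem: monotonicity of B} (with \Cref{lem: minimum assumption for B}). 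Your stated use of the hypothesis (``$pa-b\ge a$ whenever $a\ge b\ge0$'') needs only $p\ge2$ and does not identify them. Indeed, the example \eqref{eq: non minimum example} breaks exactly at the root $[1]TT=[1,2,1]$, where $p'=8<p_{k_0s_0}=1795$.

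Finally, the initial-vector case of (b) and the case $\mathbf w=\emptyset$ in the last clause are not trivial: you need $|(\mathbf g_j^{\mathbf u})_j|\ge1$. They do follow from the corrected (a) and the explicit root formulas, without the quadratic surface.
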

Unfortunately, the assumption $\mu_i(B) \geq B$ cannot be eliminated, see \Cref{ex: conter example of the monotonicity}.

Note that the claim ($a$) can directly imply ($b$) due to \eqref{eq: definition of modified c-, g-vectors}. And, the key point for the proof of \Cref{thm: monotonicity of g-vectors} is the following lemma.
\begin{lemma}\label{lem: key lemma for the monotonicity}
Let $B \in \mathrm{M}_3(\mathbb{R})$ be a cluster-cyclic initial exchange matrix. Let ${\bf w} \in \mathcal{T}$ be in a branch. Suppose that
\begin{equation}\label{eq: key lemma for the monotonicity}
\mathrm{abs}(\tilde{\bf g}_{K}^{\bf w}) \geq \mathrm{abs}(\tilde{\bf g}_{S}^{\bf w}),\mathrm{abs}(\tilde{\bf g}_{T}^{\bf w}).
\end{equation}
Then, for each $M=S,T$, we have
\begin{equation}
\mathrm{abs}(\tilde{\bf g}_{K}^{{\bf w}M}) \geq \mathrm{abs}(\tilde{\bf g}_{S}^{{\bf w}M}),\mathrm{abs}(\tilde{\bf g}_{T}^{{\bf w}M}), \mathrm{abs}(\tilde{\bf g}_{K}^{\bf w}).
\end{equation}
In particular, if (\ref{eq: key lemma for the monotonicity}) holds, then we have $\tilde{\bf g}_{K}^{\mathbf{u}'} \geq \tilde{\bf g}_{K}^{\mathbf{u}}$ for any $\mathbf{u}' \geq \mathbf{u} \geq {\bf w}$.
\end{lemma}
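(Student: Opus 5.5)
The plan is to verify the claim directly from the one-step mutation formulas. The only inputs are the triangle inequality, applied componentwise, and the bound $p\ge 2$ on the relevant entries, which holds by \Cref{prop: ordinary classification of cluster-cyclicity}. Since $\mathbf{w}$ is in a branch, the $S$-mutation is given by \Cref{lem: S-mutations of modified c- g-vectors} and the $T$-mutation by \Cref{lem: T-mutations of modified c- g-vectores}~($b$). In both cases the new vectors $\tilde{\mathbf g}_S^{\mathbf{w}M}$ and $\tilde{\mathbf g}_T^{\mathbf{w}M}$ are old ones, and the new $K$-vector has the form
\begin{equation*}
\tilde{\mathbf g}_K^{\mathbf{w}M}=-\mathbf{y}+p\,\tilde{\mathbf g}_K^{\mathbf w}.
\end{equation*}
Here $(\mathbf y,p)=(\tilde{\mathbf g}_S^{\mathbf w},p_{SK}^{\mathbf w})$ if $M=S$, and $(\mathbf y,p)=(\tilde{\mathbf g}_T^{\mathbf w},p_{KT}^{\mathbf w})$ if $M=T$.

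The key computation is as follows. Write $x_j$ and $y_j$ for the $j$th components of $\tilde{\mathbf g}_K^{\mathbf w}$ and $\mathbf y$. By the hypothesis \eqref{eq: key lemma for the monotonicity} we have $|y_j|\le |x_j|$. Since $p\ge 2$, this gives
\begin{equation*}
|p x_j-y_j|\ \ge\ p|x_j|-|y_j|\ \ge\ 2|x_j|-|x_j|=|x_j|.
\end{equation*}
Hence $\mathrm{abs}(\tilde{\mathbf g}_K^{\mathbf wM})\ge \mathrm{abs}(\tilde{\mathbf g}_K^{\mathbf w})$. This in turn dominates $\mathrm{abs}(\tilde{\mathbf g}_S^{\mathbf w})$ and $\mathrm{abs}(\tilde{\mathbf g}_T^{\mathbf w})$ by hypothesis.

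It remains to identify the new $S$- and $T$-vectors. For $M=S$ they are $\tilde{\mathbf g}_K^{\mathbf w}$ and $\tilde{\mathbf g}_T^{\mathbf w}$. For $M=T$ in a branch they are $\tilde{\mathbf g}_K^{\mathbf w}$ and $\tilde{\mathbf g}_S^{\mathbf w}$. In either case both are dominated componentwise by $\mathrm{abs}(\tilde{\mathbf g}_K^{\mathbf wM})$, which gives all three required inequalities.

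For the ``in particular'' part, which should be read with $\mathrm{abs}$ applied on both sides, I will use induction. Any $\mathbf u\ge\mathbf w$ lies in the same branch as $\mathbf w$ and is reached by successive $S$/$T$-mutations. The first statement shows that hypothesis \eqref{eq: key lemma for the monotonicity} propagates from $\mathbf w$ to $\mathbf wM$. By induction on $|\mathbf u|-|\mathbf w|$, it therefore holds at every $\mathbf u\ge\mathbf w$. Applying the one-step inequality $\mathrm{abs}(\tilde{\mathbf g}_K^{\mathbf uM})\ge\mathrm{abs}(\tilde{\mathbf g}_K^{\mathbf u})$ along the chain from $\mathbf u$ to $\mathbf u'$ then gives $\mathrm{abs}(\tilde{\mathbf g}_K^{\mathbf u'})\ge \mathrm{abs}(\tilde{\mathbf g}_K^{\mathbf u})$.

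I do not expect a genuine obstacle. The only point needing care is to use the branch version of the $T$-mutation formula, where $\tilde{\mathbf g}_T^{\mathbf wT}=\tilde{\mathbf g}_S^{\mathbf w}$, rather than the trunk version. This matters only in checking that the new $S$- and $T$-vectors are among the old vectors already dominated by $\tilde{\mathbf g}_K^{\mathbf w}$.
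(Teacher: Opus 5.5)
Your proof is correct and follows the paper's route: the one-step $S$-mutation and branch $T$-mutation formulas, the bound $p\ge 2$, and the hypothesis. The one difference is that you get $|px_j-y_j|\ge |x_j|$ from the reverse triangle inequality. The paper instead first invokes sign-coherence of $G$-matrices to obtain the exact identity $\mathrm{abs}(\tilde{\mathbf g}_K^{\mathbf wS})=p_{SK}^{\mathbf w}\,\mathrm{abs}(\tilde{\mathbf g}_K^{\mathbf w})-\mathrm{abs}(\tilde{\mathbf g}_S^{\mathbf w})$, so your version is slightly more elementary.

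Your reading of the final claim with $\mathrm{abs}$ on both sides is the intended one, since that is how the lemma is used in the monotonicity theorem. Your induction on $|\mathbf u|-|\mathbf w|$ makes explicit what the paper leaves implicit.
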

\begin{proof}
We show the case of $M=S$. (The case of $M=T$ is similar.) Since $\tilde{\bf g}_{S}^{{\bf w}S}=\tilde{\bf g}_{K}^{\bf w}$ and $\tilde{\bf g}_{T}^{{\bf w}S}=\tilde{\bf g}_{T}^{\bf w}$, it suffices to show $\mathrm{abs}(\tilde{\bf g}_{K}^{{\bf w}S})-\mathrm{abs}(\tilde{\bf g}_{M'}^{\bf w}) \geq {\bf 0}$ for any $M'=K,S,T$. First, we obtain
\begin{equation}\label{eq: expansion of abs g}
\mathrm{abs}(\tilde{\bf g}_{K}^{{\bf  w}S})=p_{SK}^{\bf w}\mathrm{abs}(\tilde{\bf g}_{K}^{\bf w})-\mathrm{abs}(\tilde{\bf g}_{S}^{\bf w}).
\end{equation}
Let ${\bf g}_{K}^{\bf w}=(x_1,x_2,x_3)^{\top}$ and ${\bf g}_{S}^{\bf w}=(y_1,y_2,y_3)^{\top}$. Then, by the sign-coherence of $G$-matrices, the sign of $x_i$ and $y_i$ is the same (admitting to combine $0$ and another sign). Let $\tau_{i} \in \{\pm1\}$ be the sign of $x_i$ and $y_i$. Then, we may express $\mathrm{abs}(\tilde{\bf g}_{K}^{\bf w})=(\tau_1x_1,\tau_2x_2,\tau_3x_3)^{\top}$ and $\mathrm{abs}({\bf g}_{S}^{\bf w})=(\tau_1y_1,\tau_2y_2,\tau_3y_3)^{\top}$. Thus, we have
\begin{equation}
p_{SK}^{\bf w}\mathrm{abs}(\tilde{\bf g}_{K}^{\bf w})-\mathrm{abs}(\tilde{\bf g}_{S}^{\bf w})=\left(\begin{matrix}
\tau_1(p_{SK}^{\bf w}x_1-y_1)\\
\tau_2(p_{SK}^{\bf w}x_2-y_2)\\
\tau_3(p_{SK}^{\bf w}x_3-y_3)
\end{matrix}\right).
\end{equation}
Note that the sign of $p_{SK}^{\bf w}x_i-y_i$ should be the same as $\tau_i$ because $p_{SK}^{\bf w} \geq 2$ and $|x_i| \geq |y_i|$. Thus, we obtain (\ref{eq: expansion of abs g}). By using this equality, we decompose $\mathrm{abs}(\tilde{\bf g}_{K}^{{\bf w}S})-\mathrm{abs}(\tilde{\bf g}_{M'}^{\bf w})$ into the sum of the following three terms.
\begin{equation}
(p_{SK}^{\bf w}-2)\mathrm{abs}(\tilde{\bf g}_{K}^{\bf w})+(\mathrm{abs}(\tilde{\bf g}_{K}^{\bf w})-\mathrm{abs}(\tilde{\bf g}_{S}^{\bf w}))+(\mathrm{abs}(\tilde{\bf g}_{K}^{\bf w})-\mathrm{abs}(\tilde{\bf g}_{M'}^{\bf w})).
\end{equation}
We can show that each term is larger or equal to ${\bf 0}$ by using $p_{SK}^{\bf w} \geq 2$ and $\mathrm{abs}(\tilde{\bf g}_{K}^{\bf w}) \geq \mathrm{abs}(\tilde{\bf g}_{S}^{\bf w}), \mathrm{abs}(\tilde{\bf g}_{T}^{\bf w})$. Thus, we obtain the claim.
\end{proof}
Thanks to this claim, once the inequality \eqref{eq: key lemma for the monotonicity} holds, the monotonicity \eqref{eq: K monotonicity} always holds after that. Unfortunately, this condition does not always hold, so we need to consider more carefully to prove \Cref{thm: monotonicity of g-vectors}. The strategy of the following proof is that we prove \eqref{eq: key lemma for the monotonicity} when ${\bf w}=[i]S^nT$ ($n \geq 1$) and ${\bf w}=[i]TS^nT$ ($n \geq 0$), and we directly check the monotonicity for the other cases, ${\bf w}=[i]S^n$ and ${\bf w}=[i]TS^n$.

\begin{proof}[Proof of Theorem~\ref{thm: monotonicity of g-vectors}]
By \Cref{lem: key lemma for the monotonicity}, it suffices to show the following four conditions. (In the proof of \eqref{item: second case for the monotonicity} and \eqref{item: fourth case for the monotonicity}, the assumption $\mu_i(B) \geq B$ is needed.)
\begin{enumerate}
\item $\mathrm{abs}(\tilde{\bf g}_{K}^{[i]S^n}) \leq \mathrm{abs}(\tilde{\bf g}_{K}^{[i]S^m})$ for $0 \leq n \leq m$.
\label{item: first case for the monotonicity}
\item Let ${\bf w}=[i]S^nT$ with $n=1,2,\dots$. Then, \eqref{eq: key lemma for the monotonicity} holds.
\label{item: second case for the monotonicity}
\item $\mathrm{abs}(\tilde{\bf g}_{K}^{[i]TS^n}) \leq \mathrm{abs}(\tilde{\bf g}_{K}^{[i]TS^m})$ for $0 \leq n \leq m$.
\label{item: third case for the monotonicity}
\item Let ${\bf w}=[i]TS^nT$ with $n=0,1,2,\dots$. Then, \eqref{eq: key lemma for the monotonicity} holds.
\label{item: fourth case for the monotonicity}
\end{enumerate}
Due to \Cref{lem: key lemma for the monotonicity}, for each $\mathbf{w}=[i]S^nT$ and $\mathbf{w}=[i]TS^nT$, \eqref{item: second case for the monotonicity} and \eqref{item: fourth case for the monotonicity} imply the monotonicity after that. Moreover, since $\tilde{\bf g}_{K}^{[i]S^n}=\tilde{\bf g}_{T}^{[i]S^nT}$ and $\tilde{\bf g}_{K}^{[i]TS^n}=\tilde{\bf g}_{S}^{[i]TS^nT}$ by \eqref{eq: T mutation of c- g-vectors in trunks} and \eqref{eq: T mutation of c- g-vectors in branches}, the inequalities $\mathrm{abs}(\tilde{\bf g}_{K}^{[i]S^n}) \leq \mathrm{abs}(\tilde{\bf g}_{K}^{[i]S^nT})$ and $\mathrm{abs}(\tilde{\bf g}_{K}^{[i]TS^n}) \leq \mathrm{abs}(\tilde{\bf g}_{K}^{[i]TS^nT})$ can be shown by \eqref{item: second case for the monotonicity} and \eqref{item: fourth case for the monotonicity}.
\\
\eqref{item: first case for the monotonicity}
We can check it by Lemma~\ref{lem: infinite S mutations}. (Note that $u_{n+1}(p) \geq u_{n}(p)$.)
\\
\eqref{item: second case for the monotonicity}
By \eqref{eq: g vectors at the root of maximal branches}, it suffices to show
\begin{equation}
p_{ST}^{[i]S^n}u_{n-1}(p_{k_0s_0}) \geq u_{n}(p_{k_0s_0})
\end{equation}
for any $n \in \mathbb{Z}_{\geq 1}$. Since $u_{n-2}(p_{k_0s_{0}}) \geq u_{-1}(p_{k_0s_0})=0$, we have $u_{n}(p_{k_0s_0})=p_{k_0s_0}u_{n-1}(p_{k_0s_0})-u_{n-2}(p_{k_0s_0}) \leq p_{k_0s_0}u_{n-1}(p_{k_0s_0})$. Moreover, according to Lemma~\ref{lem: monotonicity of B}, the largest element in $\{p_{12}^{[i]S^n},p_{23}^{[i]S^n},p_{31}^{[i]S^n}\}$ should be expressed as $p_{ST}^{[i]S^n}$. In particular, we obtain $p_{ST}^{[i]S^n} \geq p_{k_0s_0}^{[i]S^n}$. By \Cref{lem: minimum assumption for B}, we have $p_{k_0s_0}^{[i]S^n} \geq p_{k_0s_0}$. Thus, we have
$u_{n}(p_{k_0s_0}) \leq p_{k_0s_0}u_{n-1}(p_{k_0s_0}) \leq p_{ST}^{[i]S^n}u_{n-1}(p_{k_0s_0})$ as we desired.
\\
\eqref{item: third case for the monotonicity} This can be verified by \eqref{eq: iTS^n expressions}.
\\
\eqref{item: fourth case for the monotonicity}
Set $p'=p_{SK}^{[i]T}$. By \Cref{lem: T-mutations of modified c- g-vectores} and \eqref{eq: iTS^n expressions}, 
we have
\begin{equation}
\begin{aligned}\label{eq: root of TSnT}
\tilde{\bf g}_{S}^{[i]TS^nT}&=\tilde{\bf g}_{K}^{[i]TS^n}=u_{n+1}(p')\tilde{\bf e}_{s_0}-u_n(p')\tilde{\bf e}_{t_0},
\\
\tilde{\bf g}_{T}^{[i]TS^nT}&=\tilde{\bf g}_{S}^{[i]TS^n}=u_n(p')\tilde{\bf e}_{s_0}-u_{n-1}(p')\tilde{\bf e}_{t_0},\\
\tilde{\bf g}_{K}^{[i]TS^nT}&=\tilde{\bf e}_{k_0}+(p_{KT}^{[i]TS^n}u_{n+1}(p')-p_{k_0s_0})\tilde{\bf e}_{s_0}-p_{KT}^{[i]TS^n}u_{n}(p')\tilde{\bf e}_{t_0}.
\end{aligned}
\end{equation}
Thus, it suffices to show the following inequality for any $n \in \mathbb{Z}_{\geq 0}$:
\begin{equation}
p_{KT}^{[i]TS^n}u_{n+1}(p') -p_{k_0s_0} \geq u_{n+1}(p'). 
\end{equation}
We aim to show
$(p_{KT}^{[i]TS^n}-1)u_{n+1}(p') \geq p_{k_0s_0}$.
By $p_{KT}^{[i]TS^n} \geq 2$, we have
\begin{equation}
(p_{KT}^{[i]TS^n}-1)u_{n+1}(p') \geq u_{n+1}(p') \geq u_{1}(p')= p'.
\end{equation}
By \eqref{eq: T mutation of B in trunks}, we have
\begin{equation}
p'=p_{SK}^{[i]T}=p_{ST}^{[i]}=p_{s_0t_0}^{[i]}.
\end{equation}
Note that $i \neq s_0$ and $i \neq t_0$. Thus, by \Cref{lem: monotonicity of B}, the largest number in $\{p_{k_0s_0}^{[i]}, p_{s_0t_0}^{[i]}, p_{t_0k_0}^{[i]}\}$ is $p'=p_{s_0t_0}^{[i]}$. Namely, we have
\begin{equation}
p' \geq p_{k_0s_0}^{[i]}\overset{\eqref{eq: initial mutations}}{=}p_{k_0s_0}.
\end{equation}
Thus, we obtain $(p_{KT}^{[i]TS^n}-1)u_{n+1}(p')  \geq p_{k_0s_0}$ as desired.
This completes the proof.
\end{proof}
\begin{remark}
Due to the above proof, we can replace the assumption with a more restrictive one. Namely, if the conditions \eqref{item: second case for the monotonicity} and \eqref{item: fourth case for the monotonicity} are satisfied, that is, if
\begin{equation}\label{eq: sufficient condition for the monotonicity}
p_{ST}^{[i]S^{n+1}}u_{n}(p_{k_0s_0}) \geq u_{n+1}(p_{k_0s_0}),
\quad
p_{KT}^{[i]TS^n}u_{n+1}(p_{SK}^{[i]T}) -p_{k_0s_0} \geq u_{n+1}(p_{SK}^{[i]T})
\end{equation}
for any $n \in \mathbb{Z}_{\geq 0}$, then the monotonicity holds in $\mathcal{T}^{\geq [i]}$.
\end{remark}
\begin{example}\label{ex: conter example of the monotonicity}
We give one counterexample when we lose the assumption $\mu_{i}(B) \geq B$. Let an initial exchange matrix be the one in \eqref{eq: non minimum example}.
By a direct calculation, we may obtain the following $G$-matrices associated with the mutation sequence $\mfw=[1,2,1]$.
\begin{equation}
\left(\begin{smallmatrix}
1 & 0 & 0\\
0 & 1 & 0\\
0 & 0 & 1
\end{smallmatrix}\right)
\overset{1}{\mapsto}
\left(\begin{smallmatrix}
-1 & 0 & 0\\
0 & 1 & 0\\
1795 & 0 & 1
\end{smallmatrix}\right)
\overset{2}{\mapsto}
\left(\begin{smallmatrix}
-1 & 0 & 0\\
0 & -1 & 0\\
1795 & 8 & 1
\end{smallmatrix}\right)
\overset{1}{\mapsto}
\left(\begin{smallmatrix}
1 & 0 & 0\\
-228 & -1 & 0\\
29 & 8 & 1
\end{smallmatrix}\right).
\end{equation}
In particular, the $(3,1)$-th entry decreases by the last mutation. Thus, in this case, the monotonicity does not hold in general. (If we restrict the mutations in $\mathcal{T}^{\geq [2]}$ and $\mathcal{T}^{\geq [3]}$, the monotonicity holds.)
\par
On the other hand, even if the condition $\mu_i(B) \geq B$ does not hold, the monotonicity sometimes still holds. For example, let an initial exchange matrix be
\begin{equation}
B=\left(\begin{matrix}
0 & -4 & 3\\
4 & 0 & -8\\
-3 & 8 & 0
\end{matrix}\right).
\end{equation}
It holds that 
\begin{equation}
\mu_1(B)=\left(\begin{smallmatrix}
0 & 4 & -3\\
-4 & 0 & 4\\
3 & -4 & 0
\end{smallmatrix}\right) \not\geq B.
\end{equation}
However, by a direct calculation, we may check that the inequalities in \eqref{eq: sufficient condition for the monotonicity} hold, which implies that the monotonicity holds in this case.
\end{example}

\subsection{Simplification of global upper bounds}
In \Cref{sec: global}, we introduced the global upper bound, which is an upper bound only depending on the initial mutation $i=1,2,3$. In this subsection, we simplify this upper bound under the minimum assumption.
\par
Let $Q_{\mathrm{initial}}$, $Q^{+}_{\mathrm{initial}}$ and $Q^{-}_{\mathrm{initial}}$ be the sets defined by $Q$, $Q^{+}$ and $Q^{-}$ in \eqref{eq: set Q} with respect to the initial exchange matrix $B$ together with $\tilde{A}$.
\begin{theorem}\label{thm: simplified global upper bound}
Let $B \in \mathrm{M}_3(\mathbb{R})$ be a cluster-cyclic initial exchange matrix, and let $i\in \{1,2,3\}$ satisfy $\mu_i(B) \geq B$. Then, we have
\begin{equation}
|\Delta^{\geq [i]}(B)| \subset Q^{+}_{\mathrm{initial}}.
\end{equation}
In particular, under the minimum assumption, we have
\begin{equation}
|\Delta(B)| \subset Q^{+}_{\mathrm{initial}}.
\end{equation}
\end{theorem}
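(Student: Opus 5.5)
We need $|\Delta^{\geq [i]}(B)|\subset Q^+_{\mathrm{initial}}$ when $\mu_i(B)\ge B$. By \Cref{lem: convexity of Q}, $Q^{+}_{\mathrm{initial}}$ is a convex cone, so it suffices to show that every modified $g$-vector $\tilde{\mathbf g}_l^{\mathbf w}$ with $\mathbf w\ge[i]$ lies in $Q^+_{\mathrm{initial}}$. Write $\tilde A=\tilde A^{\emptyset}$ and $\tilde A^{[i]}$, and set $\mathbf x=D^{\frac12}\tilde{\mathbf g}_l^{\mathbf w}$. By \Cref{lem: surface for modified g-vectors} we know $\mathbf x^\top\tilde A^{[i]}\mathbf x=2$. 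The proof then has two parts: show that $\mathbf x^\top\tilde A\mathbf x>0$, and show that $\mathbf x$ lies on the positive side of the eigenvector $\mathbf v_{\mathrm{initial}}$.

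\textbf{Step 1: the quadratic form.} The matrices $\tilde A$ and $\tilde A^{[i]}$ differ only in the $(s_0,t_0)$ entries, where $p^{[i]}_{s_0t_0}=p_{k_0s_0}p_{k_0t_0}-p_{s_0t_0}$ by \Cref{lem: kst}. Hence
\[
\mathbf x^\top\tilde A\mathbf x = 2-2\bigl(p^{[i]}_{s_0t_0}-p_{s_0t_0}\bigr)x_{s_0}x_{t_0}.
\]
The hypothesis $\mu_i(B)\ge B$ gives $p^{[i]}_{s_0t_0}\ge p_{s_0t_0}$. By the sign remark after \Cref{thm: sign of G matrix}, we have $x_{s_0}\ge 0$ by \eqref{eq: sign s0}. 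We also have $x_{t_0}\le0$ by \eqref{eq: sign t0}, except for the vector $\mathbf e_{t_0}$; this exception has $x_{s_0}=0$ and is trivially in $Q^+_{\mathrm{initial}}$. Therefore $x_{s_0}x_{t_0}\le 0$, so $\mathbf x^\top\tilde A\mathbf x\ge 2>0$, and $\tilde{\mathbf g}_l^{\mathbf w}\in Q_{\mathrm{initial}}$.

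\textbf{Step 2: the side of the plane.} It remains to choose the correct component. Since $Q_{\mathrm{initial}}\setminus\{\mathbf 0\}$ splits into $Q^{\pm}_{\mathrm{initial}}\setminus\{\mathbf 0\}$ by \eqref{eq: separateness for Q}, I would use a connectedness argument, mirroring the proof of \Cref{thm: global upper bound theorem}.

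1. The initial vectors $\tilde{\mathbf g}^{[i]}_M$ are $\tilde{\mathbf e}_{s_0}$, $\tilde{\mathbf e}_{t_0}$ and $-\tilde{\mathbf e}_{k_0}+p_{k_0s_0}\tilde{\mathbf e}_{s_0}$. The first two pair positively with $\mathbf v_{\mathrm{initial}}$ because its entries are positive (\Cref{lem: eigenvalues lemma for cluster-cyclic matrix}(b)). The third has inner product $p_{k_0s_0}v_{s_0}-v_{k_0}$ with $\mathbf v_{\mathrm{initial}}$, which is $\ge \alpha_{k_0s_0}v_{s_0}-v_{k_0}\ge0$ by the analogue of \eqref{eq: alpha v inequality} for the initial matrix, whose proof only uses \eqref{eq: 2nd form of cluster-cyclic} and \eqref{eq: lambda inequality}. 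Equality cannot hold, since Step 1 shows the vector lies in $Q_{\mathrm{initial}}\setminus\{\mathbf 0\}$.

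2. Take a minimal $\mathbf w=\mathbf uM$ for which $\tilde{\mathbf g}^{\mathbf w}_K\in Q^-_{\mathrm{initial}}$. The recursions give $\tilde{\mathbf g}^{\mathbf w}_K+\tilde{\mathbf g}^{\mathbf u}_{M}=p\,\tilde{\mathbf g}^{\mathbf u}_{K'}$ with $p>0$, as in the proof of \Cref{thm: global upper bound theorem}.

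3. Rescale the two vectors onto the level set $\{\mathbf y^\top\tilde A\mathbf y=2\}$ and apply \Cref{lem: separation lemma for Q} to $\tilde A$. This gives that a positive combination of a point in $Q^+_{\mathrm{initial}}$ and a point in $Q^-_{\mathrm{initial}}$ is not in $Q_{\mathrm{initial}}\setminus\{\mathbf 0\}$. The lemma is stated for unit-level points, so I would either prove a scaled version via the same Cauchy--Schwarz argument, or instead use the form identity $\mathbf a^\top\tilde A\mathbf b\le -\sqrt{(\mathbf a^\top\tilde A\mathbf a)(\mathbf b^\top\tilde A\mathbf b)}$ for vectors on opposite sheets.

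4. This contradicts Step 1 applied to $\tilde{\mathbf g}^{\mathbf u}_{K'}$, which lies in $Q_{\mathrm{initial}}\setminus\{\mathbf 0\}$.

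The main obstacle is Step 1's reliance on the sign table. I need $x_{s_0}x_{t_0}\le0$ uniformly for all $g$-vectors after $[i]$, and \Cref{thm: sign of G matrix} supplies exactly this. The minimum-assumption statement then follows because $\mu_i(B)\ge B$ holds for every $i$, and the initial cone $\mathcal C(\tilde{\mathbf e}_1,\tilde{\mathbf e}_2,\tilde{\mathbf e}_3)$ lies in $Q^+_{\mathrm{initial}}$ since $\tilde A$ has positive entries and $\mathbf v_{\mathrm{initial}}>\mathbf 0$.
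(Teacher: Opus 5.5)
Your Step 1 is correct; it is the pointwise version of the first half of \Cref{lem: lemma for the simplification of global upper bounds}. Step 2, item 3, has a genuine gap. \Cref{lem: separation lemma for Q} relies essentially on $\mathbf x$ and $\mathbf y$ lying on the \emph{same} level set $H$. Your vectors $\tilde{\mathbf g}^{\mathbf u}_M$ and $\tilde{\mathbf g}^{\mathbf w}_K$ lie on the level set $2$ of $\tilde A^{[i]}$, not of $\tilde A$. For $\tilde A$, Step 1 only gives $\mathbf x^\top\tilde A\mathbf x=2-2(p^{[i]}_{s_0t_0}-p_{s_0t_0})x_{s_0}x_{t_0}\ge 2$, and this value varies from vector to vector. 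For example, with $\mathbf u=[i]$ and $M=T$, the vector $\tilde{\mathbf e}_{t_0}$ has value $2$, while $\tilde{\mathbf g}^{[i]T}_K=-\tilde{\mathbf e}_{t_0}+p^{[i]}_{s_0t_0}\tilde{\mathbf e}_{s_0}$ has a larger value when $p^{[i]}_{s_0t_0}>p_{s_0t_0}$. Rescaling onto a common level set changes the weights, so the lemma then concerns some $\lambda\tilde{\mathbf g}^{\mathbf u}_M+\mu\tilde{\mathbf g}^{\mathbf w}_K$ with $\lambda\neq\mu$. It says nothing about the sum $p\,\tilde{\mathbf g}^{\mathbf u}_{K'}$ that you actually have.

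The scaled statement you formulate is false. Take $\mathbf a\in Q^+_{\mathrm{initial}}\setminus\{\mathbf 0\}$ and $\mathbf b=-\tfrac12\mathbf a$. Then $\mathbf b\in Q^-_{\mathrm{initial}}$, yet $\mathbf a+\mathbf b=\tfrac12\mathbf a\in Q_{\mathrm{initial}}\setminus\{\mathbf 0\}$. The reverse Cauchy--Schwarz route fails in the same way. It yields only $(\mathbf a+\mathbf b)^\top\tilde A(\mathbf a+\mathbf b)\le\bigl(\sqrt{\mathbf a^\top\tilde A\mathbf a}-\sqrt{\mathbf b^\top\tilde A\mathbf b}\bigr)^2$, which is nonnegative and vanishes only when the two values coincide. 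So the pairing relation plus Step 1 cannot decide which sheet of $Q_{\mathrm{initial}}$ a $g$-vector lies on.

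The missing ingredient is a \emph{convex} region, known to lie inside $Q_{\mathrm{initial}}$, that contains the $g$-vectors. The paper builds it in three steps:
\begin{enumerate}
\item Run your Step 1 computation for every $\mathbf x$ with $(D^{\frac12}\mathbf x)^\top\tilde A^{[i]}(D^{\frac12}\mathbf x)>0$, $x_{s_0}\ge0$, $x_{t_0}\le0$, not only for level-$2$ vectors. This gives $Q_i\cap\positiveclosure{\mathbf e_{s_0}}\cap\negativeclosure{\mathbf e_{t_0}}\subset Q_{\mathrm{initial}}$.
\item The cone $Q_i^+\cap\positiveclosure{\mathbf e_{s_0}}\cap\negativeclosure{\mathbf e_{t_0}}$ is convex by \Cref{lem: convexity of Q} and contains $\mathbf e_{s_0}\in Q^+_{\mathrm{initial}}$. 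It therefore cannot meet $Q^-_{\mathrm{initial}}\setminus\{\mathbf 0\}$: a segment from $\mathbf e_{s_0}$ to such a point would stay in $Q_{\mathrm{initial}}\setminus\{\mathbf 0\}$ while joining the two components in \eqref{eq: separateness for Q}.
\item \Cref{thm: global upper bound theorem}, together with \eqref{eq: sign s0} and \eqref{eq: sign t0}, places every $g$-vector except $\mathbf e_{t_0}$ in that cone.
\end{enumerate}
Your argument never uses \Cref{thm: global upper bound theorem}, and it is exactly what selects the correct sheet. Once you use it, the induction over mutations is unnecessary.
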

Fix one initial mutation $i=1,2,3$, and set $k_0$, $s_0$, and $t_0$ as in \Cref{tab: List of initial indices}.
Note that by \Cref{thm: global upper bound theorem}, \eqref{eq: sign s0} and \eqref{eq: sign t0}, all the $g$-vectors except for $\mathbf{e}_{t_0}$ belong $Q^{+}_{i} \cap \positiveclosure{\mathbf{e}_{s_0}} \cap \negativeclosure{\mathbf{e}_{t_0}}$.
Before proceeding with the proof, we establish the following lemma.
\begin{lemma}\label{lem: lemma for the simplification of global upper bounds}
Fix $i=1,2,3$ satisfying $\mu_i(B) \geq B$. Then, we have
\begin{equation}\label{eq: lemma for the simplification of global upper bounds}
Q^{+}_{i} \cap \positiveclosure{\mathbf{e}_{s_0}} \cap \negativeclosure{\mathbf{e}_{t_0}} \subset Q^{+}_{\mathrm{initial}},
\quad
Q^{-}_{i} \cap \positiveclosure{\mathbf{e}_{s_0}} \cap \negativeclosure{\mathbf{e}_{t_0}} \subset Q^{-}_{\mathrm{initial}}.
\end{equation}
\end{lemma}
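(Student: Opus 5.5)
Work in the skew-symmetric setting ($D=I$); by \Cref{prop: skew-symmetrizing method} this loses nothing, since $Q^\pm$ do not depend on $D$. Write $\tilde A$ for the pseudo Cartan companion of the initial $B$ and $\tilde A^{[i]}$ for that of $B^{[i]}$. By \eqref{eq: mutation of b}, $\mu_i$ changes only the entry opposite to $i$, namely the $(s_0,t_0)$ entry. With $p=p_{s_0t_0}$ and $q=q_{s_0t_0}=p_{k_0s_0}p_{k_0t_0}-p_{s_0t_0}$ (\Cref{lem: kst}), the hypothesis $\mu_i(B)\geq B$ gives $q\geq p$. Hence
\[
\mathbf{x}^{\top}\tilde A\mathbf{x}=\mathbf{x}^{\top}\tilde A^{[i]}\mathbf{x}-2(q-p)x_{s_0}x_{t_0}.
\]
On $\positiveclosure{\mathbf{e}_{s_0}}\cap\negativeclosure{\mathbf{e}_{t_0}}$ we have $x_{s_0}x_{t_0}\le 0$, so the correction term is nonnegative. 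Therefore, if $\mathbf{x}^{\top}\tilde A^{[i]}\mathbf{x}>0$, then also $\mathbf{x}^{\top}\tilde A\mathbf{x}>0$. This gives
\[
Q_i\cap\positiveclosure{\mathbf{e}_{s_0}}\cap\negativeclosure{\mathbf{e}_{t_0}}\subset Q_{\mathrm{initial}}.
\]

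It remains to match the signs, i.e.\ to show that the $+$ part lands in the $+$ part. Set $R=\positiveclosure{\mathbf{e}_{s_0}}\cap\negativeclosure{\mathbf{e}_{t_0}}$, a closed convex cone, and $Y=(Q_i\setminus\{\mathbf 0\})\cap R$. By \eqref{eq: separateness for Q} and convexity (\Cref{lem: convexity of Q}), $Y$ is the disjoint union of the convex sets $Y^\pm=(Q_i^\pm\setminus\{\mathbf 0\})\cap R$. Each $Y^\pm$ is convex, hence connected, and nonempty: $\mathbf{e}_{s_0}\in Y^+$ and $-\mathbf{e}_{s_0}\in Y^-$, since $\tilde A^{[i]}$ has diagonal $2$ and $\mathbf v_i$ is positive. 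By the previous paragraph $Y\subset Q_{\mathrm{initial}}\setminus\{\mathbf 0\}=(Q^+_{\mathrm{initial}}\setminus\{\mathbf 0\})\sqcup(Q^-_{\mathrm{initial}}\setminus\{\mathbf 0\})$. These two pieces are relatively open and disjoint in $Q_{\mathrm{initial}}\setminus\{\mathbf 0\}$, being cut out by the sign of $\langle\cdot,\mathbf v\rangle$, which never vanishes there. So each connected set $Y^+$ and $Y^-$ lies entirely in one piece. Since $\mathbf e_{s_0}\in Q^+_{\mathrm{initial}}$ by positivity of $\mathbf v$ (\Cref{lem: eigenvalues lemma for cluster-cyclic matrix}($b$)), we get $Y^+\subset Q^+_{\mathrm{initial}}$. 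Since $-\mathbf e_{s_0}\in Q^-_{\mathrm{initial}}$, we get $Y^-\subset Q^-_{\mathrm{initial}}$. Adding back $\mathbf 0$ gives both inclusions in \eqref{eq: lemma for the simplification of global upper bounds}.

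The main thing to check carefully is that $\langle\mathbf{x},\mathbf v\rangle\neq 0$ on $Q_{\mathrm{initial}}\setminus\{\mathbf 0\}$, which is what makes the $\pm$ pieces relatively open and justifies the connectedness argument. This follows from the eigen-decomposition used in \Cref{lem: convexity of Q}. If $\langle\mathbf{x},\mathbf v\rangle=0$, then $\mathbf{x}$ is a combination of eigenvectors for $\nu_1,\nu_2\le 0$, so $\mathbf{x}^\top\tilde A\mathbf{x}\le 0$ and $\mathbf{x}\notin Q_{\mathrm{initial}}\setminus\{\mathbf 0\}$. The other point to verify is the sign convention, namely which entry $\mu_i$ enlarges and that it is exactly the $(s_0,t_0)$ entry; this is immediate from \Cref{lem: kst}.
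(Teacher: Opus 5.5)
Your argument is essentially the paper's. The first step, in which only the $(s_0,t_0)$ entry of the companion grows and $x_{s_0}x_{t_0}\le 0$ on $R$, is identical to the paper's. Your connectedness step is the topological form of the paper's contradiction argument: the paper joins $\mathbf{e}_{s_0}$ to an offending $\mathbf{y}$ by positive combinations inside the convex cone $Q_i^+\cap R$. Along that path the sign of $\langle\cdot,\mathbf{v}\rangle$ would have to flip, which forces a point outside $Q_{\mathrm{initial}}$.

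There is one false step. The vector $-\mathbf{e}_{s_0}$ is not in $Y^-$, because $R$ requires $x_{s_0}\ge 0$. Use $-\mathbf{e}_{t_0}$ (or $-\mathbf{e}_{k_0}$) instead:
\begin{itemize}
\item it has $x_{s_0}=0$ and $x_{t_0}=-1$, so it lies in $R$;
\item it takes a positive value under both quadratic forms;
\item it pairs negatively with both positive eigenvectors.
\end{itemize}
So it lies in $Y^-\cap Q^-_{\mathrm{initial}}$, and your argument then gives $Y^-\subset Q^-_{\mathrm{initial}}$. The paper's ``by the symmetry'' for the minus case tacitly needs exactly such a witness, since $R$ is not invariant under $\mathbf{x}\mapsto-\mathbf{x}$.

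A smaller point: your reduction to $D=I$ is valid, but the justification you give is not the right one. Independence of $Q^\pm$ from the choice of skew-symmetrizer only covers rescaling $D$. What actually works is the change of variables $\mathbf{x}\mapsto D^{1/2}\mathbf{x}$, which preserves the sign of every coordinate and hence preserves $R$.
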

\begin{proof}
Firstly, we prove $Q_{i} \cap \positiveclosure{\mathbf{e}_{s_0}} \cap \negativeclosure{\mathbf{e}_{t_0}} \subset Q_{\mathrm{initial}}$. Let $\mathbf{x}=(x_1,x_2,x_3) \in Q_{i} \cap \positiveclosure{\mathbf{e}_{s_0}} \cap \negativeclosure{\mathbf{e}_{t_0}}$. If $\mathbf{x}=\mathbf{0}$, we have $\mathbf{0} \in Q_{\mathrm{initial}}$. Assume that $\mathbf{x}\neq \mathbf{0}$. Then, we have $x_{s_0} \geq 0$, $x_{t_0} \leq 0$, and
\begin{equation}
\begin{aligned}
&\ d_{k_0}x_{k_0}^2+d_{s_0}x_{s_0}^2+d_{t_0}x_{t_0}^2+\sqrt{d_{k_0}d_{s_0}}p_{k_0s_0}^{[i]}x_{k_0}x_{s_0}+\sqrt{d_{k_0}d_{t_0}}p_{k_0t_0}^{[i]}x_{k_0}x_{t_0}+\sqrt{d_{s_0}d_{t_0}}p_{s_0t_0}^{[i]}x_{s_0}x_{t_0}
\\
>&\ 0.
\end{aligned}
\end{equation}
By \eqref{eq: initial mutations}, we have $p_{k_0s_0}^{[i]}=p_{k_0s_0}$ and $p_{k_0t_0}^{[i]}=p_{k_0s_0}$. By the assumption $\mu_i(B) \geq B$, we have $p_{s_0t_0}^{[i]} \geq p_{s_0t_0}$. Since $x_{s_0} \geq 0$ and $x_{t_0} \leq 0$, we have $p_{s_0t_0}^{[i]}x_{s_0}x_{t_0} \leq p_{s_0t_0}x_{s_0}x_{t_0}$, and it implies
\begin{equation}
\begin{aligned}
&\ d_{k_0}x_{k_0}^2+d_{s_0}x_{s_0}^2+d_{t_0}x_{t_0}^2+\sqrt{d_{k_0}d_{s_0}}p_{k_0s_0}x_{k_0}x_{s_0}+\sqrt{d_{k_0}d_{t_0}}p_{k_0t_0}x_{k_0}x_{t_0}+\sqrt{d_{s_0}d_{t_0}}p_{s_0t_0}x_{s_0}x_{t_0}
\\
\geq&\  d_{k_0}x_{k_0}^2+d_{s_0}x_{s_0}^2+d_{t_0}x_{t_0}^2+\sqrt{d_{k_0}d_{s_0}}p_{k_0s_0}^{[i]}x_{k_0}x_{s_0}+\sqrt{d_{k_0}d_{t_0}}p_{k_0t_0}^{[i]}x_{k_0}x_{t_0}+\sqrt{d_{s_0}d_{t_0}}p_{s_0t_0}^{[i]}x_{s_0}x_{t_0}
\\
>&\ 0.
\end{aligned}
\end{equation}
This implies $\mathbf{x} \in Q_{\mathrm{initial}}$.
\par
Now, we assume that the inclusions \eqref{eq: lemma for the simplification of global upper bounds} do not hold. By the symmetry, we might assume that $Q_{i}^{+} \cap \positiveclosure{\mathbf{e}_{s_0}} \cap \negativeclosure{\mathbf{e}_{t_0}} \nsubset Q_{\mathrm{initial}}^{+}$ without loss of generality. Since $Q_{\mathrm{initial}}$ is decomposed into $Q_{\mathrm{initial}}^{+}$ and $Q_{\mathrm{initial}}^{-}\setminus\{\mathbf{0}\}$, the set $Q_{i}^{+} \cap \positiveclosure{\mathbf{e}_{s_0}} \cap \negativeclosure{\mathbf{e}_{t_0}}$ intersects with $Q_{\mathrm{initial}}^{-}\setminus\{\mathbf{0}\}$. Note that $Q_{\mathrm{initial}}^{+} \setminus \{\mathbf{0}\}$ also intersects with $Q_{i}^{+} \cap \positiveclosure{\mathbf{e}_{s_0}} \cap \negativeclosure{\mathbf{e}_{t_0}}$. For example, $\mathbf{e}_{s_0}$ is an intersection of these two sets. Take an element $\mathbf{y} \in Q_{i}^{+} \cap \positiveclosure{\mathbf{e}_{s_0}} \cap \negativeclosure{\mathbf{e}_{t_0}}$ so that $\mathbf{y} \in Q^{-}_{\mathrm{initial}} \setminus\{\mathbf{0}\}$.
Then, since $Q_{\mathrm{initial}}^{+} \setminus \{\mathbf{0}\}$ and $Q_{\mathrm{initial}}^{-} \setminus\{\mathbf{0}\}$ are disconnected by \eqref{eq: separateness for Q}, there exist positive numbers $a,b \in \mathbb{R}_{>0}$ such that $a\mathbf{e}_{s_0}+b\mathbf{y} \notin Q_{\mathrm{initial}}$.
(Note that $a\mathbf{e}_{s_0}+b\mathbf{y} \neq \mathbf{0}$ holds because $\mathbf{y} \in \positiveclosure{\mathbf{e}_{s_0}} \setminus \{\mathbf{0}\}$.) On the other hand, since $Q_{i}^{+} \cap \positiveclosure{\mathbf{e}_{s_0}} \cap \negativeclosure{\mathbf{e}_{t_0}}$ is a convex cone by \Cref{lem: convexity of Q}, we have $a\mathbf{e}_{s_0}+b\mathbf{y} \in Q_{i}^{+} \cap \positiveclosure{\mathbf{e}_{s_0}} \cap \negativeclosure{\mathbf{e}_{t_0}}$. This contradicts with the inclusion $Q_{i} \cap \positiveclosure{\mathbf{e}_{s_0}} \cap \negativeclosure{\mathbf{e}_{t_0}} \subset Q_{\mathrm{initial}}$. Hence, we complete the proof.
\end{proof}
\begin{proof}[Proof of \Cref{thm: simplified global upper bound}]
Consider a $g$-vector $\mathbf{g}_{j}^{\mathbf{w}}$ with $\mathbf{w} \geq [i]$. If this $g$-vector is $\mathbf{e}_{t_0}$, the claim is shown directly. For another $g$-vector, by \Cref{thm: global upper bound theorem}, \eqref{eq: sign s0} and \eqref{eq: sign t0}, it belongs to $Q^{+}_{i} \cap \positiveclosure{\mathbf{e}_{s_0}} \cap \negativeclosure{\mathbf{e}_{t_0}}$. By \Cref{lem: lemma for the simplification of global upper bounds}, we obtain the claim.
\end{proof}
\begin{remark}
Note that for the exchange matrix corresponding to the Markov quiver, it is minimum. Hence, we can give the upper bound described in \Cref{thm: simplified global upper bound}, which is also the same as the one given in \Cref{thm: global upper bound theorem}. 
\end{remark}

\subsection*{Acknowledgements} 
 The authors would like to express their sincere gratitude to Tomoki Nakanishi for his thoughtful guidance. The authors also wish to thank Peigen Cao, Yasuaki Gyoda, Salvatore Stella and Toshiya Yurikusa for their valuable discussions and insightful suggestions. In addition, Z. Chen wants to thank Xiaowu Chen, Zhe Sun and Yu Ye for their help and support. R. Akagi is supported by JSPS KAKENHI Grant Number JP25KJ1438 and Chubei Itoh Foundation.
 Z. Chen is supported by National Natural Science Foundation of China (Grant No. 124B2003) and China Scholarship Council (Grant No. 202406340022).

\newpage
\bibliography{Akagi_Chen_25_paper3}

@article{AC25a,
  title={Real {$C$}-, {$G$}-structures and sign-coherence of cluster algebras},
  author={Akagi, R. and Chen, Z.},
  journal={arXiv preprint arXiv:2509.06486},
  year={2025}
}

@article{AC25b,
  title={Sign-coherence and tropical sign pattern for rank $3$ real cluster-cyclic exchange matrices},
  author={Akagi, R. and Chen, Z.},
  journal={arXiv preprint arXiv:2509.07454},
  year={2025}
}

@article{Aka24,
  title={Cluster-cyclic condition of skew-symmetrizable matrices of rank 3 via the Markov constant},
  author={R. Akagi},
  journal={arXiv preprint arXiv:2411.07083},
  year={2024}
}

@article {BBH11,
    AUTHOR = {Beineke, A. and Br\"ustle, T. and Hille, L.},
     TITLE = {Cluster-cyclic quivers with three vertices and the {M}arkov
              equation},
      NOTE = {With an appendix by Otto Kerner},
   JOURNAL = {Algebr. Represent. Theory},
  FJOURNAL = {Algebras and Representation Theory},
    VOLUME = {14},
      YEAR = {2011},
    NUMBER = {1},
     PAGES = {97--112},
      ISSN = {1386-923X,1572-9079},
   MRCLASS = {16G20 (13F60)},
  MRNUMBER = {2763295},
MRREVIEWER = {Gregoire\ Dupont},
       DOI = {10.1007/s10468-009-9179-9},
       URL = {https://doi.org/10.1007/s10468-009-9179-9},
}

@article {Mul16,
    AUTHOR = {Muller, G.},
     TITLE = {The existence of a maximal green sequence is not invariant
              under quiver mutation},
   JOURNAL = {Electron. J. Combin.},
  FJOURNAL = {Electronic Journal of Combinatorics},
    VOLUME = {23},
      YEAR = {2016},
    NUMBER = {2},
     PAGES = {Paper 2.47, 23},
      ISSN = {1077-8926},
   MRCLASS = {13F60},
  MRNUMBER = {3512669},
MRREVIEWER = {Fan\ Qin},
       DOI = {10.37236/5412},
       URL = {https://doi.org/10.37236/5412},
}

@article {GHKK18,
    AUTHOR = {Gross, M. and Hacking, P. and Keel, S. and Kontsevich, M.},
     TITLE = {Canonical bases for cluster algebras},
   JOURNAL = {J. Amer. Math. Soc.},
  FJOURNAL = {Journal of the American Mathematical Society},
    VOLUME = {31},
      YEAR = {2018},
    NUMBER = {2},
     PAGES = {497--608},
      ISSN = {0894-0347,1088-6834},
   MRCLASS = {13F60 (14J33)},
  MRNUMBER = {3758151},
MRREVIEWER = {Ralf\ Schiffler},
       DOI = {10.1090/jams/890},
       URL = {https://doi.org/10.1090/jams/890},
}

@article {FZ03,
    AUTHOR = {Fomin, S. and Zelevinsky, A.},
     TITLE = {Cluster algebras. {II}. {F}inite type classification},
   JOURNAL = {Invent. Math.},
  FJOURNAL = {Inventiones Mathematicae},
    VOLUME = {154},
      YEAR = {2003},
    NUMBER = {1},
     PAGES = {63--121},
      ISSN = {0020-9910,1432-1297},
   MRCLASS = {17B20 (05E15 16S99 52B12)},
  MRNUMBER = {2004457},
MRREVIEWER = {Eric\ N.\ Sommers},
       DOI = {10.1007/s00222-003-0302-y},
       URL = {https://doi.org/10.1007/s00222-003-0302-y},
}

@article {FZ07,
    AUTHOR = {Fomin, S. and Zelevinsky, A.},
     TITLE = {Cluster algebras. {IV}. {C}oefficients},
   JOURNAL = {Compos. Math.},
  FJOURNAL = {Compositio Mathematica},
    VOLUME = {143},
      YEAR = {2007},
    NUMBER = {1},
     PAGES = {112--164},
      ISSN = {0010-437X,1570-5846},
   MRCLASS = {16S99 (05E15 14M17 22E46)},
  MRNUMBER = {2295199},
MRREVIEWER = {Christof\ Gei\ss},
       DOI = {10.1112/S0010437X06002521},
       URL = {https://doi.org/10.1112/S0010437X06002521},
}

@article{LL24,
  title={An unexpected property of ${\bf g}$-vectors for rank 3 mutation-cyclic quivers},
  author={Lee, J. and Lee, K.},
  journal={arXiv preprint arXiv:2409.00599},
  year={2024}
}

@book {Nak23,
    AUTHOR = {Nakanishi, T.},
     TITLE = {Cluster algebras and scattering diagrams},
    SERIES = {MSJ Memoirs},
    VOLUME = {41},
 PUBLISHER = {Mathematical Society of Japan, Tokyo},
      YEAR = {2023},
     PAGES = {xiv+279},
      ISBN = {978-4-86497-105-8},
   MRCLASS = {13F60},
  MRNUMBER = {4563311},
MRREVIEWER = {Ibrahim\ Saleh},
       DOI = {10.1142/e073},
       URL = {https://doi.org/10.1142/e073},
}

@article{Nak24,
  TITLE={Local and global patterns of rank 3 {$G$}-fans of totally-infinite type},
  author={Nakanishi, T.},
  journal={arXiv preprint arXiv:2411.16283},
  year={2024}
}

@incollection{NZ12,
    AUTHOR = {Nakanishi, T. and Zelevinsky, A.},
     TITLE = {On tropical dualities in cluster algebras},
 BOOKTITLE = {Algebraic groups and quantum groups},
    SERIES = {Contemp. Math.},
    VOLUME = {565},
     PAGES = {217--226},
 PUBLISHER = {Amer. Math. Soc., Providence, RI},
      YEAR = {2012},
      ISBN = {978-0-8218-5317-7},
   MRCLASS = {13F60},
  MRNUMBER = {2932428},
MRREVIEWER = {Yu\ Zhou},
       DOI = {10.1090/conm/565/11159},
       URL = {https://doi.org/10.1090/conm/565/11159},
}

@article {Rea14,
    AUTHOR = {Reading, N.},
     TITLE = {Universal geometric cluster algebras},
   JOURNAL = {Math. Z.},
  FJOURNAL = {Mathematische Zeitschrift},
    VOLUME = {277},
      YEAR = {2014},
    NUMBER = {1-2},
     PAGES = {499--547},
      ISSN = {0025-5874,1432-1823},
   MRCLASS = {13F60 (05E15 20F55 52B12)},
  MRNUMBER = {3205782},
MRREVIEWER = {Xueqing\ Chen},
       DOI = {10.1007/s00209-013-1264-4},
       URL = {https://doi.org/10.1007/s00209-013-1264-4},
}

@incollection {Sev12,
    AUTHOR = {Seven, A. I.},
     TITLE = {Mutation classes of {$3\times3$} generalized {C}artan matrices},
 BOOKTITLE = {Highlights in {L}ie algebraic methods},
    SERIES = {Progr. Math.},
    VOLUME = {295},
     PAGES = {205--211},
 PUBLISHER = {Birkh\"auser/Springer, New York},
      YEAR = {2012},
      ISBN = {978-0-8176-8273-6},
   MRCLASS = {05E15 (05C50 05E10 13F60 17B67)},
  MRNUMBER = {2866853},
       DOI = {10.1007/978-0-8176-8274-3\_9},
       URL = {https://doi.org/10.1007/978-0-8176-8274-3_9},
}

@article {FG16,
    AUTHOR = {Fock, V. V. and Goncharov, A. B.},
     TITLE = {Cluster {P}oisson varieties at infinity},
   JOURNAL = {Selecta Math. (N.S.)},
  FJOURNAL = {Selecta Mathematica. New Series},
    VOLUME = {22},
      YEAR = {2016},
    NUMBER = {4},
     PAGES = {2569--2589},
      ISSN = {1022-1824,1420-9020},
   MRCLASS = {14T05 (13F60 30F60)},
  MRNUMBER = {3573965},
MRREVIEWER = {Ralf\ Schiffler},
       DOI = {10.1007/s00029-016-0282-6},
       URL = {https://doi.org/10.1007/s00029-016-0282-6},
}

@article{Cha12,
     AUTHOR = {Ch{\'a}vez, A. N.},
     TITLE = {On the c-vectors and g-vectors of the {M}arkov cluster
              algebra},
   JOURNAL = {S\'em. Lothar. Combin.},
  FJOURNAL = {S\'eminaire Lotharingien de Combinatoire},
    VOLUME = {69},
      YEAR = {2012},
     PAGES = {Art. B69d, 12},
      ISSN = {1286-4889},
   MRCLASS = {13F60},
  MRNUMBER = {3118908},
MRREVIEWER = {Li\ Li},
       DOI = {10.1029/jz069i001p00001},
       URL = {https://doi.org/10.1029/jz069i001p00001},
}

@article {FZ02,
    AUTHOR = {Fomin, S. and Zelevinsky, A.},
     TITLE = {Cluster algebras. {I}. {F}oundations},
   JOURNAL = {J. Amer. Math. Soc.},
  FJOURNAL = {Journal of the American Mathematical Society},
    VOLUME = {15},
      YEAR = {2002},
    NUMBER = {2},
     PAGES = {497--529},
      ISSN = {0894-0347,1088-6834},
   MRCLASS = {16S99 (14M99 17B99)},
  MRNUMBER = {1887642},
MRREVIEWER = {Eric\ N.\ Sommers},
       DOI = {10.1090/S0894-0347-01-00385-X},
       URL = {https://doi.org/10.1090/S0894-0347-01-00385-X},
}

@book{Zwi95,
  title={CRC Standard Mathematical Tables and Formulae},
  author={Zwillinger, D.},
  year={1995},
  publisher={CRC press},
  edition={30th},
}

@article {FT19,
    AUTHOR = {Felikson, A. and Tumarkin, P.},
     TITLE = {Geometry of mutation classes of rank 3 quivers},
   JOURNAL = {Arnold Math. J.},
  FJOURNAL = {Arnold Mathematical Journal},
    VOLUME = {5},
      YEAR = {2019},
    NUMBER = {1},
     PAGES = {37--55},
      ISSN = {2199-6792,2199-6806},
   MRCLASS = {13F60 (20H15 51F15)},
  MRNUMBER = {3981452},
MRREVIEWER = {Calin\ Chindris},
       DOI = {10.1007/s40598-019-00101-2},
       URL = {https://doi.org/10.1007/s40598-019-00101-2},
}

@article {Rea20b,
    AUTHOR = {Reading, N.},
     TITLE = {Scattering fans},
   JOURNAL = {Int. Math. Res. Not. IMRN},
  FJOURNAL = {International Mathematics Research Notices. IMRN},
      YEAR = {2020},
    NUMBER = {23},
     PAGES = {9640--9673},
      ISSN = {1073-7928,1687-0247},
   MRCLASS = {13F60 (05E14 14J33)},
  MRNUMBER = {4182806},
MRREVIEWER = {Fan\ Qin},
       DOI = {10.1093/imrn/rny260},
       URL = {https://doi.org/10.1093/imrn/rny260},
}

@article {RS16,
    AUTHOR = {Reading, N. and Speyer, D. E.},
     TITLE = {Combinatorial frameworks for cluster algebras},
   JOURNAL = {Int. Math. Res. Not. IMRN},
  FJOURNAL = {International Mathematics Research Notices. IMRN},
      YEAR = {2016},
    NUMBER = {1},
     PAGES = {109--173},
      ISSN = {1073-7928,1687-0247},
   MRCLASS = {13F60 (05C25)},
  MRNUMBER = {3514060},
MRREVIEWER = {Gustavo\ Jasso},
       DOI = {10.1093/imrn/rnv101},
       URL = {https://doi.org/10.1093/imrn/rnv101},
}

@article {RS18,
    AUTHOR = {Reading, N. and Speyer, D. E.},
     TITLE = {Cambrian frameworks for cluster algebras of affine type},
   JOURNAL = {Trans. Amer. Math. Soc.},
  FJOURNAL = {Transactions of the American Mathematical Society},
    VOLUME = {370},
      YEAR = {2018},
    NUMBER = {2},
     PAGES = {1429--1468},
      ISSN = {0002-9947,1088-6850},
   MRCLASS = {13F60 (20F55)},
  MRNUMBER = {3729507},
MRREVIEWER = {Ralf\ Schiffler},
       DOI = {10.1090/tran/7193},
       URL = {https://doi.org/10.1090/tran/7193},
}

@article {Rea20a,
    AUTHOR = {Reading, N.},
     TITLE = {A combinatorial approach to scattering diagrams},
   JOURNAL = {Algebr. Comb.},
  FJOURNAL = {Algebraic Combinatorics},
    VOLUME = {3},
      YEAR = {2020},
    NUMBER = {3},
     PAGES = {603--636},
      ISSN = {2589-5486},
   MRCLASS = {13F60},
  MRNUMBER = {4113600},
MRREVIEWER = {Ibrahim\ Saleh},
       DOI = {10.5802/alco.107},
       URL = {https://doi.org/10.5802/alco.107},
}

@article {Yur20,
    AUTHOR = {Yurikusa, T.},
     TITLE = {Density of {$g$}-vector cones from triangulated surfaces},
   JOURNAL = {Int. Math. Res. Not. IMRN},
  FJOURNAL = {International Mathematics Research Notices. IMRN},
      YEAR = {2020},
    NUMBER = {21},
     PAGES = {8081--8119},
      ISSN = {1073-7928,1687-0247},
   MRCLASS = {16G20 (05E16 13F60 57K20)},
  MRNUMBER = {4176846},
MRREVIEWER = {Christof\ Gei\ss},
       DOI = {10.1093/imrn/rnaa008},
       URL = {https://doi.org/10.1093/imrn/rnaa008},
}

@article{Yur25,
  title={Finite type and completeness of $g$-fans},
  author={Yurikusa, T.},
  journal={arXiv preprint arXiv:2512.21603},
  year={2025}
}

@article{RS22,
  title={Cluster scattering diagrams of acyclic affine type},
  author={Reading, N. and Stella, S.},
  journal={arXiv preprint arXiv:2205.05125},
  year={2022}
}

@article {Qin24,
    AUTHOR = {Qin, F.},
     TITLE = {Bases for upper cluster algebras and tropical points},
   JOURNAL = {J. Eur. Math. Soc. (JEMS)},
  FJOURNAL = {Journal of the European Mathematical Society (JEMS)},
    VOLUME = {26},
      YEAR = {2024},
    NUMBER = {4},
     PAGES = {1255--1312},
      ISSN = {1435-9855,1435-9863},
   MRCLASS = {13F60},
  MRNUMBER = {4721032},
MRREVIEWER = {Juan\ Bosco\ Fr\'ias-Medina},
       DOI = {10.4171/jems/1308},
       URL = {https://doi.org/10.4171/jems/1308},
}

@article {RS23,
    AUTHOR = {Rupel, D. and Stella, S.},
     TITLE = {Dominance regions for rank two cluster algebras},
   JOURNAL = {Ann. Comb.},
  FJOURNAL = {Annals of Combinatorics},
    VOLUME = {27},
      YEAR = {2023},
    NUMBER = {4},
     PAGES = {873--894},
      ISSN = {0218-0006,0219-3094},
   MRCLASS = {13F60},
  MRNUMBER = {4657334},
       DOI = {10.1007/s00026-023-00636-4},
       URL = {https://doi.org/10.1007/s00026-023-00636-4},
}

@article{RRS25,
  title={Dominance regions for affine cluster algebras},
  author={Reading, N. and Rupel, D. and Stella, S.},
  journal={arXiv preprint arXiv:2512.02218},
  year={2025}
}

@article {Rea23,
    AUTHOR = {Reading, N.},
     TITLE = {Dominance phenomena: mutation, scattering and cluster algebras},
   JOURNAL = {Trans. Amer. Math. Soc.},
  FJOURNAL = {Transactions of the American Mathematical Society},
    VOLUME = {376},
      YEAR = {2023},
    NUMBER = {2},
     PAGES = {773--835},
      ISSN = {0002-9947,1088-6850},
   MRCLASS = {13F60 (05E16 20F55 57Q15)},
  MRNUMBER = {4531662},
MRREVIEWER = {Fayadh\ Kadhem},
       DOI = {10.1090/tran/7888},
       URL = {https://doi.org/10.1090/tran/7888},
}

@article {CL20,
    AUTHOR = {Cao, P. and Li, F.},
     TITLE = {The enough {$g$}-pairs property and denominator vectors of cluster algebras},
   JOURNAL = {Math. Ann.},
  FJOURNAL = {Mathematische Annalen},
    VOLUME = {377},
      YEAR = {2020},
    NUMBER = {3-4},
     PAGES = {1547--1572},
      ISSN = {0025-5831,1432-1807},
   MRCLASS = {13F60 (05E40)},
  MRNUMBER = {4126901},
MRREVIEWER = {Francesco\ Esposito},
       DOI = {10.1007/s00208-020-02033-1},
       URL = {https://doi.org/10.1007/s00208-020-02033-1},
}

@article {CKLP13,
    AUTHOR = {Cerulli Irelli, G. and Keller, B. and Labardini-Fragoso, D. and Plamondon, P.-G.},
     TITLE = {Linear independence of cluster monomials for skew-symmetric cluster algebras},
   JOURNAL = {Compos. Math.},
  FJOURNAL = {Compositio Mathematica},
    VOLUME = {149},
      YEAR = {2013},
    NUMBER = {10},
     PAGES = {1753--1764},
      ISSN = {0010-437X,1570-5846},
   MRCLASS = {13F60 (18E30)},
  MRNUMBER = {3123308},
MRREVIEWER = {Olga\ Kravchenko},
       DOI = {10.1112/S0010437X1300732X},
       URL = {https://doi.org/10.1112/S0010437X1300732X},
}

@incollection {Yur23,
    AUTHOR = {Yurikusa, Toshiya},
     TITLE = {Acyclic cluster algebras with dense {$g$}-vector fans},
 BOOKTITLE = {Mc{K}ay correspondence, mutation and related topics},
    SERIES = {Adv. Stud. Pure Math.},
    VOLUME = {88},
     PAGES = {437--459},
 PUBLISHER = {Math. Soc. Japan, Tokyo},
      YEAR = {[2023] \copyright 2023},
      ISBN = {978-4-86497-098-3},
   MRCLASS = {13F60},
  MRNUMBER = {4603594},
MRREVIEWER = {Truong\ Le\ Hoang},
}
\bibliographystyle{alpha}
\end{document}